\numberwithin{equation}{section}
\theoremstyle{plain}
\newtheorem{thm}[equation]{Theorem}
\newtheorem*{thm*}{Theorem}
\newtheorem{prop}[equation]{Proposition}
\crefname{prop}{Proposition}{Propositions}
\newtheorem{cor}[equation]{Corollary}       
\newtheorem{lem}[equation]{Lemma}
\crefname{lem}{Lemma}{Lemmas}
\theoremstyle{definition} 
\newtheorem{defn}[equation]{Definition} 
\newtheorem{ex}[equation]{Example}
\newtheorem{rem}[equation]{Remark}
\newtheorem{chunk}[equation]{}
\newcommand{\Z}{\mathbb{Z}}
\newcommand{\mf}{\mathfrak}
\newcommand{\Hom}{\mathrm{Hom}}
\newcommand{\iHom}{\underline{\mathrm{Hom}}}
\newcommand{\msf}[1]{\mathsf{#1}}
\newcommand{\mc}[1]{\mathcal{#1}}
\newcommand{\mrm}[1]{\mathrm{#1}}
\newcommand{\mbb}[1]{\mathbb{#1}}
\newcommand{\scr}[1]{\mathscr{#1}}
\newcommand{\p}{\mathfrak{p}}
\newcommand{\m}{\mathfrak{m}}
\newcommand{\T}{\mathsf{T}}
\newcommand{\X}{\mathsf{X}}
\newcommand{\1}{\mathds{1}}
\newcommand{\upcl}{\lor\!}
\newcommand{\Spc}{\msf{Spc}}
\newcommand{\supp}{\msf{supp}}
\newcommand{\supph}{\msf{supp}^\msf{h}_\Sigma}
\renewcommand{\mod}[1]{\mathrm{Mod}_{#1}}
\newcommand{\A}{\mathsf{A}}
\newcommand{\B}{\mathsf{B}}
\newcommand{\D}{\mathsf{D}}
\renewcommand{\mod}[1]{\msf{mod}(#1)}
\newcommand{\Mod}[1]{\msf{Mod}(#1)}
\newcommand{\Flat}[1]{\msf{Flat}(#1)}
\newcommand{\ab}{\msf{Ab}}
\newcommand{\Thick}{\msf{Thick}}
\newcommand{\ti}{\textit}
\renewcommand{\tilde}[1]{\widetilde{#1}}
\newcommand{\rlim}{\varinjlim}
\renewcommand{\t}{\text}
\renewcommand{\c}{\mrm{c}}
\newcommand{\op}{\mrm{op}}
\renewcommand{\Flat}[1]{\msf{Flat}(#1)}
\newcommand{\closed}[1]{\msf{KZg}^\Sigma_{\msf{Cl}}(#1)}
\newcommand{\y}{\msf{y}}
\renewcommand{\bar}{\overline}
\newcommand{\hspec}[1]{\msf{Spc}^{\msf{h}}(#1)}
\renewcommand{\phi}{\varphi}
\newcommand{\shspec}[1]{\msf{Spc}_{\Sigma}^\msf{h}(#1)}
\newcommand{\sspec}[1]{\msf{Spc}_{\Sigma}(#1)}
\newcommand{\Irr}[1]{\msf{IrrRk}({#1})}
\newcommand{\ssupp}[1]{\msf{supp}_\Sigma(#1)}
\newcommand{\hssupp}[1]{\msf{supp}_{\Sigma}^{\msf{h}}(#1)}
\newcommand{\rsupp}[1]{\msf{supp}^{\msf{rk}}(#1)}
\newcommand{\hsupp}[1]{\msf{supp}^\msf{h}(#1)}
\title{The shift-homological spectrum and parametrising kernels of rank functions}
\begin{document}
	\author{Isaac Bird}
	\address[Bird]{Department of Algebra, Faculty of Mathematics and Physics, Charles University in Prague, Sokolovsk\'{a} 83, 186 75 Praha, Czech Republic}
	\email{bird@karlin.mff.cuni.cz}
	\author{Jordan Williamson}
	\address[Williamson]{Department of Algebra, Faculty of Mathematics and Physics, Charles University in Prague, Sokolovsk\'{a} 83, 186 75 Praha, Czech Republic}
	\email{williamson@karlin.mff.cuni.cz}
	\author{Alexandra Zvonareva}
	\address[Zvonareva]{Institute of Mathematics of the Czech Academy of Sciences, Žitná 25, 115 67 Prague,
Czech Republic}
	\email{zvonareva@math.cas.cz}
    
\begin{abstract}
For any compactly generated triangulated category we introduce two topological spaces, the shift-spectrum and the shift-homological spectrum. We use them to parametrise a family of thick subcategories of the compact objects, which we call radical. 
These spaces can be viewed as non-monoidal analogues of the Balmer and homological spectra arising in tensor-triangular geometry: we prove that for monogenic tensor-triangulated categories the Balmer spectrum is a subspace of the shift-spectrum. 
To construct these analogues we utilise quotients of the module category, rather than the lattice theoretic methods which have been adopted in other approaches.
We characterise radical thick subcategories and show in certain cases, such as the perfect derived categories of tame hereditary algebras or monogenic tensor-triangulated categories, that every thick subcategory is radical. We establish a close relationship between the shift-homological spectrum and the set of irreducible integral rank functions, and provide necessary and sufficient conditions for every radical thick subcategory to be given by an intersection of kernels of rank functions. In order to facilitate these results, we prove that both spaces we introduce may equivalently be described in terms of the Ziegler spectrum.
\end{abstract}
	\maketitle
 \setcounter{tocdepth}{1}
 \tableofcontents

\section{Introduction}
Since the paper of Hopkins--Smith \cite{hopkinssmith} which classified thick subcategories of the homotopy category of finite spectra, there has been a systematic programme to classify thick subcategories of small triangulated categories. In many established examples this classification is phrased in terms of a topological space. For instance, this can be found in the work of Hopkins \cite{Hopkins} and Neeman \cite{Neemanchrom}, 
who showed that for a commutative noetherian ring $R$, the thick subcategories of the perfect derived category are in bijection with the specialisation closed subsets of $\msf{Spec}(R)$. 
In \cite{bcr}, Benson, Carlson and Rickard initiated an investigation into the thick subcategories of the stable module category of group rings, and provided a complete classification for $p$-groups
in terms of the projective scheme associated to the group cohomology ring. 

In all of these examples, the triangulated categories in question are tensor-triangulated categories: they have a symmetric monoidal product $\otimes$ and furthermore each thick subcategory is a $\otimes$-ideal. A universal approach for classifying thick $\otimes$-ideals, unifying all previous classifications, was established by Balmer. In \cite{Balmer}, he defined a space $\msf{Spc}(\scr{K})$, now known as the Balmer spectrum, which parametrises radical thick $\otimes$-ideals of $\scr{K}$ using a support theory. More explicitly, the Balmer spectrum has as points the prime thick $\otimes$-ideals $\p$ of $\scr{K}$, and a basis of closed sets given by supports of objects in $\scr{K}$: 
\[
\supp(A)=\{\p\in\msf{Spc}(\scr{K}):A\not\in\p\}.
\]
Balmer's classification result is that $\supp$ establishes a bijection between the radical thick $\otimes$-ideals of $\scr{K}$ and Thomason subsets of $\msf{Spc}(\scr{K})$. Furthermore, the pair $(\msf{Spc}(\scr{K}),\supp)$ is universal: it is the terminal support datum.

Away from the tensor-triangular world, other classifications of all thick subcategories have been obtained. However, these results rely on a case-by-case analysis of the structure of the specific  triangulated category in question, due to the lack of an overarching method. Recently, however, approaches to constructing a non-monoidal analogue of the Balmer spectrum have been forthcoming, for example \cite{GratzStevenson, Krausecentral, Matsui}. 

Unfortunately, one immediately encounters major obstacles. One issue is that, unlike in the tensor-triangular setting, the lattice $\msf{Thick}(\scr{K})$ of thick subcategories is not distributive - even for categories as simple as the perfect derived category of the path algebra of the $A_2$ quiver - a property which is satisfied by lattices of nice (for instance, Thomason) subsets of a topological space. Another obstacle is that, if one wants to consider a universal support datum, it has been shown by \cite{balmerocal}, building on the work of \cite{GratzStevenson}, that the space one obtains is essentially trivial: the universal support datum consists of the space whose points are the thick subcategories themselves. In other words, to topologically understand all thick subcategories one has to know them all to begin with. Consequently, one cannot have a complete analogue of the Balmer spectrum - there is no universal smaller topological space from which one can decompress information to obtain a classification of all thick subcategories. 

However, this topological perspective can still be used to classify certain sublattices of $\Thick(\scr{K})$. While the aforementioned approaches to constructing a non-monoidal Balmer spectrum are different, they have a common thread: identify a sublattice of the lattice of all thick subcategories, show that it is a spatial frame, and apply Stone duality to obtain a topological space which will parametrise certain families of thick subcategories. 

In this article, we propose an alternative approach to constructing a non-monoidal  analogue of the Balmer spectrum, which does not follow this lattice-based recipe. Instead, we tackle the problem on the level of functor categories, by considering a non-monoidal analogue of the homological spectrum. Introduced by Balmer and Balmer--Krause--Stevenson in \cite{Balmernilpotence, bks}, the homological spectrum $\hspec{\scr{K}}$ of a tensor-triangulated category $\scr{K}$ is a topological space which is defined on $\mod{\scr{K}}$, the category of finitely presented additive functors $\scr{K}^{\op}\to \ab$. Its points are the maximal Serre $\otimes$-ideals of $\mod{\scr{K}}$, with supports given by a functorial analogue of those for the Balmer spectrum. Balmer proved that there is a continuous closed surjection $\hspec{\scr{K}}\to\msf{Spc}(\scr{K})$, which was shown in \cite{BHScomparison}, to be the Kolmogorov quotient.

For an arbitrary compactly generated triangulated category $\T$ with compacts $\scr{K}=\T^{\c}$, we define a space we call the \emph{shift-homological spectrum}, $\shspec{\T^{\c}}$. In analogy with the homological spectrum, the points of $\shspec{\T^{\c}}$ are the maximal $\Sigma$-invariant Serre subcategories of $\mod{\T^{\c}}$, which we call shift-homological primes. We equip $\shspec{\T^{\c}}$ with a basis of closed sets given by a non-monoidal support theory. 

Considering maximal $\Sigma$-invariant Serre subcategories of $\mod{\T^{\c}}$ also takes its motivation from another construction. In \cite{chuanglazarev}, rank functions on $\T^{\c}$ were introduced to provide a generalisation of the theory of matrix localisations to triangulated categories. A rank function on $\T^{\c}$ is an assignment of a non-negative real number to every object and morphism in $\T^{\c}$ which is invariant under the action of $\Sigma$ and satisfies certain natural conditions. It was shown in \cite{conde2022functorial}, that if one restricts to rank functions with integer values, such rank functions have a decomposition into irreducible components. Each irreducible component corresponds to a maximal $\Sigma$-invariant Serre subcategory of $\mod{\T^{\c}}$, and thus an irreducible rank function uniquely determines a point in $\shspec{\T^{\c}}$. The interplay between rank functions and shift-homological primes plays a central role throughout the paper, as we see later in this introduction.

Associated to any shift-homological prime $\mc{B}$ is a thick subcategory of compact objects: namely, the preimage of $\mc{B}$ under the restricted Yoneda embedding. We call such a thick subcategory \emph{shift-prime}, and build a topological space $\sspec{\T^{\c}}$ with points given by these shift-prime thick subcategories and the topology defined analogously to the Balmer spectrum, with a basis of closed sets given by $\ssupp{A}=\{\msf{P}\in\sspec{\T^{\c}}:A\not\in\msf{P}\}$, as $A$ runs through all compact objects. We call this space the \emph{shift-spectrum}. We see this construction as a shadow in the non-monoidal world of the approach to the Balmer spectrum through the homological spectrum. Indeed, for tensor-triangulated categories, there is a concrete connection between our spaces and their tensor-triangular versions.

\begin{thm*}[\ref{lem:downtohomological}, \ref{thm:maptobalmer}]
Let $\T$ be a rigidly-compactly generated tensor-triangulated category generated by its tensor unit. Then every homological prime $\mc{B}\in\hspec{\T^{\c}}$ can be obtained from a shift-homological prime $\mc{S}\in\shspec{\T^{\c}}$. Furthermore, every Balmer prime is a shift-prime and the induced inclusion
\[
\msf{Spc}(\T^{\c})\to\sspec{\T^{\c}}
\]
is continuous.
\end{thm*}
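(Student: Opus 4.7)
The plan is to analyse the interplay between $\Sigma$-invariance and $\otimes$-ideal closure in the monogenic setting, and then to invoke Balmer's surjection $\hspec{\T^\c}\twoheadrightarrow\msf{Spc}(\T^\c)$ from \cite{Balmernilpotence, bks} for the statement about Balmer primes.

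For the first claim, the key observation is that in a monogenic tensor-triangulated category, $\T^\c$ is the thick subcategory generated by $\mathbf{1}$, so the shift on $\mod{\T^\c}$ coincides with the autoequivalence $y(\Sigma\mathbf{1})\otimes(-)$. In particular every Serre $\otimes$-ideal is automatically $\Sigma$-invariant. For the converse, I would establish that in this monogenic setting the Serre $\otimes$-ideal generated by any $\Sigma$-invariant Serre subcategory $\mc{S}$ coincides with $\mc{S}$: since every $A\in\T^\c$ is built from $\mathbf{1}$ via shifts, cones, and retracts, and since $y(A)\otimes(-)$ is exact on $\mod{\T^\c}$ by rigid compact generation, tensoring $\mc{S}$ by $y(A)$ reduces via Serre closure and induction along the thick generation to applying shifts, which leave $\mc{S}$ invariant. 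This identifies the collections of maximal Serre $\otimes$-ideals and maximal $\Sigma$-invariant Serre subcategories; hence every homological prime $\mc{B}$ is itself a shift-homological prime, and one may take $\mc{S}=\mc{B}$.

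For the second claim, Balmer's construction gives a surjection $\hspec{\T^\c}\to\msf{Spc}(\T^\c)$ sending $\mc{B}\mapsto y^{-1}(\mc{B})$. Every Balmer prime $\p$ thus has the form $y^{-1}(\mc{B})$ for some homological prime $\mc{B}$; by the first part $\mc{B}$ is also a shift-homological prime, so $\p$ is by definition a shift-prime. Continuity of the resulting inclusion $\msf{Spc}(\T^\c)\to\sspec{\T^\c}$ will be immediate, since both topologies have bases of closed sets defined by the condition $A\notin\p$ for $A\in\T^\c$, so the preimage of $\ssupp{A}$ coincides with $\supp(A)$.

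The main anticipated obstacle is the technical reduction that in the monogenic setting, tensoring any $\Sigma$-invariant Serre subcategory by arbitrary compact objects stays inside that subcategory. This hinges on exactness of $y(A)\otimes(-)$ on $\mod{\T^\c}$ together with an inductive argument along the thick generation of $\T^\c$ by $\mathbf{1}$; it may be cleanest to carry out via the Ziegler-spectrum descriptions of $\hspec{\T^\c}$ and $\shspec{\T^\c}$ developed elsewhere in the paper.
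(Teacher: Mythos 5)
Your reduction of the first claim is based on the assertion that, in the monogenic setting, every $\Sigma$-invariant Serre subcategory of $\mod{\T^\c}$ is automatically a Serre $\otimes$-ideal, so that maximal Serre $\otimes$-ideals and maximal $\Sigma$-invariant Serre subcategories coincide and one may take $\mc{S}=\mc{B}$. This is false, and the paper itself contains a counterexample: for $\T=\msf{StMod}(kC_p)$ (monogenic, rigidly-compactly generated), the zero subcategory is the unique maximal Serre $\otimes$-ideal, yet it is \emph{not} maximal as a $\Sigma$-invariant Serre subcategory, and $\shspec{\msf{stmod}(kC_p)}$ has $\lceil (p-1)/2\rceil$ points, each given by a non-zero $\Sigma$-invariant Serre subcategory which therefore cannot be a $\otimes$-ideal (see \cref{ex:ttfcyclic}). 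The step of your induction that breaks is cone-closure: for a triangle $A\to B\to C$ in $\T^\c$ and an arbitrary $f\in\mod{\T^\c}$, the sequence $\y A\otimes f\to \y B\otimes f\to \y C\otimes f$ need not be exact in the middle, since $-\otimes f$ is only right exact and the doubly-unbounded acyclic complex of representables obtained from the triangle need not stay acyclic after tensoring with $f$ (the period-one acyclic complex of free $k[t]/t^p$-modules is the standard instance of this failure, and it is exactly what happens for $kC_p$). This is why the paper's argument only tensors objects of the form $\y X$, where $\y A\otimes \y X\simeq \y(A\otimes X)$ and triangles remain triangles. Since your second claim is deduced from ``$\mc{B}$ is itself a shift-homological prime'', it inherits the gap.

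The intended meaning of ``obtained from'' is the weaker, asymmetric statement of \cref{lem:downtohomological}: $\mc{B}=\mc{S}_\otimes$ where $\mc{S}_\otimes=\{f:\y A\otimes f\in\mc{S}\text{ for all }A\in\T^\c\}$. The paper proves this by constructing from $\mc{B}$ a $\Sigma$-invariant Serre subcategory $\mc{B}_\Sigma\supseteq\mc{B}$ using the pure injective $E_{\mc{B}}$ (\cref{lem:shiftisationisshiftclosed}), showing $(\mc{B}_\Sigma)_\otimes=\mc{B}$ via rigidity and compact generation (\cref{lem:recovertensorfromshift}), enlarging $\mc{B}_\Sigma$ to a maximal $\Sigma$-invariant Serre subcategory $\mc{S}$ by Zorn's lemma — this is where the single compact generator is used (\cref{prop:containedinmaxserre}) — and concluding $\mc{B}=\mc{S}_\otimes$ by maximality and \cref{lem:propsofstensor}. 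For the Balmer-prime statement one then still has to show $\y^{-1}\mc{S}=\y^{-1}\mc{S}_\otimes$, which is where monogenicity enters again: for $X\in\y^{-1}\mc{S}$ the subcategory $\{A\in\T^\c:\y A\otimes\y X\in\mc{S}\}$ is thick (legitimately, since here one tensors two representables) and contains $\1$, hence is all of $\T^\c$. Your observation that the comparison map is continuous because $\ssupp{A}$ pulls back to $\supp(A)$ is correct and agrees with the paper, but the core of the theorem needs the $(-)_\Sigma$/$(-)_\otimes$ machinery rather than an identification of the two kinds of primes.
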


Thus, our space can be seen as a candidate for a generalisation of the Balmer spectrum to the non-monoidal setting. Having introduced the spaces considered in this paper let us discuss the structure and the results in more detail. 

First, we focus on the construction of $\shspec{\T^{\c}}$. To make computations of this space more tractable, we construct a homeomorphic space, which can be obtained from $\T$ without reference to the functor category. More specifically, $\Sigma$-invariant Serre subcategories of $\mod{\T^{\c}}$ are in bijection with the closed sets of another topological space called the $\Sigma$-Ziegler spectrum of $\T$, denoted $\msf{Zg}^{\Sigma}(\T)$, whose points are the indecomposable pure injective objects. This topology is a coarsening of the classical Ziegler topology, which takes into account the action of $\Sigma$. Through this space we are able to provide the following alternative description of $\shspec{\T^{\c}}$, showing that it can be recovered from $\msf{Zg}^{\Sigma}(\T)$.

\begin{thm*}[\ref{homeomorphism}]
There is a homeomorphism 
\[
\varphi\colon \shspec{\T^{\c}}\xrightarrow{\sim} \closed{\T}^{\msf{GZ}}
\]
between the shift-homological spectrum and the set of closed points in the Kolmogorov quotient of $\msf{Zg}^{\Sigma}(\T)$, equipped with a Gabriel-Zariski topology. 
\end{thm*}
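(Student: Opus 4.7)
The plan is to build $\varphi$ via the $\Sigma$-Ziegler correspondence and check it is a homeomorphism in three stages. First I would invoke the (presumably earlier-established) order-reversing bijection between $\Sigma$-invariant Serre subcategories of $\mod{\T^{\c}}$ and $\Sigma$-Ziegler-closed subsets of $\msf{Zg}^{\Sigma}(\T)$. The points of $\shspec{\T^{\c}}$ are the maximal such Serre subcategories, so under this bijection they correspond precisely to the minimal nonempty $\Sigma$-Ziegler-closed subsets. Existence of such minima, equivalently of maximal $\Sigma$-invariant Serre subcategories, follows from Zorn's lemma applied to the poset of $\Sigma$-invariant Serre subcategories.

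Next I would identify minimal nonempty closed subsets of a topological space $X$ with closed points of its Kolmogorov quotient. A purely topological check shows that a closed set of the form $\overline{\{x\}}$ is minimal nonempty closed precisely when every $y \in \overline{\{x\}}$ satisfies $\overline{\{y\}} = \overline{\{x\}}$, equivalently when the image of $x$ in the Kolmogorov quotient is a closed point and $\overline{\{x\}}$ is its topological-indistinguishability class. Applying this to $X = \msf{Zg}^{\Sigma}(\T)$ gives the set-level bijection underlying $\varphi$, sending a maximal $\Sigma$-invariant Serre subcategory $\mc{B}$ to the closed point in the Kolmogorov quotient represented by any pure-injective in the associated minimal closed set.

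Finally, I would match the two topologies. On the $\shspec{\T^{\c}}$ side a basic closed set has the form $\{\mc{B} : \y(A) \notin \mc{B}\}$ for $A \in \T^{\c}$, where $\y$ is the restricted Yoneda embedding. Under the $\Sigma$-Ziegler correspondence, membership $\y(A) \in \mc{B}$ translates into a vanishing condition for a representable functor evaluated on the pure-injective points of the associated closed set. The Gabriel-Zariski topology on $\closed{\T}$ has a basis of closed sets of exactly this non-vanishing shape, and its restriction to closed points of the Kolmogorov quotient is the one declared in the statement. Unwinding these identifications, and using that each closed point corresponds to a single indistinguishability class, shows that $\varphi$ carries the support-basic closed sets on the left precisely onto the Gabriel-Zariski basic closed sets on the right, so is bicontinuous.

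The main obstacle will be the third step: because $\Sigma$-invariance coarsens the classical Ziegler topology, one must check carefully that the support-type closed sets on the spectrum side coincide with the Gabriel-Zariski closed sets after this coarsening and after passage to closed points of the Kolmogorov quotient. A secondary technical point is to confirm that the Gabriel-Zariski topology behaves well under restriction to the subspace of closed points, so that $\varphi$ is a genuine homeomorphism rather than merely a continuous bijection.
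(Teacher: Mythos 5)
Your proposal is correct and essentially reproduces the paper's argument: the set-level bijection obtained from the fundamental correspondence (maximal $\Sigma$-invariant Serre subcategories $\leftrightarrow$ minimal nonempty $\Sigma$-Ziegler-closed sets $\leftrightarrow$ closed points of $\msf{KZg}^{\Sigma}(\T)$) is exactly \cref{prop:ShspecKZgbijection}, and your topology-matching step is precisely the paper's proof of \cref{homeomorphism}, namely that $\Phi(\hssupp{A})=[A]_{\Sigma}^{\c}$ and $\Phi^{-1}([A]_{\Sigma})=\hssupp{A}^{\c}$, which rests on \cref{lem:recoverhomprime} to translate $\y A\in\mc{B}$ into the vanishing of $\Hom_{\T}(A,\Sigma^{i}X)$ for all $i$ and any indecomposable pure injective $X\in\scr{D}(\mc{B})$. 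One incidental caveat: your Zorn's-lemma aside about existence of shift-homological primes is not needed for the homeomorphism and, as stated, requires knowing that unions of chains of proper $\Sigma$-invariant Serre subcategories remain proper, which the paper only establishes when $\T^{\c}$ has a single compact generator (\cref{prop:containedinmaxserre}).
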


The superscript $\msf{GZ}$ indicates a retopologising of the set $\closed{\T}$, to take into account the fact that the Ziegler topology and the topology given by supports are, in some sense, dual. We note that this is similar to the tensor-triangular setting, as it was shown in \cite{birdwilliamsonhomological} that Balmer's homological spectrum can be recovered from a $\otimes$-compatible Ziegler topology.

The next stage is in formally establishing the relationship between $\shspec{\T^{\c}}$ and $\sspec{\T^{\c}}$. It is shown in \cref{prop:KQ} that there is a closed continuous surjection
\[
\shspec{\T^{\c}}\to \sspec{\T^{\c}}
\]
which realises $\sspec{\T^{\c}}$ as the Kolmogorov quotient of $\shspec{\T^{\c}}$; in particular, $\sspec{\T^{\c}}$ is a $\mrm{T}_{0}$-space.

Following this, we investigate which thick subcategories the space $\sspec{\T^{\c}}$ can parametrise. A thick subcategory is called \emph{radical} if it is the intersection of all prime thick subcategories containing it. These radical thick subcategories are precisely the thick subcategories that our space can parametrise, see \cref{bijectionradical}.

However, unlike in the tensor-triangular setting, we do not have an intrinsic characterisation of radical thick subcategories - this is one pitfall of not having a monoidal structure - and nor will, in general, every thick subcategory be radical. Nevertheless, in \cref{charofrad} we give conditions in terms of the Ziegler spectrum which ensure that a thick subcategory of $\T^{\c}$ is radical. These allow us, in subsequent sections, to illustrate the following examples of compactly generated triangulated categories where all thick subcategories are radical. 

\begin{thm*}[\ref{thm:tamheredeverythingradical}, \ref{prop:LocNoeth}, and  \ref{prop:rigideverythingradical}]
For the following examples of $\T$, every thick subcategory of $\T^\c$ is radical:
\begin{enumerate}
\item $\T=\D(\Lambda)$ for $\Lambda$ a tame hereditary algebra over an algebraically closed field;
\item $\T$ is of finite type, that is, $\T^\c$ is locally finite;
\item $\msf{T}$ is a rigidly-compactly generated tensor-triangulated category generated by its tensor unit.
\end{enumerate}
\end{thm*}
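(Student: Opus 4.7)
My plan is to verify the Ziegler-spectrum sufficient conditions of \cref{charofrad} in each of the three cases, with the monogenic rigid case also exploiting the inclusion $\msf{Spc}(\T^\c) \hookrightarrow \sspec{\T^\c}$ from \cref{thm:maptobalmer}. I would handle them in increasing order of difficulty.

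\emph{The locally finite case (finite type).} When $\T^\c$ is locally finite, the abelian category $\mod{\T^\c}$ is a length category, so its Serre subcategories are determined by the simples they contain. Transporting through the homeomorphism $\varphi$ of \cref{homeomorphism}, this forces $\msf{Zg}^\Sigma(\T)$ to be discrete on the relevant $\Sigma$-stable subsets; every $\Sigma$-closed subset is then the intersection of complements of individual points, and the criterion of \cref{charofrad} is verified for every thick subcategory. This essentially reduces to general nonsense.

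\emph{The monogenic rigid case.} Since the tensor unit generates $\T$, every thick subcategory of $\T^\c$ is automatically a thick $\otimes$-ideal. Rigidity of the compacts, via the standard retract argument for dualisable objects, implies that $A^{\otimes n} \in \mc{I}$ forces $A \in \mc{I}$, so every thick $\otimes$-ideal is Balmer-radical; Balmer's classification then expresses it as the intersection of the Balmer primes containing it. Applying \cref{thm:maptobalmer} transports this intersection to an intersection of shift-primes, which is exactly radicality in our sense.

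\emph{The tame hereditary case, and main obstacle.} For $\Lambda$ tame hereditary over an algebraically closed field, I would combine Ringel's classification of indecomposable pure-injective $\Lambda$-modules—the preprojective and preinjective indecomposables, the regular modules in the tubes, the Pr\"ufer and adic modules, and the unique generic module—with Prest's description of the Ziegler topology on $\msf{Mod}\,\Lambda$. The task is to verify the sufficient condition of \cref{charofrad} by a point-by-point analysis of the Kolmogorov quotient of $\msf{Zg}^\Sigma(\D(\Lambda))$, showing that every $\Sigma$-closed subset is determined by its closed points. This is the delicate step: the Ziegler closures of the generic module and of points in the tubes contain infinitely many other pure-injectives, and one must check by hand that they are nevertheless recovered from the shift-primes lying above them. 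The bulk of the work is here, and the remaining two parts are comparatively routine.
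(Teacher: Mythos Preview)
Your treatment of the monogenic rigid case matches the paper's proof (\cref{prop:rigideverythingradical}) essentially verbatim: every thick is a thick $\otimes$-ideal, rigidity makes it Balmer-radical, Balmer primes are shift-primes via \cref{thm:maptobalmer}, done.

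For the locally finite case your outline is in the right neighbourhood but misses the actual mechanism. The paper applies \cref{cor:Rpreservescompacts}, whose second hypothesis is that the right adjoint $R\colon \T/\msf{Loc}(\msf{L})\to\T$ preserves compacts. This is the only substantive step, and it comes from Krause's theorem that in a locally finite triangulated category every thick subcategory is admissible (so the Verdier quotient $\T^\c\to\T^\c/\msf{L}$ already has a right adjoint $\bar R$, which then agrees with $R$ on compacts). Your appeal to ``$\mod{\T^\c}$ is a length category, so the Ziegler spectrum is discrete on $\Sigma$-stable subsets'' does not by itself produce this adjoint, and ``the criterion of \cref{charofrad}'' is not one statement but several; you would need to say which one and supply the missing input.

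For the tame hereditary case your plan diverges sharply from the paper and, as written, does not constitute a proof. The paper never verifies \cref{prop:RadTest} or \cref{cor:Rpreservescompacts} via a pointwise Ziegler analysis. Instead it passes through Br\"uning's bijection between thick subcategories of $\D^{\mrm b}(\mod\Lambda)$ and wide subcategories of $\mod\Lambda$, then invokes K\"ohler's classification of the latter. Wide subcategories generated by an exceptional sequence are of the form ${}^{\perp_{0,1}}Z$ for some finite-dimensional $Z$ (\cref{ththm:WideEx}), hence radical by \cref{ex:foralgebraperpsareradical}. The remaining wide subcategories lie entirely in the regular part; the paper shows $\mathbf r={}^{\perp_{0,1}}G$ for the generic module $G$, and then carries out a combinatorial argument on each non-homogeneous tube (\cref{prop:thwideisperp}, using non-crossing arc collections) to realise every wide subcategory of a tube as ${}^{\perp_{0,1}}Z$ for a suitable regular $Z$. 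Your proposal to ``check by hand that Ziegler closures are recovered from the shift-primes lying above them'' names the difficulty but gives no route through it; in particular it does not account for the wide-subcategory classification, the exceptional/non-exceptional dichotomy, or the tube combinatorics, which is where the actual work lives.
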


Having established some similarities with the Balmer spectrum, we also highlight some differences. As noted in \cite{Balmerhomsupp}, if $p\geqslant 5$ then the tensor-triangulated category $\msf{stmod}(k C_p)$ 
has no non-trivial homological and Balmer primes. This is quite different to our setting, where, although $\sspec{\msf{stmod}(k C_p)}$ is a point, the space $\shspec{\msf{stmod}(k C_p)}$ is not. In particular, the lack of the tensor action means that fewer functors become identified on the homological spectra level. 

Another example highlighting the difference is the bounded derived category of coherent sheaves on the projective line $\T^{\c}=\D^{\mrm{b}}(\msf{coh}(\mbb{P}^{1}_{k}))$. In this setting, the Balmer spectrum is $\mbb{P}^{1}_{k}$ and it classifies the thick $\otimes$-ideals in terms of the specialisation closed subsets of $\mbb{P}^{1}_{k}$. On the other hand, by using the triangulated equivalence with the perfect derived category of the path algebra of the Kronecker quiver, we show that $\sspec{\D^{\mrm{b}}(\msf{coh}(\mbb{P}^{1}_{k}))}$ is $\Z \sqcup \mbb{P}^{1}_{k}$ and that it parametrises all thick subcategories. So we see $\Spc(\D^{\mrm{b}}(\msf{coh}(\mbb{P}^{1}_{k})))$ and $\sspec{\D^{\mrm{b}}(\msf{coh}(\mbb{P}^{1}_{k}))}$ need not be homeomorphic.

Having discussed the relationship of $\sspec{\T^{\c}}$ with the Balmer spectrum, we also investigate the relationship to other spaces introduced as non-monoidal analogues of the Balmer spectrum, namely the fully and partially functorial spaces of Gratz-Stevenson \cite{GratzStevenson}, the central support of Krause \cite{Krausecentral}, and Matsui's spectrum \cite{Matsui}. The details of these comparisons are in \cref{comapsection}.

So far we have described properties of $\shspec{\T^{\c}}$ and $\sspec{\T^{\c}}$ and described the thick subcategories that the latter classifies. Earlier we discussed the connection between $\shspec{\T^{\c}}$ and integral rank functions, and throughout the paper understanding rank functions enables a more comprehensive understanding of both of these spaces. To see this, we first have to formalise the relationship between irreducible rank functions and $\shspec{\T^{\c}}$. Initially, we do this by introducing a topological space of irreducible rank functions, which is denoted $\msf{IrrRk}(\T^{\c})$, the points of which are the irreducible rank functions, with a basis of closed sets given by the irreducible rank functions vanishing on a given compact object. 

The following result describes the relationship between the topological space of irreducible rank functions and the shift-homological spectrum, and shows that a homeomorphism between the two can be established based on properties of the Ziegler spectrum.

\begin{thm*}[\ref{lem:RankGivesPrime}, \ref{thm:whenisKahomeo}]
Let $\T$ be a compactly generated triangulated category. There is a continuous injection
\[
\mc{K}\colon\msf{IrrRk}(\T^{\c})\hookrightarrow\shspec{\T^{\c}}
\]
from the space of irreducible rank functions to the shift-homological spectrum, given by $\rho\mapsto\msf{ker}(\widetilde{\rho})$. Furthermore, the following conditions are equivalent:
\begin{enumerate}
        \item the map $\mc{K}\colon \Irr{\T^\c} \to \shspec{\T^\c}$ is a homeomorphism;
        \item for every $[X]\in\closed{\T}$ the object $\bigoplus_{i \in \Z}\Sigma^iX$ is endofinite;
        \item any minimal $\Sigma$-invariant closed subset of $\msf{Zg}(\T)$ satisfies the isolation condition.
    \end{enumerate}
\end{thm*}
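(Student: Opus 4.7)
The construction of $\mc{K}$ rests on the decomposition theorem for integral rank functions recalled in the introduction: given an irreducible rank function $\rho$, the extension $\widetilde{\rho}$ to $\mod{\T^{\c}}$ has kernel a maximal $\Sigma$-invariant Serre subcategory of $\mod{\T^{\c}}$, which defines a point $\mc{K}(\rho) \in \shspec{\T^{\c}}$. Injectivity follows because such a rank function is determined by its kernel. For continuity, the basic closed sets on each side are indexed by compacts $A \in \T^{\c}$, and the equivalence $A \in \ker(\widetilde{\rho}) \Leftrightarrow \rho(A) = 0$ identifies preimages of basic closed sets in $\shspec{\T^{\c}}$ with basic closed sets in $\Irr{\T^{\c}}$. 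This identification also exhibits $\mc{K}$ as a closed map onto its image, so $\mc{K}$ is a homeomorphism if and only if it is surjective.

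Having reduced the homeomorphism question to surjectivity, the equivalence (1) $\Leftrightarrow$ (2) would follow by combining the preceding theorem's homeomorphism $\varphi\colon\shspec{\T^{\c}} \xrightarrow{\sim} \closed{\T}^{\msf{GZ}}$ with the classification of irreducible integral rank functions in terms of endofinite indecomposable pure injectives. Under $\varphi$, a shift-homological prime corresponds to a closed point $[X]$, that is, a minimal $\Sigma$-invariant closed subset of $\msf{Zg}(\T)$ containing $X$. This point lies in the image of $\mc{K}$ precisely when there exists an irreducible rank function whose kernel matches, and by the classification this is equivalent to the $\Sigma$-orbit of $X$ furnishing an endofinite object, i.e.\ to $\bigoplus_{i \in \Z}\Sigma^{i}X$ being endofinite.

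For (2) $\Leftrightarrow$ (3) I would invoke the $\Sigma$-equivariant form of the classical theorem relating the isolation condition on a closed subset of the Ziegler spectrum to endofiniteness of its indecomposable pure injectives: on a minimal $\Sigma$-invariant closed set $V$, the isolation condition should be equivalent to endofiniteness of $\bigoplus_{i}\Sigma^{i}X$ for any (equivalently, every) $X \in V$. The main technical obstacle I anticipate is this $\Sigma$-equivariant transfer: one must verify that minimality in the $\Sigma$-invariant Ziegler topology interacts correctly with Cantor--Bendixson-style isolation arguments, and that the endofiniteness of the entire shifted sum $\bigoplus_{i}\Sigma^{i}X$ is the shift-equivariant substitute for endofiniteness of an individual isolated pure injective.
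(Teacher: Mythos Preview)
Your outline for the construction of $\mc{K}$, its injectivity and continuity, and the reduction of the homeomorphism question to surjectivity matches the paper's argument essentially verbatim (Lemma~\ref{lem:RankGivesPrime} and Corollary~\ref{cor:surjsuffices}). Likewise, your route for (1)~$\Leftrightarrow$~(2) via the homeomorphism $\Phi$ and the classification of irreducible rank functions by $\Sigma$-invariant endofinite objects is exactly what the paper does, citing \cite[Theorem 5.5]{conde2022functorial}.

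The difference is in how condition~(3) enters. You propose to prove (2)~$\Leftrightarrow$~(3) directly by invoking a $\Sigma$-equivariant ``isolation $\Leftrightarrow$ endofiniteness'' principle on minimal closed sets, and you flag this as the main technical obstacle. The paper sidesteps this entirely: it proves (1)~$\Leftrightarrow$~(3) directly, using the single pivot that $\mc{B}$ lies in the image of $\mc{K}$ if and only if $\mod{\T^{\c}}/\mc{B}$ is a length category (the bijection from \cite[Theorem 4.3]{conde2022functorial}). For (1)~$\Rightarrow$~(3), surjectivity of $\mc{K}$ means every such quotient is length, hence contains a simple, which is exactly the isolation condition for the corresponding minimal $\Sigma$-invariant closed set. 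For (3)~$\Rightarrow$~(1), the isolation condition furnishes a simple object $S$ in $\mod{\T^{\c}}/\mc{B}$; the $\Sigma$-invariant Serre subcategory generated by $\{\Sigma^i S\}$ must be everything (since $\mc{B}$ is maximal), so the quotient is length, and $\mc{B} = \ker(\widetilde{\rho})$ for some irreducible $\rho$.

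This is cleaner than what you propose: it avoids any direct Ziegler-spectrum argument relating isolation to endofiniteness, and the technical obstacle you anticipate simply does not arise. Your route is not wrong in principle, but it would end up factoring through the length-category characterisation anyway, so you may as well use it from the start.
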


Given this, we see that the kernels of rank functions give a bountiful source of radical thick subcategories of $\T^{\c}$. Indeed, the kernel of any integral rank function is radical as illustrated in \cref{Kerrhoisradical}. Furthermore, when $\mc{K}$ is a homeomorphism, the prime thick subcategories are exactly the kernels of irreducible rank functions. As such $\sspec{\T^{\c}}$ can be interpreted as a parametrising space for intersections of kernels of integral rank functions.

Keeping in mind the utility of rank functions, we attempt to better understand them for certain derived categories. In \cref{sec:hered}, we completely describe the irreducible rank functions on $\D^{\t{b}}(\mod{R})$ for hereditary rings, and show that $\shspec{\D^{\t{b}}(\mod{R})}$ can be described in terms of $\msf{Zg}(\Mod{R})$. We then focus on tame hereditary algebras $\Lambda$, where it is shown, among other things, that the isolation condition holds for $\D(\Lambda)$, meaning that the map $\mc{K}$ is a homeomorphism, and so every thick subcategory of $\D^{\t{b}}(\mod{\Lambda})$ is parametrised by rank functions.

On the other hand, the general outlook cannot be so positive: the above discussion concerning the impossibility of a complete non-monoidal analogue of the Balmer spectrum, suggests that there are compactly generated triangulated categories such that not every thick subcategory of compact objects is radical. Indeed in \cref{hypersurfaces}, we provide an example illustrating this by calculating $\shspec{\D_{\t{sg}}(R)}$ and $\sspec{\D_{\t{sg}}(R)}$ for the singularity categories of all hypersurfaces of countable Cohen-Macaulay type. In particular, we show that for both hypersurfaces of countably infinite Cohen-Macaulay type, which are the rings $A_\infty=k[[x,y]]/x^2$ and $D_\infty=k[[x,y]]/x^2y$, the zero thick subcategory is not radical. 

\subsection*{Acknowledgements}
We are grateful to the anonymous referee for their careful reading and suggestions, in particular, for suggesting a shorter proof of \cref{orbit}.

IB and JW were supported by the project PRIMUS/23/SCI/006 from Charles University, and by Charles University Research Centre
program No. UNCE/24/SCI/022. AZ acknowledges the support by the Institute of Mathematics, Czech Academy of Sciences (RVO 67985840). 

JW would like to thank the Isaac Newton Institute for Mathematical
Sciences for support and hospitality during the programme `Topology, representation theory and higher structures' when work on this paper was undertaken. This was supported by EPSRC Grant Number EP/R014604/1. JW would also like to thank the Isaac Newton Institute for Mathematical Sciences, Cambridge, for support and hospitality during the programme `Equivariant homotopy theory in context', where later work on this paper was undertaken. This work was supported by EPSRC grant EP/Z000580/1.
 
\section{Background material}
Let us establish some notation. If $\A$ is an additive category and $\msf{X}$ is a class of objects we set \[^{\perp}\msf{X}=\{A\in\A:\Hom_{\A}(A,X)=0 \t{ for all }X\in\msf{X}\}.\] If $\A$ is abelian, we set $^{\perp_{1}}\msf{X}=\{A\in\A:\t{Ext}_{\A}^{1}(A,X)=0 \t{ for all }X\in\msf{X}\}$. We similarly define $\msf{X}^{\perp}$ and $\msf{X}^{\perp_{1}}$. In a triangulated category $\T$ we set \[^{\perp_{\Z}}\X=\{B\in\T:\Hom_{\T}(B,\Sigma^{i}X)=0 \t{ for all }i\in\Z, X\in\X\}\] and similarly define $X^{\perp_{\Z}}$.

Given a class of objects $\msf{X}\subseteq \A$, we let $\msf{add}(\msf{X})$ denote the class of objects which are summands of finite coproducts of objects in $\msf{X}$. We will denote by $\msf{Add}(\msf{X})$, resp. $\msf{Prod}(\msf{X})$, the subcategory consisting of summands of coproducts, resp. products, of objects in $\msf{X}$.
If $\msf{T}$ is a triangulated category, we write $\msf{thick}(\msf{X})$ to denote the thick closure of $\msf{X}$, the smallest triangulated subcategory containing $\msf{X}$ and closed under direct summands. We also let $\msf{add}^{\Sigma}(\msf{X})=\msf{add}(\Sigma^{i}X:i\in\Z,X\in\msf{X})$. 

\subsection{Localisation in locally coherent abelian categories}
We first recall some general facts about localisation in abelian categories. This material is standard, and can be found for instance in \cite{krbook} or \cite{stenstrom}.
\begin{chunk}
If $\mc{A}$ is an abelian category, then a full non-empty subcategory $\mc{S}\subseteq\mc{A}$ is \emph{Serre} if it is closed under subobjects, quotients, and extensions. Given a Serre subcategory $\mc{S}\subseteq\mc{A}$, there is a localisation $Q_{\mc{S}}\colon\mc{A}\to\mc{A}/\mc{S}$, where $\mc{A}/\mc{S}$ is an abelian category and $Q_{\mc{S}}$ is an exact functor. 
A Serre subcategory $\mc{S}$ is said to be \emph{localising} provided that $Q_{\mc{S}}$ admits a right adjoint $R_{\mc{S}}$, which is then automatically fully faithful. In this case, $R_{\mc{S}}$ induces an equivalence of categories 
\[
\begin{tikzcd}
\mc{A}/\mc{S} \arrow[r, "\sim"] & \{A\in\mc{A}:\Hom_{\mc{A}}(\mc{S},A)=0=\t{Ext}_{\mc{A}}^{1}(\mc{S},A)\}.
\end{tikzcd}
\]
If $\mc{A}$ is Grothendieck, then $\mc{S}$ is localising if and only if it is closed under coproducts.
\end{chunk}

\begin{chunk}
We mostly consider localisations of locally coherent categories. A Grothendieck abelian category $\mc{A}$ is \emph{locally coherent} provided every object is a filtered colimit of finitely presented objects, and the subcategory $\msf{fp}(\mc{A})$ of finitely presented objects is itself an abelian category.
Usually, the localising subcategories $\mc{L}$ of locally coherent categories $\mc{A}$ that we will consider will be those of \emph{finite type}, that is localising subcategories such that the right adjoint $R_{\mc{L}}\colon \mc{A}/\mc{L}\to \mc{A}$ preserves direct limits. These localising subcategories are closely related to torsion theories.
\end{chunk}

\begin{chunk}
A torsion theory $(\mc{T},\mc{F})$ for which $\mc{T}$ is closed under subobjects is called \emph{hereditary}. Hereditary torsion theories of a Grothendieck category $\mc{A}$ are exactly those determined by injective objects in $\mc{A}$: namely,  $(\mc{T},\mc{F})$ is hereditary if and only if there is a set of injective objects $\scr{E}\subseteq \mc{A}$  such that $\mc{T}=\{A\in\mc{A}:\Hom_{\mc{A}}(A,E)=0 \t{ for all }E\in\scr{E}\}$. In this case $\mc{F}$ consists of subobjects of products of objects in $\scr{E}$, see \cite[Chapter VI]{stenstrom}. A torsion theory $(\mc{T},\mc{F})$ such that $\mc{F}$ is closed under direct limits is said to be \emph{of finite type}. Localising subcategories of finite type satisfy the following equivalent conditions.
\end{chunk}

\begin{prop}[{\cite[Proposition A.4]{krspec}\label{prel:finitetype}}]
Let $\mc{A}$ be a locally coherent Grothendieck abelian category and $\mc{L}$ be a localising subcategory of $\mc{A}$. Then the following are equivalent:
\begin{enumerate}
\item $\mc{L}$ is of finite type;
\item there is a Serre subcategory $\mc{S}\subseteq\msf{fp}(\mc{A})$ such that $\mc{L}=\rlim\mc{S}$, where $\rlim\mc{S}$ denotes the direct limit closure of $\mc{S}$;
\item there is a hereditary torsion theory of finite type $(\mc{L},\mc{F})$.
\end{enumerate}
In this case, $\mc{S}=\msf{fp}(\mc{A})\cap\mc{L}$ and $\mc{F}=\{A\in\mc{A}:\Hom_{\mc{A}}(\mc{S},A)=0\}$.
\end{prop}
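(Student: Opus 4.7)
The strategy is to prove (1) $\Leftrightarrow$ (3) first, since this is essentially a dictionary between the localisation-theoretic and torsion-theoretic viewpoints, and then close the loop via (1) $\Rightarrow$ (2) $\Rightarrow$ (1), with the local coherence hypothesis supplying the key technical input.

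For (1) $\Leftrightarrow$ (3), any localising subcategory $\mc{L}$ of a Grothendieck category determines a hereditary torsion pair $(\mc{L}, \mc{F})$ with $\mc{F} = \{A : \Hom_{\mc{A}}(\mc{L}, A) = 0\}$, and the fully faithful right adjoint $R_{\mc{L}}\colon \mc{A}/\mc{L} \hookrightarrow \mc{A}$ identifies $\mc{A}/\mc{L}$ with the subcategory of objects satisfying $\Hom(\mc{L}, -) = 0 = \t{Ext}^{1}(\mc{L}, -)$. So the content is to match closure of $\mc{F}$ under direct limits with $R_{\mc{L}}$ preserving them. Both directions are standard manipulations using the canonical torsion sequence $0 \to tA \to A \to A/tA \to 0$: on one side, finite type of the torsion pair lets one commute the torsion radical $t$ past direct limits, from which one deduces $R_{\mc{L}}$ does too; on the other, if $R_{\mc{L}}$ preserves colimits, then direct limits of torsion-free objects remain torsion-free via the local objects.

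For (1) $\Rightarrow$ (2), set $\mc{S} := \msf{fp}(\mc{A}) \cap \mc{L}$; local coherence makes $\msf{fp}(\mc{A})$ abelian and closed under subobjects in $\mc{A}$, so $\mc{S}$ is Serre in $\msf{fp}(\mc{A})$. The inclusion $\rlim \mc{S} \subseteq \mc{L}$ is automatic from closure of $\mc{L}$ under direct limits. For the reverse, given $X \in \mc{L}$, write $X = \rlim X_i$ with each $X_i$ finitely presented. The torsion functor $t$ for $(\mc{L}, \mc{F})$ commutes with direct limits by (3), which has already been shown equivalent to (1), so $X = tX = \rlim tX_i$. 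Each $tX_i \hookrightarrow X_i$ is a subobject of a finitely presented object in a locally coherent category, hence itself finitely presented, and as it lies in $\mc{L}$, it lies in $\mc{S}$.

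For (2) $\Rightarrow$ (1), assuming $\mc{L} = \rlim \mc{S}$, the associated torsion-free class may be computed as $\mc{F} = \{A : \Hom(\mc{L}, A) = 0\} = \{A : \Hom(\mc{S}, A) = 0\}$, using that $\Hom(-, A)$ turns direct limits into inverse limits. Since $\Hom(S, -)$ commutes with direct limits whenever $S$ is finitely presented, this description makes it manifest that $\mc{F}$ is closed under direct limits, giving (3) and hence (1); the final identifications of $\mc{S}$ and $\mc{F}$ are read off from these arguments. The main obstacle throughout is the interplay between finite presentability and the torsion radical: verifying that $t$ preserves direct limits and sends finitely presented objects to finitely presented ones is exactly where local coherence — as opposed to merely local finite presentation — is needed, since in a general locally finitely presented category, subobjects of finitely presented objects are only guaranteed to be finitely generated.
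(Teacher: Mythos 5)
The paper itself does not prove this proposition---it is quoted directly from \cite[Proposition A.4]{krspec}---so there is no in-paper argument to compare against and I assess your proof on its own terms. It has two genuine gaps. The concrete one is the step in (1) $\Rightarrow$ (2) where you assert that each $tX_i$ is finitely presented because it is ``a subobject of a finitely presented object in a locally coherent category''. Local coherence does not give this: it says that \emph{finitely generated} subobjects of finitely presented objects are finitely presented; arbitrary subobjects need not even be finitely generated (your closing sentence, claiming they always are in a locally finitely presented category, is also false, and this misreading of coherence is the root of the error). Torsion subobjects really can fail to be finitely generated here: take $R=\prod_{\mathbb{N}}k$ (von Neumann regular, hence coherent), let $f_n=e_1+\dots+e_n$ be the obvious idempotents, and take the hereditary torsion theory whose Gabriel filter has the finitely generated basis $\{(1-f_n)R\}$; it is of finite type, yet $t(R)=\bigoplus_{\mathbb{N}}k$ is not finitely generated. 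The step is repairable: $tX_i$ is the directed union of its finitely generated subobjects, each of which is torsion (heredity) and finitely presented (coherence), so $tX_i\in\rlim\mc{S}$; since $\rlim\mc{S}$ is closed under direct limits (a fact you would also need to record), $X=\rlim tX_i\in\rlim\mc{S}$. But as written the implication does not go through.

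The structural gap is (3) $\Rightarrow$ (1). You correctly note that finite type is equivalent to $t$ commuting with direct limits, but the phrase ``from which one deduces $R_{\mc{L}}$ does too'' is exactly the hard content and is given no argument: one must show that a direct limit of $\mc{L}$-closed objects is again $\mc{L}$-closed, i.e.\ that vanishing of $\t{Ext}^1_{\mc{A}}(\mc{L},-)$ survives direct limits, and this is where local coherence enters (the usual route is via (2): once $\mc{L}=\rlim\mc{S}$ with $\mc{S}$ consisting of coherent objects, locality of a torsion-free object can be tested against $\mc{S}$, and $\t{Ext}^1_{\mc{A}}(S,-)$ commutes with direct limits for such $S$). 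Your sketch of this direction uses no coherence at all, which should be a warning sign, since closure of $\mc{F}$ under direct limits is in general weaker than $R_{\mc{L}}$ preserving direct limits. Because you also close the loop (2) $\Rightarrow$ (1) by passing through this unproved implication, the circle (1) $\Rightarrow$ (2) $\Rightarrow$ (1) is not actually closed. A sound arrangement is: (1) $\Rightarrow$ (3) by your embedding of torsion-free objects into local ones (that part is fine); (3) $\Rightarrow$ (2) by the repaired argument above; and (2) $\Rightarrow$ (1) proved directly, using that $\mc{F}=\{A:\Hom_{\mc{A}}(\mc{S},A)=0\}$ is closed under direct limits and that locality of torsion-free objects is detected by $\t{Ext}^1_{\mc{A}}(S,-)$ for $S\in\mc{S}$.
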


As an abuse of notation, if $\mc{S}$ is a Serre subcategory of $\msf{fp}(\mc{A})$, we shall let $Q_{\mc{S}}$ and $R_{\mc{S}}$ denote the adjoints corresponding to the localising subcategory $\rlim\mc{S}$. 

\begin{chunk}\label{prel:torsiontheories}

If we assume that  $\mc{A}$ is locally coherent and $(\mc{T},\mc{F})$ is a hereditary torsion theory of finite type, then $\mc{A}/\mc{T}$ is also locally coherent. Write $\scr{E}$ for the set of injective objects determining the hereditary torsion class $\mc{T}$. If $\mbb{E}_{\mc{T}}$ is an injective cogenerator of $\mc{A}/\mc{T}$, then the right adjoint $R_{\mc{T}}$ of the localisation functor gives an identification $\msf{Prod}(\mc{E})= \msf{Prod}(R_{\mc{T}}(\mbb{E}_{\mc{T}}))$. In particular, the set $\mc{E}$ can be chosen to be the singleton $\{R_{\mc{T}}(\mbb{E}_{\mc{T}})\}$. We shall often suppress the subscript $\mc{T}$ when the torsion theory in question is clear. Consequently, given any Serre subcategory $\mc{S}\subseteq\msf{fp}(\mc{A})$, we have $\mc{S}=\{A\in\msf{fp}(\mc{A}): \Hom_{\mc{A}}(A,R(\mbb{E}))=0\}$, where $\mbb{E}$ is the injective cogenerator of $\mc{A}/\rlim\mc{S}$.
\end{chunk}

\subsection{Purity in finitely accessible categories}
In this section, we recall some key definitions and facts regarding the pure structure of finitely accessible categories, see~\cite{psl,dac} for further details. Unless otherwise stated $\scr{C}$ will denote a finitely accessible category with products.
\begin{chunk}
An exact sequence $0\to X\xrightarrow{\alpha} Y\xrightarrow{\beta} Z\to 0$ in $\scr{C}$ is \emph{pure exact} provided the induced sequence $0\to \Hom_{\scr{C}}(A,X)\to \Hom_{\scr{C}}(A,Y)\to\Hom_{\scr{C}}(A,Z)\to 0$ is exact for all $A\in\msf{fp}(\scr{C})$. In this case, we say $\alpha$ is a \emph{pure monomorphism}, $\beta$ is a \emph{pure epimorphism}, $X$ is a \emph{pure subobject} of $Y$, and $Z$ is a \emph{pure quotient} of $Y$.
\end{chunk}

\begin{chunk}
An object $X\in\scr{C}$ is \emph{pure injective} if any pure monomorphism $X\to Y$ splits. This is equivalent to the summation map $\bigoplus_I X \to X$ factoring through the canonical map $\bigoplus_I X \to \prod_I X$ for any indexing set $I$, see~\cite[Theorem 5.4]{dac} for this and other equivalent characterisations. 
There is only a set of indecomposable pure injective objects in $\scr{C}$, which we will denote by $\msf{pinj}(\scr{C})$. 
\end{chunk}

\begin{chunk}
A full subcategory $\mc{D}\subseteq\scr{C}$ is said to be \emph{definable} if it is closed under pure subobjects, direct limits, and products. Given a class $\mc{X}\subseteq\scr{C}$, we let $\msf{Def}(\mc{X})$ denote the smallest definable subcategory containing $\mc{X}$. Explicitly, $\msf{Def}(\mc{X})$ may be obtained by first closing $\mc{X}$ under direct limits and products, and then under pure subobjects. Definable subcategories of $\scr{C}$ are uniquely determined by the indecomposable pure injective objects they contain: if $\mc{D}_{1}$ and $\mc{D}_{2}$ are definable subcategories of $\scr{C}$, then $\mc{D}_{1}=\mc{D}_{2}$ if and only if $\mc{D}_{1}\cap\msf{pinj}(\scr{C})=\mc{D}_{2}\cap\msf{pinj}(\scr{C})$. Furthermore $\mc{D} = \msf{Def}(\mc{D} \cap \msf{pinj}(\scr{C}))$, and $\mc{D}$ can be recovered from $\mc{D}\cap\msf{pinj}(\scr{C})$ as the pure subobjects of products of objects in $\mc{D}\cap\msf{pinj}(\scr{C})$, see \cite[\S 5.1.1]{psl}. 
\end{chunk}

\begin{chunk}\label{prel:Ziegler}
The \emph{Ziegler spectrum} of $\scr{C}$ is a topological space which encodes the purity information of $\scr{C}$. It is denoted by $\msf{Zg}(\scr{C})$ and its underlying set is $\msf{pinj}(\scr{C})$, and its closed sets are of the form $\mc{D}\cap\msf{pinj}(\scr{C})$ for $\mc{D}$ a definable subcategory of $\scr{C}$.
If $\mc{X}\subseteq\msf{Zg}(\scr{C})$, then its closure is $\msf{Def}(\mc{X})\cap\msf{Zg}(\scr{C})$. If $\mc{D}$ is a definable subcategory of $\scr{C}$, we define $\msf{Zg}(\mc{D})$ to be $\mc{D}\cap\msf{pinj}(\scr{C})$ endowed with the subspace topology.
\end{chunk}

\begin{chunk}\label{prel:endo}
An object $X\in\scr{C}$ is \emph{endofinite} if
\[
\t{length}_{\t{End}_{\scr{C}}(X)}\,\Hom_{\scr{C}}(A,X)<\infty
\]
for all $A\in\msf{fp}(\scr{C})$. Any coproduct or product of copies of an endofinite object is again endofinite, and any endofinite object is pure injective. Furthermore, if $X$ is endofinite then $\msf{Def}(X)=\msf{Add}(X)=\msf{Prod}(X)$; so if $X$ is indecomposable, then it is a closed point in $\msf{Zg}(\scr{C})$, since endofinite objects can be uniquely decomposed as a coproduct of indecomposable objects with local endomorphism rings. These results, among others concerning endofinite objects, can be found at \cite[Chapter 13]{krbook}.
\end{chunk}

\begin{chunk}\label{prel:deffunfinaccessible}
    A functor $F\colon \scr{C} \to \scr{D}$ between finitely accessible categories with products, is \emph{definable}, if it preserves direct limits and products. Such functors preserve pure injectives and pure exact sequences. See~\cite[\S 13]{psl} for more details.
\end{chunk}

\subsection{Purity in compactly generated triangulated categories}
While all the preceding concepts were defined for finitely accessible categories with products, as illustrated in \cite{krsmash} they are all equally well defined for compactly generated triangulated categories. We also refer the reader to \cite{birdwilliamsonhomological, krcoh} for further details. Throughout this section and the whole paper, $\T$ denotes a compactly generated triangulated category with the shift functor denoted by $\Sigma$, and with the full subcategory of compact objects denoted by $\T^{\c}$.
\begin{chunk}
The category $\Mod{\T^{\c}}$ of additive functors $(\T^{\c})^{\op}\to \ab$ is a locally coherent category, and the restricted Yoneda embedding
\[
\y\colon\T\to\Mod{\T^{\c}}, \quad X\mapsto \Hom_{\T}(-,X)\vert_{\T^{\c}}
\]
allows one to consider objects of $\T$ as additive functors. However, we note that the restricted Yoneda embedding is in general neither full nor faithful. We write $\mod{\T^\c} := \msf{fp}(\Mod{\T^\c})$ for the full subcategory of finitely presented modules.
\end{chunk}

\begin{chunk}
A triangle $X\to Y\to Z\to\Sigma X$ in $\T$ is \emph{pure} if and only if $0\to \y X\to \y Y\to \y Z\to 0$ is exact in $\Mod{\T^{\c}}$; in this case, similarly to the finitely accessible setting, the map $X\to Y$ is a \emph{pure monomorphism}, $Y\to Z$ is a \emph{pure epimorphism}, $X$ is a \emph{pure subobject} of $Y$, and $Z$ is a \emph{pure quotient} of $Y$. 
An object $X\in\T$ is \emph{pure injective} if and only if any pure monomorphism $X\to Y$ splits. We let $\msf{Pinj}(\T)$ denote the class of pure injective objects in $\T$. The functor $\y\colon \T \to \Mod{\T^\c}$ restricts to an equivalence of categories \[\y\colon\msf{Pinj}(\T)\xrightarrow{\sim} \msf{Inj}(\Mod{\T^{\c}})\] between the pure injective objects in $\T$ and the injective objects in $\Mod{\T^\c}$. In particular, there is only a set of indecomposable pure injective objects in $\T$ which we denote by $\msf{pinj}(\T)$.
\end{chunk}

\begin{chunk}
 A full subcategory $\mc{D}\subseteq\T$ is called \emph{definable} if and only if there is a Serre subcategory $\mc{S}\subseteq\mod{\T^{\c}}$ such that 
\[
\mc{D}=\{X\in\T:\Hom(f,\y X)=0\t{ for all }f\in\mc{S}\}.
\]
Similarly to the setting of finitely accessible categories with products, definable subcategories are uniquely determined by the indecomposable pure injective objects they contain, and can be completely recovered from these indecomposable objects. Again, given a class of objects $\mc{X}$ in $\T$, we let $\msf{Def}(\mc{X})$ denote the smallest definable subcategory containing $\mc{X}$. Then for a definable subcategory $\mc{D}$ we have $\mc{D}=\msf{Def}(\mc{D}\cap \msf{pinj}(\T))$. We note that definable subcategories are closed under products, coproducts, pure subobjects, and pure quotients.
\end{chunk}

\begin{chunk}\label{prel:fundamentalcorrespondence}
There is an order reversing bijection between Serre subcategories of $\mod{\T^{\c}}$ and definable subcategories of $\T$ which we refer to as the \emph{fundamental correspondence} after \cite{krcoh}, given as follows. If $\mc{D}\subseteq\T$ is definable then \[\scr{S}(\mc{D})=\{f\in\mod{\T^{\c}}:\Hom(f,\y X)=0\t{ for all }X\in\mc{D}\}\] is the corresponding Serre subcategory of $\mod{\T^\c}$, and if $\mc{S}\subseteq\mod{\T^{\c}}$ is Serre, then \[\scr{D}(\mc{S})=\{X\in\T:\Hom(f,\y X)=0\t{ for all }f\in\mc{S}\}\] is the corresponding definable subcategory of $\T$. 
\end{chunk}

\begin{chunk}
The Ziegler spectrum of $\T$ is defined in a similar way to \cref{prel:Ziegler}: the underlying set is $\msf{pinj}(\T)$ and the closed sets are $\mc{D}\cap\msf{pinj}(\T)$ for $\mc{D}\subseteq\T$ a definable subcategory. We denote the resulting topological space by $\msf{Zg}(\T)$.
\end{chunk}

\begin{chunk}
We note that the category $\msf{Flat}(\T^{\c})$ of flat functors (equivalently, cohomological functors $(\T^{\c})^{\op}\to \ab$, see \cite[Lemma 2.7]{krsmash}) is a finitely accessible category with products. The purity in $\T$ is equivalent to the purity in $\Flat{\T^{\c}}$: the restricted Yoneda embedding $\y\colon \T \to \msf{Flat}(\T^\c)$ induces a homeomorphism $\msf{Zg}(\T)\simeq\msf{Zg}(\Flat{\T^{\c}})$. Note that $M \in \Mod{\T^\c}$ is flat and pure injective if and only if it is injective.
\end{chunk}

\begin{chunk}\label{prel:endofinite}
An object $X\in\T$ is endofinite if and only if 
\[
\t{length}_{\t{End}_{\T}(X)}\,\Hom_{\T}(A,X)<\infty
\]
for all $A\in\T^{\c}$. This is equivalent to $\y X$ being an endofinite object in both $\msf{Flat}(\T^{\c})$ and $\Mod{\T^{\c}}$ by \cite[Lemma 10.6]{BelARZiegler}. We note that from \cref{prel:endo} we have that if $X\in\T$ is endofinite then $X$ is pure injective, and $\msf{Add}(X)=\msf{Prod}(X)=\msf{Def}(X)$, so that indecomposable endofinite objects of $\T$ are closed points in $\msf{Zg}(\T)$.
\end{chunk}

\begin{chunk}\label{prel:isolationcondition}
Conversely, when closed points of $\msf{Zg}(\T)$ are endofinite objects is dependent on the \emph{isolation condition}. The isolation condition holds for a closed subset $\msf{X}\subseteq\msf{Zg}(\T)$ if and only if for every $X\in\msf{X}$ such that $\{X\}$ is open in $\overline{\{X\}}$, the localisation $\mod{\T^{\c}}/\msf{ker}\,\Hom(-,\y X)$ contains a simple object.

We claim that if the isolation condition holds for a closed point $X$, then $X$ is endofinite. If $X$ is closed, then $\msf{Def}(X)$ does not contain any proper non-zero definable subcategories, so by the fundamental correspondence $\msf{ker}\,\Hom(-,\y X)$ is a maximal proper Serre subcategory of $\mod{\T^{\c}}$, and thus the localisation $\mod{\T^{\c}}/\msf{ker}\,\Hom(-,\y X)$ contains no non-zero proper Serre subcategories. In other words the Serre subcategory of $\mod{\T^{\c}}/\msf{ker}\,\Hom(-,\y X)$ generated by the simple object, which exists by assumption of the isolation condition holding, is the whole category $\mod{\T^{\c}}/\msf{ker}\,\Hom(-,\y X)$, meaning that it is a length category. It follows that $X$ is an endofinite object by \cite[Lemma 2.3, Subsection 2.4 (4)]{conde2022functorial}.
\end{chunk}

\begin{chunk}\label{prel:shiftinvpurity}
We will often use a coarsening of the Ziegler topology which is compatible with the shift. If $\mc{X}\subseteq\T$ is a $\Sigma$-invariant class of objects, then $\msf{Def}(\mc{X})$ is also $\Sigma$-invariant, as all the steps needed to obtain the definable closure commute with $\Sigma$. Given an arbitrary class of objects $\mc{X} \subseteq \T$, we write $\msf{Def}^\Sigma(\mc{X})$ for the smallest $\Sigma$-invariant definable subcategory of $\T$ containing $\mc{X}$. Note that $\msf{Def}^\Sigma(\mc{X}) = \msf{Def}(\{\Sigma^iX : X \in \mc{X},~i \in \mathbb{Z}\}).$
In particular, the set of $\Sigma$-invariant definable subcategories gives a coarser topology on $\msf{pinj}(\T)$ in which the closed sets are those of the form $\mc{D}\cap\msf{pinj}(\T)$ where $\mc{D}$ runs through all  $\Sigma$-invariant definable subcategories $\mc{D}$. We let $\msf{Zg}^{\Sigma}(\T)$ denote this space. This space was originally considered in \cite[\S 6.1.1]{wagstaffe}.

The universal property of the restricted Yoneda embedding $\y\colon\T\to\Mod{\T^{\c}}$ induces a shift on $\Mod{\T^{\c}}$, which we also denote by $\Sigma$, and which satisfies $\y \Sigma = \Sigma \y$. The  assignments $\mc{D}\mapsto\scr{S}(\mc{D})$ and $\mc{S}\mapsto\scr{D}(\mc{S})$ respect the action of the shift, and so the fundamental correspondence restricts to a bijection between $\Sigma$-invariant Serre subcategories of $\mod{\T^{\c}}$ and $\Sigma$-invariant definable subcategories of $\T$.
\end{chunk}

\begin{chunk}
    Let $\scr{C}$ be a finitely accessible category with products. A functor $H\colon \T \to \scr{C}$ is \emph{coherent} if it preserves coproducts, products, and sends pure triangles to pure exact sequences. If $H$ is a homological functor, it is coherent if and only if it preserves coproducts and products. The restricted Yoneda embedding $\y\colon \T \to \msf{Flat}(\T^\c)$ is the universal coherent functor, in the sense that for any coherent functor $H\colon \T \to \scr{C}$, there exists a unique definable functor $\widehat{H}\colon \Flat{\T^\c} \to \scr{C}$ in the sense of \cref{prel:deffunfinaccessible} such that $\widehat{H} \circ \y = H$. A functor $F\colon \T \to \msf{U}$ between compactly generated triangulated categories, is \emph{definable}, if it preserves coproducts, products, and pure triangles, or equivalently, if the composite $\y \circ F\colon \T \to \Flat{\msf{U}^\c}$ is coherent. If $F\colon \T \to \msf{U}$ is a triangulated functor, then $F$ is definable if and only if it preserves coproducts and products. For further details on coherent and definable functors see \cite{definablefunctors}.
\end{chunk}

\section{The shift-homological spectrum and rank functions}
In this section, for any compactly generated triangulated category $\T$ we introduce a space called the \emph{shift-homological spectrum}, which is built from the Serre subcategories of the functor category $\mod{\T^\c}$. We give an alternative description of this space in terms of the Ziegler spectrum of $\T$ which is convenient for calculating examples, and show that this space is intimately related to the space of irreducible rank functions on $\T^\c$. In later sections we will use this space as a stepping stone towards a space parametrising certain thick subcategories of $\T^\c$. 

\subsection{The shift-homological spectrum}
Recall that $\T$ denotes a compactly generated triangulated category with full subcategory of compact objects $\T^\c$.

\begin{defn}
A \emph{shift-homological prime} is a proper maximal $\Sigma$-invariant Serre subcategory of $\mod{\T^{\c}}$. We shall let $\shspec{\T^{\c}}$ denote the set of shift-homological primes in $\mod{\T^{\c}}$, and we call this the \emph{shift-homological spectrum}. 
\end{defn}

Later on in \cref{deftop} we will define a topology on the set $\shspec{\T^{\c}}$. We shall first investigate the pure injective objects in $\T$, or equivalently the injective objects in $\Mod{\T^{\c}}$, which determine shift-homological primes.

\begin{chunk}
If $\mc{S}$ is a $\Sigma$-invariant Serre subcategory of $\mod{\T^{\c}}$, then the localisation functor $Q_{\mc{S}}\colon \Mod{\T^{\c}}\to \Mod{\T^{\c}}/\rlim\mc{S}$ equips $\Mod{\T^{\c}}/\rlim\mc{S}$ with a shift, which we also denote by $\Sigma$. This induced shift functor commutes with $Q_{\mc{S}}$ by construction.
In this case, $\Sigma$-invariant Serre subcategories of $\mod{\T^{\c}}/\mc{S}$ are in bijection with $\Sigma$-invariant Serre subcategories of $\mod{\T^{\c}}$ that contain $\mc{S}$, see \cite[Lemma 2.2.8]{krbook}. In particular, if $\mc{B}\in\shspec{\T^{\c}}$, then $\mod{\T^{\c}}/\mc{B}$ contains no proper non-zero $\Sigma$-invariant Serre subcategories. We therefore have the following.
\end{chunk}

\begin{lem}\label{prop:DefIsSimple}
If $\mc{B}\in\shspec{\T^{\c}}$, then $\scr{D}(\mc{B})$, the definable subcategory of $\T$ corresponding to $\mc{B}$ under \cref{prel:fundamentalcorrespondence}, is a simple $\Sigma$-invariant definable subcategory, in that it is non-zero and contains no non-zero proper $\Sigma$-invariant definable subcategories.  \qed
\end{lem}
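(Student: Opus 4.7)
The plan is to deduce both properties of $\scr{D}(\mc{B})$ directly from the $\Sigma$-equivariant fundamental correspondence recorded in \cref{prel:shiftinvpurity}: the assignment $\mc{S} \mapsto \scr{D}(\mc{S})$ is an order-reversing bijection between $\Sigma$-invariant Serre subcategories of $\mod{\T^\c}$ and $\Sigma$-invariant definable subcategories of $\T$. Under this bijection, the top element $\mod{\T^\c}$ corresponds to the zero definable subcategory, while the bottom $\mc{S} = 0$ corresponds to $\T$ itself. So the two properties we want (non-zero and minimality) are exactly the translates, through the order-reversing bijection, of the two properties defining a shift-homological prime (proper and maximal).

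First I would establish that $\scr{D}(\mc{B})$ is non-zero. Since $\mc{B}$ is by definition a proper subcategory, $\mc{B} \neq \mod{\T^\c}$, and the bijectivity of the correspondence together with $\scr{D}(\mod{\T^\c}) = 0$ yields $\scr{D}(\mc{B}) \neq 0$.

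Second, I would show the minimality property. Suppose $\mc{E} \subseteq \scr{D}(\mc{B})$ is a $\Sigma$-invariant definable subcategory of $\T$. By the bijection, $\mc{E} = \scr{D}(\mc{S})$ for a uniquely determined $\Sigma$-invariant Serre subcategory $\mc{S}$ of $\mod{\T^\c}$, and the inclusion $\mc{E} \subseteq \scr{D}(\mc{B})$ reverses to $\mc{B} \subseteq \mc{S}$. Maximality of $\mc{B}$ among \emph{proper} $\Sigma$-invariant Serre subcategories forces $\mc{S} = \mc{B}$ or $\mc{S} = \mod{\T^\c}$; applying $\scr{D}$ gives $\mc{E} = \scr{D}(\mc{B})$ or $\mc{E} = 0$, as required.

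There is no real obstacle here: once the $\Sigma$-equivariant fundamental correspondence is in hand, the statement is purely a translation of the maximality/properness conditions across an order-reversing bijection. The only thing worth emphasising in the write-up is that the correspondence genuinely restricts to $\Sigma$-invariant subcategories on both sides, which is the content of \cref{prel:shiftinvpurity} and ensures that the $\Sigma$-invariance hypotheses match up on the two sides of the bijection.
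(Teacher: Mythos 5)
Your proof is correct and follows essentially the same route as the paper, which deduces the lemma (stated with no separate proof) from the $\Sigma$-equivariant fundamental correspondence of \cref{prel:shiftinvpurity}, i.e.\ by translating properness and maximality of $\mc{B}$ across the order-reversing bijection exactly as you do. The paper additionally records the bijection between $\Sigma$-invariant Serre subcategories of $\mod{\T^\c}/\mc{B}$ and those of $\mod{\T^\c}$ containing $\mc{B}$, but this is just another phrasing of the same translation and your direct argument (including the check that $\scr{D}(\mod{\T^\c})=0$, which holds by compact generation) is complete.
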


\begin{chunk}\label{definingXSigma}
Given a Serre subcategory $\mc{S}\subseteq\mod{\T^{\c}}$, it was shown in \cite[Theorem 3.7]{birdwilliamsonhomological} that the composition $Q_{\mc{S}}\circ \y$ gives an equivalence of categories \[\scr{D}(\mc{S}) \cap \msf{Pinj}(\T) \xrightarrow{\sim} \msf{Inj}(\Mod{\T^{\c}}/\rlim \mc{S}).\] By \cref{prop:DefIsSimple}, for any $\mc{B} \in \shspec{\T^\c}$ we have that if $X\in\scr{D}(\mc{B})\cap\msf{pinj}(\T)$, then $\msf{Def}^\Sigma(X)=\scr{D}(\mc{B})$. We now show how this can be used to recover $\mc{B}$.
\end{chunk}

\begin{lem}\label{lem:recoverhomprime}
Let $\mc{B}\in\shspec{\T^{\c}}$ and $X\in\scr{D}(\mc{B})$ be a non-zero pure injective object. Then 
\[
\mc{B}=\{f\in\mod{\T^{\c}}:\Hom(f,\Sigma^{i}\y X)=0 \t{ for all }i\in\Z\}.
\]
\end{lem}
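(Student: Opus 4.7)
Plan for the proof of \cref{lem:recoverhomprime}.

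The strategy is to define $\mc{B}' := \{f\in\mod{\T^{\c}} : \Hom(f,\Sigma^{i}\y X)=0 \text{ for all } i\in\Z\}$ and show that $\mc{B}'$ is a proper $\Sigma$-invariant Serre subcategory containing $\mc{B}$; maximality of $\mc{B}$ will then force $\mc{B}'=\mc{B}$. So there are really three small tasks: verify that $\mc{B}'$ is a $\Sigma$-invariant Serre subcategory, verify the inclusion $\mc{B}\subseteq \mc{B}'$, and verify that $\mc{B}'$ is proper.

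For the first task, $\Sigma$-invariance of $\mc{B}'$ is immediate, since $\Hom(\Sigma^{j}f,\Sigma^{i}\y X)\cong \Hom(f,\Sigma^{i-j}\y X)$. The Serre property follows from the fact that $X$ is pure injective, so $\y X$ is injective in $\Mod{\T^{\c}}$ (hence each $\Sigma^i \y X$ is injective too), meaning that $\Hom(-,\Sigma^{i}\y X)$ is exact on $\mod{\T^{\c}}$. Consequently the joint kernel $\mc{B}'$ is closed under subobjects, quotients, and extensions.

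For the second task, by \cref{prel:shiftinvpurity} the definable subcategory $\scr{D}(\mc{B})$ corresponding to $\mc{B}$ under the fundamental correspondence is $\Sigma$-invariant. Since $X\in\scr{D}(\mc{B})$, it follows that $\Sigma^{i}X\in\scr{D}(\mc{B})$ for every $i\in\Z$. Hence for $f\in\mc{B}=\scr{S}(\scr{D}(\mc{B}))$ we have $\Hom(f,\y(\Sigma^{i}X))=\Hom(f,\Sigma^{i}\y X)=0$, so $f\in\mc{B}'$.

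For the third task, suppose for contradiction that $\mc{B}'=\mod{\T^{\c}}$. Then $\Hom(\y c,\y X)=0$ for every $c\in\T^{\c}$, and by Yoneda this gives $\Hom_{\T}(c,X)=0$ for every compact $c$. Since the compacts generate $\T$, this forces $X=0$, contradicting the hypothesis. With all three tasks done, maximality of $\mc{B}$ among proper $\Sigma$-invariant Serre subcategories of $\mod{\T^{\c}}$ gives $\mc{B}=\mc{B}'$. I do not anticipate any genuine obstacle here; the only thing to be a little careful about is that Serre closure genuinely needs the injectivity of $\y X$ (rather than the weaker fact that $X$ is merely pure injective in $\T$), which is exactly where the pure injective hypothesis on $X$ enters.
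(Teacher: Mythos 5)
Your proposal is correct and follows essentially the same route as the paper: define the candidate subcategory, note it is a proper $\Sigma$-invariant Serre subcategory because each $\Sigma^i\y X$ is a non-zero injective object, show $\mc{B}$ is contained in it via the fundamental correspondence and the $\Sigma$-invariance of $\scr{D}(\mc{B})$, and conclude by maximality. The only difference is cosmetic: you verify properness explicitly through Yoneda and compact generation, while the paper phrases it as $X\in\scr{D}(\msf{S}_{\mc{B}})$ being non-zero.
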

\begin{proof}
As $\Sigma^{i}\y X$ is a non-zero injective object for each $i\in \Z$, the set \[\msf{S}_{\mc{B}}:=\{f\in\mod{\T^{\c}}:\Hom(f,\Sigma^{i}\y X)=0 \t{ for all }i\in\Z\}\] is a $\Sigma$-invariant Serre subcategory of $\mod{\T^{\c}}$. This subcategory is proper, since $X\in\scr{D}(\msf{S}_{\mc{B}})$ is non-zero. 
If $f\in\mc{B}$ then $\Hom(f,\y Z)=0$ for all $Z\in\scr{D}(\mc{B})$ by the fundamental correspondence. In particular, $\Hom(f,\Sigma^{i}\y X)=0$ for all $i \in \Z$, and consequently $\mc{B} \subseteq \msf{S}_{\mc{B}}$. Thus, by maximality of $\mc{B}$, the equality follows. 
\end{proof}

From \cref{prop:DefIsSimple}, we see that from a shift-homological prime $\mc{B}$ we obtain a simple $\Sigma$-invariant definable subcategory $\scr{D}(\mc{B})\subseteq\T$, which corresponds to the closure in $\msf{Zg}^{\Sigma}(\T)$ of any indecomposable pure injective in it. Conversely, from \cref{lem:recoverhomprime}, we see that from any indecomposable pure injective object in $\scr{D}(\mc{B})$ we can recover $\mc{B}$; in particular, by the fundamental correspondence, every shift-homological prime arises this way. Let us now formalise this.

\begin{chunk}\label{kolmogorovquotient}
Recall that if $\X$ is a topological space, there is an equivalence relation on $\X$ given by $x\sim y$ if and only if $\overline{\{x\}}=\overline{\{y\}}$, where $\overline{\{x\}}$ denotes the closure of the singleton set. The quotient space $\msf{K}(\X) := \X/{\sim}$ is called the \emph{Kolmogorov quotient} and is the universal approximation to $\X$ by a $\mrm{T}_0$-space. (More precisely, taking the Kolmogorov quotient is left adjoint to the inclusion of $\mrm{T}_0$-spaces into all topological spaces.) Note that the quotient map $\X \to \msf{K}(\X)$ is continuous, open, and closed.

In particular, for $\X = \msf{Zg}^{\Sigma}(\T)$ it follows that for all $X,Y\in\msf{Zg}^{\Sigma}(\T)$ one has $X\sim Y$ if and only if $\msf{Def}^{\Sigma}(X)=\msf{Def}^{\Sigma}(Y)$. Consequently, by \cref{prop:DefIsSimple}, if $\mc{B}\in\shspec{\T^\c}$ then all the points in $\scr{D}(\mc{B})\cap\msf{pinj}(\T)$ are equivalent under $\sim$. We let $\msf{KZg}^{\Sigma}(\T)$ denote the corresponding quotient space.
\end{chunk}

\begin{chunk}\label{defn:MapToKQ}
We define a map
\[
\Phi\colon\shspec{\T^{\c}}\to\msf{KZg}^{\Sigma}(\T)
\]
by sending $\mc{B}$ to $[X]$, the equivalence class under $\sim$ of any indecomposable pure injective $X\in\scr{D}(\mc{B})$. Since, as discussed in \cref{kolmogorovquotient}, all the points of $\scr{D}(\mc{B})\cap\msf{pinj}(\T)$ are in the same equivalence class, this map is well defined. The following is the first step in establishing the topological relationship between $\msf{Zg}^{\Sigma}(\T)$ and $\shspec{\T^{\c}}$.
\end{chunk}

\begin{prop}\label{prop:ShspecKZgbijection}
The map $\Phi$ gives a bijection $\shspec{\T^{\c}}\to\closed{\T}$ between the shift-homological spectrum and the set of closed points in $\msf{KZg}^{\Sigma}(\T)$.
\end{prop}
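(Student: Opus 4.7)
The plan is to exploit the fundamental correspondence from \cref{prel:shiftinvpurity}, which matches shift-homological primes with simple $\Sigma$-invariant definable subcategories of $\T$ via \cref{prop:DefIsSimple}, and to show that closed points of $\msf{KZg}^{\Sigma}(\T)$ parametrise exactly the same data. The bijection will essentially be $\mc{B} \leftrightarrow \msf{Def}^{\Sigma}(X) \cap \msf{pinj}(\T)$ for any indecomposable pure injective $X\in \scr{D}(\mc{B})$.

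First I would verify that $\Phi$ is well-defined and lands in $\closed{\T}$. Given $\mc{B}\in\shspec{\T^{\c}}$ and an indecomposable pure injective $X\in\scr{D}(\mc{B})$, the simplicity of $\scr{D}(\mc{B})$ forces $\msf{Def}^{\Sigma}(X) = \scr{D}(\mc{B})$, and the same equality holds for any other indecomposable pure injective $Y\in\scr{D}(\mc{B})$, so $X\sim Y$ and $[X]$ does not depend on the choice of representative. In fact the equivalence class of $X$ inside $\msf{Zg}^{\Sigma}(\T)$ coincides with $\scr{D}(\mc{B})\cap\msf{pinj}(\T)$, which is closed in $\msf{Zg}^{\Sigma}(\T)$; therefore $[X]$ is closed in $\msf{KZg}^{\Sigma}(\T)$.

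Injectivity is essentially immediate from \cref{lem:recoverhomprime}: if $\Phi(\mc{B}_{1}) = \Phi(\mc{B}_{2})$, pick any common representative $X$ for the equivalence class, and then both $\mc{B}_{i}$ equal $\{f\in\mod{\T^{\c}}:\Hom(f,\Sigma^{i}\y X)=0\ \forall i\in\Z\}$, so $\mc{B}_{1}=\mc{B}_{2}$.

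The main obstacle is surjectivity. Given a closed point $[X]\in\msf{KZg}^{\Sigma}(\T)$, the natural candidate preimage is $\mc{B} := \scr{S}(\msf{Def}^{\Sigma}(X))$; this is $\Sigma$-invariant Serre by \cref{prel:shiftinvpurity}, and proper since $\y X\neq 0$. The task is to show $\mc{B}$ is maximal, equivalently that $\msf{Def}^{\Sigma}(X)$ is a simple $\Sigma$-invariant definable subcategory. To unpack the closedness hypothesis, note that $[X]$ being closed in the Kolmogorov quotient means its preimage in $\msf{Zg}^{\Sigma}(\T)$ is closed, hence of the form $\mc{D}\cap\msf{pinj}(\T)$ for some $\Sigma$-invariant definable $\mc{D}$; comparing with $\overline{\{X\}} = \msf{Def}^{\Sigma}(X)\cap\msf{pinj}(\T)$ and using that definable subcategories are determined by their indecomposable pure injectives, one obtains $\mc{D}=\msf{Def}^{\Sigma}(X)$. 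Consequently every indecomposable pure injective $Y\in \msf{Def}^{\Sigma}(X)$ satisfies $\msf{Def}^{\Sigma}(Y) = \msf{Def}^{\Sigma}(X)$. Simplicity then follows since any non-zero $\Sigma$-invariant definable subcategory of $\msf{Def}^{\Sigma}(X)$ contains some indecomposable pure injective $Y$, which generates all of $\msf{Def}^{\Sigma}(X)$. By construction $\Phi(\mc{B}) = [X]$, completing the bijection.
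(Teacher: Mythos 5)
Your proposal is correct and follows essentially the same route as the paper: well-definedness and closedness of the image via \cref{prop:DefIsSimple}, injectivity via \cref{lem:recoverhomprime}, and surjectivity by showing that $\scr{S}(\msf{Def}^{\Sigma}(X))$ is maximal for a closed point $[X]$. The only (cosmetic) difference is in the surjectivity step, where you extract simplicity of $\msf{Def}^{\Sigma}(X)$ from the closedness of the fibre $q^{-1}\{[X]\}$ and then pass through the fundamental correspondence, whereas the paper argues maximality directly on the Serre side by comparing closures $\overline{\{[Y]\}}$ and $\overline{\{[X]\}}$ in $\msf{KZg}^{\Sigma}(\T)$.
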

\begin{proof}

Let us first show that the image of $\Phi$ is contained within $\closed{\T}$. Take some $\mc{B} \in \shspec{\T^\c}$ and consider $\Phi(\mc{B})$. Let $X$ be a representative for $\Phi(\mc{B})$. The preimage of $\Phi(\mc{B})$ under the quotient map $\msf{Zg}^\Sigma(\T) \to \msf{KZg}^\Sigma(\T)$ consists of $Y\in\msf{pinj}(\T)$ such that $\msf{Def}^{\Sigma}(Y)=\msf{Def}^{\Sigma}(X)$, and is closed in $\msf{Zg}^{\Sigma}(\T)$ since it is of the form $\msf{Def}^{\Sigma}(X)\cap \msf{pinj}(\T)$ by \cref{prop:DefIsSimple}. This means that $\Phi(\mc{B})$ is a closed point in the quotient topology.

Let $\mc{B}, \mc{C} \in \shspec{\T^\c}$ be such that $\Phi(\mc{B}) = \Phi(\mc{C})$. Let $X$ be a representative for $\Phi(\mc{B})$ and $Y$ be a representative for $\Phi(\mc{C})$, so that $\msf{Def}^\Sigma(X) = \msf{Def}^\Sigma(Y)$. By \cref{lem:recoverhomprime} we have \[\mc{B} = \{f \in \mod{\T^\c}: \Hom(f,\Sigma^i\y X) = 0 \t{ for all }i\in\Z\}\] and similarly for $\mc{C}$. As $\msf{Def}^\Sigma(X) = \msf{Def}^\Sigma(Y)$, we see that $\Hom(f, \Sigma^i\y X) = 0$ for all $i \in \Z$ if and only if $\Hom(f,\Sigma^i\y Y) = 0$ for all $i \in \Z$. Therefore $\mc{B} = \mc{C}$, so $\Phi$ is injective.

We finally show that $\Phi$ is surjective. To this end, let $[X]\in\msf{KZg}_{\msf{Cl}}^{\Sigma}(\T)$ and consider $\msf{Def}^{\Sigma}(X)$, which is independent of the choice of representative. There is then a $\Sigma$-invariant Serre subcategory $\msf{S}_{X}:=\scr{S}(\msf{Def}^{\Sigma}(X))\subseteq\mod{\T^{\c}}$. As $X$ is not zero, $\msf{S}_X$ is proper. We will show that $\msf{S}_{X}$ is maximal, so we suppose that $\msf{S}_{X}\subseteq\mc{A}$, with $\mc{A}$ a proper $\Sigma$-invariant Serre subcategory. Then we have that $\scr{D}(\mc{A})\subseteq\scr{D}(\msf{S}_{X})=\msf{Def}^{\Sigma}(X)$ by the fundamental correspondence. It follows that if $Y\in\scr{D}(\mc{A})$, then $\overline{\{[Y]\}}\subseteq \overline{\{[X]\}}$ in $\msf{KZg}^{\Sigma}(\T)$: indeed, if $V$ is a closed subset of $\msf{KZg}^\Sigma(\T)$ containing $[X]$, then $\overline{\{X\}} \subseteq q^{-1}V$ where $q\colon \msf{Zg}^\Sigma(\T) \to \msf{KZg}^\Sigma(\T)$ denotes the quotient, so $[Y] \in V$. As we assumed that $[X]$ is a closed point, we deduce that $\overline{\{[Y]\}}=\overline{\{[X]\}}$, hence $Y\sim X$ and $\msf{Def}^{\Sigma}(X)=\scr{D}(\mc{A})$, giving $\mc{A}=\msf{S}_{X}$ so that $\msf{S}_{X}$ is maximal. By the definition of $\Phi$ we have that $\Phi(\msf{S}_{X})=[X]$, so $\Phi$ is surjective.
\end{proof}

\begin{chunk}
The use of the $\Sigma$-Ziegler topology means that, when we pass to $\msf{KZg}^{\Sigma}(\T)$, the indecomposable pure injective objects $X$ and $\Sigma^{i}X$ become identified. In $\msf{Zg}^{\Sigma}(\T)$, the points $X$ and $\Sigma^{i}X$ are topologically indistinguishable, but they are not the same. An alternative construction would have been to consider $\msf{Zg}(\T)/\Sigma$, the quotient space under the equivalence relation $X\sim Y$ if and only if $X=\Sigma^{i}Y$ for some $i\in\Z$. We will denote the projection map $\msf{Zg}(\T) \rightarrow \msf{Zg}(\T)/\Sigma$ by $\pi$. As the next result shows, the choice of approaching $\shspec{\T^{\c}}$ through $\msf{Zg}^{\Sigma}(\T)$ or $\msf{Zg}(\T)/\Sigma$ is irrelevant.
\end{chunk}

\begin{prop}\label{orbit}
The quotient map $f\colon \msf{Zg}^\Sigma(\T) \to \msf{Zg}(\T)/\Sigma$ induces a homeomorphism \[\msf{KZg}^{\Sigma}(\T)\xrightarrow{\sim} \msf{K}(\msf{Zg}(\T)/\Sigma).\]
\end{prop}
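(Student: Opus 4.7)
The plan is to realise $f$ itself as an open quotient map and then apply the Kolmogorov quotient functor to both sides. First, I would verify $f$ is continuous: a closed $V\subseteq\msf{Zg}(\T)/\Sigma$ has $\pi$-preimage closed in $\msf{Zg}(\T)$, and this preimage is automatically $\Sigma$-invariant, so it defines a closed set of $\msf{Zg}^{\Sigma}(\T)$. Next, I would show $f$ is open. If $U\subseteq\msf{Zg}^{\Sigma}(\T)$ is open, its complement is a $\Sigma$-invariant closed set, so $U$ is itself $\Sigma$-invariant. Hence $\pi^{-1}(f(U))=U$, which is open in $\msf{Zg}(\T)$ since every closed set of $\msf{Zg}^{\Sigma}(\T)$ is closed in $\msf{Zg}(\T)$. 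Thus $f(U)$ is open in $\msf{Zg}(\T)/\Sigma$.

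Let $q_{1}$ and $q_{2}$ denote the Kolmogorov quotient maps out of $\msf{Zg}^{\Sigma}(\T)$ and $\msf{Zg}(\T)/\Sigma$ respectively. The composite $q_{2}\circ f$ is continuous with $\mrm{T}_{0}$ target, so it factors uniquely through $q_{1}$ as a continuous map $\bar{f}\colon \msf{KZg}^{\Sigma}(\T)\to \msf{K}(\msf{Zg}(\T)/\Sigma)$. Surjectivity of $\bar{f}$ is clear since $f$ and $q_{2}$ are both surjective. For injectivity, I would identify both Kolmogorov equivalence relations on $\msf{pinj}(\T)$ with the same one: $X\sim Y$ if and only if $\msf{Def}^{\Sigma}(X)=\msf{Def}^{\Sigma}(Y)$. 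For $\msf{KZg}^{\Sigma}(\T)$ this is immediate from the closure formula $\overline{\{X\}}=\msf{Def}^{\Sigma}(X)\cap\msf{pinj}(\T)$ in $\msf{Zg}^{\Sigma}(\T)$. For $\msf{K}(\msf{Zg}(\T)/\Sigma)$, the closure of $\{\pi(X)\}$ works out to be $\pi(\msf{Def}^{\Sigma}(X)\cap\msf{pinj}(\T))$, and since $\pi$ restricts to a bijection between $\Sigma$-orbits and points of the quotient while $\Sigma$-invariant definable subcategories are determined by their indecomposable pure injectives, two such closures agree precisely when the corresponding $\Sigma$-invariant definable subcategories do.

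For openness of $\bar{f}$: given an open $W\subseteq \msf{KZg}^{\Sigma}(\T)$, the set $q_{1}^{-1}(W)$ is open in $\msf{Zg}^{\Sigma}(\T)$, so $f(q_{1}^{-1}(W))$ is open by openness of $f$, and since $q_{2}$ is open by \cref{kolmogorovquotient}, the image $\bar{f}(W)=q_{2}(f(q_{1}^{-1}(W)))$ is open in $\msf{K}(\msf{Zg}(\T)/\Sigma)$. Thus $\bar{f}$ is a continuous open bijection, hence a homeomorphism. The main bookkeeping is tracking the three different identifications in play (coarsening to $\Sigma$-invariant closed sets, quotienting by $\Sigma$-orbits, and Kolmogorov quotient), but once $f$ is recognised as an open quotient map between suitable spaces the remaining work is formal.
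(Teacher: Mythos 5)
Your proposal is correct and follows essentially the same route as the paper: you prove $f$ is continuous via the $\Sigma$-invariance of $\pi^{-1}V$, descend to the Kolmogorov quotients, and establish injectivity using the same key fact that $\pi(\msf{Def}^{\Sigma}(X)\cap\msf{pinj}(\T))$ is closed in $\msf{Zg}(\T)/\Sigma$. The only differences are cosmetic — you work with open sets and the universal property of the Kolmogorov quotient where the paper works with closed sets and functoriality — and both versions go through.
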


\begin{proof}
By functoriality of the Kolmogorov quotient we have the following commutative diagram:
\[
\begin{tikzcd}
\msf{Zg}^{\Sigma}(\T) \arrow[r, "f"] \arrow[d, "q_{1}"'] & \msf{Zg}(\T)/\Sigma \arrow[d, "q_{2}"] \\
\msf{KZg}^{\Sigma}(\T) \arrow[r, "\msf{K}(f)"'] & \msf{K}(\msf{Zg}(\T)/\Sigma).
\end{tikzcd}
\]
Recall that a subset $V$ of $\msf{Zg}(\T)/\Sigma$ is closed if and only if $\pi^{-1}V$ is closed in $\msf{Zg}(\T)$. Since this is a $\Sigma$-invariant closed set, this is also closed in $\msf{Zg}^{\Sigma}(\T)$ by definition, and as such, the map $f\colon \msf{Zg}^{\Sigma}(\T)\to \msf{Zg}(\T)/\Sigma$ given by $X\mapsto \pi X$ is continuous. Furthermore, this map is closed since if $U\subseteq\msf{Zg}^{\Sigma}(\T)$ is closed, then $\pi^{-1}f(U)=\{Y\in\msf{Zg}(\T):X=\Sigma^{i}Y \text{ for some } X \in U\} = U$, and hence is also closed in $\msf{Zg}(\T)$. 
Consequently, the induced map $\msf{K}(f)\colon\msf{KZg}^{\Sigma}(\T)\to\msf{K}(\msf{Zg}(\T)/\Sigma)$ is also closed and continuous. It is thus a homeomorphism if and only if it is bijective.

Let $X,Y \in \msf{pinj}(\T)$. From the definitions, we have $q_2f(X) = q_2 f(Y)$ if and only if $\overline{\{f(X)\}} = \overline{\{f(Y)\}} \in \msf{Zg}(\T)/\Sigma$ if and only if $\overline{\{X\}} = \overline{\{Y\}} \in \msf{Zg}^\Sigma(\T)$. From this we see that $\msf{K}(f)$ is an injection, and it is trivially surjective as $q_{2}\circ f$ is surjective. 
\end{proof}
Having better understood the points of the shift-homological spectrum $\shspec{\T^{\c}}$, we now endow it with a topology. The following definition is inspired by \cite{Balmernilpotence}.

\begin{defn}\label{deftop}
Given an object $A\in\T^{\c}$, define the \emph{shift-homological support} of $A$ to be
\[
\hssupp{A}=\{\mc{B}\in\shspec{\T^{\c}}:\y A\not\in\mc{B}\}.
\]
Using \cref{lem:recoverhomprime}, we see that $\mc{B}\in\hssupp{A}$ if and only if for any object $X\in\scr{D}(\mc{B})\cap\msf{pinj}(\T)$ there is an $i \in \Z$ such that $\Hom_{\T}(A,\Sigma^iX)\neq 0$.
\end{defn}

% \begin{defn}\label{deftop}
% Given an object $A\in\T^{\c}$, define the \emph{shift-homological support} of $A$ to be
% \[
% \hssupp{A}=\{\mc{B}\in\shspec{\T^{\c}}:\y A\not\in\mc{B}\}.
% \]
% Using \cref{lem:recoverhomprime}, we see that $\mc{B}\in\hssupp{A}$ if and only if $\Hom_{\T}(A,\Sigma^iX)\neq 0$ for any object $X\in\scr{D}(\mc{B})\cap\msf{pinj}(\T)$ and some $i \in \Z$.
% \end{defn}

\begin{lem}\label{Lem:Supp}
The shift-homological support satisfies the following properties:
\begin{enumerate}
\item $\supph(0)=\varnothing$;
\item $\supph(X)=\supph(\Sigma X)$ for all $X \in \T^{\c}$;
\item $\supph(\oplus_{i=1}^n X_{i})=\cup_{i=1}^n\supph(X_{i})$ for all $X_i \in \T^{\c}$;
\item if $X\to Y\to Z$ is a triangle in $\T^{\c}$, then $\supph(Y)\subseteq\supph(X)\cup\supph(Z)$.
\end{enumerate}
\end{lem}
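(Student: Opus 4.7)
The plan is to work directly from the definition $\hssupp{A}=\{\mc{B}\in\shspec{\T^{\c}}:\y A\not\in\mc{B}\}$ and exploit the fact that each shift-homological prime $\mc{B}$ is a $\Sigma$-invariant Serre subcategory of $\mod{\T^{\c}}$, recalling that $\y$ restricts to a functor $\T^{\c}\to\mod{\T^{\c}}$ landing in (finitely generated projective, hence) finitely presented objects. Each of (1)--(3) will then be a short verification.

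For (1), I would observe that $\y 0=0$ lies in every Serre subcategory, so no prime can omit it. For (2), the $\Sigma$-invariance of any $\mc{B}\in\shspec{\T^{\c}}$ gives $\y A\in\mc{B}$ if and only if $\Sigma\y A=\y\Sigma A\in\mc{B}$ (using the compatibility $\y\Sigma=\Sigma\y$ from \cref{prel:shiftinvpurity}), which immediately yields the equality of supports. For (3), since $\y$ is additive, $\y(\bigoplus_i X_i)=\bigoplus_i\y X_i$, and a Serre subcategory is closed under finite direct sums and direct summands; hence $\y(\bigoplus_i X_i)\in\mc{B}$ if and only if $\y X_i\in\mc{B}$ for all $i$, which rephrased in terms of complements gives the desired union formula.

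The main content is (4). My approach is to turn the triangle $X\to Y\to Z\to\Sigma X$ in $\T^{\c}$ into an exact sequence in $\mod{\T^{\c}}$. Applying $\Hom_{\T}(A,-)$ for a compact $A$ and running the long exact sequence over all $A\in\T^{\c}$ produces an exact sequence of functors $\y X\xrightarrow{\alpha}\y Y\xrightarrow{\beta}\y Z$ in $\mod{\T^{\c}}$. Letting $I=\operatorname{im}\alpha$ and $K=\operatorname{im}\beta$, I would factor this as a short exact sequence $0\to I\to\y Y\to K\to 0$, where $I$ is a quotient of $\y X$ and $K$ is a subobject of $\y Z$; these subquotients remain in $\mod{\T^{\c}}$ because locally coherent categories are closed under subquotients of finitely presented objects.

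The claim $\supph(Y)\subseteq\supph(X)\cup\supph(Z)$ is equivalent to showing that if $\y X,\y Z\in\mc{B}$ then $\y Y\in\mc{B}$. Assuming $\y X,\y Z\in\mc{B}$, closure of $\mc{B}$ under quotients forces $I\in\mc{B}$, closure under subobjects forces $K\in\mc{B}$, and closure under extensions then gives $\y Y\in\mc{B}$, as required. I do not expect any serious obstacle; the only subtlety is being careful that $I$ and $K$ lie in $\mod{\T^{\c}}$ so that membership in $\mc{B}\subseteq\mod{\T^{\c}}$ even makes sense, which is handled by the local coherence of $\Mod{\T^{\c}}$.
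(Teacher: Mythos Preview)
Your proof is correct and is precisely the argument the paper has in mind: the paper's entire proof is the one-line remark ``This is clear from the properties of $\Sigma$-invariant Serre subcategories,'' and your write-up is a faithful unpacking of that sentence. The only minor simplification is that you need not worry separately about $I,K$ lying in $\mod{\T^{\c}}$: since $\Mod{\T^{\c}}$ is locally coherent, $\mod{\T^{\c}}$ is itself abelian, so images computed there automatically stay in $\mod{\T^{\c}}$.
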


\begin{proof}
This is clear from the properties of $\Sigma$-invariant Serre subcategories.
\end{proof}

\begin{chunk}
    We put a topology on $\shspec{\T^{\c}}$ by taking a basis of closed sets to be the $\hssupp{A}$ as $A$ runs over $\T^{\c}$. That this indeed forms a basis for a topology is justified by the previous lemma.
\end{chunk}

\begin{chunk}
We now move to upgrade the bijection of \cref{prop:ShspecKZgbijection} to a homeomorphism. In order to do this, we must retopologise the set $\closed{\T}$, which, when equipped with the subspace topology coming from $\msf{Zg}^\Sigma(\T)$, is always a $\mrm{T}_{1}$-space. Note that the topology on $\shspec{\T^{\c}}$ and the Ziegler topology are on `opposite sides': for the Ziegler topology, closed sets are given by vanishing sets of functors, while the homological support is given by non-vanishing sets of functors. The relationship between Ziegler style and Zariski style topologies is discussed in \cite[\S 14]{psl}.
\end{chunk}

\begin{chunk}\label{defn:GZtoponZiegler}
We define the $\Sigma$-\emph{Gabriel-Zariski} topology on $\closed{\T}$ to have a basis of open sets given by
\[
[A]_{\Sigma}=\{[X] \in\closed{\T}:\Hom_{\T}(A,\Sigma^iX)=0 \t{ for all $i \in \Z$}\}
\]
as $A$ runs over $\T^{\c}$. First note that this is well defined, since if $X \sim Y$ in the sense of \cref{kolmogorovquotient}, then $\Hom_{\T}(A,\Sigma^iX)=0$ for all $i \in \Z$ if and only if $\Hom_{\T}(A,\Sigma^iY)=0$ for all $i \in \Z$. These sets also indeed form the basis for a topology since they cover (any $[X]$ is in $[0]_\Sigma$) and moreover $[C]_\Sigma \cap [D]_\Sigma = [C \oplus D]_\Sigma.$ We denote this topological space by $\msf{KZg}_\msf{Cl}^{\Sigma}(\T)^{\msf{GZ}}$.
\end{chunk}

The following should now come as no surprise.

\begin{thm}\label{homeomorphism}
The map \[\Phi\colon \shspec{\T^\c} \to \msf{KZg}_\msf{Cl}^{\Sigma}(\T)^{\msf{GZ}}\] of \cref{prop:ShspecKZgbijection} is a homeomorphism.
\end{thm}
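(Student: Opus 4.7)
The plan is to leverage the bijection $\Phi$ already established in \cref{prop:ShspecKZgbijection} and show that it identifies the basic closed sets of the two topologies. Since both topologies are defined via a basis indexed by compact objects $A \in \T^\c$, it suffices to prove that for every $A \in \T^\c$, the image $\Phi(\hssupp{A})$ is precisely the complement of the basic open set $[A]_\Sigma$ in $\closed{\T}^{\msf{GZ}}$. As $\Phi$ is already a bijection, this will simultaneously establish that $\Phi$ is continuous (preimages of basic open sets are open) and closed (images of basic closed sets are closed), and hence a homeomorphism.

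The key computation goes as follows. Fix $\mc{B} \in \shspec{\T^\c}$, let $X \in \scr{D}(\mc{B})\cap \msf{pinj}(\T)$ be a non-zero representative, so that $\Phi(\mc{B}) = [X]$. By \cref{lem:recoverhomprime}, $\y A \notin \mc{B}$ if and only if $\Hom(\y A, \Sigma^i \y X) \neq 0$ for some $i \in \Z$. Since $A$ is compact, the restricted Yoneda embedding induces a natural isomorphism $\Hom_{\Mod{\T^\c}}(\y A, \y Z) \cong \Hom_\T(A, Z)$ for every $Z \in \T$, and $\y$ commutes with $\Sigma$ by \cref{prel:shiftinvpurity}. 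Therefore $\mc{B} \in \hssupp{A}$ if and only if $\Hom_\T(A, \Sigma^i X) \neq 0$ for some $i \in \Z$, which is exactly the condition that $[X] \notin [A]_\Sigma$. This gives
\[
\Phi(\hssupp{A}) = \closed{\T} \setminus [A]_\Sigma,
\]
a basic closed set of the $\Sigma$-Gabriel-Zariski topology.

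I do not anticipate any substantial obstacle: all of the work has been done in assembling the machinery. The only subtlety is that the condition defining $\hssupp{A}$ must be checked to be independent of the chosen representative of the equivalence class $[X]$, but this is immediate because the condition $\Hom_\T(A, \Sigma^i X) \neq 0$ for some $i$ depends only on $\msf{Def}^\Sigma(X)$ — two indecomposable pure injectives $X$ and $Y$ with $\msf{Def}^\Sigma(X) = \msf{Def}^\Sigma(Y)$ satisfy the condition simultaneously, since each lies in the $\Sigma$-invariant definable closure of the other. This is precisely what guarantees the map $\Phi$ is well defined on equivalence classes, and so the bijective correspondence of basic closed sets yields the homeomorphism.
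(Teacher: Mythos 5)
Your proposal is correct and follows essentially the same route as the paper: both use the bijection of \cref{prop:ShspecKZgbijection} together with the characterisation $\mc{B}\in\hssupp{A}$ iff $\Hom_\T(A,\Sigma^iX)\neq 0$ for some $i$ (via \cref{lem:recoverhomprime} and the Yoneda isomorphism $\Hom(\y A,\y Z)\cong\Hom_\T(A,Z)$ for compact $A$) to match the basic closed sets $\hssupp{A}$ with the complements of the basic opens $[A]_\Sigma$. The only cosmetic difference is that you phrase this as a single identification $\Phi(\hssupp{A})=\closed{\T}\setminus[A]_\Sigma$, whereas the paper verifies continuity and closedness as two separate displays.
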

\begin{proof}
We first show that $\Phi$ is continuous. It is enough to check that the preimage of a basic open set is open. If $C\in\T^{\c}$, then
\[
\Phi^{-1}([C]_{\Sigma})=\{\mc{B}\in\shspec{\T^{\c}}:\Hom_{\T}(C,\Sigma^iX)=0 \text{ for $X\in\scr{D}(\mc{B})\cap\msf{pinj}(\T)$ and all $i \in \Z$}\}.
\]
In particular, we see that $\Phi^{-1}([C]_{\Sigma}) = \supph(C)^\c$ is an open set.

We next show that $\Phi$ is closed. We see that
\[
\Phi(\supph(C))=\{\Phi(\mc{B}): \text{for any $X\in\scr{D}(\mc{B})\cap\msf{pinj}(\T)$ there is an $i \in \Z$ such that }\Hom_{\T}(C,\Sigma^iX)\neq 0\}.
\]
% \[
% \Phi(\supph(C))=\{\Phi(\mc{B}):\Hom_{\T}(C,\Sigma^iX)\neq 0 \text{ for  $X\in\scr{D}(\mc{B})\cap\msf{pinj}(\T)$ and some $i \in \Z$}\}.
% \]
Since $\Phi$ is a bijection by \cref{prop:ShspecKZgbijection}, we see that this is nothing other than $[C]_{\Sigma}^{\c}$ which is closed. As such, $\Phi$ sends basic closed sets to closed sets, and hence is closed. Therefore $\Phi$ is a homeomorphism as claimed.
\end{proof}

\begin{rem}
The homeomorphism given in \cref{homeomorphism} is particularly useful for computing examples as we will demonstrate later.
\end{rem}

We now show that for triangulated categories generated by a single compact object, the space $\shspec{\T^{\c}}$ cannot be empty.

\begin{prop}\label{prop:containedinmaxserre}
Suppose $\T^{\c}$ is generated by an object $G\in\T^{\c}$, i.e., $\T^\c = \msf{thick}(G)$. Then every proper $\Sigma$-invariant Serre subcategory of $\mod{\T^{\c}}$ is contained in a shift-homological prime. In particular, $\shspec{\T^\c}$ is non-empty. 
\end{prop}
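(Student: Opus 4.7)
The plan is to run a Zorn's lemma argument, whose only non-trivial input is the observation that $\y G$ generates $\mod{\T^\c}$ as a $\Sigma$-invariant Serre subcategory. Once this is established, properness of a $\Sigma$-invariant Serre subcategory is detected by the single element $\y G$, which makes the chain condition easy to verify.

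The key lemma to prove first is: \emph{if $\mc{S} \subseteq \mod{\T^\c}$ is a $\Sigma$-invariant Serre subcategory with $\y G \in \mc{S}$, then $\mc{S} = \mod{\T^\c}$.} Define $\msf{X} = \{A \in \T^\c : \y A \in \mc{S}\}$. I would check that $\msf{X}$ is a thick subcategory of $\T^\c$. Closure under $\Sigma^{\pm 1}$ follows from $\y \Sigma = \Sigma \y$ together with the $\Sigma$-invariance of $\mc{S}$. Closure under summands follows from closure of $\mc{S}$ under subobjects, since $\y A$ is a subobject of $\y(A \oplus B) = \y A \oplus \y B$. For closure under cones, given a triangle $A \to B \to C \to \Sigma A$ in $\T^\c$ with $A, B \in \msf{X}$, apply $\y$ to obtain the long exact sequence $\y A \to \y B \to \y C \to \y \Sigma A$. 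Let $I$ be the image of $\y B \to \y C$; then $I$ is a quotient of $\y B$, hence lies in $\mc{S}$, and $\y C / I$ is a subobject of $\y \Sigma A \in \mc{S}$, hence also lies in $\mc{S}$. Thus $\y C$ is an extension of objects in $\mc{S}$, so $C \in \msf{X}$. As $G \in \msf{X}$ and $\T^\c = \msf{thick}(G)$, we conclude $\msf{X} = \T^\c$. Finally, every finitely presented functor is a cokernel $\y Y \twoheadrightarrow F$ with $Y \in \T^\c$, so closure under quotients yields $\mc{S} = \mod{\T^\c}$.

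Given this, the proposition follows by a standard Zorn's lemma argument. Let $\mc{S}_0$ be a proper $\Sigma$-invariant Serre subcategory and consider the poset $\mc{P}$ of all proper $\Sigma$-invariant Serre subcategories of $\mod{\T^\c}$ containing $\mc{S}_0$, ordered by inclusion. For any chain $\{\mc{S}_\alpha\}_\alpha$ in $\mc{P}$, its union $\mc{S}_\infty := \bigcup_\alpha \mc{S}_\alpha$ is easily seen to be $\Sigma$-invariant and Serre (all three Serre closure conditions pass to directed unions). By the key lemma, each $\mc{S}_\alpha$ is proper if and only if $\y G \notin \mc{S}_\alpha$; since this holds for every $\alpha$, we have $\y G \notin \mc{S}_\infty$, so $\mc{S}_\infty$ is also proper and provides an upper bound in $\mc{P}$. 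Zorn's lemma therefore produces a maximal element $\mc{B} \in \mc{P}$. Any proper $\Sigma$-invariant Serre subcategory containing $\mc{B}$ automatically contains $\mc{S}_0$ and so belongs to $\mc{P}$, forcing equality with $\mc{B}$ by maximality; thus $\mc{B} \in \shspec{\T^\c}$. The final claim follows by taking $\mc{S}_0 = 0$.

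The only non-routine step is the verification that $\msf{X}$ is triangulated, which is where the interaction between the shift and the Serre closure conditions is essential: without $\Sigma$-invariance one would not be able to control $\y C/I$ as a subobject of $\y \Sigma A$. Everything else is formal.
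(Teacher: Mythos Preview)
Your proof is correct and follows essentially the same approach as the paper: both use Zorn's lemma on the poset of proper $\Sigma$-invariant Serre subcategories containing the given one, with properness detected by the single condition $\y G \notin \mc{L}$. You spell out in more detail why $\y^{-1}\mc{S}$ is thick (the paper simply asserts this), but the structure of the argument is the same.
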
  
\begin{proof}
Let $\mc{S}$ be a proper $\Sigma$-invariant Serre subcategory of $\mod{\T^\c}$. Consider
\[
\mf{P}_{\mc{S}}=\{\mc{L}\subseteq\mod{\T^{\c}}:\mc{L} \t{ is a proper $\Sigma$-invariant Serre subcategory such that $\mc{S}\subseteq \mc{L}$}\}
\]
which is non-empty as $\mc{S}\in\mf{P}_{\mc{S}}$. Note that a $\Sigma$-invariant Serre subcategory $\mc{L}$ is proper if and only if $\y G\not \in \mc{L}$. Indeed, if $\y G\in\mc{L}$ then $\y^{-1}\mc{L}$ is a thick subcategory containing $G$, and therefore $\y^{-1}\mc{L} = \T^\c$, so that $\mc{L}$ contains all finitely presented projectives and thus coincides with $\mod{\T^{\c}}$. Now, let $\{\mc{L}_{i}\}$ be a chain of elements of $\mf{P}_{\mc{S}}$. Then
\[
\bigcup_{i}\mc{L}_{i}
\]
is a $\Sigma$-invariant Serre subcategory, and it does not contain $\y G$ (otherwise some $\mc{L}_i$ is not proper). Therefore Zorn's lemma gives the existence of a maximal element $\mf{P}_{\mc{S}}$ which is a shift-homological prime.
\end{proof}

\subsection{A topology on irreducible rank functions}\label{ss:rankfunctions}
We now investigate the relationship between $\shspec{\T^{\c}}$ and the collection of irreducible rank functions on $\T^{\c}$. We first recall the definition as given in \cite{chuanglazarev}.

\begin{chunk}\label{prel:rankfunctions}
A \emph{rank function} on $\T^{\c}$ assigns to each object $X\in\T^{\c}$ a nonnegative real number $\rho(X)$ such that the following three conditions hold:
\begin{enumerate}
\item for any $X\in\T^{\c}$, we have $\rho(\Sigma X)=\rho(X)$;
\item for any $X,Y\in\T^{\c}$, we have $\rho(X\oplus Y)=\rho(X)+\rho(Y)$;
\item for any triangle $X\to Y\to Z$ in $\T^{\c}$, there is an inequality $\rho(Y)\leq \rho(X)+\rho(Z)$.
\end{enumerate}

Equivalently, rank functions can be defined as an assignment of a nonnegative real number $\rho(f)$ to each morphism $f\in\T^{\c}$, such that the following three conditions hold:
\begin{enumerate}
\item for any morphism $f\in\T^{\c}$, we have $\rho(\Sigma f)=\rho(f)$;
\item for any morphisms $f,g\in\T^{\c}$, we have $\rho(f\oplus g)=\rho(f)+\rho(g)$;
\item for any triangle $X\xrightarrow{f} Y\xrightarrow{g} Z$ in $\T^{\c}$, there is an equality $\rho(\text{id}_Y)= \rho(f)+\rho(g)$.
\end{enumerate}

Passage from a rank function defined on morphisms to a rank function defined on objects is given by $\rho(X)=\rho(\text{id}_X)$. We consider the following restrictions on a rank function $\rho$. Firstly, we suppose that $\rho(f)\in\Z$ for every morphism $f\in\T^{\c}$; such a rank function is called \emph{integral}.
Any integral rank function $\rho$ can, as shown in \cite[Theorem 4.2]{conde2022functorial}, be decomposed as
\[
\rho=\Sigma_{I}\rho_{i}
\]
where the rank functions $\rho_{i}$ are \ti{irreducible}, that is nonzero integral rank functions that cannot be expressed as a sum of nonzero integral rank functions. Furthermore,
for each $X \in \T^{\c}$, the set $\{j\in I:\rho_{j}(X)\neq 0\}$ is finite.
\end{chunk}

\begin{chunk}\label{rankviaSerre}
    Let us now relate irreducible rank functions to $\shspec{\T^{\c}}$. In order to do this, recall from \cite[Theorem 3.11]{conde2022functorial} that for any rank function $\rho$ on $\T^\c$, there is an associated $\Sigma$-invariant additive function $\tilde{\rho}\colon \mod{\T^\c} \to \mbb{R}_{\geq 0}$, defined by $\tilde{\rho}(\msf{Im}(\y f)) = \rho(f)$. From this it is immediate that $\tilde{\rho}(\y C) = \rho(C)$. By~\cite[Theorem 4.3]{conde2022functorial} (and its proof), the assignment $\rho \mapsto \msf{ker}(\tilde{\rho})$ gives a bijection between irreducible rank functions on $\T^\c$, and $\Sigma$-invariant Serre subcategories $\mc{S}$ of $\mod{\T^\c}$ such that $\mod{\T^\c}/\mc{S}$ is a length category with a single simple object up to the action of $\Sigma$. This suggests that we can consider the set of irreducible rank functions as a subset of $\shspec{\T^{\c}}$. Let us make this more precise.
\end{chunk}

\begin{chunk}
    We write $\msf{IrrRk}(\T^\c)$ for the set of irreducible rank functions on $\T^\c$. We equip this set with a topology with a basis of closed sets given by
    \[\rsupp{C} = \{\rho \in \Irr{\T^\c} : \rho(C) \neq 0\}\] as $C$ ranges over $\T^\c$. (This indeed forms the basis for a topology since $\rsupp{C} \cup \rsupp{D} = \rsupp{C \oplus D}$, and $\cap_{C \in \T^\c}\rsupp{C} = \varnothing$ as $\rho(0)=0$ for all rank functions $\rho$.)
\end{chunk}

\begin{lem}\label{lem:RankGivesPrime}
There is a continuous injective map \[\mc{K}\colon \Irr{\T^\c} \to \shspec{\T^\c}\] defined by $\mc{K}(\rho) = \msf{ker}(\tilde{\rho})$ for $\rho \in \Irr{\T^\c}$.
\end{lem}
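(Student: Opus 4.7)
The plan is to unpack the definitions and leverage the bijection from \cite[Theorem 4.3]{conde2022functorial} recalled in \cref{rankviaSerre}. There are three things to check: that $\mc{K}(\rho)$ lies in $\shspec{\T^\c}$, that $\mc{K}$ is injective, and that $\mc{K}$ is continuous. I would address these in order.

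First I would verify that $\mc{K}(\rho) = \msf{ker}(\tilde{\rho})$ is indeed a shift-homological prime. Since $\tilde{\rho}\colon \mod{\T^\c} \to \mbb{R}_{\geq 0}$ is $\Sigma$-invariant and additive, its kernel is automatically a $\Sigma$-invariant Serre subcategory, and it is proper because $\rho$ is nonzero. The content is maximality. By \cref{rankviaSerre}, since $\rho$ is irreducible, the quotient $\mod{\T^\c}/\msf{ker}(\tilde{\rho})$ is a length category admitting a single simple object up to the action of $\Sigma$. Any $\Sigma$-invariant Serre subcategory of this quotient either contains no shift of the simple (hence is zero) or contains all shifts of the simple, and in the latter case closure under extensions forces it to be the entire quotient. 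Using the lattice isomorphism between $\Sigma$-invariant Serre subcategories of the quotient and those of $\mod{\T^\c}$ containing $\msf{ker}(\tilde{\rho})$, this shows $\msf{ker}(\tilde{\rho})$ is a maximal proper $\Sigma$-invariant Serre subcategory, i.e., a shift-homological prime.

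Injectivity is immediate, since the very assignment $\rho \mapsto \msf{ker}(\tilde{\rho})$ is shown to be a bijection (onto its image) in \cite[Theorem 4.3]{conde2022functorial}, as stated in \cref{rankviaSerre}.

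For continuity, it suffices to check that the preimage of each basic closed set $\hssupp{A}$, for $A \in \T^\c$, is closed in $\Irr{\T^\c}$. Using the identity $\tilde{\rho}(\y A) = \rho(A)$ from \cref{rankviaSerre}, we compute
\[
\mc{K}^{-1}(\hssupp{A}) = \{\rho \in \Irr{\T^\c} : \y A \notin \msf{ker}(\tilde{\rho})\} = \{\rho \in \Irr{\T^\c} : \rho(A) \neq 0\} = \rsupp{A},
\]
which is a basic closed set in $\Irr{\T^\c}$ by definition. This shows $\mc{K}$ is continuous (and indeed that the preimages of basic closed sets are basic closed sets, making $\mc{K}$ a topological embedding onto its image provided the topology on $\Irr{\T^\c}$ agrees with the subspace topology from $\shspec{\T^\c}$; the lemma as stated only asks for continuity). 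No step here is difficult — the essential work has already been done in \cite{conde2022functorial}; the contribution of the lemma is to recast that bijection as a continuous map into the newly-defined space $\shspec{\T^\c}$.
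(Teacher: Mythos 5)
Your proposal is correct and follows essentially the same route as the paper: maximality of $\msf{ker}(\tilde{\rho})$ via the fact from \cref{rankviaSerre} that $\mod{\T^\c}/\msf{ker}(\tilde{\rho})$ is a length category with a unique simple up to $\Sigma$ (so its only $\Sigma$-invariant Serre subcategories are $0$ and everything), injectivity from the bijection in \cite[Theorem 4.3]{conde2022functorial}, and continuity from the computation $\mc{K}^{-1}(\hssupp{A}) = \rsupp{A}$. The extra observation that preimages of basic closed sets are basic closed sets is a harmless bonus not needed for the statement.
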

\begin{proof}
We first show that the assignment $\rho \mapsto \msf{ker}(\tilde{\rho})$ produces a shift-homological prime. By~\cref{rankviaSerre}, the localisation $\mod{\T^{\c}}/\msf{ker}(\tilde{\rho})$ is a length category with a unique simple up to $\Sigma$. Since in length categories Serre subcategories are determined by the simple objects they contain, there is a bijection between $\Sigma$-invariant subsets of isoclasses of simple objects and $\Sigma$-invariant Serre subcategories of $\mod{\T^{\c}}/\msf{ker}(\tilde{\rho})$. It follows that $\mod{\T^{\c}}/\msf{ker}(\tilde{\rho})$ has no non-trivial  proper $\Sigma$-invariant Serre subcategories. Hence there are no proper $\Sigma$-invariant Serre subcategories of $\mod{\T^{\c}}$ containing $\msf{ker}(\tilde{\rho})$, which is thereby maximal. Injectivity follows immediately from the bijection recalled in \cref{rankviaSerre}. For continuity, we have \[\mc{K}^{-1}(\hssupp{C}) = \{\rho \in \Irr{\T^\c} : \y C \not\in \msf{ker}(\tilde{\rho})\} = \rsupp{C}\] and hence $\mc{K}$ is continuous.
\end{proof}

\begin{cor}\label{cor:surjsuffices}
If $\mc{K}$ is surjective, then it is a homeomorphism.    
\end{cor}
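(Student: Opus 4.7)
The plan is to leverage the work already done in \cref{lem:RankGivesPrime}. There, continuity and injectivity of $\mc{K}$ are established, so under the surjectivity assumption we already have a continuous bijection. To upgrade this to a homeomorphism, it therefore suffices to verify that $\mc{K}$ is a closed map, or equivalently that $\mc{K}^{-1}$ is continuous. Nothing deeper than basic point-set topology should be required.

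The key ingredient is already isolated in the proof of the previous lemma, namely the identity $\mc{K}^{-1}(\hssupp{C}) = \rsupp{C}$ for every $C \in \T^\c$. Since we are now assuming that $\mc{K}$ is a bijection, we may apply $\mc{K}$ to both sides of this equality to deduce $\mc{K}(\rsupp{C}) = \hssupp{C}$. In other words, $\mc{K}$ sends each basic closed set in $\Irr{\T^\c}$ to a basic closed set in $\shspec{\T^\c}$.

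To pass from basic closed sets to all closed sets, I would use that $\mc{K}$, being a bijection, preserves both finite unions and arbitrary intersections of subsets. Since an arbitrary closed subset of $\Irr{\T^\c}$ can be written as an intersection of finite unions of the sets $\rsupp{C}$, its image under $\mc{K}$ is the corresponding intersection of finite unions of the sets $\hssupp{C}$, and hence is closed in $\shspec{\T^\c}$. Therefore $\mc{K}$ is closed, and combined with being a continuous bijection it is a homeomorphism. There is no genuine obstacle: the content of the statement is entirely contained in the identity $\mc{K}(\rsupp{C}) = \hssupp{C}$, which is an immediate corollary of the calculation in \cref{lem:RankGivesPrime} together with the surjectivity hypothesis.
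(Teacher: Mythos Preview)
Your proof is correct and follows essentially the same approach as the paper: both arguments show that $\mc{K}$ is closed by establishing $\mc{K}(\rsupp{C}) = \hssupp{C}$, you by applying the bijection to the identity $\mc{K}^{-1}(\hssupp{C}) = \rsupp{C}$ from \cref{lem:RankGivesPrime}, the paper by a direct computation. The only difference is that you spell out the passage from basic closed sets to arbitrary closed sets, which the paper leaves implicit.
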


\begin{proof}
As a continuous closed bijection is a homeomorphism, it suffices to show that if $\mc{K}$ is surjective then it is closed. Now,
\[
\mc{K}(\rsupp{C})=\{\mc{K}(\rho):\rho(C)\neq 0\} = \{\msf{ker}(\widetilde{\rho}):\y C\not\in\msf{ker}(\widetilde{\rho})\}
\]
so if $\mc{K}$ is surjective, then every shift-homological prime is of the form $\msf{ker}(\widetilde{\rho})$, and we therefore see that $\mc{K}(\rsupp{C)}=\hssupp{C}$, proving that $\mc{K}$ is closed as desired.
\end{proof}

For an object $X\in\T$, we set \[X^{\Sigma}=\bigoplus_{i\in\Z}\Sigma^{i}X \quad \text{and} \quad X_\Sigma = \prod_{i \in \Z}\Sigma^iX\]and note that these are both $\Sigma$-invariant objects. We record the following observation about $\Sigma$-invariant endofinite objects.

\begin{lem}\label{lem:endofiniteupperlower}
Let $X\in\T$, then $X_{\Sigma}$ is endofinite if and only if $X^{\Sigma}$ is endofinite.
\end{lem}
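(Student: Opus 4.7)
The plan rests on two observations. First, each shift $\Sigma^jX$ is a direct summand of both $X^\Sigma$ and $X_\Sigma$: for the coproduct this is immediate, and for the product the required splitting of the projection $X_\Sigma\to \Sigma^jX$ is given by the map $\Sigma^jX\to X_\Sigma = \prod_i\Sigma^iX$ whose $j$-th coordinate is the identity and whose other coordinates are zero. Second, for any endofinite $Y\in\T$ one has $\msf{Def}(Y)=\msf{Add}(Y)=\msf{Prod}(Y)$ by \cref{prel:endo,prel:endofinite}, and both coproducts and products of copies of an endofinite object are again endofinite.

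With these in hand I would argue the forward direction as follows. Suppose $X_\Sigma$ is endofinite. Then $X$ is a summand of $X_\Sigma$, and since endofiniteness passes to direct summands (as recorded in \cite[Chapter 13]{krbook}), $X$ is endofinite. Using $\End_\T(\Sigma^iX)\cong\End_\T(X)$ and $\Hom_\T(A,\Sigma^iX)\cong\Hom_\T(\Sigma^{-i}A,X)$ for each compact $A$, each $\Sigma^iX$ is therefore endofinite, and each is a summand of $X_\Sigma$ and so lies in $\msf{Def}(X_\Sigma)$. Since definable subcategories are closed under coproducts, $X^\Sigma=\bigoplus_i\Sigma^iX\in\msf{Def}(X_\Sigma)=\msf{Add}(X_\Sigma)$, which exhibits $X^\Sigma$ as a summand of a coproduct of copies of $X_\Sigma$. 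Such a coproduct is endofinite by \cref{prel:endo}, so $X^\Sigma$ is endofinite as well.

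The reverse implication is entirely dual, interchanging coproducts and products and using $\msf{Def}(X^\Sigma)=\msf{Prod}(X^\Sigma)$ at the analogous step. The only mildly non-formal point in the whole argument is the summand identification for the product $X_\Sigma$, but this is a one-line consequence of the universal property of the product; beyond that, every step is a direct assembly of the structural properties of endofinite objects recalled in the preliminaries, so I do not anticipate a genuine obstacle.
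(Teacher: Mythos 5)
Your proposal is correct and follows essentially the same route as the paper: both arguments rest on the facts that each $\Sigma^iX$ is a summand of $X^\Sigma$ and of $X_\Sigma$, that definable subcategories are closed under coproducts and products (so each of $X^\Sigma$, $X_\Sigma$ lies in the definable closure of the other), and that every object of $\msf{Def}(E)=\msf{Add}(E)=\msf{Prod}(E)$ is endofinite when $E$ is. The paper merely packages this as the single statement $\msf{Def}(X_\Sigma)=\msf{Def}(X^\Sigma)$, whereas you run the two inclusions separately; the content is the same.
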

\begin{proof}
Recall from \cref{prel:endofinite} that if $E \in \T$ is endofinite, then every object of $\msf{Def}(E)$ is also endofinite. Therefore, the statement follows if it is shown that $\msf{Def}(X_\Sigma) = \msf{Def}(X^\Sigma)$. Note that $\Sigma^{i}X\in\msf{Def}(X^{\Sigma})$ as definable subcategories are closed under summands. As they are also closed under products, we see that $X_{\Sigma}\in\msf{Def}(X^{\Sigma})$. The inclusion $X^{\Sigma}\in\msf{Def}(X_{\Sigma})$ holds by a similar argument.
\end{proof}

We now give a complete characterisation of when the map $\mc{K}$ is a homeomorphism.

\begin{thm}\label{thm:whenisKahomeo}
   The following conditions are equivalent:
    \begin{enumerate}
        \item\label{kappaisbijection} the map $\mc{K}\colon \Irr{\T^\c} \to \shspec{\T^\c}$ of \cref{lem:RankGivesPrime} is a homeomorphism;
        \item\label{endofinitecondition} for every $[X]\in\closed{\T}$ the object $X^{\Sigma}$ is endofinite;
        \item\label{isolated} any minimal $\Sigma$-invariant closed subset of $\msf{Zg}(\T)$ satisfies the isolation condition (see \cref{prel:isolationcondition}).
    \end{enumerate}
In this case, the image of the map $\Phi$ of \cref{prop:ShspecKZgbijection} consists of the equivalence classes of indecomposable endofinite objects $X\in\T$ such that $X^{\Sigma}$ is endofinite.
\end{thm}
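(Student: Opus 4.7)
My plan is to establish the equivalences (1) $\Leftrightarrow$ (2) and (2) $\Leftrightarrow$ (3) separately, from which the image description of $\Phi$ will drop out.

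For (1) $\Leftrightarrow$ (2), I would invoke \cref{cor:surjsuffices} to reduce (1) to the surjectivity of $\mc{K}$. By the bijection in \cref{rankviaSerre}, $\mc{K}$ is surjective if and only if for every shift-homological prime $\mc{B}$, the localisation $\mod{\T^\c}/\mc{B}$ is a length category with a unique simple object up to the $\Sigma$-action. Fixing $\mc{B}$ corresponding to $[X] \in \closed{\T}$ via \cref{prop:ShspecKZgbijection}, \cref{prop:DefIsSimple} gives $\scr{D}(\mc{B}) = \msf{Def}^\Sigma(X)$, whose indecomposable pure injectives form the single $\Sigma$-orbit $\{\Sigma^i X\}_{i\in\Z}$. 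I would then use the equivalence $\scr{D}(\mc{B}) \cap \msf{Pinj}(\T) \simeq \msf{Inj}(\Mod{\T^\c}/\rlim\mc{B})$ from \cref{definingXSigma} to identify (the image of) $X_\Sigma$ with a $\Sigma$-invariant injective cogenerator of $\Mod{\T^\c}/\rlim\mc{B}$, and then invoke \cite[Lemma 2.3, \S 2.4(4)]{conde2022functorial} together with \cref{lem:endofiniteupperlower} to conclude that the length property of $\mod{\T^\c}/\mc{B}$ with a single $\Sigma$-orbit of simples is equivalent to the endofiniteness of $X^\Sigma$.

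For (2) $\Rightarrow$ (3), I would take a minimal $\Sigma$-invariant closed subset $\msf{X} \subseteq \msf{Zg}(\T)$, which is of the form $\msf{Def}^\Sigma(X) \cap \msf{pinj}(\T)$ for some closed point $[X] \in \closed{\T}$. By (2), $X^\Sigma$ is endofinite, so by \cref{prel:endo} we have $\msf{Def}^\Sigma(X) = \msf{Add}(X^\Sigma)$, and every $Y \in \msf{X}$ is endofinite. Such a $Y$ is then a closed point of $\msf{Zg}(\T)$ by \cref{prel:endofinite}, so $\{Y\} = \overline{\{Y\}}$ is trivially open in itself, and endofiniteness of $Y$ ensures that $\mod{\T^\c}/\msf{ker}\Hom(-, \y Y)$ is a length category containing a simple, verifying the isolation condition.

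The direction (3) $\Rightarrow$ (2) is the most delicate, and I expect it to be the main obstacle. Given $[X] \in \closed{\T}$, the subset $\msf{X} = \msf{Def}^\Sigma(X) \cap \msf{pinj}(\T)$ is minimal $\Sigma$-invariant closed and isolation holds for $\msf{X}$ by (3). The quasi-compactness of $\msf{Zg}(\T)$ together with its $\mrm{T}_0$ separation would let a Zorn-type argument produce a closed point $Y$ of $\msf{Zg}(\T)$ lying in $\msf{X}$. Since $\msf{ker}\Hom(-, \y Y)$ is then maximal Serre and isolation supplies a simple object in its localisation, the quotient $\mod{\T^\c}/\msf{ker}\Hom(-, \y Y)$ is a length category, and $Y$ is endofinite. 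The hard part will be bootstrapping endofiniteness of $Y$ to that of $Y^\Sigma$; I anticipate exploiting that $\msf{Def}^\Sigma(Y) = \msf{Def}^\Sigma(X)$ by minimality of $\msf{X}$, combined with analysing the $\Sigma$-equivariant injective cogenerator of $\Mod{\T^\c}/\rlim\mc{B}$ via the equivalence of \cref{definingXSigma} and the structural characterisation secured through (1) $\Leftrightarrow$ (2). Once $Y^\Sigma$ is endofinite, the containment $X \in \msf{Def}^\Sigma(Y) = \msf{Add}(Y^\Sigma)$ together with indecomposability of $X$ forces $X \cong \Sigma^j Y$ for some $j$, yielding that $X^\Sigma$ is endofinite. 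The description of the image of $\Phi$ is then immediate from \cref{prop:ShspecKZgbijection} together with condition (2), since under (2) every $[X] \in \closed{\T}$ is represented by an indecomposable endofinite pure injective with $X^\Sigma$ endofinite, and conversely any such $X$ gives a closed point of $\msf{KZg}^\Sigma(\T)$ by the argument that $\msf{Add}(X^\Sigma) = \msf{Def}^\Sigma(X)$ admits no proper non-zero $\Sigma$-invariant definable subcategory.
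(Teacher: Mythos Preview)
Your (1) $\Leftrightarrow$ (2) and (2) $\Rightarrow$ (3) are essentially sound, with two caveats. First, the assertion that the indecomposable pure injectives in $\scr{D}(\mc{B})$ form a single $\Sigma$-orbit $\{\Sigma^i X\}$ is unjustified a priori; this is in fact a consequence of (2), not an input. Fortunately your argument does not really use it: $X_\Sigma$ is a $\Sigma$-invariant injective cogenerator of $\Mod{\T^\c}/\rlim\mc{B}$ simply because $\msf{Def}^\Sigma(X)=\msf{Def}(X_\Sigma)$. Second, the paper short-circuits your route by invoking \cite[Theorem~5.5]{conde2022functorial} directly, which characterises surjectivity of $\mc{K}$ as the condition that $\scr{D}(\mc{B})$ consist entirely of endofinite objects; you are in effect reproving this from \cite[Theorem~4.3]{conde2022functorial}.

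The real gap is in (3) $\Rightarrow$ (2). Your Zorn step is flawed as written: $\msf{Zg}(\T)$ is quasi-compact but in general \emph{not} $\mrm{T}_0$, so you cannot extract a closed singleton $\{Y\}$ inside $\msf{X}$; at best you obtain a minimal closed subset. Even granting some $Y$ with $\{Y\}$ open in $\overline{\{Y\}}$ so that the isolation hypothesis applies and $Y$ is endofinite, the bootstrap from $Y$ endofinite to $Y^\Sigma$ endofinite is precisely the heart of the matter. Your proposed route via ``the structural characterisation secured through (1) $\Leftrightarrow$ (2)'' is circular: that characterisation says $Y^\Sigma$ is endofinite iff $\mod{\T^\c}/\mc{B}$ is a length category with one $\Sigma$-orbit of simples, which is exactly what you are trying to show.

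The paper sidesteps this by proving (3) $\Rightarrow$ (1) rather than (3) $\Rightarrow$ (2), and this is the key manoeuvre you are missing. Given $\mc{B}\in\shspec{\T^\c}$ with $\msf{X}=\scr{D}(\mc{B})\cap\msf{pinj}(\T)$ minimal $\Sigma$-invariant closed, the isolation hypothesis yields a simple object $S$ in $\mod{\T^\c}/\mc{B}$. Now use that $\mc{B}$ is $\Sigma$-\emph{maximal}: the $\Sigma$-invariant Serre subcategory of $\mod{\T^\c}/\mc{B}$ generated by $\{\Sigma^i S : i\in\Z\}$ must be the whole category, so $\mod{\T^\c}/\mc{B}$ is length. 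Then $\mc{B}=\msf{ker}(\tilde\rho)$ for some irreducible $\rho$ by \cref{rankviaSerre}, and $\mc{K}$ is surjective. No pointwise endofiniteness bootstrap is required; the $\Sigma$-maximality of $\mc{B}$ does the work globally.
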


\begin{proof}
Let us first observe that (\ref{endofinitecondition}) is well-defined. That is, if $X,Y\in\msf{pinj}(\T)$ are such that $\msf{Def}^{\Sigma}(X)=\msf{Def}^{\Sigma}(Y)$, we claim that $X^{\Sigma}$ is endofinite if and only if $Y^{\Sigma}$ is. Note that $\msf{Def}^{\Sigma}(X)=\msf{Def}(X^{\Sigma})$, and then the claim follows in a similar way to the proof of \cref{lem:endofiniteupperlower}.

We now prove the equivalence of (\ref{kappaisbijection}) and (\ref{endofinitecondition}). By \cref{cor:surjsuffices}, $\mc{K}$ is a homeomorphism if and only if it is surjective, which is equivalent to every $\mc{B}\in\shspec{\T^{\c}}$ being of the form $\mc{B}=\msf{ker}(\widetilde{\rho})$ for an irreducible rank function $\rho$. This is, by \cite[Theorem 5.5]{conde2022functorial}, further equivalent to $\scr{D}(\mc{B})$ being a simple $\Sigma$-invariant definable subcategory consisting solely of endofinite objects for any $\mc{B}\in\shspec{\T^{\c}}$. This occurs if and only if $X^{\Sigma}$ is endofinite for any $X\in\scr{D}(\mc{B})\cap\msf{pinj}(\T)$ by \cref{prop:DefIsSimple}. Combining these equivalences with the compatibility of the Kolmogorov quotient relation with endofiniteness, we see that $\mc{K}$ is a homeomorphism if and only $X^{\Sigma}$ is endofinite for any representative of $\Phi(\mc{B})$ for $\mc{B}\in\shspec{\T^{\c}}$. Since $\Phi$ is a bijection by \cref{prop:ShspecKZgbijection}, this proves the equivalence of the first two items.

For (\ref{kappaisbijection}) implies (\ref{isolated}), if $\mc{K}$ is a homeomorphism, then every $\mc{B}\in\shspec{\T^{\c}}$ gives a length category $\mod{\T^{\c}}/\mc{B}$ by \cref{lem:RankGivesPrime} and \cite[Theorem 4.3]{conde2022functorial}. Since every minimal $\Sigma$-invariant closed subset of $\msf{Zg}(\T)$ is of the form $\scr{D}(\mc{B})\cap\msf{Zg}(\T)$ for some $\mc{B}\in\shspec{\T^{\c}}$, we obtain, from \cite[Theorem 4.3]{conde2022functorial}, that such a closed set satisfies the isolation condition.

Conversely, to see that (\ref{isolated}) implies (\ref{kappaisbijection}) we suppose that $\mc{B}$ is a shift-homological prime and set $\mc{X}:=\scr{D}(\mc{B})\cap \msf{pinj}(\T)$. Then $\mc{X}$ is a minimal $\Sigma$-invariant closed subset of $\msf{Zg}(\T)$ and $\mc{B}=\scr{S}(\msf{Def}(\mc{X}))$. By assumption the isolation condition holds for $\mc{X}$, so the localisation $\mod{\T^{\c}}/\mc{B}$ contains a simple object $S$. Since $\mc{B}$ is a shift-homological prime, the localisation $\mod{\T^{\c}}/\mc{B}$ has no non-zero proper $\Sigma$-invariant Serre subcategories. Consequently, the Serre subcategory of $\mod{\T^\c}/\mc{B}$ generated by $\{\Sigma^{i}S:i\in\Z\}$ is all of $\mod{\T^{\c}}/\mc{B}$, which is therefore a length category. In particular, $\mc{B}$ coincides with $\msf{ker}(\tilde{\rho})$ for some irreducible rank function $\rho$, again by \cite[Theorem 4.3]{conde2022functorial}. Therefore $\mc{K}$ is surjective and hence is a homeomorphism by \cref{cor:surjsuffices}.
\end{proof}

Since the isolation condition holding for all of $\msf{Zg}(\T)$ means it holds for any closed subset of $\msf{Zg}(\T)$, we obtain the following immediately.
\begin{cor}\label{cor:Kishomeo}
If the isolation condition holds for $\msf{Zg}(\T)$, then $\mc{K}\colon \Irr{\T^\c} \to \shspec{\T^\c}$ is a homeomorphism. \qed
\end{cor}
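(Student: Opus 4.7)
The plan is to read this off directly from the equivalence of conditions \ref{kappaisbijection} and \ref{isolated} in \cref{thm:whenisKahomeo}. Every minimal $\Sigma$-invariant closed subset $\mc{X} \subseteq \msf{Zg}(\T)$ is in particular a closed subset of $\msf{Zg}(\T)$, so it suffices to verify the general fact that if the isolation condition holds on the ambient space $\msf{Zg}(\T)$, then it holds on every closed subset of $\msf{Zg}(\T)$. Once this inheritance is in hand, condition \ref{isolated} of \cref{thm:whenisKahomeo} is automatic, and hence so is \ref{kappaisbijection}.

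To prove the inheritance, I would unpack the definition recalled in \cref{prel:isolationcondition}. Fix a closed subset $\mc{X} \subseteq \msf{Zg}(\T)$ and a point $X \in \mc{X}$ such that $\{X\}$ is open in the closure of $\{X\}$ computed inside $\mc{X}$. Since $\mc{X}$ is closed in $\msf{Zg}(\T)$, this closure agrees with the closure of $\{X\}$ computed inside $\msf{Zg}(\T)$, and so $X$ is equally an isolated point of its closure within the ambient Ziegler spectrum. The hypothesis then provides a simple object in the localisation $\mod{\T^{\c}}/\msf{ker}\,\Hom(-,\y X)$, which is exactly the isolation condition for $X$ viewed as a point of $\mc{X}$. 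Applying this observation in the case when $\mc{X}$ is a minimal $\Sigma$-invariant closed subset furnishes condition \ref{isolated} of \cref{thm:whenisKahomeo}, giving the asserted homeomorphism via that theorem.

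There is no real obstacle here: the substance of the corollary is entirely packaged in \cref{thm:whenisKahomeo}, and what remains is a short subspace-topology argument. If anything delicate needed checking, it would be the compatibility between closures taken in $\mc{X}$ versus in $\msf{Zg}(\T)$, but this is immediate from the assumption that $\mc{X}$ itself is closed in $\msf{Zg}(\T)$.
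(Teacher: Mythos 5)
Your proposal is correct and follows essentially the same route as the paper: the paper deduces the corollary from \cref{thm:whenisKahomeo} via the observation that the isolation condition on all of $\msf{Zg}(\T)$ passes to every closed subset, which is exactly your inheritance argument. Your extra check that closures taken in a closed subset agree with ambient closures simply makes explicit what the paper treats as immediate.
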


\begin{ex}
The isolation condition holds in all of the following examples, and hence the map $\mc{K}$ is a homeomorphism by \cref{cor:Kishomeo}:
\begin{enumerate}
    \item $\msf{K}(\msf{Proj}(A))$ for any derived discrete algebra $A$, see \cite[Theorem A]{ZgDDA};
    \item $\D(A)$ for any derived discrete algebra $A$, see \cite[Main Theorem]{bobinskikrause};
    \item $\D(\Lambda)$ for any tame hereditary artin algebra $\Lambda$, see \cref{thm:isolationhered} below.
\end{enumerate}
\end{ex}

\section{The shift-spectrum and parametrising thick subcategories}
In the previous section, we constructed a topological space $\shspec{\T^\c}$ by considering certain Serre subcategories of the functor category $\mod{\T^\c}$. In this section, we build on this to construct another topological space whose points are certain thick subcategories: as such, we pass from the functorial level back down to the triangulated level. This construction mimics the relationship between the homological spectrum and Balmer spectrum of a rigidly-compactly generated tensor-triangulated category, and we show that there is a close similarity between certain results obtained in either passage.

\subsection{The shift-spectrum} In the tensor-triangular setting, one can view the Balmer spectrum as being obtained from the homological spectrum in two ways; the perspective of \cite{Balmernilpotence} was that the Balmer spectrum is the `preimage' of the homological spectrum under $\y$, and, on the other hand, it was proved in \cite{BHScomparison} that the Balmer spectrum can also be realised as the Kolmogorov quotient of the homological spectrum. Using these two perspectives as a motivation, we consider the analogous constructions for the shift-homological spectrum. For the first perspective, note that for any $\Sigma$-invariant Serre subcategory $\mc{S}$ of $\mod{\T^\c}$, the subcategory $\y^{-1}\mc{S}$ of $\T^\c$ is thick. 

\begin{defn}
A thick subcategory $\msf{P}\subseteq\T^{\c}$ is said to be a \emph{shift-prime} provided $\msf{P}=\y^{-1}\mc{B}$ for some $\mc{B}\in\shspec{\T^{\c}}$. The \emph{shift-spectrum} of $\T^{\c}$, denoted $\sspec{\T^{\c}}$, has as points the shift-prime thick subcategories of $\T^{\c}$, and a basis of closed sets given by
\[
\ssupp{A}=\{\msf{P}\in\sspec{\T^{\c}}:A\not\in\msf{P}\},
\]
as $A$ runs through $\T^{\c}$. That these closed sets form a basis follows from the observation that $\ssupp{C} \cup \ssupp{D} = \ssupp{C \oplus D}$, and $\cap_{C \in \T^\c}\ssupp{C} = \varnothing$.
\end{defn}

Given that the points of $\sspec{\T^{\c}}$ are determined by maximal $\Sigma$-invariant Serre subcategories, one may wonder whether maximal thick subcategories are also points of $\sspec{\T^{\c}}$. This is frequently the case, as the next result shows. 
\begin{lem}\label{lem:maximalthickisprime}
Suppose $\T^{\c}$ is generated by an object $G\in\T^{\c}$. Let $\msf{L}\subseteq\T^{\c}$ be a proper maximal thick subcategory of $\T^{\c}$. Then $\msf{L}\in\sspec{\T^{\c}}$.
\end{lem}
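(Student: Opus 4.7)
The plan is to produce a shift-homological prime $\mc{B} \in \shspec{\T^{\c}}$ with $\y^{-1}\mc{B} = \msf{L}$. The problem reduces, via \cref{prop:containedinmaxserre}, to constructing a proper $\Sigma$-invariant Serre subcategory $\mc{S}$ of $\mod{\T^{\c}}$ with $\y(\msf{L}) \subseteq \mc{S}$. Indeed, given such $\mc{S}$, \cref{prop:containedinmaxserre} extends it to a shift-homological prime $\mc{B} \supseteq \mc{S}$, which is proper and so satisfies $\y G \notin \mc{B}$. Then $\y^{-1}\mc{B}$ is a thick subcategory of $\T^{\c}$ containing $\msf{L}$ but not $G$, and is thus equal to $\msf{L}$ by maximality.

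For the construction I would work through the fundamental correspondence of \cref{prel:fundamentalcorrespondence}. Set
\[
\mc{D} := \{Y \in \T : \Hom_{\T}(X, Y) = 0 \text{ for all } X \in \msf{L}\} = \Loc(\msf{L})^{\perp}.
\]
Since $\msf{L}$ is a $\Sigma$-invariant class of compact objects, $\mc{D}$ is a $\Sigma$-invariant definable subcategory of $\T$. Define $\mc{S} := \scr{S}(\mc{D})$, the associated $\Sigma$-invariant Serre subcategory of $\mod{\T^{\c}}$. It contains $\y(\msf{L})$ because $\Hom(\y X, \y Y) = \Hom_{\T}(X, Y) = 0$ for every $X \in \msf{L}$ and $Y \in \mc{D}$.

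The main obstacle is to verify that $\mc{S}$ is proper, which will follow once we establish $\mc{D} \neq 0$. If instead $\mc{D} = 0$, then Bousfield localisation at $\Loc(\msf{L})$ forces $\Loc(\msf{L}) = \T$, so $G \in \Loc(\msf{L})$; but by Neeman's localisation theorem, $\Loc(\msf{L})^{\c}$ coincides with the idempotent completion of $\msf{L}$ inside $\Loc(\msf{L})$, which equals $\msf{L}$ itself since $\msf{L}$ is thick in $\T^{\c}$. This gives $G \in \msf{L}$, contradicting the properness of $\msf{L}$. With $\mc{D} \neq 0$ in hand, suppose $\y G \in \mc{S}$. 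Then $\Hom_{\T}(G, Y) = 0$ for every $Y \in \mc{D}$, and the $\Sigma$-invariance of $\mc{D}$ upgrades this to $\Hom_{\T}(G, \Sigma^{i} Y) = 0$ for every $i \in \Z$ and $Y \in \mc{D}$. Since $G$ compactly generates $\T$, each such $Y$ vanishes, forcing $\mc{D} = 0$—a contradiction. Therefore $\y G \notin \mc{S}$, $\mc{S}$ is proper, and the strategy of the first paragraph completes the proof.
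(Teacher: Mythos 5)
Your proposal is correct, and it shares the paper's overall skeleton: construct a proper $\Sigma$-invariant Serre subcategory of $\mod{\T^{\c}}$ containing $\y(\msf{L})$, enlarge it to a shift-homological prime via \cref{prop:containedinmaxserre}, and conclude by maximality of $\msf{L}$ (using that properness of a $\Sigma$-invariant Serre subcategory is detected by $\y G$). The difference lies in how the seed Serre subcategory is produced. The paper takes the kernel of the induced exact functor $q^{*}\colon\mod{\T^{\c}}\to\mod{\T^{\c}/\msf{L}}$, where $q\colon\T^{\c}\to\T^{\c}/\msf{L}$ is the Verdier quotient; there both the containment $\y(\msf{L})\subseteq\msf{ker}(q^{*})$ and properness are immediate, since $q$ kills $\msf{L}$ while $q^{*}\y G=\y(qG)\neq 0$ because $G\notin\msf{L}$. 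You instead stay on the level of the big category, taking $\mc{S}=\scr{S}(\msf{L}^{\perp})$ under the fundamental correspondence of \cref{prel:fundamentalcorrespondence}, and you verify properness by invoking Bousfield localisation at $\Loc(\msf{L})$ together with the Neeman--Thomason theorem ($\Loc(\msf{L})\cap\T^{\c}=\msf{L}$ since $\msf{L}$ is thick) to see that $\msf{L}^{\perp}\neq 0$, and then compact generation by $G$ to see $\y G\notin\mc{S}$. This costs you heavier machinery than the paper's one-line construction, but it buys an argument that remains entirely within the purity/definability framework used elsewhere in the paper, avoiding the functor category of the Verdier quotient altogether; in fact, once $\msf{L}^{\perp}\neq 0$ is known, properness of $\mc{S}$ already follows from the fundamental correspondence ($\scr{D}(\mod{\T^{\c}})=0$), so your detour through $\y G$ at that point is not strictly needed. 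Both routes are sound.
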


\begin{proof}
Let $q\colon\T^{\c}\to\T^{\c}/\msf{L}$ be the Verdier localisation and let $q^{*}\colon\mod{\T^{\c}}\to\mod{\T^{\c}/\msf{L}}$ be the unique exact functor such that $q^{*}\circ\y = \y \circ q$. It is clear that $\msf{ker}(q^{*})$ is a proper $\Sigma$-invariant Serre subcategory of $\mod{\T^{\c}}$. By \cref{prop:containedinmaxserre}, there is a homological prime $\mc{B}\in\shspec{\T^{\c}}$ containing $\msf{ker}(q^{*})$. Then $\y^{-1}\mc{B}\in\sspec{\T^{\c}}$ contains $\msf{L}$, and therefore by maximality we have $\y^{-1}\mc{B}=\msf{L}$, as desired.
\end{proof}

We see that whenever $\T^\c$ is generated by a single compact object, the set $\sspec{\T^{\c}}$ cannot be empty.

\begin{rem}
Note that the converse to \cref{lem:maximalthickisprime} is not true. For the derived category of the path algebra of the Kronecker quiver, the smallest thick subcategory containing all regular tubes, and the smallest thick subcategory containing all regular tubes but one, both give examples of shift-prime thick subcategories, see \cref{th:kronecker}. Therefore shift-prime thick subcategories need not be maximal.
\end{rem}

\begin{chunk}\label{definingalpha}
As $\Phi\colon \shspec{\T^\c} \to \closed{\T}^\msf{GZ}$ of \cref{homeomorphism} is a homeomorphism, we may define a map 
\[
\alpha\colon \closed{\T}^\msf{GZ} \to \sspec{\T^\c}
\]
via the commutative diagram
\[
\begin{tikzcd}
\closed{\T}^{\msf{GZ}} \arrow[r, "\Phi^{-1}"] \arrow[dr,swap, "\alpha"] & \shspec{\T^{\c}} \arrow[d, "\y^{-1}"] \\ {} & \sspec{\T^{\c}}.
\end{tikzcd}
\]
Therefore one sees that \[\alpha([X])=\{C\in\T^{\c}:\Hom_{\T}(C,\Sigma^{i}X)=0 \t{ for all }i\in\Z\} = {}^{\perp_\Z}X
.\] It is immediate from the definition of supports that $\y^{-1}$ is a closed and continuous surjection, and hence $\alpha$ is also a closed and continuous surjection.
\end{chunk}

In tensor-triangular geometry, the Nerves of Steel conjecture states that the map $\y^{-1}\colon\hspec{\T^\c}\to\Spc(\T^\c)$ is a homeomorphism (equivalently, injective), see~\cite{Balmernilpotence}. This is known to be true in all cases where both spaces have been computed. In our setting, such a statement would be equivalent to $\alpha$ being injective. The following example, which we give further details on later, provides an immediate counterexample to such a conjecture holding in this setting.

\begin{ex}
    Let $\T=\msf{D}(\Z)$ be the derived category of $\Z$-modules. For a fixed prime $p \in \Z$ and distinct $j,k>0$, the modules $\Z/p^j$ and $\Z/p^k$ (viewed as complexes in degree 0) give distinct points in $\msf{KZg}^\Sigma_\msf{Cl}(\msf{D}(\Z))$, see \cref{dedekind} for more details. However, $\alpha(\Z/p^j) = \alpha(\Z/p^k)$ since $\Hom_{\D(\Z)}(A,\Sigma^i\Z/p^{j})=0$ for all $i \in \Z$ if and only if $\Hom_{\D(\Z)}(A,\Sigma^i\Z/p^{k})=0$ for all $i \in \Z$. The following lemma explains that this is a frequent phenomenon.
\end{ex}

\begin{lem}\label{lem:thicksamepreimage}
	Let $[X], [Y] \in \msf{KZg}_\msf{Cl}^{\Sigma}(\T)$. If $\msf{Loc}(X) = \msf{Loc}(Y)$ or $\msf{Coloc}(X)=\msf{Coloc}(Y)$, then $\alpha([X]) = \alpha([Y])$.
\end{lem}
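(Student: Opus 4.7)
The plan is to unravel the definition $\alpha([X]) = {}^{\perp_\Z}X \cap \T^{\c}$ from \cref{definingalpha} and re-read membership $C \in \alpha([X])$ as the statement $X \in C^{\perp_\Z}$, where the right-orthogonal is taken in all of $\T$. The whole point is that, for any fixed compact $C \in \T^{\c}$, the class
\[
C^{\perp_\Z} = \{Z \in \T : \Hom_{\T}(C,\Sigma^{i}Z) = 0 \text{ for all } i \in \Z\}
\]
is simultaneously a localising and colocalising subcategory of $\T$. Indeed, it is visibly closed under shifts, summands, and extensions (since $\Hom_{\T}(C,-)$ is a homological functor). Closure under coproducts follows from compactness of $C$, which gives $\Hom_{\T}(C, \bigoplus_j Z_j) = \bigoplus_j \Hom_{\T}(C,Z_j)$, while closure under products is automatic for any representable functor, since $\Hom_{\T}(C,\prod_j Z_j) = \prod_j \Hom_{\T}(C,Z_j)$.

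Given this, the proof is essentially a one-liner. Assume $\msf{Loc}(X) = \msf{Loc}(Y)$ and let $C \in \T^{\c}$. Then
\[
C \in \alpha([X]) \iff X \in C^{\perp_\Z} \iff \msf{Loc}(X) \subseteq C^{\perp_\Z} \iff \msf{Loc}(Y) \subseteq C^{\perp_\Z} \iff C \in \alpha([Y]),
\]
where the middle equivalences use that $C^{\perp_\Z}$ is localising. Since this holds for every $C \in \T^{\c}$, we obtain $\alpha([X]) = \alpha([Y])$. The colocalising case is symmetric: replace $\msf{Loc}$ with $\msf{Coloc}$ in the chain above and use that $C^{\perp_\Z}$ is colocalising.

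There is no real obstacle to this argument; the subtlety to flag is only that the statement is well-defined on Kolmogorov equivalence classes. Since $\msf{Loc}$ and $\msf{Coloc}$ are closed under $\Sigma^{\pm 1}$, the hypotheses $\msf{Loc}(X) = \msf{Loc}(Y)$ and $\msf{Coloc}(X) = \msf{Coloc}(Y)$ are independent of the chosen representatives of the $\Sigma$-orbits, and the conclusion $\alpha([X]) = \alpha([Y])$ depends only on the classes in $\closed{\T}$ because $\alpha$ is already well-defined on these classes by \cref{definingalpha}.
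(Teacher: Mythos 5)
Your proof is correct and follows exactly the same route as the paper: the key observation in both is that for a compact $C$ the class $C^{\perp_\Z}=\{Z\in\T:\Hom_{\T}(C,\Sigma^i Z)=0 \text{ for all } i\in\Z\}$ is both localising and colocalising, so membership of $X$ in it is detected by $\msf{Loc}(X)$ or $\msf{Coloc}(X)$. The extra remarks on well-definedness on Kolmogorov classes are fine but not needed beyond what \cref{definingalpha} already provides.
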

\begin{proof}
If $A\in\T^{\c}$, then $\{Z\in\T:\Hom_{\T}(A,\Sigma^{i}Z)=0 \t{ for all }i\in\Z\}$ is both localising and colocalising. In particular, we see that if $\msf{Loc}(X) = \msf{Loc}(Y)$ or $\msf{Coloc}(X)=\msf{Coloc}(Y)$ then $A\in\alpha([X])$ if and only if $A\in\alpha([Y])$ from the definition of $\alpha$.
\end{proof}

While a $\Sigma$-Nerves of Steel statement fails at the first hurdle, we can show that the perspective of the Balmer spectrum being the Kolmogorov quotient of the homological spectrum does hold true for our spaces. More specifically we have the following.

\begin{prop}\label{prop:KQ}
The shift-spectrum $\sspec{\T^\c}$ is a $\mrm{T}_0$-space and there is a homeomorphism \[\msf{K}(\shspec{\T^\c}) \xrightarrow{\sim} \sspec{\T^\c}.\]     
\end{prop}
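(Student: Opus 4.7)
The plan is to first verify that $\sspec{\T^\c}$ is $\mathrm{T}_0$, then invoke the universal property of the Kolmogorov quotient to factor the natural map $\y^{-1}\colon \shspec{\T^\c} \to \sspec{\T^\c}$ through $\msf{K}(\shspec{\T^\c})$, and finally check that the resulting map is a homeomorphism. Throughout, the key observation is that the basic closed sets on both sides are defined by identical vanishing conditions on $\Hom$-functors out of compact objects, so they separate points on the nose.

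To show $\sspec{\T^\c}$ is $\mathrm{T}_0$, I would take two distinct shift-primes $\msf{P} \neq \msf{Q}$ and note that they must differ on some compact $A$, so precisely one of them lies in the basic closed set $\ssupp{A}$. Since the basic closed sets are closed under finite unions (via $\ssupp{A} \cup \ssupp{B} = \ssupp{A \oplus B}$), every closed set is an intersection of basic closed sets, and so topological indistinguishability reduces to lying in the same basic closed sets; hence $\msf{P}$ and $\msf{Q}$ are topologically distinguishable. Next, observe that $\y^{-1}\colon \shspec{\T^\c} \to \sspec{\T^\c}$ factors as $\alpha \circ \Phi$, where $\Phi$ is the homeomorphism from \cref{homeomorphism} and $\alpha$ is a closed continuous surjection by \cref{definingalpha}. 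Hence $\y^{-1}$ is itself a closed continuous surjection, and the universal property of $\msf{K}(-)$ (as left adjoint to the inclusion of $\mathrm{T}_0$-spaces) yields a unique continuous map
\[
\bar{\y^{-1}}\colon \msf{K}(\shspec{\T^\c}) \longrightarrow \sspec{\T^\c}
\]
such that $\bar{\y^{-1}} \circ q = \y^{-1}$, where $q$ denotes the quotient map.

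To finish, I need $\bar{\y^{-1}}$ to be a homeomorphism. Surjectivity is inherited from $\y^{-1}$. For injectivity, the same observation as above shows that two shift-homological primes $\mc{B}, \mc{C}$ are topologically indistinguishable in $\shspec{\T^\c}$ if and only if they lie in the same basic closed sets $\hssupp{A}$, which is equivalent to $\y A \in \mc{B} \Leftrightarrow \y A \in \mc{C}$ for every $A \in \T^\c$, i.e.\ $\y^{-1}\mc{B} = \y^{-1}\mc{C}$. For closedness, given closed $F \subseteq \msf{K}(\shspec{\T^\c})$, the set $q^{-1}(F)$ is closed by continuity of $q$, and then $\y^{-1}(q^{-1}(F)) = \bar{\y^{-1}}(F)$ (using surjectivity of $q$) is closed in $\sspec{\T^\c}$ since $\y^{-1}$ is a closed map. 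Thus $\bar{\y^{-1}}$ is a continuous closed bijection, and hence a homeomorphism. The argument is essentially formal once the $\mathrm{T}_0$-separation of $\sspec{\T^\c}$ is pinned down; I don't anticipate any serious obstacle beyond keeping track of which basic-set axioms are needed.
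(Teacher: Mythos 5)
Your proposal is correct and follows essentially the same route as the paper: establish $\mrm{T}_0$ via the supports, identify the Kolmogorov equivalence on $\shspec{\T^\c}$ with having the same $\y$-preimage, and transfer closedness from $\y^{-1}$ (which the paper records in \cref{definingalpha}) to the induced bijection. The only cosmetic difference is that you derive the properties of $\y^{-1}$ from $\alpha\circ\Phi$, whereas the paper observes them directly and uses them to define $\alpha$; this does not affect the argument.
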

\begin{proof}
Firstly, it is immediate that $\sspec{\T^\c}$ is a $\mrm{T}_0$-space, since if $\msf{P}_{1},\msf{P}_{2}\in\sspec{\T^{\c}}$ are such that $\msf{P}_{1}\neq \msf{P}_{2}$, then without loss of generality there is an $A\in\msf{P}_{1}\backslash\msf{P}_{2}$. In particular, $\ssupp{A}$ topologically distinguishes $\msf{P}_{1}$ and $\msf{P}_{2}$.

Recall from \cref{kolmogorovquotient} that the Kolmogorov quotient of $\shspec{\T^\c}$ is the quotient of $\shspec{\T^\c}$ under the equivalence relation $\mc{B}\sim \mc{C}$ if and only if $\overline{\{\mc{B}\}} = \overline{\{\mc{C}\}}$. Let $\mc{B}$, $\mc{C}$ be two points in $\shspec{\T^\c}$. We have $\overline{\{\mc{B}\}} = \overline{\{\mc{C}\}}$ if and only if the basic closed sets containing $\mc{B}$ and $\mc{C}$ coincide. This is the case if and only if
\[
\{A \in \T^{\c}: \mc{B} \in \supph(A)\} = \{A\in \T^{\c}: \mc{C} \in \supph(A)\}.
\]
 This happens if and only if an object $A\in \T^{\c}$ belongs to $\y^{-1}\mc{B}$ precisely when it belongs to $\y^{-1}\mc{C}$. So we deduce that $\overline{\{\mc{B}\}} = \overline{\{\mc{C}\}}$ if and only if $\y^{-1}\mc{B}=\y^{-1}\mc{C}$.

 Since $\sspec{\T^\c}$ is $\mrm{T}_0$, by the universal property of the Kolmogorov quotient, we obtain a continuous map $f\colon \msf{K}(\shspec{\T^\c}) \to \sspec{\T^\c}$ making the diagram
 \[
 \begin{tikzcd}
     \shspec{\T^\c} \ar[d,"q"'] \ar[dr, "\y^{-1}"] & \\
     \msf{K}(\shspec{\T^\c}) \ar[r, "f"'] & \sspec{\T^\c}
 \end{tikzcd}
 \]
 commute. Since the Kolmogorov quotient relation is the same as the equivalence relation given by $\y^{-1}$ it is immediate that $f$ is a bijection. As $\y^{-1}$ is closed we deduce that $f$ is also closed, and hence is a homeomorphism.
\end{proof}

\begin{chunk}\label{chunk:shiftspecrank}
When the conditions of \cref{thm:whenisKahomeo} hold, i.e., when $\mc{K}\colon \Irr{\T^\c} \to \shspec{\T^\c}$ is a homeomorphism, a thick subcategory of $\T^\c$ is a shift-prime if and only if it is the kernel of an irreducible rank function. In this case, we see that $\sspec{\T^{\c}}$ can be identified with the space of irreducible rank functions modulo the relation that the kernels agree. 
We note that even in very simple categories, there are examples of irreducible rank functions which differ but have the same kernel, such as the cluster category of type $A_3$, see \cite[\S 6]{conde2022functorial}.
\end{chunk}

\subsection{Relating the spaces to the homological and Balmer spectra}\label{subsec:maptobalmer}

So far we have defined two spaces, the first of which, $\shspec{\T^{\c}}$, mimics the construction of the homological spectrum $\hspec{\T^\c}$ from tensor-triangular geometry. From this, we defined $\sspec{\T^{\c}}$, which we showed can be realised as the Kolmogorov quotient of $\shspec{\T^{\c}}$. This situation replicates the relationship between the homological and Balmer spectrum. We now establish a relationship between $\shspec{\T^{\c}}$ and $\hspec{\T^{\c}}$, and as a result obtain a continuous, injective comparison map  $\Spc(\T^{\c})\to\sspec{\T^{\c}}$ in the case when $\T$ is a monogenic rigidly-compactly generated tensor-triangulated category. To begin with, we recall some preliminaries concerning tensor-triangulated categories. We refer to \cite{bks} for further details.

\begin{chunk}
Until stated otherwise, we shall assume that $\T$ is a rigidly-compactly generated tensor-triangulated category. We shall let $\otimes$ denote the monoidal product, and $\iHom$ the internal hom, so there is a natural isomorphism
\[
\Hom_{\T}(X\otimes Y,Z)\simeq \Hom_{\T}(X,\iHom(Y,Z))
\]
for all $X,Y,Z\in\T$. The $\otimes$-unit in $\T$ is denoted $\1$, and for a compact object $A$ we let $DA=\iHom(A,\1)$.
Day convolution induces a closed monoidal structure on $\Mod{\T^{\c}}$ such that $\y$ is strong monoidal. We let $\otimes$ denote the monoidal product on $\Mod{\T^{\c}}$, $\iHom$ the internal hom of functors, and note that the $\otimes$-unit on $\Mod{\T^{\c}}$ is $\y \1$. We note that $\y$ is not a closed functor, that is, the canonical map
\[\y\iHom(X,Y) \to \iHom(\y X, \y Y)\] is not an isomorphism in general. However, it is an isomorphism if either $X$ is compact (see \cite[Remark 2.4]{bks}), or if $Y$ is pure injective (see \cite[Recollection 2.4]{Balmerhomsupp}).
Abusing notation, we let $D(\y A)$ denote $\iHom(\y A,\y\1)$, and note that by the above there is a natural isomorphism $\y DA\simeq D\y A$ for any compact $A\in\T^{\c}$.
\end{chunk}

\begin{chunk}
We write $\Mod{\T^\c}^\mrm{dual}$ for the full subcategory of $\Mod{\T^\c}$ consisting of the dualisable objects, i.e., those $M$ for which $DM \otimes N \to \iHom(M,N)$ is an isomorphism for all $N \in \Mod{\T^\c}$. We claim that the restricted Yoneda embedding $\y$ gives an equivalence of categories \[\y\colon \T^\c \xrightarrow{\sim} \Mod{\T^\c}^\mrm{dual}.\] 

To see that the map is well-defined, we note that an object $M$ is dualisable if and only if the canonical map $DM\otimes M \to \iHom(M,M)$ is an isomorphism, see \cite[III.\S 1]{LMSM} for instance. If $A$ is compact, there are isomorphisms $D(\y A)\otimes \y A \simeq \y (DA\otimes A)\simeq \y \iHom(A,A)\simeq \iHom(\y A,\y A)$, showing that $\y A$ is dualisable as desired.

Therefore to prove the claim it suffices to show that the functor is essentially surjective. If $M\in\Mod{\T^{\c}}$ is dualisable, then $M\otimes -$ admits a left and right adjoint, so is exact and thus $M$ is flat by \cite[\S4.1]{birdwilliamsonhomological}. Moreover, as $M$ is dualisable, $\Hom(M,-)$ preserves direct limits, so $M$ is finitely presented as well. Therefore $M$ is a finitely presented functor which is also flat, and hence is projective so is of the form $\y A$ for some $A\in\T^{\c}$.
\end{chunk}

\begin{chunk}\label{tensorhomologicalprimes}
Recall from \cite{Balmernilpotence} that the \emph{homological spectrum} $\hspec{\T^{\c}}$ is the topological space whose points are maximal Serre $\otimes$-ideals of $\mod{\T^{\c}}$, and a basis of closed sets is given by $\hsupp{A}=\{\mc{B}\in\hspec{\T^{\c}}:\y A\not\in\mc{B}\}$.
Given a Serre $\otimes$-ideal $\mc{C}\subseteq\mod{\T^{\c}}$, following \cite{bks} we let $E_{\mc{C}}\in\T$ be the unique pure injective object such that $\y E_{\mc{C}}=R_{\mc{C}}\mbb{E}(Q_{\mc{C}}\y\1)$, where $\mbb{E}$ denotes the injective envelope. Then by \cite[Theorem 3.5]{bks} one recovers the Serre $\otimes$-ideal $\mc{C}$ from $E_{\mc{C}}$ as 
\[\mc{C}=\{f\in\mod{\T^{\c}}:f\otimes \y E_{\mc{C}}=0\}.\]
\end{chunk}

\begin{defn}
Let $\mc{S}\subseteq\mod{\T^{\c}}$ be a $\Sigma$-invariant Serre subcategory.
\begin{enumerate}
    \item For a compact $A\in\T^{\c}$, set
    \[
    \mc{S}_{A} = \{f \in \mod{\T^\c} : \y A \otimes f \in \mc{S}\}.
    \]
    \item 
    Define
    \[
    \mc{S}_{\otimes}=\bigcap_{A\in\T^{\c}}\mc{S}_{A}.
    \]
\end{enumerate}
\end{defn}

Let us quickly note some properties of $\mc{S}_{\otimes}$.
\begin{lem}\label{lem:propsofstensor}
Let $\mc{S}, \mc{R}$ be $\Sigma$-invariant Serre subcategories of $\mod{\T^\c}$. We have the following:
\begin{enumerate}
    \item\label{item:ideal} $\mc{S}_{\otimes}$ is a Serre $\otimes$-ideal of $\mod{\T^\c}$;
    \item\label{item:proper} if $\mc{S}\neq \mod{\T^{\c}}$, then $\mc{S}_{\otimes}\neq \mod{\T^{\c}}$;
    \item\label{item:orderpreserving} if $\mc{S}\subseteq\mc{R}$, then $\mc{S}_{\otimes}\subseteq\mc{R}_{\otimes}$.
\end{enumerate}
\end{lem}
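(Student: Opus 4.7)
The plan is to handle the three items essentially independently, with the main content being in (1).

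For (1), the first observation is that for each $A \in \T^\c$ the functor $\y A \otimes -\colon \Mod{\T^\c} \to \Mod{\T^\c}$ is exact; this is because $\y A$ is finitely generated projective in $\Mod{\T^\c}$ (it is the image under the strong monoidal Yoneda of the tensor unit's compact object $A$, and representables are projective). Consequently, for a Serre subcategory $\mc{S} \subseteq \mod{\T^\c}$, the preimage $\mc{S}_A = (\y A \otimes -)^{-1}(\mc{S})$ is closed under subobjects, quotients, and extensions in $\mod{\T^\c}$, hence is Serre. Since an intersection of Serre subcategories is Serre, $\mc{S}_\otimes$ is Serre. For the $\otimes$-ideal property, take $f \in \mc{S}_\otimes$ and $g \in \mod{\T^\c}$; I want to show $f \otimes g \in \mc{S}_A$ for every $A \in \T^\c$. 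Choose a presentation $\y B \xrightarrow{} \y C \to g \to 0$ with $B,C \in \T^\c$. Since $\y A \otimes f \otimes -$ is right exact, $\y A \otimes f \otimes g$ is the cokernel of the map $\y A \otimes f \otimes \y B \to \y A \otimes f \otimes \y C$. Using $\y A \otimes \y B \cong \y(A \otimes B)$ (by strong monoidality of $\y$) and the fact that $A \otimes B, A \otimes C \in \T^\c$, both ends of this map lie in $\mc{S}$ by the definition of $\mc{S}_\otimes$. Hence the cokernel lies in $\mc{S}$, giving $f \otimes g \in \mc{S}_A$ for every $A$, and so $f \otimes g \in \mc{S}_\otimes$ as required.

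For (2), I would argue by contrapositive: if $\mc{S}_\otimes = \mod{\T^\c}$, then $\y\1 \in \mc{S}_\otimes$, so $\y A \cong \y A \otimes \y\1 \in \mc{S}$ for every $A \in \T^\c$. Any object of $\mod{\T^\c}$ is a cokernel of a map between such representables, and since $\mc{S}$ is closed under quotients this forces $\mc{S} = \mod{\T^\c}$. For (3), if $\mc{S} \subseteq \mc{R}$, then manifestly $\mc{S}_A \subseteq \mc{R}_A$ for each $A$, and intersecting over $A$ preserves the inclusion.

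The only real subtlety is the $\otimes$-ideal verification in (1): one has to reduce the problem from arbitrary $g \in \mod{\T^\c}$ down to representables, where monoidality of $\y$ and compactness of the tensor product $A \otimes B$ allow one to absorb the second tensor factor into the indexing variable of the intersection. Everything else is formal.
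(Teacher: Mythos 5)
Your proof is correct and follows essentially the same route as the paper: $\mc{S}_A=(\y A\otimes -)^{-1}(\mc{S})$ is Serre because tensoring with $\y A$ is exact, and the ideal property reduces to $\y B\otimes\y A\otimes f\simeq\y(B\otimes A)\otimes f$ with $B\otimes A$ compact. The only differences are cosmetic: in (1) you spell out the reduction from an arbitrary $g\in\mod{\T^\c}$ to representables via a presentation and right exactness (a step the paper leaves implicit in its ``it suffices to show''), and in (2) you argue by contrapositive where the paper simply observes $\mc{S}_\otimes\subseteq\mc{S}_{\1}=\mc{S}$.
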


\begin{proof}
For (\ref{item:ideal}), to see that $\mc{S}_\otimes$ is a Serre subcategory, it suffices to see that $\mc{S}_A$ is Serre for all $A \in \T^\c$. By definition, $\mc{S}_A = (\y A \otimes -)^{-1}(\mc{S})$. Since tensoring with $\y A$ is exact, $\mc{S}_A$ is therefore also Serre. 
To show $\mc{S}_{\otimes}$ is moreover a $\otimes$-ideal, it suffices to show that if $f\in\mc{S}_{\otimes}$ and $A\in\T^{\c}$, then $\y A\otimes f\in\mc{S}_{\otimes}$, which is equivalent to having $\y B\otimes (\y A\otimes f)\in\mc{S}$ for all $B\in\T^{\c}$. Yet since $\y B\otimes (\y A\otimes f)\simeq \y(B\otimes A)\otimes f$, and $A\otimes B\in\T^{\c}$, we have that $\y B\otimes (\y A\otimes f)\in\mc{S}$ by the assumption on $f$, which finishes the proof of (\ref{item:ideal}).

To see (\ref{item:proper}) observe that $\mc{S}=\mc{S}_{\1}$ by definition. Consequently, we have $\mc{S}_{\otimes}\subseteq\mc{S}_{\1}=\mc{S}\subsetneq \mod{\T^{\c}}$.

For (\ref{item:orderpreserving}), we have $f\in\mc{S}_{A}$ if and only if $\y A\otimes f\in\mc{S}\subseteq \mc{R}$, hence $\mc{S}_{A}\subseteq \mc{R}_{A}$ for all $A\in\T^{\c}$, and taking intersections gives $\mc{S}_{\otimes}\subseteq\mc{R}_{\otimes}$.
\end{proof}

The above provides a way to construct a Serre $\otimes$-ideal from a $\Sigma$-invariant Serre subcategory. The following definition gives a way to go in the other direction.

\begin{defn}
Let $\mc{B}$ be a Serre $\otimes$-ideal of $\mod{\T^{\c}}$. Define
\[
\mc{B}_{\Sigma}=\{f\in\mod{\T^{\c}}:\Hom(\y \1, \Sigma^i f\otimes \y E_{\mc{B}})=0 \t{ for all $i \in \Z$}\}.
\]
\end{defn}

\begin{lem}\label{lem:shiftisationisshiftclosed}
Let $\mc{B}$ be a Serre $\otimes$-ideal of $\mod{\T^\c}$. The set $\mc{B}_{\Sigma}$ is a $\Sigma$-invariant Serre subcategory, and $\mc{B}\subseteq\mc{B}_{\Sigma}$. In particular, if $\mc{B} \neq \{0\}$ then $\mc{B}_{\Sigma}\neq \{0\}$.
\end{lem}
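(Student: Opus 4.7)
The plan is to verify the three assertions in turn. The key observation is that $\mc{B}_{\Sigma}$ can be written as $\bigcap_{i\in\Z}\msf{ker}(F_{i})$, where $F_{i}\colon\mod{\T^{\c}}\to\msf{Ab}$ is the functor $f\mapsto\Hom(\y\1,\Sigma^{i}f\otimes\y E_{\mc{B}})$. To establish that $\mc{B}_{\Sigma}$ is Serre, I would argue that each $F_{i}$ is exact and so its kernel is Serre, making the intersection Serre as well. Exactness of $F_{i}$ decomposes into three pieces: $\Sigma^{i}$ is an equivalence, tensoring with $\y E_{\mc{B}}$ is exact because $\y E_{\mc{B}}$ lies in the image of the restricted Yoneda embedding and is therefore flat in $\Mod{\T^{\c}}$, and $\Hom(\y\1,-)$ is exact because $\y\1$ is the representable functor at the tensor unit and so is a finitely generated projective object in $\Mod{\T^{\c}}$ by the Yoneda lemma.

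The $\Sigma$-invariance of $\mc{B}_{\Sigma}$ is then immediate from the definition: using that $F_{i}(\Sigma f)=F_{i+1}(f)$, we see that the defining condition is preserved by $\Sigma$ since $i$ ranges over all of $\Z$.

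For the inclusion $\mc{B}\subseteq\mc{B}_{\Sigma}$, the plan is to exploit the characterisation of $\mc{B}$ via $E_{\mc{B}}$ recalled in \cref{tensorhomologicalprimes}, namely $\mc{B}=\{f\in\mod{\T^{\c}}:f\otimes\y E_{\mc{B}}=0\}$. If $f\in\mc{B}$ then $f\otimes\y E_{\mc{B}}=0$, and since the shift on $\Mod{\T^{\c}}$ commutes with the tensor product (as $\y$ is strong monoidal and commutes with $\Sigma$), we get $\Sigma^{i}f\otimes\y E_{\mc{B}}\cong\Sigma^{i}(f\otimes\y E_{\mc{B}})=0$ for every $i\in\Z$. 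Applying $\Hom(\y\1,-)$ yields $F_{i}(f)=0$ for all $i$, so $f\in\mc{B}_{\Sigma}$. The final assertion is then immediate: if $\mc{B}\neq\{0\}$, it contains a non-zero object which by the inclusion also belongs to $\mc{B}_{\Sigma}$.

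There is no substantive obstacle in this proof, it is bookkeeping with exactness properties. The only point requiring care is confirming that $\y\1$ is projective in the functor category (as opposed to only in $\mod{\T^{\c}}$) so that $\Hom(\y\1,-)$ is genuinely exact on $\mod{\T^{\c}}$, but this is standard from Yoneda.
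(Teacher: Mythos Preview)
Your proof is correct and follows essentially the same approach as the paper. The only cosmetic difference is that the paper packages the family $\{F_i\}_{i\in\Z}$ into a single exact functor $\Hom(\y\1,-\otimes\prod_{i\in\Z}\Sigma^{i}\y E_{\mc{B}})$ using flatness of the product, whereas you take the intersection of the individual kernels; both arguments rest on the same exactness ingredients and the same use of the characterisation $\mc{B}=\msf{ker}(-\otimes\y E_{\mc{B}})\cap\mod{\T^{\c}}$.
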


\begin{proof}
Since $\y\Sigma^{i}E_{\mc{B}}$ is a flat functor, so is $\prod_{i\in\Z}\y\Sigma^{i}E_{\mc{B}}$. In particular, the composition 
\[\Hom(\y\1, -\otimes \prod_{i \in \Z}\Sigma^i\y E_{\mc{B}})\colon\mod{\T^{\c}}\to \ab
\]
is an exact functor, hence its kernel, which is $\mc{B}_{\Sigma}$, is Serre. As the product is taken over all shifts, it is immediate that $\mc{B}_{\Sigma}$ is $\Sigma$-invariant.
Since $\mc{B}=\msf{ker}(-\otimes \y E_{\mc{B}}) \cap \mod{\T^\c}$ by \cref{tensorhomologicalprimes}, we see that if $f\in\mc{B}$, then we also have $f\otimes \Sigma^i\y E_{\mc{B}}=0$ for all $i \in \Z$, and thus $f\in\mc{B}_{\Sigma}$.
\end{proof}

\begin{lem}\label{lem:recovertensorfromshift}
Let $\mc{B}$ be a Serre $\otimes$-ideal of $\mod{\T^\c}$. Then $(\mc{B}_{\Sigma})_{\otimes}=\mc{B}$, and moreover, if $\mc{B}$ is proper then $\mc{B}_{\Sigma}$ is proper.
\end{lem}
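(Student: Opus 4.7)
The strategy is to unfold both operations in terms of the object $E_{\mc{B}}$ and use dualisability of objects in the image of $\y$ to reduce to the characterisation of $\mc{B}$ recalled in \cref{tensorhomologicalprimes}.

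First I would expand the definition of $(\mc{B}_{\Sigma})_{\otimes}$. By construction, $f \in (\mc{B}_{\Sigma})_{\otimes}$ if and only if for every compact $A \in \T^{\c}$ we have $\y A \otimes f \in \mc{B}_{\Sigma}$, which in turn is the condition
\[
\Hom(\y \1,\, \Sigma^{i}(\y A \otimes f) \otimes \y E_{\mc{B}}) = 0 \quad \text{for all } i \in \Z.
\]
Since $\Sigma$ on $\Mod{\T^{\c}}$ commutes with $\y$, we have $\Sigma^{i}(\y A \otimes f) = \y(\Sigma^{i}A) \otimes f$; as $\{\Sigma^{i}A : A \in \T^{\c}, i \in \Z\} = \T^{\c}$, the quantification over $A$ and $i$ collapses to a single quantifier over $A \in \T^{\c}$.

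The next step is to use that $\y A$ is dualisable in $\Mod{\T^{\c}}$ with dual $D\y A \cong \y DA$. Applying the tensor–hom adjunction and then the Yoneda lemma yields
\[
\Hom(\y \1,\, \y A \otimes f \otimes \y E_{\mc{B}}) \;\cong\; \Hom(\y DA,\, f \otimes \y E_{\mc{B}}) \;\cong\; (f \otimes \y E_{\mc{B}})(DA).
\]
As $A$ runs over $\T^{\c}$, so does $DA$, and therefore the vanishing of these evaluations for all $A$ is equivalent to $f \otimes \y E_{\mc{B}} = 0$ as a functor on $\T^{\c}$. By the recollection in \cref{tensorhomologicalprimes}, this is exactly the condition $f \in \mc{B}$, so $(\mc{B}_{\Sigma})_{\otimes} = \mc{B}$.

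The properness claim follows by a short contrapositive argument. If $\mc{B}_{\Sigma} = \mod{\T^{\c}}$, then $\y A \otimes f \in \mc{B}_{\Sigma}$ holds trivially for every $A, f$, so $(\mc{B}_{\Sigma})_{\otimes} = \mod{\T^{\c}}$; combined with the equality already established, this forces $\mc{B} = \mod{\T^{\c}}$. The main potential obstacle is the book-keeping with Day convolution and the non-closedness of $\y$, but since $\y A$ is compact (hence dualisable in $\Mod{\T^\c}$) the canonical comparison maps used above are genuine isomorphisms, so no difficulty arises.
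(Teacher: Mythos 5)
Your proof is correct and follows essentially the same route as the paper: unfold the definition of $(\mc{B}_{\Sigma})_{\otimes}$, use dualisability of $\y A$ to rewrite $\Hom(\y\1,\y A\otimes f\otimes \y E_{\mc{B}})$ as $\Hom(\y DA, f\otimes \y E_{\mc{B}})$, conclude $f\otimes\y E_{\mc{B}}=0$ since $DA$ ranges over all compacts, and then invoke the characterisation of $\mc{B}$ via $E_{\mc{B}}$, with the same contrapositive argument for properness. The only cosmetic differences are that you absorb the shift quantifier into the quantifier over $A$ and make the Yoneda-evaluation step explicit, where the paper keeps both quantifiers and cites compact generation.
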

\begin{proof}
By definition, we have $f \in (\mc{B}_\Sigma)_\otimes$ if and only if $\y A \otimes f \in \mc{B}_\Sigma$ for all $A \in \T^\c$, if and only if $\Hom(\y\1, \Sigma^i\y A\otimes f\otimes  \y E_{\mc{B}})=0$ for all $A \in \T^\c$ and $i \in \Z$. Now
\[
\Hom(\y\1, \Sigma^i\y A\otimes f\otimes \y E_{\mc{B}}) \simeq \Hom(\y DA,\Sigma^if\otimes \y E_{\mc{B}}).
\]
Therefore by compact generation, $f \in (\mc{B}_\Sigma)_\otimes$ if and only if $f \otimes \y E_\mc{B} = 0$, which in turn is equivalent to $f \in \mc{B}$, as required. To see that $\mc{B}_{\Sigma}$ is proper if $\mc{B}$ is, if one had $\mc{B}_{\Sigma}=\mod{\T^{\c}}$ then one would have $\mc{B} = (\mc{B}_{\Sigma})_{\otimes}=\mod{\T^{\c}}$ as well.
\end{proof}

We now show that the homological spectrum can be seen by $\shspec{\T^{\c}}$ when $\T$ has a single compact generator.
\begin{lem}\label{lem:downtohomological}
Let $\T$ be a rigidly-compactly generated tensor-triangulated category such that $\T^\c = \msf{thick}(G)$ for some $G \in \T^\c$. Then for any $\mc{B}\in\hspec{\T^{\c}}$ there exists an $\mc{S}\in\shspec{\T^{\c}}$ such that $\mc{B}=\mc{S}_{\otimes}$.
\end{lem}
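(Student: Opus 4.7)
The plan is to promote $\mc{B}$ to a shift-homological prime by first passing through the $(-)_\Sigma$ construction and then invoking the Zorn-type result from \cref{prop:containedinmaxserre}, which requires precisely the monogenicity hypothesis on $\T^\c$.

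Concretely, I would start with the $\Sigma$-invariant Serre subcategory $\mc{B}_\Sigma \subseteq \mod{\T^\c}$. By \cref{lem:shiftisationisshiftclosed} this is genuinely $\Sigma$-invariant and Serre, and by \cref{lem:recovertensorfromshift} it is proper (since $\mc{B}$ is proper, being a maximal, hence in particular proper, Serre $\otimes$-ideal) and satisfies $(\mc{B}_\Sigma)_\otimes = \mc{B}$. Because $\T^\c = \msf{thick}(G)$, \cref{prop:containedinmaxserre} applies, so $\mc{B}_\Sigma$ is contained in some shift-homological prime $\mc{S} \in \shspec{\T^\c}$.

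Next I would check that this $\mc{S}$ does the job. From $\mc{B}_\Sigma \subseteq \mc{S}$ and the monotonicity of the $(-)_\otimes$ operation established in \cref{lem:propsofstensor}(\ref{item:orderpreserving}), we obtain
\[
\mc{B} = (\mc{B}_\Sigma)_\otimes \subseteq \mc{S}_\otimes.
\]
By \cref{lem:propsofstensor}(\ref{item:ideal}) and (\ref{item:proper}), $\mc{S}_\otimes$ is a proper Serre $\otimes$-ideal of $\mod{\T^\c}$. Now the maximality of $\mc{B}$ among proper Serre $\otimes$-ideals forces $\mc{B} = \mc{S}_\otimes$, as desired.

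The argument is essentially a soft package: the real content lives in \cref{lem:recovertensorfromshift} (showing $(-)_\Sigma$ is a one-sided inverse to $(-)_\otimes$) and in \cref{prop:containedinmaxserre} (the existence of maximal $\Sigma$-invariant Serre supercategories, which is where monogenicity of $\T^\c$ is used). I do not anticipate any genuine obstacle here; the only point to be slightly careful about is that one must verify $\mc{B}_\Sigma$ is \emph{proper} before appealing to Zorn, which is exactly the last sentence of \cref{lem:recovertensorfromshift}.
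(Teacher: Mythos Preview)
Your proposal is correct and follows essentially the same argument as the paper: pass to $\mc{B}_\Sigma$, use \cref{prop:containedinmaxserre} to embed it in some $\mc{S}\in\shspec{\T^\c}$, then apply monotonicity and properness of $(-)_\otimes$ together with maximality of $\mc{B}$ to conclude $\mc{B}=\mc{S}_\otimes$. The references you cite and the logical flow match the paper's proof almost line for line.
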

\begin{proof}
Let $\mc{B}\in\hspec{\T^{\c}}$, then by \cref{lem:shiftisationisshiftclosed,lem:recovertensorfromshift} we have that $\mc{B}_{\Sigma}$ is a proper $\Sigma$-invariant Serre subcategory of $\mod{\T^{\c}}$. Since we assumed that $\T^\c$ has a single generator, we may apply \cref{prop:containedinmaxserre} to obtain an inclusion $\mc{B}_{\Sigma}\subseteq\mc{S}$ for some $\mc{S}\in\shspec{\T^{\c}}$. Applying \cref{lem:propsofstensor}(\ref{item:orderpreserving}), we deduce that $(\mc{B}_{\Sigma})_{\otimes}\subseteq\mc{S}_{\otimes}$, yet by \cref{lem:propsofstensor}(\ref{item:proper}) we have that $\mc{S}_{\otimes}\neq\mod{\T^{\c}}$, and by \cref{lem:recovertensorfromshift} we have that $(\mc{B}_{\Sigma})_{\otimes}=\mc{B}$. Therefore we have $\mc{B}\subseteq\mc{S}_{\otimes}\subsetneq\mod{\T^{\c}}$, so by the maximality of $\mc{B}$ it follows that $\mc{B} = \mc{S}_{\otimes}$.  
\end{proof} 

The preceding lemma is the key ingredient in the following result, which relates the shift-spectrum and the Balmer spectrum. We say that a rigidly-compactly generated tensor-triangulated category $\T$ is \emph{monogenic} if its tensor unit $\1$ is a compact generator.
\begin{thm}\label{thm:maptobalmer}
Let $\T$ be a monogenic rigidly-compactly generated tensor-triangulated category. Then every Balmer prime is a shift-prime, and the inclusion
\[
\msf{Spc}(\T^{\c})\hookrightarrow\sspec{\T^{\c}},
\]
is continuous.
\end{thm}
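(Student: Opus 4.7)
The plan is to prove that every Balmer prime is a shift-prime by a two-step lift through the homological spectrum, and to check continuity directly on the basis of supports. Given $\p \in \msf{Spc}(\T^{\c})$, I would first invoke the surjectivity of the homological comparison map $\y^{-1}\colon\hspec{\T^{\c}}\to\msf{Spc}(\T^{\c})$ (which holds in the rigidly-compactly generated setting by \cite{Balmernilpotence, bks}, as already recalled in the introduction) to produce a homological prime $\mc{B}\in\hspec{\T^{\c}}$ with $\y^{-1}\mc{B}=\p$. The monogenicity hypothesis then allows the application of \cref{lem:downtohomological}, yielding some $\mc{S}\in\shspec{\T^{\c}}$ with $\mc{B}=\mc{S}_{\otimes}$. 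It then suffices to verify $\y^{-1}\mc{S}=\y^{-1}\mc{S}_{\otimes}=\p$. The inclusion $\y^{-1}\mc{S}_{\otimes}\subseteq \y^{-1}\mc{S}$ is immediate from $\mc{S}_{\otimes}\subseteq\mc{S}_{\1}=\mc{S}$.

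For the reverse inclusion, I would fix $A\in\T^{\c}$ with $\y A\in\mc{S}$ and consider
\[
\mc{K}_{A}=\{B\in\T^{\c}:\y(B\otimes A)\in\mc{S}\}.
\]
The crux is to show that $\mc{K}_{A}$ is a thick subcategory of $\T^{\c}$. Closure under shifts follows from the $\Sigma$-invariance of $\mc{S}$ combined with $\y\Sigma=\Sigma\y$ and the compatibility of $\Sigma$ with $\otimes$; closure under summands follows from $\mc{S}$ being closed under subobjects. For cofibers, given a triangle $B_{1}\to B_{2}\to B_{3}$ in $\T^{\c}$ with $B_{1},B_{2}\in\mc{K}_{A}$, tensoring with $A$ produces a triangle in $\T^{\c}$, which after applying $\y$ yields a long exact sequence in $\mod{\T^{\c}}$. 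Setting $K=\msf{im}(\y(B_{1}\otimes A)\to \y(B_{2}\otimes A))$, exactness at $\y(B_{2}\otimes A)$ realises $K$ as a quotient of $\y(B_{1}\otimes A)\in\mc{S}$, and exactness at $\y(B_{3}\otimes A)$ realises $\y(B_{3}\otimes A)/\msf{im}(\y(B_{2}\otimes A)\to\y(B_{3}\otimes A))$ as a subobject of $\y\Sigma(B_{1}\otimes A)\in\mc{S}$; assembling these via extension closure of $\mc{S}$ shows $\y(B_{3}\otimes A)\in\mc{S}$, so $B_{3}\in\mc{K}_{A}$. Trivially $\1\in\mc{K}_{A}$, and since $\T^{\c}=\msf{thick}(\1)$ by the monogenicity hypothesis, this forces $\mc{K}_{A}=\T^{\c}$. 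Thus $\y B\otimes\y A\in\mc{S}$ for every $B\in\T^{\c}$, i.e.\ $\y A\in\mc{S}_{\otimes}$, proving the remaining inclusion.

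For continuity of the resulting inclusion $\iota\colon \msf{Spc}(\T^{\c})\hookrightarrow\sspec{\T^{\c}}$, it is enough to check preimages of the basic closed sets: for any $C\in\T^{\c}$,
\[
\iota^{-1}(\ssupp{C})=\{\p\in\msf{Spc}(\T^{\c}):C\notin\p\}=\supp(C),
\]
which is precisely the basic closed Balmer support, hence closed. The main technical obstacle in the whole argument is the cofiber step in the thickness of $\mc{K}_{A}$: because $\y$ on triangles only produces a long exact sequence (not a short exact sequence) in $\mod{\T^{\c}}$, one has to split it into image, kernel, and cokernel pieces and reassemble them using the subobject, quotient, and extension closure of $\mc{S}$; this two-out-of-three style reasoning is what ultimately powers the transition from the naive Serre condition encoded by $\mc{S}$ to the tensor-ideal condition defining $\mc{S}_{\otimes}$.
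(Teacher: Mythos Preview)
Your proposal is correct and follows essentially the same route as the paper: lift $\p$ to a homological prime $\mc{B}$, apply \cref{lem:downtohomological} to write $\mc{B}=\mc{S}_{\otimes}$ for some $\mc{S}\in\shspec{\T^{\c}}$, and then verify $\y^{-1}\mc{S}=\y^{-1}\mc{S}_{\otimes}$ via the thick-subcategory argument on the set $\{B\in\T^{\c}:\y(B\otimes A)\in\mc{S}\}$ (the paper's $\msf{L}_{X}$ with the roles of the two variables swapped). The only difference is that the paper dispatches thickness in one line by noting that $\y A\otimes -$ is exact on $\mod{\T^{\c}}$ (so $(\y A\otimes -)^{-1}\mc{S}$ is a $\Sigma$-invariant Serre subcategory, and its $\y$-preimage is automatically thick), whereas you unpack the long exact sequence by hand; both arguments are valid, and yours has the virtue of being self-contained.
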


\begin{proof}
Let $\p$ be a Balmer prime so that $\p=\y^{-1}\mc{B}$ for some homological prime $\mc{B} \in \hspec{\T^\c}$ by \cite[Corollary 3.9]{Balmernilpotence}. By \cref{lem:downtohomological} there is a shift-homological prime $\mc{S}\in\shspec{\T^{\c}}$ such that $\mc{B}=\mc{S}_{\otimes}$. We claim that $\y^{-1}\mc{S}=\y^{-1}\mc{S}_{\otimes}$ so that $\y^{-1}\mc{S}=\mf{p}$. Since $\mc{S}_{\otimes}\subseteq\mc{S}=\mc{S}_{\1}$, it is clear that $\y^{-1}\mc{S}_{\otimes}\subseteq\y^{-1}\mc{S}$. On the other hand, let $X\in\y^{-1}\mc{S}$ and consider $\msf{L}_{X}=\{A\in\T^{\c}:\y A\otimes\y X\in\mc{S}\}$. Since $\mc{S}$ is a $\Sigma$-invariant Serre subcategory, we have that $\msf{L}_{X}$ is a thick subcategory and contains the unit by assumption on $X$. Thus $\msf{L}_{X}=\T^{\c}$, so $\y A\otimes \y X\in\mc{S}$ for all $A\in\T^{\c}$, that is, $\y X\in\mc{S}_{\otimes}$. Therefore $\p = \y^{-1}\mc{B} = \y^{-1}(\mc{S}_\otimes) = \y^{-1}\mc{S}$ is a shift-prime.
That the inclusion map $\msf{Spc}(\T^\c) \hookrightarrow \sspec{\T^\c}$ is continuous is immediate from the definitions.
\end{proof}

\begin{rem}
    Note that the continuous injection $\tau\colon \Spc(\T^\c) \to \sspec{\T^\c}$ is a homeomorphism onto its image since $\tau(\supp(A)) = \ssupp{A} \cap \tau(\Spc(\T^\c))$ for any $A \in \T^\c$. As such $\tau$ is a homeomorphism if and only if it is surjective. It is natural to ask whether this always holds. If $\T$ is not generated by its tensor unit, then it is easy to construct counterexamples. For example, $\msf{D}^{\t{b}}(\msf{coh}\,\mbb{P}^{1}_{k})$ is equivalent to $\D^{\mrm{b}}(\mod{\Lambda})$, where $\Lambda$ is the path algebra of the Kronecker quiver. We show in \cref{th:kronecker} that $\sspec{\D^{\mrm{b}}(\mod{\Lambda})}=\Z\sqcup \mbb{P}^{1}_{k}$, while $\msf{Spc}(\msf{D}^{\t{b}}(\msf{coh}\,\mbb{P}^{1}_{k}))=\mbb{P}^{1}_{k}$, see \cite[Theorem 6.3]{Balmer}.
\end{rem}

\begin{ex}\label{tthomeo}
Suppose that $\T$ is monogenic. If $\Spc(\T^\c)$ is a point, then $\tau\colon \Spc(\T^\c) \to \sspec{\T^\c}$ is a homeomorphism since $\T^\c$ has no non-zero proper thick subcategories. For example, this holds if $\T$ is a tt-field, or if $\T = \msf{D}(R)$ for a commutative Artinian local ring $R$. 
\end{ex}

\subsection{Radical thick subcategories}

As we saw in \cref{thm:maptobalmer}, the space $\sspec{\T^{\c}}$ can be seen, in some sense, as a non-monoidal analogue of the Balmer spectrum. Since the main role of the Balmer spectrum is to classify radical thick $\otimes$-ideals, we now investigate how $\sspec{\T^{\c}}$ can be used to classify certain thick subcategories of $\T^{\c}$.

\begin{defn}
If $\msf{L}\subseteq\T$ is a thick subcategory, define its \emph{radical} to be
\[
\sqrt{\msf{L}}=\bigcap_{\substack{\msf{P}\in\sspec{\T^{\c}}\\ \msf{L}\subseteq\msf{P}}} \msf{P},
\]
and say that $\msf{L}$ is radical if $\msf{L}=\sqrt{\msf{L}}$.
\end{defn}
We note that, as an empty intersection, the whole category $\T^{\c}$ is radical, as is any $\msf{P}\in\sspec{\T^{\c}}$. Furthermore, there is a clear inclusion $\msf{L}\subseteq\sqrt{\msf{L}}$ for a thick subcategory $\msf{L}$. We let $\msf{RadThick}(\T^{\c})$ denote the set of radical thick subcategories of $\T^{\c}$.

\begin{chunk}
Given a thick subcategory $\msf{L}$ of $\T^\c$, we define its support by
\[\ssupp{\msf{L}} := \bigcup_{A \in \msf{L}} \ssupp{A} = \{\msf{P} \in \sspec{\T^\c} : \msf{L} \not\subseteq \msf{P}\}.\] 
Note that for any object $A\in\T^{\c}$ we have $\ssupp{A}= \ssupp{\msf{thick}(A)}$.
As each $\ssupp{A}$ is a closed set in $\sspec{\T^{\c}}$, we see that $\ssupp{\msf{L}}$ is a specialisation closed subset of $\sspec{\T^\c}$. We let $\msf{SC}(\sspec{\T^{\c}})$ denote the lattice of specialisation closed subsets of $\sspec{\T^\c}$. This has joins given by union, and meets given by the specialisation closure of the intersection. (Note that finite meets are just given by the intersection.)
\end{chunk}

\begin{chunk}
We write $\Thick(\T^\c)$ for the lattice of thick subcategories of $\T^\c$ and recall that this has arbitrary joins and meets given by $\bigvee_{i \in I} \msf{L}_i = \msf{thick}(\bigcup_{i \in I} \msf{L}_i)$ and $\bigwedge_{i \in I} \msf{L}_i = \bigcap_{i \in I} \msf{L}_i$. Note that $\supp_\Sigma\colon \Thick(\T^\c) \to \msf{SC}(\sspec{\T^\c})$ is a poset map which preserves arbitrary joins by construction. 
\end{chunk}

\begin{ex}\label{ex:suppmeets}
While $\ssupp{-}\colon\Thick(\T^{\c})\to\msf{SC}(\sspec{\T^{\c}})$ preserves arbitrary joins, it does not preserve finite meets in general. For a concrete example, take $\T=\D(kA_{2})$. We will show in \cref{ex:psshered} that $\sspec{\D^{\mrm{b}}(kA_{2})}=\{\msf{thick}(P_{1}), 
\msf{thick}(P_{2}), \msf{thick}(S_{2})\}$ with the discrete topology. Note that $P_{1}, P_{2}$ and $S_{2}$ are the indecomposable finite dimensional modules over $kA_{2}$ up to isomorphism, where $P_{1}$ and $P_{2}$ are indecomposable projective and $S_2$ is the simple injective module. Moreover, for any $\msf{P} \in \sspec{\msf{D}^\msf{b}(kA_2)}$ we have $\ssupp{\msf{P}} = \sspec{\msf{D}^\msf{b}(kA_2)}\backslash\{\msf{P}\}$. Therefore for distinct points $\msf{P},\msf{P}'$ in $\sspec{\msf{D}^\msf{b}(kA_2)}$, we have 
\[\ssupp{\msf{P}} \cap \ssupp{\msf{P}'} = \{\msf{P}\}^{\c} \cap \{\msf{P}'\}^{\c} = \{\msf{P}\cup\msf{P}'\}^{\c}\] whereas $\ssupp{\msf{P} \cap \msf{P}'} = \ssupp{0} = \varnothing$.
\end{ex}

We can see how the support of a thick subcategory is only dependent on its radical.

\begin{lem}\label{supportofradical}
    Let $\msf{L}$ be a thick subcategory of $\T^\c$. Then $\ssupp{\msf{L}} = \ssupp{\sqrt{\msf{L}}}$.
\end{lem}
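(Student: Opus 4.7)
The plan is to prove both inclusions directly from the definitions; no heavy machinery is needed since the statement is essentially formal, following the same pattern as the analogous fact for the Balmer spectrum and radical $\otimes$-ideals.

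First I would establish $\ssupp{\msf{L}} \subseteq \ssupp{\sqrt{\msf{L}}}$. Suppose $\msf{P} \in \ssupp{\msf{L}}$, meaning $\msf{L} \not\subseteq \msf{P}$. From the inclusion $\msf{L} \subseteq \sqrt{\msf{L}}$, which is immediate from the definition of the radical, it follows that if $\sqrt{\msf{L}} \subseteq \msf{P}$ then also $\msf{L} \subseteq \msf{P}$, a contradiction. Hence $\sqrt{\msf{L}} \not\subseteq \msf{P}$, and so $\msf{P} \in \ssupp{\sqrt{\msf{L}}}$.

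For the reverse inclusion, I would argue by contrapositive. If $\msf{P} \in \sspec{\T^\c}$ satisfies $\msf{L} \subseteq \msf{P}$, then $\msf{P}$ is one of the shift-primes appearing in the intersection defining $\sqrt{\msf{L}}$, so $\sqrt{\msf{L}} \subseteq \msf{P}$. Taking contrapositives, if $\sqrt{\msf{L}} \not\subseteq \msf{P}$ then $\msf{L} \not\subseteq \msf{P}$, which gives $\ssupp{\sqrt{\msf{L}}} \subseteq \ssupp{\msf{L}}$. Combining the two inclusions yields the equality, so there is essentially no obstacle here; the result is a direct consequence of the definitions of the support and of the radical.
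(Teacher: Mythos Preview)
Your proof is correct and follows essentially the same approach as the paper: one inclusion comes from $\msf{L} \subseteq \sqrt{\msf{L}}$, and the other from the observation that $\msf{L} \subseteq \msf{P}$ implies $\sqrt{\msf{L}} \subseteq \msf{P}$ by definition of the radical. The paper phrases the second step in terms of complements rather than contrapositives, but this is only a cosmetic difference.
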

\begin{proof}
   Since $\msf{L} \subseteq \sqrt{\msf{L}}$, we have $\ssupp{\msf{L}} \subseteq \ssupp{\sqrt{\msf{L}}}$. By definition of the radical, if $\msf{L} \subseteq \msf{P}$, then $\sqrt{\msf{L}} \subseteq \msf{P}$, so $\ssupp{\msf{L}}^{\c} \subseteq \ssupp{\sqrt{\msf{L}}}^{\c}$ and we get the equality.
\end{proof}

\begin{chunk}
Given a subset $U$ of $\sspec{\T^\c}$, we define a thick subcategory $\Psi(U)$ of $\T^\c$ by 
\[\Psi(U) := \{C \in \T^\c : \ssupp{C} \subseteq U\}.\]
The following lemma collects some properties of $\Psi$.
\end{chunk}

\begin{lem}\label{lem:propsofPsi}
Let $U$ be a subset of $\sspec{\T^\c}$. 

\begin{enumerate}
    \item\label{radicaliscomposite} $\Psi(U) = \bigcap_{\msf{P} \in U^\c} \msf{P}$ and in particular, $\Psi(\ssupp{\msf{L}}) = \sqrt{\msf{L}}$ for any thick subcategory $\msf{L}$ of $\T^\c$.

    \item\label{psiisradical} $\Psi(U)$ is a radical thick subcategory of $\T^\c$.
\end{enumerate}
\end{lem}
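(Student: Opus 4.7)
The whole lemma is a matter of unwinding definitions and swapping quantifiers, so the plan is to treat each part in turn.

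For part~(\ref{radicaliscomposite}), the strategy is to chase through the definitions. By definition, $C\in\Psi(U)$ if and only if $\ssupp{C}\subseteq U$, which by the definition of $\ssupp{C}$ is equivalent to saying that for every $\msf{P}\in\sspec{\T^\c}$ with $C\notin\msf{P}$ one has $\msf{P}\in U$. Contrapositively, this says that for every $\msf{P}\in U^\c$ we have $C\in\msf{P}$, i.e.\ $C\in\bigcap_{\msf{P}\in U^\c}\msf{P}$. The ``in particular'' statement is then immediate: taking $U=\ssupp{\msf{L}}$, its complement consists precisely of the $\msf{P}\in\sspec{\T^\c}$ with $\msf{L}\subseteq\msf{P}$, and so the intersection formula collapses exactly to the definition of $\sqrt{\msf{L}}$.

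For part~(\ref{psiisradical}), the plan is to use part~(\ref{radicaliscomposite}) to exhibit $\Psi(U)$ as an intersection of shift-primes, which is then automatically thick (intersections of thick subcategories are thick). For radicality, we always have $\Psi(U)\subseteq\sqrt{\Psi(U)}$, so the only thing to check is the reverse containment. But every $\msf{P}\in U^\c$ appears in the intersection defining $\Psi(U)$, hence contains $\Psi(U)$, and therefore appears among the shift-primes over which $\sqrt{\Psi(U)}=\bigcap_{\Psi(U)\subseteq\msf{Q}}\msf{Q}$ is taken. Consequently
\[
\sqrt{\Psi(U)} \;\subseteq\; \bigcap_{\msf{P}\in U^\c}\msf{P} \;=\; \Psi(U),
\]
giving equality.

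Neither part presents a genuine obstacle; the only mild subtlety is remembering to use the complement $U^\c$ correctly when translating ``$\ssupp{C}\subseteq U$'' into a statement about membership in primes, and in particular noting that $U$ itself need not be specialisation closed, but this plays no role since the argument only uses that $\Psi(U)$ can be rewritten as an intersection of shift-primes.
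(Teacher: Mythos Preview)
Your proof is correct and follows essentially the same approach as the paper. The only minor variation is in part~(\ref{psiisradical}): the paper rewrites $\sqrt{\Psi(U)}$ as $\Psi(\ssupp{\Psi(U)})$ via part~(\ref{radicaliscomposite}) and then observes $\ssupp{\Psi(U)}\subseteq U$, whereas you argue directly from the intersection formula that every $\msf{P}\in U^\c$ contains $\Psi(U)$ and hence appears in the intersection defining $\sqrt{\Psi(U)}$; these are two phrasings of the same elementary observation.
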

\begin{proof}
Repeating the proof of \cite[Lemma 4.8]{Balmer} verbatim gives $\Psi(U) = \bigcap_{\msf{P} \in U^\c} \msf{P}$. The second part of (\ref{radicaliscomposite}) follows from this by definition of a radical thick subcategory.

By (\ref{radicaliscomposite}), $\Psi(U) \subseteq \sqrt{\Psi(U)}= \Psi(\ssupp{\Psi(U)})$. Conversely, we have \[\Psi(\ssupp{\Psi(U)}) = \{A \in \T^\c : \ssupp{A} \subseteq \bigcup_{\ssupp{B} \subseteq U} \ssupp{B}\} \subseteq \{A \in \T^\c : \ssupp{A} \subseteq U\} = \Psi(U),\] so $\Psi(U) = \Psi(\ssupp{\Psi(U)})$ which gives (\ref{psiisradical}).
\end{proof}

We may now assemble these preliminaries into a classification result. Following~\cite{MatsuiTakahashi}, we define the parameter set
\[\msf{Param}(\sspec{\T^\c}) = \{\ssupp{\msf{L}} : \msf{L} \in \msf{RadThick}(\T^\c)\}.\] Note that by \cref{supportofradical} one may run over all thick subcategories or only over radical thick subcategories. From this definition, the following result is expected.

\begin{prop}\label{bijectionradical}
    The assignments     \[
    \begin{tikzcd}[column sep=2cm]
        \msf{RadThick}(\T^\c) \ar[r, "\supp_\Sigma", yshift=1mm] & \mathsf{Param}(\sspec{\T^\c}) \ar[l, "\Psi", yshift=-1mm]
    \end{tikzcd}
    \] are mutually inverse bijections.
\end{prop}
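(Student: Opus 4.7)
The proof is essentially a direct consequence of \cref{lem:propsofPsi} together with the definition of $\msf{Param}(\sspec{\T^\c})$, so I would simply assemble the pieces and verify the two round-trip identities.

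First I would check that both maps are well-defined between the claimed sets. For $\supp_\Sigma$, this is immediate from the definition of $\msf{Param}(\sspec{\T^\c})$: applying $\supp_\Sigma$ to a radical thick subcategory produces, by definition, an element of $\msf{Param}(\sspec{\T^\c})$. For $\Psi$, part (\ref{psiisradical}) of \cref{lem:propsofPsi} tells us that $\Psi(U)$ is a radical thick subcategory for any $U \subseteq \sspec{\T^\c}$, so in particular for $U \in \msf{Param}(\sspec{\T^\c})$.

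Next I would verify that the two composites are the identity. Starting with $\msf{L} \in \msf{RadThick}(\T^\c)$, part (\ref{radicaliscomposite}) of \cref{lem:propsofPsi} gives $\Psi(\ssupp{\msf{L}}) = \sqrt{\msf{L}}$, and since $\msf{L}$ is radical, $\sqrt{\msf{L}} = \msf{L}$, so $\Psi \circ \supp_\Sigma = \mrm{id}$. Conversely, starting with $U \in \msf{Param}(\sspec{\T^\c})$, write $U = \ssupp{\msf{L}}$ for some $\msf{L} \in \msf{RadThick}(\T^\c)$ (such an $\msf{L}$ exists by definition of $\msf{Param}$; we do not need uniqueness, since we are about to recover it). Then by the preceding calculation, $\Psi(U) = \sqrt{\msf{L}} = \msf{L}$, and hence $\ssupp{\Psi(U)} = \ssupp{\msf{L}} = U$.

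Since both composites are identities, the two maps are mutually inverse bijections, as required. There is no genuine obstacle here: all of the content has already been built into \cref{supportofradical} and \cref{lem:propsofPsi}; the statement is simply the Galois-type formalism one obtains once those two lemmas are in hand.
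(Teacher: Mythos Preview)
Your proof is correct and follows essentially the same approach as the paper's: both verify well-definedness via \cref{lem:propsofPsi}(\ref{psiisradical}) and then check the two round-trip identities using \cref{lem:propsofPsi}(\ref{radicaliscomposite}) together with $\sqrt{\msf{L}} = \msf{L}$ for radical $\msf{L}$.
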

\begin{proof}
    Firstly note that $\Psi$ is well-defined by \cref{lem:propsofPsi}(\ref{psiisradical}). Since taking the radical is idempotent, by \cref{lem:propsofPsi} the composition $\Psi \circ \msf{supp}_\Sigma$ is the identity when restricted to radical thick subcategories. Conversely, any element of $\mathsf{Param}(\sspec{\T^\c})$ is of the form $\ssupp{\msf{L}}$ for some radical thick subcategory $\msf{L} = \sqrt{\msf{L}}$, and we have $\ssupp{\Psi(\ssupp{\msf{L}})} = \ssupp{\sqrt{\msf{L}}} = \ssupp{\msf{L}}$ by \cref{lem:propsofPsi}(\ref{radicaliscomposite}).
\end{proof}

We also note the following corollary which provides partial information about the radical thick subcategories just from knowledge of the topology on $\sspec{\T^\c}$.

\begin{cor}\label{surjectionradical}
    The assignment \[\Psi\colon \msf{SC}(\sspec{\T^\c}) \to \msf{RadThick}(\T^\c)\] is surjective. Equivalently, $\msf{supp}_\Sigma\colon \msf{RadThick}(\T^\c) \to \msf{SC}(\sspec{\T^\c})$ is injective.
\end{cor}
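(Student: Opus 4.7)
The plan is to deduce this corollary directly from Proposition \ref{bijectionradical} together with Lemma \ref{lem:propsofPsi}, so no genuinely new content is needed. The key observation, which is the only thing we really must verify, is that $\msf{Param}(\sspec{\T^\c})$ is a subset of $\msf{SC}(\sspec{\T^\c})$. This holds because, for any thick subcategory $\msf{L}$, the support $\ssupp{\msf{L}} = \bigcup_{A\in\msf{L}}\ssupp{A}$ is a union of basic closed subsets of $\sspec{\T^\c}$ and hence is specialisation closed.

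Given this, surjectivity of $\Psi\colon\msf{SC}(\sspec{\T^\c})\to\msf{RadThick}(\T^\c)$ is a one-line check: for any $\msf{L}\in\msf{RadThick}(\T^\c)$ we have $\ssupp{\msf{L}}\in\msf{SC}(\sspec{\T^\c})$ by the above, and Lemma \ref{lem:propsofPsi}(\ref{radicaliscomposite}) then gives $\Psi(\ssupp{\msf{L}})=\sqrt{\msf{L}}=\msf{L}$.

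For the equivalence of the two statements, I would argue formally from the identity $\Psi\circ\supp_\Sigma=\mrm{id}_{\msf{RadThick}(\T^\c)}$ (which is Lemma \ref{lem:propsofPsi}(\ref{radicaliscomposite}) specialised to radical thick subcategories). Since $\supp_\Sigma$ admits $\Psi$ as a left inverse it is injective, and dually, $\Psi$ admits $\supp_\Sigma$ as a right inverse and is therefore surjective; in other words both assertions follow immediately from this composite identity. There is no real obstacle: the corollary is just the statement that the bijection of Proposition \ref{bijectionradical} continues to behave well once one enlarges the codomain from $\msf{Param}(\sspec{\T^\c})$ to all of $\msf{SC}(\sspec{\T^\c})$.
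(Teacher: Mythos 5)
Your proposal is correct and matches the paper's own proof, which likewise derives the statement immediately from Proposition \ref{bijectionradical} together with the observation that every element of $\msf{Param}(\sspec{\T^\c})$ is specialisation closed; you have merely spelled out the same one-line argument (the identity $\Psi\circ\supp_\Sigma=\mrm{id}$ on radical thick subcategories) in more detail.
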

\begin{proof}
    This is immediate from \cref{bijectionradical} together with the fact that any element of the parameter set is specialisation closed.
\end{proof}

We obtain the following as an immediate consequence.
\begin{cor}\label{supportdeterminesbuilding}\leavevmode
\begin{enumerate}
    \item Let $X,Y \in \T^\c$. Then $\ssupp{X} \subseteq \ssupp{Y}$ if and only if $\msf{thick}(X) \subseteq \sqrt{\msf{thick}(Y)}$.
    \item If the zero thick subcategory is radical, then $\ssupp{C} = \varnothing$ implies $C \simeq 0$. \qed
\end{enumerate}
\end{cor}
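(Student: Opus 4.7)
The plan is to derive both statements essentially formally from \cref{supportofradical} and \cref{lem:propsofPsi}, so the proof will be short rather than involving any new ideas.

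For (1), I would first note the basic identity $\ssupp{X} = \ssupp{\msf{thick}(X)}$: the inclusion $\supseteq$ holds since every prime $\msf{P}$ containing $X$ contains $\msf{thick}(X)$ by thickness, while $\subseteq$ is trivial. Combined with \cref{supportofradical} this gives $\ssupp{X} = \ssupp{\sqrt{\msf{thick}(X)}}$ and similarly for $Y$. Next, observe that the assignment $\Psi$ of \cref{lem:propsofPsi} is order-preserving: if $U \subseteq V$ in $\sspec{\T^\c}$, then $V^\c \subseteq U^\c$, so $\bigcap_{\msf{P} \in U^\c} \msf{P} \subseteq \bigcap_{\msf{P} \in V^\c} \msf{P}$. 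Assuming $\ssupp{X} \subseteq \ssupp{Y}$, applying $\Psi$ and using $\Psi(\ssupp{\msf{L}}) = \sqrt{\msf{L}}$ from \cref{lem:propsofPsi}(\ref{radicaliscomposite}) yields
\[
\msf{thick}(X) \subseteq \sqrt{\msf{thick}(X)} \subseteq \sqrt{\msf{thick}(Y)}.
\]
Conversely, if $\msf{thick}(X) \subseteq \sqrt{\msf{thick}(Y)}$, then applying $\ssupp{-}$ (which is order-preserving by construction) gives $\ssupp{X} \subseteq \ssupp{\sqrt{\msf{thick}(Y)}} = \ssupp{Y}$, using \cref{supportofradical} again.

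For (2), I would compute $\Psi(\varnothing)$ directly using \cref{lem:propsofPsi}(\ref{radicaliscomposite}): by definition of the radical this equals $\bigcap_{\msf{P} \in \sspec{\T^\c}} \msf{P} = \sqrt{\{0\}}$. If the zero thick subcategory is radical, then $\sqrt{\{0\}} = \{0\}$, so $\Psi(\varnothing) = \{0\}$. Since $\ssupp{C} = \varnothing$ forces $C \in \Psi(\varnothing)$ directly from the definition of $\Psi$, we conclude $C \simeq 0$.

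There is no real obstacle here: both parts are purely formal manipulations of the definitions and of the bijection established in \cref{bijectionradical}. The only minor point worth flagging in the writeup is the monotonicity of $\Psi$, which, while clear, does not appear as a separate lemma earlier and is what powers the forward direction of (1).
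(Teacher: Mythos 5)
Your argument is correct and is essentially the paper's own route: the corollary is stated there as an immediate consequence of \cref{supportofradical}, \cref{lem:propsofPsi} and the bijection of \cref{bijectionradical}, which is exactly what you unwind (including the harmless monotonicity of $\Psi$ and of $\supp_\Sigma$). Nothing further is needed.
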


\subsection{Characterisations of radical thick subcategories}\label{charofrad} 
In this subsection we provide conditions to determine whether a given thick subcategory is radical, and then give various examples to illustrate these.

\begin{chunk}
Let $\msf{L}\subseteq\T^{\c}$ be a thick subcategory. There is a recollement
\[
\begin{tikzcd}[column sep=1.5cm]
\msf{Loc}(\msf{L}) \arrow[r, hook, shift left = 6pt] \arrow[r, leftarrow] \arrow[r, hook, shift right = 6pt] & \T  \arrow[r, shift left= 6pt, "Q"] \arrow[r, hookleftarrow, "R" description] \arrow[r, shift right= 6pt] & \T/\msf{Loc}(\msf{L})
\end{tikzcd}
\]
associated to the smashing subcategory $\msf{Loc}(\msf{L})\subseteq\T$, see \cite[Theorem 5.6.1]{KrLocTheory}.
The category $\mc{D}_{\msf{L}}:=\msf{Loc}(\msf{L})^{\perp}$ is a compactly generated triangulated definable subcategory of $\T$ by \cite[Lemma 12.4]{krcq}, and $R$ gives an equivalence of categories $\T/\msf{Loc}(\msf{L}) \xrightarrow{\sim} \mc{D}_\msf{L}$. Note that $Q$ preserves compact objects since it is a finite localisation.
\end{chunk}

\begin{chunk}
The subset $\msf{Zg}^{\Sigma}(\mc{D}_{\msf{L}})=\mc{D}_{\msf{L}}\cap\msf{Zg}^{\Sigma}(\T)$ is a closed subset of $\msf{Zg}^{\Sigma}(\T)$, which can be identified with $\msf{Zg}^{\Sigma}(\T/\msf{Loc}(\msf{L}))$ by the above equivalence. Note that $\msf{KZg}^{\Sigma}(\mc{D}_{\msf{L}})$ is the image of $\msf{Zg}^{\Sigma}(\mc{D}_{\msf{L}})$ in $\msf{KZg}^{\Sigma}(\T)$ under the Kolmogorov quotient, and define $\msf{U}_{\msf{L}}^{\t{cl}}=\msf{KZg}^{\Sigma}(\mc{D}_{\msf{L}})\cap\closed{\T}$; thus, $\msf{U}_{\msf{L}}^{\t{cl}}$ consists of equivalence classes of indecomposable pure injective objects of $\T$ that are in $\msf{L}^{\perp}$ and in $\closed{\T}$. 
\end{chunk}

\begin{lem}\label{lem:radicalalt}
Let $\msf{L}$ be a thick subcategory of $\T^\c$. Then 
\[
\sqrt{\msf{L}} = \bigcap_{[X]\in\msf{U}_{\msf{L}}^{\mrm{cl}}} {}^{\perp_\Z} X \cap \T^\c.
\]
\end{lem}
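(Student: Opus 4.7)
The proof is essentially an exercise in unpacking definitions, using the homeomorphism $\Phi\colon \shspec{\T^\c} \xrightarrow{\sim} \closed{\T}^{\msf{GZ}}$ of \cref{homeomorphism} together with the explicit description of the map $\alpha$ from \cref{definingalpha}. The plan is to reindex the intersection defining $\sqrt{\msf{L}}$ so that the index set becomes $\msf{U}_\msf{L}^{\mrm{cl}}$.

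I would start by using \cref{homeomorphism} and the commutative diagram defining $\alpha$ to identify every shift-prime $\msf{P} \in \sspec{\T^\c}$ as $\alpha([X]) = {}^{\perp_\Z} X \cap \T^\c$ for some closed point $[X] \in \closed{\T}$: indeed, $\y^{-1}\colon \shspec{\T^\c} \to \sspec{\T^\c}$ is surjective and factors through $\alpha$ via $\Phi$. Hence the defining intersection can be rewritten as
\[
\sqrt{\msf{L}} = \bigcap_{\substack{[X]\in\closed{\T} \\ \msf{L}\subseteq {}^{\perp_\Z} X}} \bigl({}^{\perp_\Z} X \cap \T^\c\bigr).
\]
The minor subtlety that $\alpha$ need not be injective is harmless: distinct $[X]$'s yielding the same shift-prime merely produce repeated terms in the intersection, which does not change its value.

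The key step is then to identify the indexing set with $\msf{U}_\msf{L}^{\mrm{cl}}$. The condition $\msf{L} \subseteq {}^{\perp_\Z} X$ means $\Hom_\T(L, \Sigma^i X) = 0$ for every $L \in \msf{L}$ and $i \in \Z$. Since $\msf{L}$ generates $\msf{Loc}(\msf{L})$ as a localising subcategory (which is automatically $\Sigma$-invariant), this vanishing propagates to all of $\msf{Loc}(\msf{L})$, and is therefore equivalent to $X \in \msf{Loc}(\msf{L})^\perp = \mc{D}_\msf{L}$. As $\mc{D}_\msf{L}$ is $\Sigma$-invariant, this condition on $X$ is well-defined on equivalence classes $[X]$ in $\msf{KZg}^\Sigma(\T)$. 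Consequently the indexing set consists of exactly those closed points $[X] \in \closed{\T}$ whose representatives lie in $\mc{D}_\msf{L}$, which by definition is $\msf{U}_\msf{L}^{\mrm{cl}}$.

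There is no serious obstacle here, as each step is a direct consequence of earlier constructions; the proof amounts to threading the homeomorphism $\Phi$ through the definitions of $\sqrt{\msf{L}}$, $\alpha$, and $\mc{D}_\msf{L}$, and noting the compatibility of $\perp_\Z$ with the localising closure.
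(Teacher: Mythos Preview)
Your proposal is correct and follows essentially the same approach as the paper's proof: both identify shift-primes containing $\msf{L}$ with closed points $[X]$ satisfying $\msf{L} \subseteq {}^{\perp_\Z}X$, via the description of $\alpha$ in \cref{definingalpha}, and then observe this condition is equivalent to $[X] \in \msf{U}_\msf{L}^{\mrm{cl}}$. You simply spell out more of the intermediate steps (the passage through $\msf{Loc}(\msf{L})^\perp$, well-definedness on equivalence classes, and the harmlessness of $\alpha$ failing to be injective) that the paper leaves implicit.
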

\begin{proof}
For a shift-prime $\msf{P}=\y^{-1}\mc{B}$ the containment $\msf{L}\subseteq \msf{P}$ is equivalent to $\Hom_{\T}(\msf{L},\Sigma^i X)=0$ for all $i \in \Z$ and $[X]\in\closed{\T}$ the unique point corresponding to $\mc{B}\in\shspec{\T^{\c}}$, see \cref{definingalpha}. We see that 
\[
\sqrt{\msf{L}}=\bigcap_{\substack{\msf{P}\in\sspec{\T^{\c}}\\ \msf{L}\subseteq\msf{P}}} \msf{P} = \bigcap_{[X]\in\msf{U}_{\msf{L}}^{\mrm{cl}}} {}^{\perp_\Z} X \cap \T^\c. 
\]
\end{proof}

The following proposition gives a way to test if a thick subcategory is radical. Recall that a set of objects $\{E_i : i \in I\}$ is said to cogenerate a triangulated subcategory $\mc{S}$ if $\Hom_\mc{S}(X,\Sigma^j E_i) = 0$ for all $i \in I$, $j \in \Z$ implies that $X \simeq 0$.
\begin{prop}\label{prop:RadTest}
    Let $\msf{L}$ be a thick subcategory of $\T^\c$. Then $\msf{L}$ is radical if and only if $\{QX:[X]\in\msf{U}_{\msf{L}}^{\mrm{cl}}\}$ cogenerates the essential image of $Q\colon \T^\c \to (\T/\msf{Loc}(\msf{L}))^\c$.
\end{prop}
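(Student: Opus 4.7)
The plan is to translate Lemma~\ref{lem:radicalalt} through the recollement into a statement purely about the Verdier quotient, and then recognise this statement as the cogeneration condition.

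First, I would exploit the recollement. If $X \in \mc{D}_\msf{L} = \msf{Loc}(\msf{L})^\perp$ then $X \simeq RQX$ via the equivalence, so for any $C \in \T^\c$ and $i \in \Z$ the adjunction $Q \dashv R$ yields
\[
\Hom_\T(C,\Sigma^i X) \simeq \Hom_\T(C, R(\Sigma^i QX)) \simeq \Hom_{\T/\msf{Loc}(\msf{L})}(QC, \Sigma^i QX).
\]
Combining this with Lemma~\ref{lem:radicalalt}, the radical can be rewritten as
\[
\sqrt{\msf{L}} = \{C \in \T^\c : \Hom_{\T/\msf{Loc}(\msf{L})}(QC, \Sigma^i QX)=0 \text{ for all } i \in \Z,\, [X] \in \msf{U}_\msf{L}^\mrm{cl}\}.
\]

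Second, I would invoke Neeman's theorem on smashing localisations generated by compact objects: the induced functor $\T^\c/\msf{L} \to (\T/\msf{Loc}(\msf{L}))^\c$ is fully faithful, and in particular $\ker(Q|_{\T^\c}) = \msf{L}$. Consequently, for $C \in \T^\c$, we have $QC \simeq 0$ in $\T/\msf{Loc}(\msf{L})$ if and only if $C \in \msf{L}$.

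With these two translations, both directions become straightforward. If $\msf{L}$ is radical and $C \in \T^\c$ satisfies $\Hom(QC, \Sigma^i QX)=0$ for all $i$ and $[X] \in \msf{U}_\msf{L}^\mrm{cl}$, then by the first step $C \in \sqrt{\msf{L}} = \msf{L}$, so $QC \simeq 0$; since any object in the essential image of $Q|_{\T^\c}$ is of this form, cogeneration follows. Conversely, assuming cogeneration, take $C \in \sqrt{\msf{L}}$: by the first step $\Hom(QC, \Sigma^i QX)=0$ for all $i, [X]$, so by cogeneration $QC \simeq 0$, and by the second step $C \in \msf{L}$. This gives $\sqrt{\msf{L}} \subseteq \msf{L}$, and the reverse inclusion is automatic.

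The only mildly subtle step is verifying the identification $\ker(Q|_{\T^\c}) = \msf{L}$, which is where the thickness (summand-closure) of $\msf{L}$ is crucial; I would cite Neeman's localisation theorem here. The remainder is a routine manipulation of the adjunction combined with Lemma~\ref{lem:radicalalt}.
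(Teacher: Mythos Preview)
Your proof is correct and follows essentially the same approach as the paper: both use \cref{lem:radicalalt}, translate the Hom condition through the adjunction (you via $X \simeq RQX$ and $Q \dashv R$, the paper via the local equivalence $X \to LX$), and then invoke $\ker(Q|_{\T^\c}) = \msf{Loc}(\msf{L}) \cap \T^\c = \msf{L}$ to conclude. The only cosmetic difference is that you cite Neeman explicitly for the last identification while the paper uses it without comment.
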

\begin{proof}
 By \cref{lem:radicalalt}, we have that 
\[\sqrt{\msf{L}}=\{A\in\T^\c:\Hom_{\T}(A,\Sigma^{i}X)=0 \t{ for all }i\in\Z \t{ and }[X]
\in\msf{U}_{\msf{L}}^{\t{cl}}\}.
\] 
Since $\msf{Loc}(\msf{L})^{\perp}$ consists of the $L$-local objects of the associated localisation functor $L=R\circ Q\colon\T\to \T$, for any $[X] \in \msf{U}_\msf{L}^{\t{cl}}$, the canonical map $X \to LX$ is an isomorphism by definition of $\msf{U}_\msf{L}^{\t{cl}}$. As such, 
\begin{equation}\label{eq:raddescription}\sqrt{\msf{L}}=\{A\in\T^\c:\Hom_{\T/\msf{Loc}(\msf{L})}(QA,\Sigma^{i}QX)=0 \t{ for all }i\in\Z \t{ and }[X]
\in\msf{U}_{\msf{L}}^{\t{cl}}\}.
\end{equation}

Therefore if $\{QX : X \in \msf{U}_{\msf{L}}^{\mrm{cl}}\}$ cogenerates the essential image of $Q\colon \T^\c \to (\T/\msf{Loc}(\msf{L}))^\c$, we see that $A \in \sqrt{\msf{L}}$ if and only if $QA \simeq 0$. Therefore 
\[\sqrt{\msf{L}} = \msf{ker}(Q) \cap \T^\c = \msf{Loc}(\msf{L}) \cap \T^\c = \msf{L},\] so $\msf{L}$ is radical as required.

Conversely, assume that $\msf{L}$ is radical. Suppose that $A\in\T^{\c}$ is such that $\Hom_{\T/\msf{Loc}(\msf{L})}(QA,\Sigma^i QX) = 0$ for all $i \in \Z$ and $[X] \in \msf{U}_\msf{L}^\mrm{cl}$; we must show that $QA=0$. By \cref{eq:raddescription}, the assumption implies that $A\in\sqrt{\msf{L}} = \msf{L}$. Since $\msf{ker}(Q)=\msf{Loc}(\msf{L})$, we have $QA \simeq 0$ as required.
\end{proof}

The following gives an illustrative case of when the preceding proposition can be used. 

\begin{cor}\label{cor:Rpreservescompacts}
Suppose that $\T$ satisfies the following property: 
\begin{quote}
for every indecomposable $A\in\T^{\c}$ there is an $[X]\in\closed{\T}$ with $A\in\msf{Def}^{\Sigma}(X)$.
\end{quote}
Then a thick subcategory $\msf{L}\subseteq \T^{\c}$ is radical if the right adjoint $R$ in the induced localisation sequence
\[
\begin{tikzcd}
\msf{Loc}(\msf{L}) \arrow[r, hook, shift left = 3pt] & \T \arrow[r, shift left = 3pt, "Q"] \arrow[l, shift left = 3pt, swap] & \T/\msf{Loc}(\msf{L}) \arrow[l, hook', shift left = 3pt,"R"]
\end{tikzcd}
\]
preserves compact objects.
\end{cor}

% \begin{cor}\label{cor:Rpreservescompacts}
% Let $\msf{L}$ be a thick subcategory of $\T^{\c}$ and suppose that the induced localisation sequence
% \[
% \begin{tikzcd}
% \msf{Loc}(\msf{L}) \arrow[r, hook, shift left = 3pt] & \T \arrow[r, shift left = 3pt, "Q"] \arrow[l, shift left = 3pt, swap] & \T/\msf{Loc}(\msf{L}) \arrow[l, hook', shift left = 3pt,"R"]
% \end{tikzcd}
% \]
% satisfies the following properties:
% \begin{enumerate}
% \item for any indecomposable $A\in\T^{\c}$ there is an $[X]\in\closed{\T}$ such that $A\in\msf{Def}^{\Sigma}(X)$;
% \item the inclusion $R$ preserves compact objects.
% \end{enumerate}
% Then $\msf{L}$ is radical.
% \end{cor}

\begin{proof}
We check the conditions of \cref{prop:RadTest}. Since the functor $R$ preserves compact objects by assumption, $Q$ and $R$ induce mutually inverse equivalences $(\T/\msf{Loc}(\msf{L}))^\c\simeq  \msf{L}^{\perp}\cap \T^\c$. 

Let $A$ be an indecomposable compact object in $\T/\msf{Loc}(\msf{L})$. By the second assumption $RA$ is a compact object in $\T$, and is indecomposable as $R$ is fully faithful. Consequently, by the first assumption, there is an $[X]\in\closed{\T}$ such that $RA\in\msf{Def}^{\Sigma}(X)$. Since $\msf{Def}^\Sigma(X)$ is simple, we have $\msf{Def}^\Sigma(RA) = \msf{Def}^\Sigma(X)$. Now, since $A\in \T/\msf{Loc}(\msf{L})$ we have that $RA\in \msf{L}^{\perp}$. Therefore as $\msf{L}^\perp$ is a triangulated definable subcategory, we have $\msf{Def}^\Sigma(X) = \msf{Def}^\Sigma(RA) \subseteq \msf{L}^\perp$. In particular, $X \in \msf{L}^\perp$ and hence $[X] \in \msf{U}_\msf{L}^\t{cl}$. 

To apply \cref{prop:RadTest} it suffices to check that $\Hom_{\T/\msf{Loc}(\msf{L})}(A,\Sigma^i QX) \neq 0$ for some $i$, since then $\{QX\}$ cogenerates the compact objects in $\T/\msf{Loc}(\msf{L})$. We have $\Hom_{\T/\msf{Loc}(\msf{L})}(A,\Sigma^i QX) = \Hom_\T(RA, \Sigma^i X)$, so if these were zero for all $i \in \Z$, then we would also have $\Hom_\T(RA, RA) = 0$ since $\msf{Def}^\Sigma(X) = \msf{Def}^\Sigma(RA)$ and $RA$ is a compact object. This cannot happen as $RA\neq 0$, and we conclude that $\msf{L}$ is radical by \cref{prop:RadTest}.
\end{proof}
Another source of radical thick subcategories is integral rank functions.

\begin{lem}\label{Kerrhoisradical}
Let $\rho$ be an integral rank function on $\T^{\c}$. Then the thick subcategory $\msf{ker}(\rho)$ is radical.
\end{lem}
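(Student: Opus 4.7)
The plan is to exploit the decomposition of integral rank functions into irreducibles together with the map $\mc{K}$ of \cref{lem:RankGivesPrime}, which realises the kernel of an irreducible rank function as a shift-homological prime. The key observation is that the kernel of any intersection of shift-primes is automatically radical, so it suffices to exhibit $\msf{ker}(\rho)$ as such an intersection.

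First, by \cref{prel:rankfunctions} the integral rank function $\rho$ decomposes as $\rho = \sum_{i \in I} \rho_i$ where each $\rho_i$ is an irreducible integral rank function, and for each fixed compact $A$ only finitely many $\rho_i(A)$ are nonzero. Since each $\rho_i$ takes non-negative values, we have for any $A \in \T^{\c}$ that $\rho(A)=0$ if and only if $\rho_i(A)=0$ for every $i \in I$. Translating this through the identification $\rho_i(A) = \widetilde{\rho_i}(\y A)$ from \cref{rankviaSerre}, we obtain
\[
\msf{ker}(\rho) = \bigcap_{i \in I} \msf{ker}(\rho_i) = \bigcap_{i \in I} \y^{-1}\msf{ker}(\widetilde{\rho_i}).
\]

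Next, by \cref{lem:RankGivesPrime}, each $\msf{ker}(\widetilde{\rho_i})$ is a shift-homological prime, hence each $\y^{-1}\msf{ker}(\widetilde{\rho_i})$ is a shift-prime thick subcategory of $\T^{\c}$ by definition. Thus $\msf{ker}(\rho)$ is exhibited as an intersection of shift-primes $\msf{P}_i := \y^{-1}\msf{ker}(\widetilde{\rho_i}) \in \sspec{\T^{\c}}$. Since each $\msf{P}_i$ contains $\msf{ker}(\rho)$, we have $\sqrt{\msf{ker}(\rho)} \subseteq \msf{P}_i$ for every $i$, and consequently
\[
\sqrt{\msf{ker}(\rho)} \subseteq \bigcap_{i \in I} \msf{P}_i = \msf{ker}(\rho),
\]
which together with the trivial reverse inclusion gives $\msf{ker}(\rho) = \sqrt{\msf{ker}(\rho)}$.

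No genuine obstacle is anticipated, as every ingredient has been set up in the preceding subsections; the proof is essentially a bookkeeping exercise combining the decomposition of integral rank functions with the shift-prime structure supplied by $\mc{K}$.
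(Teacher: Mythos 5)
Your proposal is correct and follows essentially the same route as the paper: decompose the integral rank function into irreducibles, identify $\msf{ker}(\rho)$ with $\bigcap_i \y^{-1}\msf{ker}(\widetilde{\rho_i})$, and invoke \cref{lem:RankGivesPrime} to see this is an intersection of shift-primes, hence radical. The only difference is that you spell out the finiteness/non-negativity bookkeeping and the final radical-as-intersection argument, which the paper leaves implicit.
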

\begin{proof}
Since $\rho$ is integral it admits a decomposition $\rho = \sum_{I} \rho_i$ into a locally finite sum of irreducibles, as described in \cref{prel:rankfunctions}. Therefore $\msf{ker}(\rho) = \cap_{I} \msf{ker}(\rho_i) = \cap_{I} \y^{-1}\msf{ker}(\tilde{\rho_i})$. Now $\msf{ker}(\tilde{\rho_i})$ is an element of $\shspec{\T^\c}$ by \cref{lem:RankGivesPrime}, and hence $\msf{ker}(\rho)$ is radical.
\end{proof}

If the map $\mc{K}\colon \Irr{\T^\c} \to \shspec{\T^\c}$ of \cref{lem:RankGivesPrime} is a homeomorphism, then $\sspec{\T^{\c}}$ has, as points, the kernels of irreducible rank functions, see \cref{chunk:shiftspecrank}. In this case we obtain the following.
\begin{cor}\label{radicalsaskernels}
Suppose that $\mc{K}\colon \Irr{\T^\c} \to \shspec{\T^\c}$ is a homeomorphism (see \cref{thm:whenisKahomeo}). Then a thick subcategory $\msf{L}$ of $\T^\c$ is radical if and only if $\msf{L} = \cap_{i \in I}\msf{ker}(\rho_{i})$ where $\rho_{i}$ is an irreducible rank function for all $i\in I$.
\end{cor}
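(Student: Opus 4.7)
The plan is to prove the biconditional directly, treating each direction separately, relying on the identification of shift-primes with kernels of irreducible rank functions which was already flagged in \cref{chunk:shiftspecrank}.

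For the forward direction, I would start from the definition: if $\msf{L}$ is radical then $\msf{L} = \bigcap_{\msf{P}\in\sspec{\T^\c},\,\msf{L}\subseteq\msf{P}} \msf{P}$. Under the hypothesis that $\mc{K}$ is a homeomorphism, every shift-homological prime is of the form $\msf{ker}(\tilde{\rho})$ for some irreducible $\rho$, by \cref{lem:RankGivesPrime} and the bijective correspondence recalled in \cref{rankviaSerre}. Therefore every shift-prime has the form $\y^{-1}\msf{ker}(\tilde{\rho})$. Using the identity $\tilde{\rho}(\y C) = \rho(C)$ from \cref{rankviaSerre}, this simplifies to $\msf{ker}(\rho)$, so the radical decomposition of $\msf{L}$ is precisely an intersection of kernels of irreducible rank functions.

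For the reverse direction, assume $\msf{L} = \bigcap_{i\in I} \msf{ker}(\rho_i)$ with each $\rho_i$ irreducible. By \cref{lem:RankGivesPrime} each $\msf{ker}(\tilde{\rho_i})$ is a shift-homological prime, so each $\msf{ker}(\rho_i) = \y^{-1}\msf{ker}(\tilde{\rho_i})$ is a shift-prime thick subcategory of $\T^\c$. Hence $\msf{L}$ is expressed as an intersection of shift-primes. I then observe that any such intersection is automatically radical: since each $\msf{ker}(\rho_i)$ contains $\msf{L}$, the intersection $\sqrt{\msf{L}}$ (taken over all shift-primes containing $\msf{L}$) is contained in $\bigcap_{i\in I}\msf{ker}(\rho_i) = \msf{L}$, which combined with the tautological inclusion $\msf{L}\subseteq\sqrt{\msf{L}}$ yields equality.

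There is no real obstacle: both directions are essentially bookkeeping, combining the hypothesis with \cref{chunk:shiftspecrank,lem:RankGivesPrime,rankviaSerre}. The only subtle point worth verifying explicitly is the identification $\y^{-1}\msf{ker}(\tilde{\rho}) = \msf{ker}(\rho)$, which is immediate from $\tilde{\rho}(\y C)=\rho(C)$, and the general observation that intersections of primes are automatically radical (this is a direct unwinding of the definition of $\sqrt{-}$).
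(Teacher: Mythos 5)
Your proof is correct and follows essentially the same route as the paper: both directions reduce to the identification, under the homeomorphism hypothesis, of shift-primes with kernels $\msf{ker}(\rho)=\y^{-1}\msf{ker}(\tilde{\rho})$ of irreducible rank functions. The only cosmetic difference is in the reverse direction, where the paper cites \cref{Kerrhoisradical} while you argue directly that an intersection of shift-primes containing $\msf{L}$ forces $\sqrt{\msf{L}}=\msf{L}$; this is the same content unwound.
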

\begin{proof}
    The reverse direction is immediate from \cref{Kerrhoisradical}. Conversely, since $\mc{K}\colon \Irr{\T^\c} \to \shspec{\T^\c}$ is a homeomorphism, each shift prime $\msf{P}$ is of the form $\msf{ker}(\rho)$ for some irreducible rank function $\rho$, so the claim is immediate from the definition of a radical thick subcategory.
\end{proof}

As integral rank functions are locally finite sums of irreducible ones, the previous corollary shows that radical thick subcategories can be thought of as kernels of ``formal sums'' of irreducible rank functions.

We end the section by giving various examples of radical thick subcategories.
\begin{ex}\label{ex:foralgebraperpsareradical}
 Let $\T = \D(\Lambda)$ where $\Lambda$ is any finite dimensional $k$-algebra and $k$ is a field. By \cite[Example 3.6]{conde2022functorial}, for any compact object $A\in\D(\Lambda)^\c$ there is a rank function, denoted 
\[
\vartheta^{A}=\sum_{i\in\Z}\,\t{dim}_{k}\,\Hom_{\D(\Lambda)^\c}(-,\Sigma^{i}A).
\]
If $\msf{A}\subseteq\D(\Lambda)^\c$ is a set of compact objects, then the thick subcategory $^{\perp_{\Z}}\msf{A}$ of $\D(\Lambda)^\c$ is radical. Indeed, for any $A\in\msf{A}$ we see that $^{\perp_{\Z}}A=\msf{ker}\,\vartheta^{A}$ is radical by \cref{Kerrhoisradical}. Since $^{\perp_{\Z}}\msf{A}=\cap_{A\in\msf{A}}\,^{\perp_{\Z}}A$, and intersections of radical thick subcategories are radical, we obtain the result. By similar reasoning, using the rank function $\vartheta_{A}$ also defined at \cite[Example 3.6]{conde2022functorial}, we also obtain that $\msf{A}^{\perp_{\Z}}$ is radical. For a discussion of similar examples in the context of dg algebras over a field or projective schemes over a field see \cite[Examples 3.7 and 3.8]{conde2022functorial}.
\end{ex}

\begin{rem}
   Note that the previous example implies that the condition (1) in \cref{cor:Rpreservescompacts} is automatically satisfied
   for $\T = \D(\Lambda)$ where $\Lambda$ is any finite dimensional $k$-algebra. Indeed, any indecomposable compact $A$ gives an irreducible rank function $\vartheta^A$. In particular, if $A$ is an indecomposable compact in $\D(\Lambda)$, then by \cref{lem:RankGivesPrime} together with \cref{homeomorphism}, the irreducible rank function $\vartheta^A$ gives a point $[X] \in \msf{KZg}_\msf{Cl}^\Sigma(\D(\Lambda))$ such that $A\in \scr{D}(\msf{ker}(\widetilde{\vartheta}^A)) = \msf{Def}^\Sigma(X)$.
\end{rem}

\begin{ex}
We will show in \cref{prop:locfinradical} that if $\T$ is a triangulated category with $\T^\c$ locally finite, then every thick subcategory of $\T^{\c}$ is radical. 
\end{ex}

\begin{ex}
In \cref{thm:tamheredeverythingradical} we will show that if $\Lambda$ is a tame hereditary algebra over an algebraically closed field, then every thick subcategory of $\D(\Lambda)^\c$ is radical.
\end{ex}

\begin{ex}
The zero thick subcategory need not be radical, as is illustrated in \cref{hypersurfaces}.
\end{ex}

In the case of rigidly-compactly generated tensor-triangulated categories, we have the following (where the term radical is used in the sense of this paper, not \cite{Balmer}).
\begin{prop}\label{prop:rigideverythingradical}
Let $\T$ be a monogenic rigidly-compactly generated tensor-triangulated category. Then every thick subcategory of $\T^{\c}$ is radical.
\end{prop}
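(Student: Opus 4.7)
The plan is to leverage the continuous injection $\tau\colon \Spc(\T^{\c}) \hookrightarrow \sspec{\T^{\c}}$ from \cref{thm:maptobalmer}, together with Balmer's classification of thick $\otimes$-ideals. First I would observe that monogenicity forces every thick subcategory $\msf{L}$ of $\T^{\c}$ to be a thick $\otimes$-ideal. Indeed, fixing $X \in \msf{L}$, the subcategory $\{A \in \T^{\c} : A \otimes X \in \msf{L}\}$ is thick and contains $\1$, so it equals $\T^{\c} = \msf{thick}(\1)$; letting $X$ vary gives the $\otimes$-ideal condition.

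Next I would prove that every thick $\otimes$-ideal $\msf{L}$ of $\T^{\c}$ is automatically \emph{Balmer-radical}, in the sense that $a^{\otimes n} \in \msf{L}$ forces $a \in \msf{L}$. This is the conceptual heart of the argument, and it is the unique place where rigidity is used: each compact $a$ is dualisable, so $a$ is a direct summand of $a \otimes Da \otimes a$ via the composite of the coevaluation and evaluation maps. If $a^{\otimes n} \in \msf{L}$ with $n \geq 2$, then $a^{\otimes n} \otimes Da \in \msf{L}$ because $\msf{L}$ is a $\otimes$-ideal; tensoring the retract $a \hookrightarrow a \otimes Da \otimes a$ by $a^{\otimes (n-2)}$ exhibits $a^{\otimes(n-1)}$ as a summand of $a^{\otimes n} \otimes Da$, whence $a^{\otimes(n-1)} \in \msf{L}$. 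Induction on $n$ yields $a \in \msf{L}$.

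Finally I would invoke Balmer's identification $\{a \in \T^\c : a^{\otimes n} \in \msf{L} \text{ for some } n \geq 1\} = \bigcap_{\p \in \Spc(\T^{\c}),\, \msf{L} \subseteq \p} \p$, so that the previous step gives $\msf{L} = \bigcap_{\p \in \Spc(\T^{\c}),\, \msf{L} \subseteq \p} \p$. Since every Balmer prime containing $\msf{L}$ is a shift-prime containing $\msf{L}$ by \cref{thm:maptobalmer}, intersecting over the (potentially larger) set of shift-primes can only shrink the intersection, giving
\[
\msf{L} \subseteq \sqrt{\msf{L}} = \bigcap_{\substack{\msf{P} \in \sspec{\T^{\c}} \\ \msf{L} \subseteq \msf{P}}} \msf{P} \subseteq \bigcap_{\substack{\p \in \Spc(\T^{\c}) \\ \msf{L} \subseteq \p}} \p = \msf{L}.
\]
Equality throughout shows $\msf{L} = \sqrt{\msf{L}}$, so $\msf{L}$ is radical. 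The main obstacle is really just locating the rigidity argument in step two; once that is in place, the passage from the Balmer to the shift side is purely formal and follows from the comparison map.
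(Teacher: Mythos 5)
Your proposal is correct and follows essentially the same route as the paper: the paper simply cites Balmer (Remark 4.3 and Proposition 4.4) for the fact that in the monogenic rigid setting every thick subcategory is a thick $\otimes$-ideal equal to the intersection of the Balmer primes containing it, and then concludes exactly as you do via \cref{thm:maptobalmer}, since every Balmer prime is a shift-prime. The only difference is that you unpack the cited tensor-triangular facts (the $\msf{thick}(\1)$ ideal argument and the rigidity retract $a \mid a \otimes Da \otimes a$) rather than quoting them, which is fine but not a distinct strategy.
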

\begin{proof}
Since $\T$ is monogenic and rigidly-compactly generated, every thick subcategory of $\T^\c$ is radical in Balmer's sense, see \cite[Remark 4.3 and Proposition 4.4]{Balmer}. In other words, for any thick subcategory $\msf{L}$ of $\T^\c$ we have $\msf{L} = \bigcap\limits_{\substack{\mathsf{L} \subseteq \p \\ \p \in \msf{Spc}(\T^\c)}}\p.$ Therefore
\[
\sqrt{\msf{L}} = \bigcap_{\substack{\msf{L} \subseteq \msf{P} \\ \msf{P} \in \sspec{\T^\c}}}\msf{P} \subseteq \bigcap_{\substack{\mathsf{L} \subseteq \p \\ \p \in \msf{Spc}(\T^\c)}}\p =\msf{L},
\]
where we use the fact from \cref{thm:maptobalmer} that, in this setting, every Balmer prime is a shift-prime.
\end{proof}

\section{Locally finite triangulated categories}\label{sec:locallyfinite}
In this section, we consider the case of pure semisimple and locally finite triangulated categories in detail. In particular, we will compute both $\shspec{\T^{\c}}$ and $\sspec{\T^{\c}}$ in the locally finite case, which will be useful for subsequent examples.  

\subsection{The spectra for finite type triangulated categories}
\begin{chunk}
Recall from \cite[\S 9]{bel} or \cite[Theorem 2.10]{krsmash} that a compactly generated triangulated category $\T$ is \emph{pure semisimple} if every object of $\T$ is pure injective. This is equivalent to $\T^{\c}$ being locally noetherian (that is, to $\Mod{\T^\c}$ being locally noetherian). In this setting, we can show that the map of \cref{lem:RankGivesPrime} is a homeomorphism.
\end{chunk}

\begin{prop}\label{prop:LocNoeth}
Let $\T$ be a pure semisimple triangulated category. Then the map 
\[\mc{K}\colon \Irr{\T^\c} \to \shspec{\T^\c}\] of \cref{lem:RankGivesPrime} is a homeomorphism.
\end{prop}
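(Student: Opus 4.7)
My plan is to verify condition (3) of Theorem \ref{thm:whenisKahomeo}, namely that the isolation condition holds for every minimal $\Sigma$-invariant closed subset of $\msf{Zg}(\T)$. In fact I will check the stronger statement that the isolation condition holds on the whole of $\msf{Zg}(\T)$, which by Corollary \ref{cor:Kishomeo} already suffices. Concretely, I must produce, for each $X \in \msf{Zg}(\T)$, a simple object in the localisation $\mod{\T^\c}/\msf{ker}\,\Hom(-,\y X)$.

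The crucial hypothesis I will exploit is that pure semisimplicity of $\T$ is equivalent to $\Mod{\T^\c}$ being locally noetherian, as recalled at the start of the section. In any locally noetherian Grothendieck category two standard facts hold: every non-zero object contains a simple subobject (obtained by choosing any non-zero finitely generated, hence noetherian, subobject and then taking a minimal non-zero subobject of it), and every simple object is finitely presented (a simple object is trivially noetherian, and in the locally noetherian Grothendieck setting noetherian objects are both finitely generated and finitely presented).

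Setting $\mc{S}_X := \msf{ker}\,\Hom(-,\y X)$, I apply these facts to the non-zero injective $\y X \in \Mod{\T^\c}$ to extract a simple subobject $S \hookrightarrow \y X$ with $S \in \mod{\T^\c}$. The inclusion gives $\Hom(S,\y X) \neq 0$, so $S \notin \mc{S}_X$, and hence the image $Q_{\mc{S}_X}(S)$ in the quotient abelian category $\mod{\T^\c}/\mc{S}_X$ is non-zero. The main point to verify is that this image is in fact simple in the quotient; this is a routine consequence of the description of subobjects in a Gabriel quotient, using that the only subobjects of $S$ in $\mod{\T^\c}$ are $0$ and $S$, combined with $S \notin \mc{S}_X$. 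The isolation condition for $\msf{Zg}(\T)$ thus follows, and Corollary \ref{cor:Kishomeo} finishes the proof.

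I do not anticipate any substantial obstacle: the argument reduces to extracting a simple subobject of $\y X$ from local noetherianity, which is precisely what that hypothesis is designed to provide, and the residual fact that the image of a simple object survives as a simple in a Gabriel quotient (whenever it is not killed) is well known.
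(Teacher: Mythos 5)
There is a genuine gap in the key step. You claim that in a locally noetherian Grothendieck category every non-zero object contains a simple subobject, "obtained by choosing any non-zero finitely generated, hence noetherian, subobject and then taking a minimal non-zero subobject of it". This confuses the ascending with the descending chain condition: a noetherian object has a \emph{maximal proper} subobject (hence a simple \emph{quotient}), but need not have a \emph{minimal non-zero} subobject. For instance, in the locally noetherian category $\Mod{\Z}$ the object $\Z$ is noetherian with no simple submodule, and even the injective object $\Q$ has no simple submodule, so the statement fails exactly where you apply it (to the injective $\y X$). Note also that the obvious repair --- replacing the simple subobject by a simple quotient of a finitely generated subobject of $\y X$ --- does not immediately work for your argument, because such a simple object $S$ need not admit a non-zero map to $\y X$, so you lose the reason why $S \notin \msf{ker}\,\Hom(-,\y X)$ and hence why it survives in the Gabriel quotient.

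Your overall strategy (verify the isolation condition and invoke \cref{cor:Kishomeo}) is reasonable, but the paper sidesteps the problem above by working inside the quotient category rather than in $\Mod{\T^\c}$: for a shift-homological prime $\mc{B}$ it shows that $\Mod{\T^\c}/\rlim\mc{B}$ is again locally noetherian (using that its injectives are the pure injectives of $\scr{D}(\mc{B})$, and that coproducts of pure injectives are pure injective when $\Mod{\T^\c}$ is locally noetherian), and then takes a simple \emph{quotient} of a non-zero finitely generated object of that quotient category; local noetherianity guarantees this simple object is finitely presented, hence lies in $\mod{\T^\c}/\mc{B}$, and maximality of $\mc{B}$ then makes $\mod{\T^\c}/\mc{B}$ a length category, giving surjectivity of $\mc{K}$ directly. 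If you want to keep your route through the isolation condition, you would need an argument of this locally-noetherian-quotient type for each $\msf{ker}\,\Hom(-,\y X)$, not the false simple-subobject lemma.
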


\begin{proof}
Let $\mc{B}\in\shspec{\T^{\c}}$ and consider the localisation $\Mod{\T^{\c}}/\rlim\mc{B}$. We claim that this is a locally noetherian category. To see this, since it is locally coherent, it is enough by \cite[Theorem 11.2.12]{krbook} to show that any coproduct of injective objects in $\Mod{\T^{\c}}/\rlim\mc{B}$ is injective. By~\cite[Theorem 3.7]{birdwilliamsonhomological} there is an equivalence of categories between the injective objects in $\Mod{\T^{\c}}/\rlim\mc{B}$ and $\scr{D}(\mc{B}) \cap \msf{Pinj}(\T)$, the pure injective objects in $\scr{D}(\mc{B})$.  Therefore as $\Mod{\T^{\c}}$ is locally noetherian, it follows that coproducts of pure injective objects in $\T$ are pure injective. Hence $\scr{D}(\mc{B}) \cap \msf{Pinj}(\T)$ is closed under coproducts, so $\Mod{\T^\c}/\rlim\mc{B}$ is locally noetherian as claimed.

Any finitely generated object in the quotient $\Mod{\T^\c}/\rlim\mc{B}$ contains a maximal subobject, and hence a simple quotient $S$. Since $\Mod{\T^\c}/\rlim\mc{B}$ is locally noetherian, the simple quotient $S$ is also finitely generated, and hence is an object of $\mod{\T^\c}/\mc{B}$. Consequently $\mod{\T^{\c}}/\mc{B}$ contains a simple object, which generates a $\Sigma$-invariant Serre subcategory which must, by maximality of $\mc{B}$, be the whole of $\msf{mod}(\T^{\c})/\mc{B}$. Since everything in the $\Sigma$-invariant Serre subcategory generated by $S$ is of finite length, it follows that $\msf{mod}(\T^{\c})/\mc{B}$ is a length category, and hence yields an irreducible rank function on $\T^{\c}$ by \cref{rankviaSerre}. This shows that the map $\mc{K}$ is surjective, and hence by \cref{cor:surjsuffices}, is in fact a homeomorphism.
\end{proof}

\begin{chunk}
We now restrict our attention to a particular subclass of the pure semisimple triangulated categories. The category $\T^{\c}$ is \emph{locally finite} provided that $\T^{\c}$ and $(\T^{\c})^{\op}$ are locally noetherian. In this case we say $\T$ is of \emph{finite type}, after \cite{BelARZiegler}. Note that this is equivalent to saying that the category $\mod {\T^{\c}}$ is a finite length abelian category. 
\end{chunk}

\begin{chunk}\label{KrullSchmidt}
A crucial feature of pure semisimple (and hence finite type) triangulated categories is that any object admits a decomposition into a direct sum of compact objects with local endomorphism rings, and in particular, $\T^\c$ is a Krull-Schmidt category, see~\cite[Theorem 9.3]{bel}. It follows that the indecomposable pure injective objects are just the indecomposable compact objects.
\end{chunk}

We now turn to describing $\shspec{\T^{\c}}$ and $\sspec{\T^\c}$ for $\T$ of finite type. In light of \cref{homeomorphism}, we begin by understanding the Kolmogorov quotient of $\msf{Zg}^\Sigma(\T)$. We write $\msf{ind}(\T^\c)$ for the set of indecomposable compacts in $\T$, which as a set is equal to $\msf{pinj}(\T)$.

\begin{lem}\label{prop:orbitlocallyfinite}
Let $\T$ be a finite type triangulated category. Then for  $X\in \msf{ind}(\T^\c)$ we have $\msf{Def}^{\Sigma}(X)\cap \msf{pinj}(\T)=\{\Sigma^i X:i\in\Z\}$. In particular, for $X,Y\in \msf{ind}(\T^\c)$ we have $\msf{Def}^{\Sigma}(X)=\msf{Def}^{\Sigma}(Y)$  if and only if $X\simeq \Sigma^{i}Y$ for some $i\in \Z$.
\end{lem}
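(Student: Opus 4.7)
The strategy is to identify, for each $Z \in \msf{ind}(\T^\c)$, the $\Sigma$-invariant Serre subcategory $\mc{S}_Z := \{f \in \mod{\T^\c} : \Hom(f, \Sigma^i\y Z) = 0 \text{ for all }i \in \Z\}$ corresponding to $\msf{Def}^\Sigma(Z)$ under the fundamental correspondence (\cref{lem:recoverhomprime}). The inclusion $\{\Sigma^iX : i \in \Z\} \subseteq \msf{Def}^\Sigma(X) \cap \msf{pinj}(\T)$ is immediate from $\Sigma$-invariance of $\msf{Def}^\Sigma(X)$ together with \cref{KrullSchmidt}, which identifies $\msf{ind}(\T^\c)$ with the indecomposable pure injectives. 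The second sentence of the lemma is then a formal consequence of the first: if $\msf{Def}^\Sigma(X)=\msf{Def}^\Sigma(Y)$ then $Y \in \msf{Def}^\Sigma(X)\cap\msf{pinj}(\T) = \{\Sigma^iX : i \in \Z\}$, and the converse is $\Sigma$-invariance of the definable closure.

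For the reverse inclusion, take $Y \in \msf{Def}^\Sigma(X) \cap \msf{pinj}(\T)$; by \cref{KrullSchmidt} it is an indecomposable compact. Since $\T$ is of finite type, $\mod{\T^\c}$ is a finite length abelian category, and since $\T$ is pure semisimple each $\y Z$ with $Z \in \msf{ind}(\T^\c)$ is an indecomposable injective object of $\mod{\T^\c}$. In a finite length abelian category, every indecomposable injective is the injective envelope of its simple socle, so $\y Z \simeq E(S_Z)$ for $S_Z := \msf{soc}(\y Z)$, and the assignment $Z \mapsto S_Z$ gives a bijection between isoclasses of indecomposable compacts and isoclasses of simples of $\mod{\T^\c}$. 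Essentiality of $E(S)$ shows that a simple $T$ admits a non-zero map to $E(S)$ if and only if $T\simeq S$, and an induction on composition length (using exactness of $\Hom(-,E(S))$) upgrades this to: $\Hom(f,E(S)) = 0$ if and only if $S$ is not a composition factor of $f$. Applying this to $\y Z = E(S_Z)$ and its shifts, $\mc{S}_Z$ is precisely the $\Sigma$-invariant Serre subcategory consisting of finitely presented modules whose composition factors avoid the $\Sigma$-orbit of $S_Z$.

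With this description in hand, $Y \in \msf{Def}^\Sigma(X)$ gives $\msf{Def}^\Sigma(Y) \subseteq \msf{Def}^\Sigma(X)$, hence $\mc{S}_X \subseteq \mc{S}_Y$ by the fundamental correspondence (\cref{prel:fundamentalcorrespondence}), which forces the $\Sigma$-orbit of $S_Y$ to be contained in the $\Sigma$-orbit of $S_X$. As $Y$ is non-zero, $S_Y \simeq \Sigma^iS_X$ for some $i$, so taking injective envelopes yields $\y Y \simeq \y \Sigma^iX$, and fully faithfulness of $\y$ on pure injectives gives $Y \simeq \Sigma^iX$. The main technical point to navigate is justifying that $\y Z$ is an indecomposable injective in $\mod{\T^\c}$ and is the envelope of its simple socle there; this follows from pure semisimplicity making $\y Z$ injective in $\Mod{\T^\c}$, exactness of the embedding $\mod{\T^\c} \hookrightarrow \Mod{\T^\c}$, and the standard structure of indecomposable injectives in a finite length abelian category.
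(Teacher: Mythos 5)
Your proof is correct, but it takes a genuinely different route from the paper. The paper's argument is short and object-level: by Beligiannis' theorem every object of a finite type $\T$ is endofinite, so $\msf{Def}^{\Sigma}(X)=\msf{Def}(X^{\Sigma})=\msf{Add}(X^{\Sigma})$, and then any indecomposable pure injective $Y$ in this class is a summand of a coproduct of shifts of $X$, whence $Y\simeq\Sigma^{i}X$ by Krull--Schmidt. You instead work entirely in the functor category: using pure semisimplicity to see that $\y Z$ is an indecomposable injective of $\Mod{\T^{\c}}$ lying in the length category $\mod{\T^{\c}}$, you identify $\y Z$ with the injective envelope of its simple socle, describe the Serre subcategory corresponding to $\msf{Def}^{\Sigma}(Z)$ as the functors whose composition factors avoid the $\Sigma$-orbit of that socle, and conclude via the order-reversing fundamental correspondence. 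Both arguments use the full finite type hypothesis; yours buys a finer piece of information (the explicit simple--injective correspondence and the composition-factor description of $\scr{S}(\msf{Def}^{\Sigma}(Z))$, essentially the Ziegler-point/indecomposable-injective dictionary in this case) at the cost of more bookkeeping, while the paper's buys brevity by outsourcing to the endofiniteness theorem, which it also exploits elsewhere. Two small points to tidy: the reference to \cref{lem:recoverhomprime} is not quite apt, since that lemma concerns shift-homological primes --- the identification $\scr{S}(\msf{Def}^{\Sigma}(Z))=\{f:\Hom(f,\Sigma^{i}\y Z)=0 \text{ for all } i\}$ follows directly from \cref{prel:fundamentalcorrespondence} applied to the definable subcategory generated by $\{\Sigma^{i}Z\}$; and the claim that the socle of $\y Z$ is simple and essential should be justified by uniformity of the indecomposable injective $\y Z$ in the Grothendieck category $\Mod{\T^{\c}}$ (finite length plus indecomposability alone would not suffice), which your final sentence gestures at but is worth making explicit.
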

\begin{proof}
By \cite[Theorem 10.13]{BelARZiegler}, any object of $\T$ is endofinite, and thus, by \cref{prel:endofinite}, it follows that $\msf{Def}(X^{\Sigma})=\msf{Add}(X^{\Sigma})$. Suppose that $Y\in\msf{Add}(X^{\Sigma})\cap\msf{pinj}(\T)$, so that $Y$ is a summand of $\oplus_{I}X^{\Sigma}$ for some set $I$. By the Krull-Schmidt property \cref{KrullSchmidt}, we have that $Y\simeq \Sigma^{i}X$ for some $i\in\Z$. In particular, we have that $\msf{Def}^{\Sigma}(X)\cap\msf{pinj}(\T)=\{\Sigma^i X:i\in\Z\}$. 
\end{proof}

\begin{chunk}\label{chunk:indtopology}
We equip the set $\msf{ind}(\T^\c)$ with a topology with a basis of open sets given by \[(C)_{\Sigma}=\{B\in\msf{ind}(\T^{\c}):\Hom_{\T}(C,\Sigma^{i}B)=0 \t{ for all $i\in\Z$}\}\] as $C$ ranges over $\T^\c$. In light of \cref{KrullSchmidt}, one may think of this as the $\msf{GZ}$-topology on $\msf{Zg}(\T)$. We consider the space $\msf{ind}(\T^\c)/\Sigma$ with the quotient topology.
\end{chunk}

\begin{prop}\label{prop:shspeclocallyfinite}
Let $\T$ be a finite type triangulated category. There is a homeomorphism
\[
\Psi\colon\shspec{\T^\c} \xrightarrow{\sim} \msf{ind}(\T^\c)/\Sigma,
\]
with $\Psi(\mc{B}) = [X]$ where $X$ is any indecomposable compact in $\scr{D}(\mc{B})$.
\end{prop}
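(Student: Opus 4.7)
The plan is to pull back the homeomorphism $\Phi\colon \shspec{\T^\c} \xrightarrow{\sim} \closed{\T}^{\msf{GZ}}$ of \cref{homeomorphism} along an identification of $\closed{\T}^{\msf{GZ}}$ with $\msf{ind}(\T^\c)/\Sigma$ which is available under the finite type assumption.

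First, I would identify the underlying sets. By \cref{KrullSchmidt}, $\msf{pinj}(\T) = \msf{ind}(\T^\c)$, so $\msf{Zg}^\Sigma(\T)$ has underlying set $\msf{ind}(\T^\c)$. By \cref{prop:orbitlocallyfinite}, the Kolmogorov equivalence relation $X \sim Y \Leftrightarrow \msf{Def}^\Sigma(X) = \msf{Def}^\Sigma(Y)$ coincides with the $\Sigma$-orbit equivalence on $\msf{ind}(\T^\c)$, so as sets $\msf{KZg}^\Sigma(\T) = \msf{ind}(\T^\c)/\Sigma$. Moreover, for any $X\in\msf{ind}(\T^\c)$ the preimage of $[X]$ under the quotient map $\msf{Zg}^\Sigma(\T) \to \msf{KZg}^\Sigma(\T)$ is $\{\Sigma^i X : i \in \Z\} = \msf{Def}^\Sigma(X) \cap \msf{pinj}(\T)$, which is closed in $\msf{Zg}^\Sigma(\T)$ by definition of the $\Sigma$-Ziegler topology. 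Hence every point of $\msf{KZg}^\Sigma(\T)$ is closed, so $\closed{\T} = \msf{KZg}^\Sigma(\T) = \msf{ind}(\T^\c)/\Sigma$ as sets.

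Next, I would match up the topologies. Both are indexed by compacts $C \in \T^\c$: on one side the basic open set $[C]_\Sigma$ of the $\Sigma$-Gabriel-Zariski topology from \cref{defn:GZtoponZiegler}, on the other the image in the quotient of the basic open set $(C)_\Sigma \subseteq \msf{ind}(\T^\c)$ from \cref{chunk:indtopology}. Unwinding the definitions, these subsets of $\msf{ind}(\T^\c)/\Sigma$ are literally equal. As each $(C)_\Sigma$ is already $\Sigma$-invariant, the sets $\{[C]_\Sigma : C \in \T^\c\}$ form a basis for the quotient topology on $\msf{ind}(\T^\c)/\Sigma$ (saturated preimages of $\Sigma$-invariant basic opens are open). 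This yields a homeomorphism $\closed{\T}^{\msf{GZ}} \xrightarrow{\sim} \msf{ind}(\T^\c)/\Sigma$, and composing with $\Phi$ produces the required $\Psi$.

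Finally, for the explicit description, $\Phi(\mc{B})$ is defined in \cref{defn:MapToKQ} as the equivalence class of any indecomposable pure injective $X$ in $\scr{D}(\mc{B})$; since in the finite type case $\msf{pinj}(\T) = \msf{ind}(\T^\c)$, this $X$ is an indecomposable compact object, as required. No step presents a serious obstacle: the argument is an assembly of the previously established results, with the finite type hypothesis entering only through \cref{KrullSchmidt} and \cref{prop:orbitlocallyfinite}.
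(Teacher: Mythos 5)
Your proposal is correct and follows essentially the same route as the paper: identify $\msf{KZg}^\Sigma(\T)$ with $\msf{ind}(\T^\c)/\Sigma$ via \cref{KrullSchmidt} and \cref{prop:orbitlocallyfinite}, observe all points are closed, match the basic opens $[C]_\Sigma$ with the images of $(C)_\Sigma$ in the quotient topology, and compose with the homeomorphism $\Phi$ of \cref{homeomorphism}. Your extra remark justifying that the $\Sigma$-invariant basic opens descend to a basis of the quotient topology is a detail the paper asserts without comment, but otherwise the two arguments coincide.
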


\begin{proof}
By \cref{homeomorphism} there is a homeomorphism $\shspec{\T^{\c}}\xrightarrow{\Phi} \closed{\T}^{\msf{GZ}}$, sending $\mc{B}$ to $[X]$ for an indecomposable pure injective $X\in \scr{D}(\mc{B})$. 
    
By \cref{prop:orbitlocallyfinite}, the equivalence classes of the Kolmogorov quotient relation on $\msf{Zg}^{\Sigma}(\T)$ coincide with the shift orbits, and consequently the sets $\msf{KZg}^{\Sigma}(\T)$ and $\msf{ind}(\T^{\c})/\Sigma$ are equal. Since $\msf{Def}^{\Sigma}(X)\cap\msf{pinj}(\T)=\{\Sigma^{i}X:i\in \Z\}$, we see that any point in $\msf{KZg}^{\Sigma}(\T)$ is closed and hence $\closed{\T}=\msf{ind}(\T^{\c})/\Sigma$ as sets.
    
The sets
\[
(C)_{\Sigma}=\{[B]\in\msf{ind}(\T^\c)/\Sigma:\Hom_{\T}(C,\Sigma^{i}B)=0 \t{ for all $i\in\Z$}\}
\] 
as $C$ ranges over $\T^\c$, form a basis of open sets for the quotient topology on $\msf{ind}(\T^\c)/\Sigma$. This topology therefore coincides with the $\msf{GZ}$-topology on $\closed{\T}$, hence the spaces $\closed{\T}^{\msf{GZ}}$ and $\msf{ind}(\T^{\c})/\Sigma$ are homeomorphic, which gives the result.
\end{proof}

\begin{rem}
Combining the last corollary with \cref{prop:LocNoeth}, we can deduce the space of irreducible rank functions in the finite type setting. Namely, $\Irr{\T^\c}$ is homeomorphic to $\msf{ind}(\T^\c)/\Sigma$. 
\end{rem}

We now describe $\sspec{\T^{\c}}$. For two indecomposable compact objects $A,B\in\msf{ind}(\T^{\c})$, we write $A\sim_{\msf{t}}B$ if and only if $\msf{thick}(A)=\msf{thick}(B)$. The space $\msf{ind}(\T^{\c})/{\sim_{\msf{t}}}$ is the quotient space of $\msf{ind}(\T^{\c})$ with respect to this relation. As the following result shows, this is also the Kolmogorov quotient of $\msf{ind}(\T^{\c})/\Sigma$.

\begin{prop}\label{locfiniteindmodsim}
Let $\T$ be a finite type triangulated category. The Kolmogorov quotient relation on $\msf{ind}(\T^{\c})/\Sigma$ is given by $[A]\sim[B]\iff A\sim_{\msf{t}}B$. Consequently, there is a homeomorphism
\[
\sspec{\T^{\c}}\to \msf{ind}(\T^{\c})/{\sim_{\msf{t}}}
\]
given by $\msf{P}=\y^{-1}\mc{B}\mapsto [A]$ where $A$ is any object in $\scr{D}(\mc{B})\cap\msf{ind}(\T^{\c})$, 
with inverse given by $[A]\mapsto \,^{\perp_{\Z}}A$. 
\end{prop}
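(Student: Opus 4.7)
The plan is to combine \cref{prop:KQ} with \cref{prop:shspeclocallyfinite} and then identify the resulting Kolmogorov relation with $\sim_{\msf{t}}$ via a dual radicality argument. First, under the composite homeomorphism
\[
\sspec{\T^\c}\simeq \msf{K}(\shspec{\T^\c})\simeq \msf{K}(\msf{ind}(\T^\c)/\Sigma),
\]
the shift-prime corresponding to $[X]\in\msf{ind}(\T^\c)/\Sigma$ is ${}^{\perp_\Z}X\cap\T^\c$: by \cref{lem:recoverhomprime} the shift-homological prime $\mc{B}_{[X]}$ corresponding to $[X]$ equals $\{f\in\mod{\T^\c}:\Hom(f,\Sigma^i\y X)=0 \text{ for all }i\in\Z\}$, and since $\y C$ is representable for any compact $C$, Yoneda gives $\y^{-1}\mc{B}_{[X]}={}^{\perp_\Z}X\cap\T^\c$. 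Combined with the description of the Kolmogorov relation on $\shspec{\T^\c}$ from \cref{prop:KQ}, this shows that $[X]\sim[Y]$ in the Kolmogorov quotient iff ${}^{\perp_\Z}X\cap\T^\c={}^{\perp_\Z}Y\cap\T^\c$.

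To identify this relation with $\sim_{\msf{t}}$, the implication $\sim_{\msf{t}}\Rightarrow\sim$ is immediate, because ${}^{\perp_\Z}(-)$ depends only on the thick closure. For the converse, I would apply \cref{prop:locfinradical} to $\T^{\op}$, which is itself of finite type by the self-dual nature of the local-finiteness condition on $\T^\c$. A symmetric version of \cref{prop:shspeclocallyfinite} identifies the shift-primes of $\T^{\op}$ with the subcategories $A^{\perp_\Z}\cap\T^\c$ for $[A]\in\msf{ind}(\T^\c)/\Sigma$. Since $\msf{thick}(X)$ is radical when viewed as a thick subcategory of $(\T^{\op})^\c$, it is recoverable as the intersection of such dual shift-primes containing it, and $X\in A^{\perp_\Z}\cap\T^\c$ iff $A\in{}^{\perp_\Z}X\cap\msf{ind}(\T^\c)$, giving
\[
\msf{thick}(X) = \bigcap_{[A]\,:\,A \in {}^{\perp_\Z}X\cap\msf{ind}(\T^\c)} A^{\perp_\Z}\cap\T^\c.
\]
It follows that $\msf{thick}(X)=\msf{thick}(Y)$ iff ${}^{\perp_\Z}X\cap\msf{ind}(\T^\c)={}^{\perp_\Z}Y\cap\msf{ind}(\T^\c)$, which, since ${}^{\perp_\Z}(-)$ is closed under summands and shifts and $\T^\c$ is Krull--Schmidt by \cref{KrullSchmidt}, is precisely the Kolmogorov relation ${}^{\perp_\Z}X\cap\T^\c={}^{\perp_\Z}Y\cap\T^\c$.

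Putting the pieces together, the homeomorphism $\sspec{\T^\c}\to\msf{ind}(\T^\c)/\sim_{\msf{t}}$ is the composite of the homeomorphisms from \cref{prop:KQ} and \cref{prop:shspeclocallyfinite}: the forward map sends $\msf{P}=\y^{-1}\mc{B}$ to the $\sim_{\msf{t}}$-class of any indecomposable $A\in\scr{D}(\mc{B})\cap\msf{ind}(\T^\c)$ (independent of the choice of $A$ by \cref{prop:orbitlocallyfinite}), with inverse $[A]\mapsto {}^{\perp_\Z}A\cap\T^\c$. The main obstacle is the dual radicality step: one must verify carefully that \cref{prop:locfinradical} and \cref{prop:shspeclocallyfinite} admit the claimed self-dual formulation, and that the shift-primes of $\T^{\op}$ are indeed right perpendiculars in $\T^\c$ of the expected form.
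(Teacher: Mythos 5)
The first half of your argument --- identifying $\sspec{\T^\c}$ with $\msf{K}(\msf{ind}(\T^\c)/\Sigma)$ via \cref{prop:KQ} and \cref{prop:shspeclocallyfinite}, computing the shift-prime attached to $[X]$ as ${}^{\perp_{\Z}}X\cap\T^\c$, and hence identifying the Kolmogorov relation with ${}^{\perp_{\Z}}X\cap\T^\c={}^{\perp_{\Z}}Y\cap\T^\c$ --- is exactly what the paper does, and the easy implication $\sim_{\msf{t}}\Rightarrow\sim$ is likewise as in the paper. The gap is in the converse, your ``dual radicality'' step. You propose to apply \cref{prop:locfinradical} and a symmetric version of \cref{prop:shspeclocallyfinite} to $\T^{\op}$, on the grounds that local finiteness is self-dual. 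Self-duality does hold for the small category, since $\T^\c$ is locally finite if and only if $(\T^\c)^{\op}$ is, but ``finite type'' is a property of the ambient compactly generated category, and $\T^{\op}$ is essentially never compactly generated in the relevant sense: a compact object of $\T^{\op}$ would be an object $X$ for which the natural map $\bigoplus_i\Hom_{\T}(Y_i,X)\to\Hom_{\T}(\prod_i Y_i,X)$ is invertible for every family $(Y_i)$, and already for $\T=\msf{D}(k)$, $X=k$, $Y_i=k$ indexed by $\mathbb{N}$ this fails, even though $\msf{D}(k)$ is of finite type. Since \cref{prop:locfinradical} (which uses the recollement on $\T$ and the restricted Yoneda embedding) and \cref{prop:shspeclocallyfinite} (which uses purity in $\T$) genuinely rely on the ambient compactly generated structure, they cannot be dualised as you suggest, and your displayed formula expressing $\msf{thick}(X)$ as an intersection of right perpendiculars $A^{\perp_{\Z}}\cap\T^\c$ is left unproved.

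That formula is precisely the double-perpendicular identity $\msf{thick}(X)=({}^{\perp}\msf{thick}(X))^{\perp}$ for thick subcategories of a locally finite triangulated category, and this is where the paper's proof goes instead: it cites \cite[Proposition 4.4(1)]{KrauseLocallyFinite}, a result proved inside the small Krull--Schmidt category $\T^\c$ (via its Auslander--Reiten/Serre duality structure), with no dual ambient category needed. If you replace your dual-radicality step by that citation (or reprove it), the remaining steps of your argument are correct and coincide with the paper's proof.
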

\begin{proof}
If $[A],[B]\in\msf{ind}(\T^{\c})/\Sigma$, we have that $\overline{\{[A]\}}=\overline{\{[B]\}}$ if and only if $[A]\in(C)_{\Sigma}\iff [B]\in (C)_{\Sigma}$ for all $C\in\T^{\c}$, that is $[A]$ and $[B]$ are contained within the same basic open sets. Unravelling, we see that this is equivalent to $\Hom_{\T}(C,\Sigma^{i}A)=0\iff \Hom_{\T}(C,\Sigma^{i}B)=0$ for all $C\in\T^{\c}$ and $i\in\Z$, or equivalently $^{\perp_{\Z}}A=\,^{\perp_{\Z}}B$. 

Now, $^{\perp_{\Z}}A=\,^{\perp}\msf{thick}(A)$, and as $\T^{\c}$ is locally finite one has that $\msf{thick}(A)=(^{\perp}\msf{thick}(A))^{\perp}$ by \cite[Proposition 4.4(1)]{KrauseLocallyFinite}. In particular, we have
\[
\overline{\{[A]\}}=\overline{\{[B]\}} \iff \,^{\perp}\msf{thick}(A)=\,^{\perp}\msf{thick}(B)\iff \msf{thick}(A)=\msf{thick}(B),
\]
which proves the first claim. The induced homeomorphism now exists immediately by taking the Kolmogorov quotient of the homeomorphism in \cref{prop:shspeclocallyfinite} and using \cref{prop:KQ}, while the description of the map and its inverse also follow from  \cref{prop:shspeclocallyfinite} and the fundamental correspondence. 
\end{proof}

\begin{rem}
\cref{lem:thicksamepreimage} showed that for $[X],[Y] \in \msf{KZg}_\msf{Cl}^\Sigma(\T)$, if $\msf{thick}(X) = \msf{thick}(Y)$, then $\alpha([X]) = \alpha([Y])$, that is, the points become identified under the passage to the shift-spectrum $\sspec{\T^\c}$. If $\T$ is of finite type, then the converse is also true by \cref{prop:shspeclocallyfinite} and \cref{locfiniteindmodsim}. 
\end{rem}

We now show that for finite type triangulated categories, every thick subcategory is radical.

\begin{prop}\label{prop:locfinradical}
Let $\T$ be a finite type triangulated category. Every thick subcategory of $\T^{\c}$ is radical.
\end{prop}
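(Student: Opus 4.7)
My plan is to compute the radical $\sqrt{\msf{L}}$ of a thick subcategory $\msf{L} \subseteq \T^\c$ explicitly in terms of orthogonals, and then to conclude using the perp-duality for thick subcategories of locally finite triangulated categories.

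First, I would apply \cref{locfiniteindmodsim} to describe every shift-prime of $\T^\c$ as $^{\perp_\Z}A$ for some indecomposable $A \in \msf{ind}(\T^\c)$. Since $\msf{L}$ is thick, hence $\Sigma$-invariant, its right orthogonal $\msf{L}^\perp$ (taken in $\T^\c$) is also $\Sigma$-invariant, and the inclusion $\msf{L} \subseteq {}^{\perp_\Z}A$ reduces to $A \in \msf{L}^\perp$. This yields
\[
\sqrt{\msf{L}} = \bigcap_{A \in \msf{ind}(\T^\c) \cap \msf{L}^\perp} {}^{\perp_\Z}A.
\]

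Next, using the Krull--Schmidt property of $\T^\c$ (see \cref{KrullSchmidt}) together with the fact that $\msf{L}^\perp$ is closed under summands, every object of $\msf{L}^\perp$ is a finite direct sum of indecomposable objects in $\msf{ind}(\T^\c) \cap \msf{L}^\perp$. Combined with the $\Sigma$-invariance of $\msf{L}^\perp$, the intersection above collapses to $^\perp(\msf{L}^\perp)$, both orthogonals being computed inside $\T^\c$.

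The main step, and the anticipated obstacle, is then to show that $\msf{L} = {}^\perp(\msf{L}^\perp)$ for every thick $\msf{L} \subseteq \T^\c$. The inclusion $\msf{L} \subseteq {}^\perp(\msf{L}^\perp)$ is automatic, so all the content lies in the reverse containment. This is the natural counterpart to the identity $\msf{thick}(A) = ({}^\perp\msf{thick}(A))^\perp$ from \cite[Proposition 4.4(1)]{KrauseLocallyFinite} used in the proof of \cref{locfiniteindmodsim}: in a locally finite triangulated category, $^\perp(-)$ and $(-)^\perp$ are mutually inverse bijections between the thick subcategories of $\T^\c$. Applying this to the thick subcategory $\msf{L}^\perp$ delivers the required equality and hence $\sqrt{\msf{L}} = \msf{L}$, so $\msf{L}$ is radical.
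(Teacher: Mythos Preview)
Your argument is correct and gives a genuinely different proof from the paper's. The paper verifies the hypotheses of \cref{cor:Rpreservescompacts}: it uses \cite[Theorem 2.5]{KrauseLocallyFinite} to show that the inclusion $\msf{L}\hookrightarrow\T^{\c}$ has a right adjoint, promotes this to the statement that $R\colon\T/\msf{Loc}(\msf{L})\to\T$ preserves compacts, and then checks the remaining cogeneration condition via the finite type hypothesis. Your route bypasses that machinery entirely: using \cref{locfiniteindmodsim} you identify every shift-prime as ${}^{\perp_\Z}A$ for an indecomposable compact $A$, and then the computation of $\sqrt{\msf{L}}$ reduces to pure perp-calculus in $\T^{\c}$, yielding $\sqrt{\msf{L}}={}^{\perp}(\msf{L}^{\perp})$. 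This is more direct and more elementary, and it makes transparent that radicality here is really just admissibility of thick subcategories in disguise. The paper's approach, by contrast, illustrates how the general criterion \cref{cor:Rpreservescompacts} operates in a concrete case.

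One small point of precision: the identity you need is ${}^{\perp}(\msf{L}^{\perp})=\msf{L}$, which is the \emph{dual} of the identity $({}^{\perp}\msf{L})^{\perp}=\msf{L}$ cited from \cite[Proposition 4.4(1)]{KrauseLocallyFinite}. Your claim that ${}^{\perp}(-)$ and $(-)^{\perp}$ are mutually inverse bijections on thick subcategories is correct, but it requires both directions. This is not a problem---the locally finite condition is self-dual (so one may apply the cited result to $(\T^{\c})^{\op}$), or one may invoke the other half of Krause's Proposition 4.4, or deduce it from admissibility via \cite[Theorem 2.5]{KrauseLocallyFinite}---but it would be worth saying explicitly which of these you are using rather than leaving it implicit.
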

\begin{proof}
For this we apply \cref{cor:Rpreservescompacts}. Let $\msf{L}\subseteq\T^{\c}$ be a thick subcategory. Let $R$ denote the right adjoint of the localisation $Q\colon \T\to \T/\msf{Loc}(\msf{L})$. The localisation $Q$ restricts to a Verdier localisation $Q\colon \T^\c \to \T^\c/\msf{L}$. The inclusion functor $\msf{L}\to \T^{\c}$ admits a right adjoint by \cite[Theorem 2.5]{KrauseLocallyFinite}, which implies that the Verdier localisation $Q\colon\T^{\c}\rightarrow \T^{\c}/ \msf{L}$ admits a right adjoint $\bar{R}$.

We first show that $R$ preserves compact objects. For any $B \in \T^\c$ there is a natural map $\theta_B\colon\bar{R}QB \to RQB$ given by the adjunct to the identity map $Q\bar{R}QB = QB \to QB$. For any $A\in\T^{\c}$ and $B\in \T^{\c}$, the map $\theta_B$ induces an isomorphism
\[
\Hom_{\T}(A,RQB)\simeq\Hom_{\T/\msf{Loc}(\msf{L})}(QA,QB)\simeq \Hom_{\T^{\c}/\msf{L}}(QA,QB)\simeq\Hom_{\T}(A,\bar{R}QB),
\]
since $QA$ and $QB$ are compact objects. In particular, by compact generation we deduce that $R(QB)=\bar{R}(QB)$, hence $R(QB)\in\T^{\c}$. Since every compact object in $\T/\msf{Loc}(\msf{L})$ is a summand of $QB$ for some $B\in\T^{\c}$ by \cite[Theorem 5.6.1]{KrLocTheory}, and $\T^{\c}$ is closed under summands, we deduce that $R$ preserves compact objects.

The condition that for every indecomposable $A\in\T^{\c}$ there is an $[X]\in\closed{\T}$ such that $A\in\msf{Def}^{\Sigma}(X)$ then follows: we may take $X = A$ which indeed satisfies $[A] \in \closed{\T}$ by \cref{prop:shspeclocallyfinite} since $\T$ is of finite type. 
\end{proof}

We now provide conditions under which $\sspec{\T^{\c}}$ is a discrete space, which will encompass all of our main examples of finite type triangulated categories.
\begin{lem}\label{lem:locallyfiniteopen}
Let $\T$ be a finite type triangulated category such that $\msf{ind}(\T^{\c})/{\sim_{\msf{t}}}$ is a finite set. Then for any thick subcategory $\msf{L}\subseteq \T^{\c}$, the set $(\msf{L}\cap \msf{ind}(\T^{\c}))/{\sim_{\msf{t}}}$ is open in $\msf{ind}(\T^{\c})/{\sim_{\msf{t}}}$. 
\end{lem}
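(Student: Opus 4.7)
The plan is to transfer the problem to $\sspec{\T^\c}$ using the homeomorphism of \cref{locfiniteindmodsim}. Under this homeomorphism, the set $(\msf{L}\cap \msf{ind}(\T^{\c}))/{\sim_{\msf{t}}}$ corresponds to
\[
\msf{W} := \{{}^{\perp_\Z}A : A \in \msf{L} \cap \msf{ind}(\T^\c)\} \subseteq \sspec{\T^\c},
\]
so it suffices to show $\msf{W}$ is open. By hypothesis $\sspec{\T^\c}$ has only finitely many points, and by \cref{prop:KQ} it is a $\mrm{T}_0$ space. Since in any finite $\mrm{T}_0$ space the open sets are precisely the up-sets for the specialisation order, the task reduces to identifying that order and checking that $\msf{W}$ is upward closed.

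For the specialisation order, I will use that the basic closed sets of $\sspec{\T^\c}$ are the $\ssupp{A}$ for $A \in \T^\c$, and compute
\[
\overline{\{\msf{P}\}} = \bigcap_{A \notin \msf{P}} \ssupp{A} = \{\msf{Q} \in \sspec{\T^\c} : \msf{Q} \subseteq \msf{P}\},
\]
so $\msf{Q} \leq \msf{P}$ in the specialisation order if and only if $\msf{Q} \subseteq \msf{P}$ as thick subcategories of $\T^\c$.

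To verify that $\msf{W}$ is upward closed, suppose ${}^{\perp_\Z}A \in \msf{W}$ with $A \in \msf{L} \cap \msf{ind}(\T^\c)$, and let $\msf{P} \in \sspec{\T^\c}$ satisfy ${}^{\perp_\Z}A \subseteq \msf{P}$. Writing $\msf{P} = {}^{\perp_\Z}C$ with $C \in \msf{ind}(\T^\c)$ via the bijection of \cref{locfiniteindmodsim}, the inclusion ${}^{\perp_\Z}A \subseteq {}^{\perp_\Z}C$ combined with the identity $\msf{thick}(X) = ({}^\perp \msf{thick}(X))^\perp$ used in the proof of \cref{locfiniteindmodsim} gives $\msf{thick}(C) \subseteq \msf{thick}(A) \subseteq \msf{L}$; hence $C \in \msf{L}$ and $\msf{P} \in \msf{W}$. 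I do not anticipate substantial obstacles, as the argument is an unwinding of \cref{locfiniteindmodsim} together with the standard description of the topology on a finite $\mrm{T}_0$ space.
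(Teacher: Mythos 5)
Your proof is correct, and it takes a genuinely different route from the paper's. The paper works entirely inside $\msf{ind}(\T^{\c})/{\sim_{\msf{t}}}$: using $({}^{\perp}\msf{L})^{\perp}=\msf{L}$ for the arbitrary thick subcategory $\msf{L}$ (Krause's locally finite biperpendicularity) together with the Krull--Schmidt property, it rewrites $(\msf{L}\cap\msf{ind}(\T^{\c}))/{\sim_{\msf{t}}}$ as $\bigcap_{[C]}(C)_{\Sigma}$ with the intersection indexed by $({}^{\perp}\msf{L}\cap\msf{ind}(\T^{\c}))/{\sim_{\msf{t}}}$, and then invokes finiteness to see this is a finite intersection of basic opens. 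You instead transport the problem to $\sspec{\T^{\c}}$ via \cref{locfiniteindmodsim}, use the fact that in a finite space the opens are exactly the specialisation up-sets, identify the specialisation order from $\overline{\{\msf{P}\}}=\bigcap_{A\notin\msf{P}}\ssupp{A}$ as inclusion of shift-primes, and verify upward-closure of your set $\msf{W}$ using the biperpendicularity only in the special case $\msf{thick}(X)=({}^{\perp}\msf{thick}(X))^{\perp}$ for a single indecomposable, exactly as it already appears in the proof of \cref{locfiniteindmodsim}. All the individual steps check out: the closure computation, the up-set characterisation (with your orientation of the order), and the deduction ${}^{\perp_{\Z}}A\subseteq{}^{\perp_{\Z}}C\Rightarrow\msf{thick}(C)\subseteq\msf{thick}(A)\subseteq\msf{L}$ are correct, and there is no circularity since \cref{locfiniteindmodsim} and \cref{prop:KQ} precede the lemma. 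The trade-off: both arguments rest on the same external input and both use finiteness crucially, but the paper uses it to make an intersection of opens finite while you use it for the Alexandrov (open $=$ up-set) criterion; your approach yields as a by-product the explicit description of the specialisation order on $\sspec{\T^{\c}}$ (inclusion of shift-primes), whereas the paper's is slightly more economical in that it never leaves $\msf{ind}(\T^{\c})/{\sim_{\msf{t}}}$ and exhibits the open set concretely as an intersection of the basic opens $(C)_\Sigma$. (A minor point worth making explicit in your write-up: the subset $\msf{L}\cap\msf{ind}(\T^{\c})$ is saturated for $\sim_{\msf{t}}$, so its image in the quotient is well defined and does correspond to $\msf{W}$ under the homeomorphism.)
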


\begin{proof}
By inspection, the quotient map $\msf{ind}(\T^{\c})\to\msf{ind}(\T^{\c})/{\sim_{\msf{t}}}$ is an open map, and thus the sets $(C)_{\Sigma}$ are open in $\msf{ind}(\T^{\c})/{\sim_{\msf{t}}}$. 
Since $\T^{\c}$ is locally finite, $ (^{\perp}\msf{L})^{\perp}=\msf{L}$ by~\cite[Proposition 4.4(1)]{KrauseLocallyFinite}. So,
\begin{align*}
(\msf{L}\cap \msf{ind}(\T^{\c}))/{\sim_{\msf{t}}}&=((^{\perp}\msf{L})^{\perp}\cap \msf{ind}(\T^{\c}))/{\sim_{\msf{t}}} \\
&=
\{[B]\in \msf{ind}(\T^{\c})/{\sim_{\msf{t}}} : \Hom({}^{\perp}\msf{L},B)=0\} \\
&= \bigcap_{C\in {}^{\perp}\msf{L}} (C)_{\Sigma}\\
&= \bigcap_{[C]\in ({}^{\perp}\msf{L}\cap \msf{ind}(\T^{\c}))/{\sim_{\msf{t}}})} (C)_{\Sigma}.
\end{align*}
The final equality uses the Krull-Schmidt property \cref{KrullSchmidt}, and that $C\sim_{\msf{t}}C'$ implies $(C)_{\Sigma}=(C')_{\Sigma}$. Therefore, $(\msf{L} \cap \msf{ind}(\T^\c))/{\sim_{\msf{t}}}$ can be realised as a finite intersection of open subsets by the assumption that $\msf{ind}(\T^{\c})/{\sim_{\msf{t}}}$ is finite, and hence $(\msf{L}\cap\msf{ind}(\T^{\c}))/{\sim_{\msf{t}}}$ is open.
\end{proof}

\begin{cor}\label{locallyfinitediscrete}
Let $\T$ be a finite type triangulated category such that $\msf{ind}(\T^\c)/{\sim_{\msf{t}}}$ is a finite set. If for every $A\in\msf{ind}(\T^{\c})$ we have $(\msf{thick}(A)\cap\msf{ind}(\T^{\c}))/{\sim_{\msf{t}}}=\{[A]\}$, then $\sspec{\T^{\c}}$ is discrete.
\end{cor}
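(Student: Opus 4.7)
The plan is to combine the homeomorphism $\sspec{\T^{\c}}\cong\msf{ind}(\T^{\c})/{\sim_{\msf{t}}}$ furnished by \cref{locfiniteindmodsim} with the openness result of \cref{lem:locallyfiniteopen}. Showing that $\sspec{\T^{\c}}$ is discrete is equivalent to showing that every singleton $\{[A]\}$ in $\msf{ind}(\T^{\c})/{\sim_{\msf{t}}}$ is open.

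For each indecomposable compact $A\in\msf{ind}(\T^{\c})$, I would apply \cref{lem:locallyfiniteopen} to the thick subcategory $\msf{thick}(A)\subseteq\T^{\c}$. This requires $\msf{ind}(\T^{\c})/{\sim_{\msf{t}}}$ to be finite, which holds by hypothesis, so the lemma guarantees that $(\msf{thick}(A)\cap\msf{ind}(\T^{\c}))/{\sim_{\msf{t}}}$ is an open subset of $\msf{ind}(\T^{\c})/{\sim_{\msf{t}}}$. The additional hypothesis is precisely that this open set is the singleton $\{[A]\}$.

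Since $A\in\msf{ind}(\T^{\c})$ was arbitrary, every singleton of $\msf{ind}(\T^{\c})/{\sim_{\msf{t}}}$ is open, and hence the space is discrete. Transporting this across the homeomorphism of \cref{locfiniteindmodsim} gives that $\sspec{\T^{\c}}$ is discrete, as required. There is no real obstacle here; the work has already been done in \cref{lem:locallyfiniteopen}, and the corollary is essentially the observation that the singleton hypothesis turns the openness of $(\msf{thick}(A)\cap\msf{ind}(\T^{\c}))/{\sim_{\msf{t}}}$ into the openness of $\{[A]\}$.
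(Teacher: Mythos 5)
Your proposal is correct and matches the paper's own argument: both apply \cref{lem:locallyfiniteopen} to $\msf{thick}(A)$ for each indecomposable $A$, use the hypothesis to conclude that each singleton $\{[A]\}$ is open, and transport discreteness across the homeomorphism of \cref{locfiniteindmodsim}. (Your observation that openness of all singletons already gives discreteness, without further appeal to finiteness, is a minor streamlining of the paper's closing sentence.)
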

\begin{proof}
Since $(\msf{thick}(A)\cap\msf{ind}(\T^{\c}))/{\sim_{\msf{t}}}=\{[A]\}$ for each $A \in \msf{ind}(\T^\c)$, \cref{lem:locallyfiniteopen} together with \cref{locfiniteindmodsim} shows that each point in $\sspec{\T^{\c}}$ is open. Since the space is finite, it then follows that it is discrete.
\end{proof}

\begin{rem}
We note that $\msf{ind}(\T^\c)/{\sim_{\msf{t}}}$ is a finite set if $\T^\c$ is either generated by a single object or is connected, see \cite[Proposition 4.5 and Theorem 4.6]{KrauseLocallyFinite}.
\end{rem}

\begin{ex}\label{ex:psshered}
If $A$ is a hereditary $k$-algebra of finite representation type, then $\sspec{\D(A)^\c}$ is discrete. Note that $\D(A)$ is of finite type, see Example (3) after Proposition 2.3 in \cite{KrauseLocallyFinite}. Any finite dimensional indecomposable $A$-module $M$ is exceptional by \cite[Application 1]{RingelBricks} or \cite[Lemma 4.10]{HuberyKrause}, and so $\Hom_{A}(M,M)$ is a division ring and has vanishing higher self-extensions. Consequently $\msf{thick}(M)=\{\Sigma^{i}M:i\in\Z\}$ and thus $\sspec{\D(A)^\c}$ is discrete by the preceding result.
\end{ex}

\begin{rem}
There are cases when $\sspec{\T^{\c}}$ is not discrete for $\T^{\c}$ locally finite. For example, consider $\T = \prod_{i=1}^\infty \msf{D}(k)$ for a field $k$. One sees that $\T^\c$ is the full subcategory of $\prod_{i=1}^\infty \msf{D}(k)^\c$ spanned by the tuples which have only finitely many non-zero entries. The indecomposable compacts are the objects $k[i]$ (i.e., the tuple which is zero in every entry except for $i$ where it is $k$.) By the criterion~\cite[Proposition 2.3]{KrauseLocallyFinite}, one checks that $\T^\c$ is locally finite. The sets \[(k[i])_\Sigma^\c = \{[k[j]] : \Hom(k[i], k[j]) \neq 0\} = \{[k[i]]\}\] and their finite unions form a basis of closed sets for $\sspec{\T^\c}$ (under the homeomorphism to $\msf{ind}(\T^\c)/{\sim_{\msf{t}}}$ proved in \cref{locfiniteindmodsim}). As such the closed sets are the finite unions of the singletons, and the space is not discrete. 
\end{rem}

\subsection{Tensor-triangular fields}\label{ttfields}
In this subsection we use the above results to further investigate the relationship between the spectra defined in this paper, and their tensor analogues.

\begin{chunk}
Suppose that $\scr{F}$ is a tt-field in the sense of~\cite[Definition 1.1]{bks}, that is, $\scr{F}$ is a rigidly-compactly generated tensor-triangulated category which is pure semisimple, and for which $X \otimes -\colon \scr{F} \to \scr{F}$ is a faithful functor, for all non-zero $X \in \scr{F}$. Any tt-field $\scr{F}$ is of finite type since rigidity ensures that $\scr{F}^\c \simeq (\scr{F}^\c)^\op$, see~\cite[Remark 5.8]{bks} for more details. As such the above results apply. By~\cite[Proposition 5.15 and Theorem 5.17(a)]{bks}, the tensor versions, namely the homological spectrum $\hspec{\scr{F}^\c}$ and the Balmer spectrum $\Spc(\scr{F}^\c)$ are both just a point. This need not be the case for the non-tensor versions of the spaces as we now explain. First we give an example where the shift-homological spectrum is not a point.
\end{chunk}

\begin{ex}\label{ex:ttfcyclic}
Let $k$ be a field of characteristic $p$, and consider the stable module category of the cyclic group of order $p$, $\mathsf{StMod}(kC_p)$, which is a tt-field by \cite[Proposition 5.1]{bks}. Making the identification $kC_p \cong k[t]/t^p$, and writing $\langle i\rangle$ for the object $k[t]/t^i$, the collection of indecomposable compacts is $\{\langle i\rangle : 1 \leq i \leq p-1\}$. 

The shift-homological spectrum of $\msf{StMod}(kC_p)^\c = \mathsf{stmod}(kC_p)$ has $\lceil (p-1)/2 \rceil$ points by~\cref{prop:shspeclocallyfinite}, with points corresponding to $\langle i\rangle$ for $1 \leq i \leq \lceil (p-1)/2 \rceil$ as we have $\Sigma\langle i \rangle = \langle p-i \rangle$. For any $1 \leq i \leq \lceil p-1/2 \rceil$ we have ${}^\perp \langle i \rangle = \{0\}$, so for any non-zero $C \in \T^\c$ we have $(C)_\Sigma = \varnothing$. These are the basic opens in the topology, so we see that $\shspec{\msf{stmod}(kC_p)}$ is the $\lceil (p-1)/2 \rceil$ point space with the indiscrete topology. So although 0 is the only maximal Serre $\otimes$-ideal, there are a few maximal $\Sigma$-invariant Serre subcategories (and in particular, $0$ is not maximal as a $\Sigma$-invariant Serre subcategory). 

An alternative point of view on this is provided by the results of \cref{subsec:maptobalmer}. In the notation of that section, the $\otimes$-homological prime $0$ satisfies $0 = \mathsf{Ker}(\Hom^*(\y\1,-))_\otimes$. 

On the other hand, the shift-spectrum $\sspec{\msf{stmod}(kC_p)}$ is just a point by \cref{locfiniteindmodsim}, since all indecomposable compacts in $\msf{StMod}(kC_p)$ generate the same thick subcategory (namely, the whole category of compacts). 
\end{ex}

We now give an example where the shift-spectrum (and hence the shift-homological spectrum) is not just a point.
\begin{ex}
Let $\scr{G}$ be a tt-field and consider the product of two copies of it: $\scr{F} = \scr{G} \times \scr{G}$ with tensor product given by
\[
(x_0, x_1) \otimes (y_0, y_1) = ((x_0 \otimes y_0) \oplus (x_1 \otimes y_1), (x_0 \otimes y_1) \oplus (x_1 \otimes y_0))
\] 
and unit $\1_\scr{F} = (\1_\scr{G},0).$ Then $\scr{F}$ is a tt-field by~\cite[Example 5.19]{bks}, but from \cref{locfiniteindmodsim}, we see that its shift-spectrum $\sspec{\scr{F}^\c}$ cannot be a point. Indeed, since the indecomposable compacts must take the form $(x,0)$ or $(0,x)$, no single indecomposable compact can generate the whole category under thick closure.
\end{ex}

\section{Hereditary rings and tame hereditary algebras}\label{sec:hered}
In this section we consider the case of hereditary categories $\mc{A}$, and give general results concerning the structure of the shift-spectrum and the shift-homological spectrum for the compact objects in their derived categories $\D(\mc{A})$.  We pay particular attention to the case of tame hereditary algebras $\Lambda$, proving that the isolation condition holds for $\D(\Lambda)$ and that every thick subcategory of $\D(\Lambda)^\c$ is radical.

\subsection{Hereditary categories}
\begin{chunk}
Let $\mc{A}$ be a locally coherent hereditary category, that is $\t{Ext}_{\mc{A}}^{i}(-,-)=0$ for $i\geq 2$. By \cite[Proposition 7.4]{stovicekpurity} the derived category $\D(\mc{A})$ is compactly generated by $\msf{fp}(\mc{A})$, where as usual we view objects of $\mc{A}$ as complexes concentrated in degree 0. If $X\in\D(\mc{A})$, then
\[
X\simeq\bigoplus_{i\in\Z}\Sigma^iH_{i}(X)\simeq \prod_{i}\Sigma^iH_{i}(X),
\]
where the first isomorphism can be found at \cite[Proposition 4.4.15]{krbook}, and the second equality holds since the canonical map $\bigoplus_{i\in\Z}\Sigma^iH_{i}(X)\to \prod_{i}\Sigma^i H_{i}(X)$ in $\msf{Ch}(\mc{A})$ is a quasi-isomorphism.
\end{chunk}

\begin{chunk}\label{Zieglerhereditary}
The functor $H_{i}(-)\colon\D(\mc{A})\to\mc{A}$ and the embedding $\mc{A}\to\D(\mc{A})$ preserve indecomposable pure injectives and are definable. We therefore see that $X\in\D(\mc{A})$ is pure injective if and only if $H_{i}(X)$ is pure injective for all $i$, and thus $X$ is indecomposable pure injective if and only if $X\simeq \Sigma^{i}E$ for some $E\in\msf{pinj}(\mc{A})$. By the definability of the functors, see \cite[Theorem 6.1]{definablefunctors}, there is a homeomorphism 
\begin{equation*}\label{hered:zieglerdecomp}
    \msf{Zg}(\D(\mc{A}))\simeq \bigsqcup_{\Z}\msf{Zg}(\mc{A}).
\end{equation*}
 An alternative proof for the module categories of hereditary rings can be found at \cite[Theorem 8.1]{prestgarkusha}.
\end{chunk}

We will use \cref{hered:zieglerdecomp} to show that $\shspec{\D(\mc{A})^{\c}}$ is homeomorphic to the closed points in $\msf{KZg}(\mc{A})$ equipped with a suitable Gabriel-Zariski topology. In other words, the Ziegler spectrum of $\mc{A}$ completely determines the shift-homological spectrum and the shift-spectrum of $\D(\mc{A})^{\c}$ when considered with the GZ-topology. We now make the GZ-topology on $\msf{KZg}_{\msf{Cl}}(\mc{A})$ precise.

\begin{chunk}
Define the GZ-topology on $\msf{KZg}_{\msf{Cl}}(\mc{A})$ to have a basis of open sets given by those of the form 
\[
[M]_{\mc{A}}=\{[X]\in\msf{KZg}_{\msf{Cl}}(\mc{A}):\Hom_{\mc{A}}(M,X)= 0=\t{Ext}_{\mc{A}}^{1}(M,X)\}
\]
as $M$ runs through $\msf{fp}(\mc{A})$. We write $\msf{KZg}_{\msf{Cl}}(\mc{A})^\msf{GZ}$ for this topological space. That these sets form the basis for a topology, follows from the observation that $[0]_{\mc{A}}=\msf{KZg}_{\msf{Cl}}(\mc{A})$ and $[M\oplus N]_{\mc{A}}=[M]_{\mc{A}}\cap[N]_{\mc{A}}$. 
\end{chunk}

\begin{rem}
Note that $[M]_{\mc{A}}$ is well-defined since for a finitely presented object $M$ and indecomposable pure injectives $X,X'$ such that $[X]=[X']\in \msf{KZg}(\mc{A})$ we have $\Hom_{\mc{A}}(M,X)= 0$ if and only if $\Hom_{\mc{A}}(M,X')= 0$. 
The same holds for $\t{Ext}_{\mc{A}}^{1}(M,-)$, since $\t{Ext}_{\mc{A}}^{1}(M,-)$ preserves products and direct limits. To see the statement about direct limits, one can identify $\t{Ext}_{\mc{A}}^{1}(M,-)$ with $\Hom_{\D(\mc{A})}(M,\Sigma(-))$ and the direct limit of objects in $\mc{A}$ with the homotopy colimit of the same objects viewed as complexes (which is possible, since colimits are exact in chain complexes on $\mc{A}$). The claim then follows from the fact that $M$ is compact, hence sends a filtered homotopy colimit to a direct limit.  Therefore the kernel of $\t{Ext}_{\mc{A}}^{1}(M,-)$ is a definable subcategory and we get the statement.
\end{rem}

In order to provide a description of $\shspec{\D(\mc{A})^\c}$ in terms of the Ziegler spectrum of $\mc{A}$ we require the following lemma. To avoid ambiguity we will use the notation $M[i]$ for the stalk complex concentrated in degree $i$.

\begin{lem}\label{lem:hereditaryhomeo}
The map $f\colon\msf{Zg}(\mc{A})\to\msf{KZg}^{\Sigma}(\D(\mc{A}))$ given by $M\mapsto [M[0]]$ induces a homeomorphism
\[
\varphi\colon\msf{KZg}(\mc{A})\to\msf{KZg}^{\Sigma}(\D(\mc{A})).
\]
\end{lem}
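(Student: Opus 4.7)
The plan is to exploit the description $\msf{Zg}(\D(\mc{A}))\simeq \bigsqcup_{\Z}\msf{Zg}(\mc{A})$ of \cref{Zieglerhereditary}, in which the shift $\Sigma$ of $\D(\mc{A})$ acts by cyclically permuting the $\Z$-indexed components (sending $M[i]$ to $M[i+1]$). From this, I would first extract a bijection between $\Sigma$-invariant closed subsets of $\msf{Zg}(\D(\mc{A}))$ and arbitrary closed subsets of $\msf{Zg}(\mc{A})$: since the disjoint union topology has closed sets of the form $\bigsqcup_{i} C_i$ with each $C_i$ closed in $\msf{Zg}(\mc{A})$, the $\Sigma$-invariance forces $C_i=C_j$ for all $i,j$, and every $\Sigma$-invariant closed subset is $\bigsqcup_{i} C$ for a unique closed $C\subseteq\msf{Zg}(\mc{A})$. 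Dually, this yields a bijection between $\Sigma$-invariant definable subcategories of $\D(\mc{A})$ and definable subcategories of $\mc{A}$, under which $\msf{Def}^{\Sigma}(M[0])$ corresponds to $\msf{Def}(M)$ for any $M\in\msf{pinj}(\mc{A})$. As a sanity check, well-definedness of $f$ on the level of sets is immediate from \cref{Zieglerhereditary}, since $M[0]\in\msf{pinj}(\D(\mc{A}))$ whenever $M\in\msf{pinj}(\mc{A})$.

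I would then use the above correspondence to show that $f$ descends through the Kolmogorov quotient to a bijection $\varphi\colon \msf{KZg}(\mc{A})\to\msf{KZg}^{\Sigma}(\D(\mc{A}))$. Well-definedness and injectivity both follow from the equivalence $\msf{Def}(M)=\msf{Def}(N)\iff \msf{Def}^{\Sigma}(M[0])=\msf{Def}^{\Sigma}(N[0])$, which matches the Kolmogorov equivalence relation on each side. Surjectivity is also immediate: every indecomposable pure injective of $\D(\mc{A})$ is of the form $\Sigma^{i}M[0]$ for some $M\in\msf{pinj}(\mc{A})$ by \cref{Zieglerhereditary}, and $[\Sigma^{i}M[0]]=[M[0]]$ in $\msf{KZg}^{\Sigma}(\D(\mc{A}))$ by $\Sigma$-invariance of $\msf{Def}^{\Sigma}$.

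Finally, for the topological identification, the closed subsets of $\msf{KZg}(\mc{A})$ (respectively $\msf{KZg}^{\Sigma}(\D(\mc{A}))$) are images under the respective Kolmogorov quotient maps of closed subsets of $\msf{Zg}(\mc{A})$ (respectively $\Sigma$-invariant closed subsets of $\msf{Zg}(\D(\mc{A}))$), and the bijection of definable subcategories from the first step matches these closed subsets up under $\varphi$. Hence $\varphi$ is both continuous and closed, and therefore a homeomorphism. The main obstacle is in the first step, namely verifying that the $\Sigma$-action on $\msf{Zg}(\D(\mc{A}))$ really permutes the components of the disjoint union decomposition of \cref{Zieglerhereditary} in the claimed way, and then identifying $\msf{Def}^{\Sigma}(M[0])$ with $\msf{Def}(M)$; for this the splitting $X\simeq \prod_{i}\Sigma^{i}H_i(X)$ together with the definability of each $H_i$ should suffice.
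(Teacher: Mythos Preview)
Your proof is correct and uses the same underlying ingredients as the paper (the disjoint union decomposition of \cref{Zieglerhereditary} and the definability of the $H_i$), but the organisation differs in a useful way. The paper argues pointwise: it shows $f$ is continuous as a composite of continuous maps, shows $f$ is closed by explicitly identifying $\pi^{-1}(f(U))=\bigcap_{i}H_i^{-1}(\msf{Def}(U))\cap\msf{pinj}(\D(\mc{A}))$ for closed $U\subseteq\msf{Zg}(\mc{A})$, passes to the Kolmogorov quotient by functoriality, and then proves injectivity of $\varphi$ separately using that $H_0$ is coherent. Your approach is more structural: you first establish the bijection between closed subsets of $\msf{Zg}(\mc{A})$ and $\Sigma$-invariant closed subsets of $\msf{Zg}(\D(\mc{A}))$ directly from the disjoint union description, and then read off well-definedness, bijectivity, continuity, and closedness of $\varphi$ all at once from this lattice isomorphism (using that closed sets in a space and in its Kolmogorov quotient biject). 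This is arguably cleaner, as it avoids the separate injectivity argument and the explicit $\bigcap_i H_i^{-1}$ computation; the paper's route, on the other hand, makes the role of the coherent functors $H_i$ more visible.
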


\begin{proof}
Let us first observe that the map $f$ is surjective: if $X\in\msf{Zg}(\D(\mc{A}))$, then $X\simeq E[i]$ for some $E\in\msf{Zg}(\mc{A})$ and $i\in\Z$ by \cref{Zieglerhereditary}. Since $\msf{Def}^{\Sigma}(E[i])=\msf{Def}^{\Sigma}(E[j])$ for all $j\in\Z$, it follows that $E[i]\sim E[0]$, and thus $X$ and $E[0]$ are in the same equivalence class in $\msf{KZg}^\Sigma(\D(\mc{A}))$, so $f(E) = [X]$. 

Since $(-)[0]$ is a definable functor that preserves indecomposability, it induces a continuous map $\msf{Zg}(\mc{A})\to\msf{Zg}(\D(\mc{A}))$. By definition, we may write $f$ as the composite
\[\msf{Zg}(\mc{A}) \xrightarrow{(-)[0]} \msf{Zg}(\D(\mc{A})) \xrightarrow{\mrm{id}} \msf{Zg}^\Sigma(\D(\mc{A})) \xrightarrow{\pi} \msf{KZg}^\Sigma(\D(\mc{A})).\]
Each of these maps is continuous, and hence $f$ is continuous.

We now show that $f$ is closed.
Observe that for a closed set $U\subseteq \msf{Zg}(\mc{A})$ its image $U'=\{[M[0]]:M\in U\}$ in $\msf{KZg}^{\Sigma}(\D(\mc{A}))$ is closed if and only if $\pi^{-1}U'$ is closed. We first claim that
\[
\pi^{-1}U'=\bigcap_{i\in\Z}H_{i}^{-1}(\msf{Def}(U))\cap \msf{pinj}(\D(\mc{A})).
\]
Each $H_{i}^{-1}(\msf{Def}(U))$ is a definable subcategory since they are preimages of definable subcategories under coherent functors. Therefore $\bigcap_{i\in\Z}H_{i}^{-1}(\msf{Def}(U))$ is $\Sigma$-invariant and definable. Now if $X \in \pi^{-1}U'$, then there exists $M \in U$ such that $\msf{Def}^{\Sigma}(X)=\msf{Def}^{\Sigma}(M[0])$. Since $M[0] \in \bigcap_{i \in \Z} H_i^{-1}(\msf{Def}(U))$, we also have $X \in \bigcap_{i \in \Z} H_i^{-1}(\msf{Def}(U))$ as required. For the other inclusion,  any indecomposable pure injective in $\D(\mc{A})$ is concentrated in one degree. Therefore, any object in $\bigcap_{i\in\Z}H_{i}^{-1}(\msf{Def}(U))\cap \msf{pinj}(\D(\mc{A}))$ is of the form $M[i]$ for some $M\in U$. 
This shows that $\pi^{-1}U'$ is a closed set in $\msf{Zg}^\Sigma(\D(\mc{A}))$, we deduce that $U'$ is closed as required. 

Consequently, $f$ induces a closed and continuous surjection
\[
\varphi:=\msf{K}(f)\colon\msf{KZg}(\mc{A})\to\msf{KZg}^{\Sigma}(\D(\mc{A})).
\]
So it remains to show that $\phi$ is an injection. So we take $[X], [Y] \in \msf{KZg}(\mc{A})$ such that $\varphi([X])=\varphi([Y])$. By definition of $\phi$ this means that $\msf{Def}^{\Sigma}(X[0])=\msf{Def}^{\Sigma}(Y[0])$. Now, since $X\simeq H_{0}(X[0])$, and using the fact that $H_{0}\colon\msf{D}(\mc{A})\to\mc{A}$ is a coherent functor, we have that \[H_{0}(\msf{Def}^{\Sigma}(X[0])) \subseteq \msf{Def}(H_{0}(X[0]))=\msf{Def}(X).
\]
We deduce that $Y\in\msf{Def}(X)$ in $\mc{A}$. Symmetrically we deduce that $X\in\msf{Def}(Y)$, and thus $[X]=[Y]$ in $\msf{KZg}(\mc{A})$. Hence $\varphi$ is injective, and therefore a homeomorphism.
\end{proof}

Therefore $\closed{\D(\mc{A})}$ is in bijection with the closed points in $\msf{KZg}(\mc{A})$, which we denote by $\msf{KZg}_{\msf{Cl}}(\mc{A})$. 

\begin{thm}\label{prop:hereditarycategoryspectra}
Let $\mc{A}$ be a locally coherent hereditary category. Then there is a homeomorphism
\[
\shspec{\D(\mc{A})^{\c}}\xrightarrow{\sim}\msf{KZg}_{\msf{Cl}}(\mc{A})^{\msf{GZ}}.
\]
\end{thm}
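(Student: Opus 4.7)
The plan is to chain together two homeomorphisms already established in the paper. By \cref{homeomorphism}, the shift-homological spectrum $\shspec{\D(\mc{A})^{\c}}$ is homeomorphic to $\closed{\D(\mc{A})}^{\msf{GZ}}$. It therefore suffices to exhibit a homeomorphism $\msf{KZg}_{\msf{Cl}}(\mc{A})^{\msf{GZ}} \xrightarrow{\sim} \closed{\D(\mc{A})}^{\msf{GZ}}$.

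The candidate is the restriction of the map $\varphi\colon \msf{KZg}(\mc{A}) \xrightarrow{\sim} \msf{KZg}^{\Sigma}(\D(\mc{A}))$ from \cref{lem:hereditaryhomeo} to closed points. Since $\varphi$ is a homeomorphism for the Ziegler topologies, it restricts to a bijection $\msf{KZg}_{\msf{Cl}}(\mc{A}) \xrightarrow{\sim} \closed{\D(\mc{A})}$. The content of the proof is therefore to check that this bijection is also a homeomorphism for the GZ-topologies, which are different from the (subspace) Ziegler topologies.

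The comparison of the GZ-topologies is a direct computation. A basic open in $\closed{\D(\mc{A})}^{\msf{GZ}}$ is of the form $[C]_{\Sigma} = \{[Y] : \Hom_{\D(\mc{A})}(C,\Sigma^{i}Y)=0 \text{ for all } i\in\Z\}$ for $C \in \D(\mc{A})^{\c}$, whereas a basic open in $\msf{KZg}_{\msf{Cl}}(\mc{A})^{\msf{GZ}}$ is of the form $[M]_{\mc{A}}$ for $M \in \msf{fp}(\mc{A})$. Because $\mc{A}$ is hereditary and compactly generated by $\msf{fp}(\mc{A})$, every compact object splits as $C \simeq \bigoplus_{k=1}^n \Sigma^{j_k}N_k$ with $N_k \in \msf{fp}(\mc{A})$. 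Using the hereditary vanishing $\Hom_{\D(\mc{A})}(N, \Sigma^{\ell}X[0])=0$ for $\ell \notin \{0,1\}$, together with the identifications $\Hom_{\D(\mc{A})}(N,X[0]) = \Hom_{\mc{A}}(N,X)$ and $\Hom_{\D(\mc{A})}(N,\Sigma X[0]) = \t{Ext}^{1}_{\mc{A}}(N,X)$, a short calculation yields
\[
\varphi^{-1}([C]_{\Sigma}) = \bigcap_{k=1}^{n}[N_k]_{\mc{A}},
\]
and in particular $\varphi^{-1}([N[0]]_{\Sigma}) = [N]_{\mc{A}}$ for every $N \in \msf{fp}(\mc{A})$. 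Thus bases of the two topologies correspond under $\varphi^{-1}$, and the restriction is a homeomorphism.

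The only step requiring genuine care is the identification of the bases: it hinges on the observation that in the hereditary setting every compact object of $\D(\mc{A})$ splits as a finite sum of shifted finitely presented objects, so that the $\Z$-indexed vanishing conditions defining $[C]_{\Sigma}$ collapse onto finitely many $\Hom$ and $\t{Ext}^{1}$ conditions on the $\mc{A}$-side. Given this, the result follows by composing $\varphi^{-1}$ with the homeomorphism of \cref{homeomorphism}.
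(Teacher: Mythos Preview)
Your proposal is correct and follows essentially the same route as the paper: both reduce to checking that the bijection $\varphi$ of \cref{lem:hereditaryhomeo}, restricted to closed points, intertwines the two GZ-topologies, and both do this by decomposing an arbitrary compact $C\simeq\bigoplus\Sigma^{j_k}N_k$ with $N_k\in\msf{fp}(\mc{A})$ to compute $\varphi^{-1}([C]_{\Sigma})=\bigcap_k[N_k]_{\mc{A}}$ for continuity, and observing $\varphi([N]_{\mc{A}})=[N[0]]_{\Sigma}$ for openness.
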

\begin{proof}
By combining \cref{homeomorphism} with \cref{lem:hereditaryhomeo}, we have 
\[\shspec{\D(\mc{A})^{\c}} \xrightarrow[\sim]{\Phi} \msf{KZg}_{\msf{Cl}}(\D(\mc{A}))^{\msf{GZ}} \xleftarrow{\varphi} \msf{KZg}_{\msf{Cl}}(\mc{A})^{\msf{GZ}}\]
in which the left arrow is a homeomorphism, and the right arrow is bijection. Therefore, it suffices to prove that $\varphi$ is open and continuous.

As recalled above, by \cite[Proposition 7.4]{stovicekpurity}, the objects of $\msf{fp}(\mc{A})$ form a set of compact generators for $\D(\mc{A})$, and thus any complex of the form $\oplus_{i=m}^{n}\Sigma^{i}A$ is compact, whenever $A\in\msf{fp}(\mc{A})$. In particular, for $A\in\msf{fp}(\mc{A})$ we see that  $\phi([A]_{\mc{A}})=\{[X[0]]:\Hom_{\mc{A}}(A,X)=0=\t{Ext}_{\mc{A}}^{1}(A,X)\}=[A[0]]_{\Sigma}$, hence $\phi$ is open in the GZ-topology.

By the locally coherent assumption, $\msf{fp}(\mc{A})$ is an abelian category. Since $\msf{fp}(\mc{A})$ is a set of compact generators for $\D(\mc{A})$, by a thick subcategory argument one checks that for any $P\in\D(\mc{A})^{\c}$, $H_{i}(P)$ is finitely presented, and that $P$ is bounded. Therefore $P$ is concentrated between degrees $m$ and $n$ and $P \simeq \bigoplus_{i=m}^n \Sigma^{i}H_i(P)$ as $\mc{A}$ is hereditary. Therefore, recalling the notation of \cref{defn:GZtoponZiegler}, we have 
\begin{align*}
[P]_{\Sigma}&=\bigcap_{i=m}^{n}[\Sigma^{i}H_{i}(P)]_{\Sigma}\\ &=\bigcap_{i=m}^{n}[H_{i}(P)]_{\Sigma}\\
&=\bigcap_{i=m}^{n}\{[X]\in\closed{\D(\mc{A})}:\Hom_{\D(\mc{A})}(H_{i}(P),\Sigma^{j}X)=0 \text{ for all }j\in\Z\}
    \end{align*}

and so
\[
\phi^{-1}([P]_{\Sigma})=\bigcap_{i=m}^{n}\{[X]\in\msf{KZg}_{\msf{Cl}}(\mc{A}):\Hom_{\mc{A}}(H_{i}(P),X)=0=\t{Ext}_{\mc{A}}^{1}(H_{i}(P),X)\} = \bigcap_{i=m}^{n}[H_{i}(P)]_{\mc{A}}.
\]
Being a finite intersection, we see that $\phi^{-1}([P]_\Sigma)$ is open, and therefore $\phi$ is continuous in the GZ-topology. We have shown that $\phi$ is an open, continuous bijection, and hence it is a homeomorphism.
\end{proof}

Let us now give an example of how \cref{prop:hereditarycategoryspectra} can be used in a concrete instance, by computing $\sspec{\D(R)^\c}$ for certain Dedekind domains, including all commutative ones. 

\begin{prop}\label{dedekind}
Let $R$ be a PI Dedekind domain which is not a division ring. Then there is a homeomorphism $\sspec{\D(R)^\c}\xrightarrow{\sim} \msf{Spec}(Z(R))$, where $Z(R)$ denotes the centre of $R$.
\end{prop}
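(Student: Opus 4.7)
The plan is to combine the Ziegler-theoretic description of $\shspec{\D(R)^\c}$ given by \cref{prop:hereditarycategoryspectra} with the Kolmogorov-quotient identification of the shift-spectrum from \cref{prop:KQ}, and then to recognise the resulting space as $\msf{Spec}(Z(R))$ using the module theory of PI Dedekind domains. Since $R$ is hereditary, $\Mod R$ is a locally coherent hereditary category, so \cref{prop:hereditarycategoryspectra} gives $\shspec{\D(R)^\c} \cong \msf{KZg}_\msf{Cl}(\Mod R)^\msf{GZ}$, reducing everything to the Ziegler spectrum of $\Mod R$.

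First I would describe the closed points of $\msf{KZg}(\Mod R)$. For a PI Dedekind domain $R$ the centre $Z(R)$ is a commutative Dedekind domain, and for each $\mf{m} \in \msf{Maxspec}(Z(R))$ the central localisation $R_\mf{m}$ is a local hereditary Noetherian prime PI ring whose module theory reduces, via local Morita equivalence, to that of the DVR $Z(R)_\mf{m}$. Combining this with the classical description of the Ziegler spectrum for commutative Dedekind domains, the closed points of $\msf{KZg}(\Mod R)$ should be exactly the endofinite indecomposables $M_{\mf{m},n}$ of finite length corresponding to pairs $(\mf{m},n)$ with $\mf{m} \in \msf{Maxspec}(Z(R))$ and $n \geq 1$, together with the class of the classical ring of quotients $Q$ of $R$, which is a simple Artinian ring.

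Next I would take the Kolmogorov quotient via \cref{prop:KQ}. Under the map $[X] \mapsto {}^{\perp_\Z} X$ of \cref{definingalpha}, two closed points become identified exactly when they have the same compact perpendicular in $\D(R)^\c$, which by \cref{lem:thicksamepreimage} is the case when they generate the same thick subcategory. For each fixed $\mf{m}$ the modules $M_{\mf{m},n}$ for varying $n$ generate the common thick subcategory $\msf{thick}(M_{\mf{m},1})$ of $\mf{m}$-torsion perfect complexes, hence collapse to a single point; modules associated to distinct maximal ideals give distinct perpendiculars, since $\t{Ext}^\ast_R(M_{\mf{m},1}, M_{\mf{m}',1})=0$ for $\mf{m}\neq\mf{m}'$; and $Q$ is distinct from all of them because it is torsion-free and divisible, so $\t{Ext}^\ast_R(M_{\mf{m},n}, Q)=0$ while $\Hom_R(R,Q)\neq 0$. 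This yields a bijection $\sspec{\D(R)^\c} \leftrightarrow \msf{Spec}(Z(R))$ sending $\mf{m}$ to ${}^{\perp_\Z} M_{\mf{m},1}$ and the generic point to ${}^{\perp_\Z}Q$.

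Finally I would verify this bijection is a homeomorphism by matching basic closed sets. Any $A \in \D(R)^\c$ decomposes as $\bigoplus_i \Sigma^i H_i(A)$ with each $H_i(A) = P_i \oplus T_i$ splitting into a projective part and a finite direct sum of finite length torsion pieces. A direct Hom/Ext computation then shows that $\ssupp{A}$ equals all of $\msf{Spec}(Z(R))$ when some $P_i \neq 0$, and otherwise equals the finite set of maximal ideals of $Z(R)$ supporting the torsion of $A$, matching the basic Zariski closed sets on $\msf{Spec}(Z(R))$. The main obstacle is the first step: obtaining the detailed description of $\msf{Zg}(\Mod R)$ for a non-commutative PI Dedekind domain, since transferring the commutative picture requires care about how the order (or Azumaya) structure of $R$ over $Z(R)$ interacts with pure injective indecomposables and their definable closures, and will likely need to be carried out via central localisation at each $\mf{m} \in \msf{Maxspec}(Z(R))$ together with suitable Morita-type arguments, or by appeal to the existing literature on Ziegler spectra of HNP PI rings.
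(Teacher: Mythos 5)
Your proposal follows essentially the same route as the paper: reduce via \cref{prop:hereditarycategoryspectra} to the closed points of $\msf{KZg}(\Mod{R})$, collapse the finite length points over a fixed prime using \cref{definingalpha} and \cref{lem:thicksamepreimage}, and match the supports of compacts with the Zariski-closed subsets of $\msf{Spec}(Z(R))$. The paper resolves what you call the main obstacle simply by citing the known description of $\msf{Zg}(R)$ for PI Dedekind prime rings (Prest, \S 5.2.1, Theorem 5.2.3), reading off that the closed points of the Kolmogorov quotient are the finite length points $R/\p^n$ together with $Q(R)$ (the adic and Pr\"ufer points are not closed because their closures contain $Q(R)$), and then uses the McConnell--Robson bijection between two-sided primes of $R$ and primes of $Z(R)$ rather than indexing by $\msf{Maxspec}(Z(R))$ from the outset. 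One caveat: your suggested justification that the central localisation $R_{\mf{m}}$ is Morita equivalent to the DVR $Z(R)_{\mf{m}}$ is false in general (it fails at ramified primes, e.g.\ for a maximal order in a quaternion division algebra), so of your two proposed ways to carry out Step 1 only the appeal to the existing literature goes through as stated --- which is exactly what the paper does.
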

\begin{proof}
The Ziegler spectra $\msf{Zg}(R)$ of such rings are known, see \cite[\S 5.2.1]{psl}. The points are as follows:
\begin{enumerate}
    \item the indecomposable torsion modules $R/\p^n$ where $\p$ is a prime (and hence maximal) two-sided ideal and $1 \leq n < \infty$;
    \item the $\p$-adic completions $R_\p^\wedge$ where $\p$ is a prime ideal; 
    \item the Pr\"ufer modules $R/\p^\infty = E(R/\p)$ where $\p$ is a prime ideal;
    \item the quotient division ring $Q(R)$ of $R$.
\end{enumerate}

From the discussion of the topology given in \cite[Theorem 5.2.3 and p. 226]{psl}, the closed points of $\msf{Zg}(R)$ are those of the form $R/\p^n$ (for $n$ finite) and $Q(R)$. Moreover, if $\scr{X}$ is a subset of $\msf{Zg}(R)$ which does not contain any points of the form $R/\p^n$ with $n$ finite, then $\scr{X}$ is closed if and only if it contains $Q(R)$. From these facts we deduce that
\[
\overline{\{R/\p^n\}} = \{R/\p^n\} \qquad \overline{\{R/\p^\infty\}} = \{R/\p^\infty, Q(R)\} \qquad \overline{\{R_\p^\wedge\}} = \{R_\p^\wedge, Q(R)\}.
\]
Therefore $\msf{KZg}_\msf{Cl}(R)$ consists of the points $R/\p^n$ (for $1\leq n<\infty$) and $Q(R)$. Applying \cref{prop:hereditarycategoryspectra}, \cref{definingalpha} and \cref{lem:thicksamepreimage}, we obtain that
\[
\sspec{\msf{D}(R)^{\c}} = \{{}^{\perp_{\Z}}R/\p : \text{$\p$ is a prime ideal of $R$}\} \cup \{{}^{\perp_{\Z}} Q(R)\}
\] 
as a set. 

Now, there is, by \cite[Proposition 7.9]{McRob}, an order-preserving bijection between prime two-sided ideals of $R$ and prime ideals of $Z(R)$, given by $I\mapsto I\cap Z(R)$ with inverse $J\mapsto JR$. The map $\phi\colon \sspec{\msf{D}(R)^\c} \to \msf{Spec}(Z(R))$ defined by ${}^{\perp_{\Z}} R/\p \mapsto \p\cap Z(R)$ and ${}^{\perp_{\Z}} Q(R) \mapsto 0$ is a homeomorphism. Indeed the only closed sets in $\msf{Spec}(Z(R))$ are the whole space and the finite unions of prime ideals, and 
\[\ssupp{R/\p}=  \{\msf{P} \in \sspec{\msf{D}(R)^\c} : R/\mf{p}\not\in\msf{P}\} = \{{}^{\perp_{\Z}} R/\p\}=\phi^{-1}(\p),\]
hence the map is continuous and it is clearly closed.
\end{proof}

\subsection{Rank functions and characters}
The decomposition of derived categories for hereditary categories also enables us to better understand rank functions. In this subsection we relate characters on $\mod{R}$ in the sense of \cite{CrawleyBoevey} to irreducible rank functions on $\D(R)^\c$ for arbitrary rings $R$ and then apply it in the hereditary case. 

The following result will be useful. It is well known to specialists, but we were not able to find a proof in this generality in the literature so we include the proof.

\begin{lem}\label{lem:DefFunPresEndofinite}
Let $\msf{A}$ and $\msf{B}$ be finitely accessible categories with products. If $F\colon\msf{A}\to\msf{B}$ is a definable functor, then $F$ preserves endofinite objects.
\end{lem}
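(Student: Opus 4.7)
The plan is to invoke the characterization that, in a finitely accessible category with products, an object $X$ is endofinite if and only if $X$ is pure injective and $\msf{Add}(X) = \msf{Prod}(X)$. One direction is already recorded in \cref{prel:endo}; the converse is standard, see for example \cite[Chapter 13]{krbook}. Granted this characterization, it suffices to transfer both properties to $F(X)$.

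First, since $F$ is definable, it preserves pure injectives by \cref{prel:deffunfinaccessible}, so $F(X)$ is pure injective. Second, because $F$ preserves products and the zero object (the empty product), it is additive and hence preserves finite coproducts; combined with preservation of direct limits, $F$ preserves arbitrary coproducts, as any coproduct $\coprod_I Y_i$ is the filtered colimit of its finite subcoproducts. Consequently, for every set $I$ there are natural isomorphisms
\[
F(X)^{(I)} \simeq F(X^{(I)}) \qquad \text{and} \qquad F(X)^I \simeq F(X^I).
\]

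Finally, to verify $\msf{Add}(F(X)) = \msf{Prod}(F(X))$, take $Y \in \msf{Prod}(F(X))$, so that $Y$ is a direct summand of $F(X)^I \simeq F(X^I)$ for some set $I$. Since $X$ is endofinite, $X^I \in \msf{Prod}(X) = \msf{Add}(X)$, so $X^I$ is a direct summand of $X^{(J)}$ for some set $J$. Applying $F$ shows that $F(X^I)$ is a direct summand of $F(X^{(J)}) \simeq F(X)^{(J)}$, whence $Y \in \msf{Add}(F(X))$. The reverse inclusion is proved symmetrically, using that $X^{(I)} \in \msf{Prod}(X)$. The principal obstacle is invoking the correct form of the characterization of endofiniteness; should the ``pure injective plus $\msf{Add}=\msf{Prod}$'' formulation not be directly at hand, one can equivalently use $\msf{Def}(X) = \msf{Add}(X)$ and verify this identity for $F(X)$ by an analogous summand argument, supplemented by the fact from \cref{prel:deffunfinaccessible} that $F$ preserves the constructions (products, direct limits, pure subobjects) generating definable closure.
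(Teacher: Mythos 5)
The decisive step in your argument is the converse of the characterisation you invoke: that a pure injective object $X$ with $\msf{Add}(X)=\msf{Prod}(X)$ must be endofinite. This is not in \cref{prel:endo} (which records only the forward direction, endofinite $\Rightarrow$ pure injective with $\msf{Def}=\msf{Add}=\msf{Prod}$), it is not a standard fact, and I do not believe it can be cited from \cite[Chapter 13]{krbook}. In fact, already for module categories it would settle the pure-semisimplicity conjecture: if $R$ is a right pure-semisimple ring and $X$ is the direct sum of one copy of each indecomposable right $R$-module, then $X$ is pure injective (every module is), every module is a direct sum of indecomposables and hence lies in $\msf{Add}(X)$, and every module is a pure subobject --- hence, being pure injective, a direct summand --- of a product of indecomposable pure injectives, so lies in $\msf{Prod}(X)$; thus $\msf{Add}(X)=\msf{Prod}(X)=\Mod{R}$. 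But $X$ is endofinite only if it has finitely many isomorphism classes of indecomposable summands (by the character/endolength argument, cf.\ \cite[Proposition 13.2.2]{krbook}), i.e.\ only if $R$ has finite representation type. So the "standard" converse you rely on is at least as strong as an open conjecture, and your proof establishes only that $F(X)$ is pure injective with $\msf{Add}(F(X))=\msf{Prod}(F(X))$ --- those transfer steps (preservation of pure injectives, products, and hence coproducts, plus the summand argument) are fine, but they stop short of endofiniteness. The suggested fallback via $\msf{Def}(X)=\msf{Add}(X)$ has exactly the same defect (its converse is equally unestablished, and the same example applies), and in addition the transfer of the pure-subobject closure to $F(X)$ is not justified, since pure subobjects of $F(X)^{(I)}$ in $\msf{B}$ need not arise from objects of $\msf{A}$, and a pure subobject of a pure injective object need not be a summand.

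By contrast, the paper's proof avoids any such characterisation: it embeds $\msf{A}$ and $\msf{B}$ into the functor categories $\mbb{D}(\msf{A})$, $\mbb{D}(\msf{B})$ via Crawley-Boevey's construction, which identifies endofinite objects with endofinite injectives, and uses the adjunction $\Lambda\dashv F^{*}$ (with $\Lambda$ exact and preserving finitely presented objects) to transfer finite length of $\Hom$-groups directly along the isomorphism $\Hom_{\mbb{D}(\msf{B})}(B,F^{*}X)\simeq\Hom_{\mbb{D}(\msf{A})}(\Lambda B,X)$. If you want to repair your approach, you would need either a proof of the converse characterisation in the generality of finitely accessible categories with products (which, as above, is currently out of reach) or a direct finite-length argument of the kind the paper gives.
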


\begin{proof}
Following \cite{CrawleyBoevey}, we write $\mbb{D}(\A) := \msf{Flat}(\msf{fp}(\msf{fp}(\A),\ab)^{\op})$ and $d_\A\colon \A \to \mbb{D}(\A)$ for the canonical functor induced by the Yoneda embedding. By \cite[Corollary 10.5]{Krauselfp}, for any definable functor $F\colon \A \to \B$ there is an adjunction
\[
\begin{tikzcd}
\mbb{D}(\B) \arrow[r, shift left=1ex, "\Lambda"] \arrow[r, leftarrow, swap, shift right=1ex , "F^{*}"]& \mbb{D}(\A)
\end{tikzcd}
\]
where $F^{*}$ is the unique definable functor extending $F$, and the functor $\Lambda$ is exact and preserves finitely presented objects. 

By \cite[\S 3.6, Lemma 1]{CrawleyBoevey}, $d_\A$ identifies the endofinite objects in $\A$ with the endofinite injective objects of $\mbb{D}(\A)$. Since $F^*$ preserves injectives (being the right adjoint to an exact functor), it therefore suffices to show that $F^{*}$ preserves endofinite objects. If $B\in\msf{fp}(\mbb{D}(\B))$, then there is an isomorphism
\[
\Hom_{\mbb{D}(\B)}(B,F^{*}X)\simeq\Hom_{\mbb{D}(A)}(\Lambda(B),X)
\] 
of $\t{End}_{\mbb{D}(\A)}(X)$-modules. As $\Lambda(B)$ is finitely presented, it follows that $\Hom_{\mbb{D}(\B)}(B,F^{*}X)$ is of finite length over $\t{End}_{\mbb{D}(\A)}(X)$, and thus also over $\t{End}_{\mbb{D}(\B)}(F^{*}X)$, and thus $F^{*}$ preserves endofinite objects, as desired.
\end{proof}

\begin{chunk}\label{chunk:charactersendo}
Recall from \cite[\S 3.6]{CrawleyBoevey} that any endofinite $R$-module $M$ gives a character $\chi_M$ on $\mod{R}$ defined by $\chi_M = \mrm{length}_{\mrm{End}(M)}\Hom_{R}(-,M)$. Moreover, this assignment restricts to a bijection between indecomposable endofinite $R$-modules and irreducible characters, see~\cite[\S 3.6, Theorem]{CrawleyBoevey}.
\end{chunk}

This motivates the need to understand the relationship between endofinite objects in $\Mod{R}$ and $\Sigma$-invariant endofinite objects in $\D(R)$ for a ring. Recall that $X^{\Sigma}$ denotes the object $\bigoplus_{i\in\Z}\Sigma^{i} X$. 

\begin{lem}\label{lem:CoprodStalksEndo}
Let $R$ be any ring and $X\in\Mod{R}$ be an endofinite object. Then $X^{\Sigma}$
is a $\Sigma$-invariant endofinite object in $\D(R)$.
\end{lem}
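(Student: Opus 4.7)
Since $\Sigma$-invariance of $X^\Sigma$ is immediate from the construction, the task reduces to verifying endofiniteness of $X^\Sigma$ in $\D(R)$, i.e., showing that $\Hom_{\D(R)}(C, X^\Sigma)$ is of finite length over $\t{End}_{\D(R)}(X^\Sigma)$ for every compact $C \in \D(R)^\c$.

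I would fix such a compact $C$ and model it by a bounded complex $P^\bullet$ of finitely generated projective $R$-modules. Using compactness of $C$, one obtains
\[
\Hom_{\D(R)}(C, X^\Sigma) \simeq \bigoplus_{i \in \Z}\Hom_{\D(R)}(C, \Sigma^i X),
\]
and each summand is identified with $H^{-i}\Hom_R(P^\bullet, X)$. The key observation driving the argument is that since $P^\bullet$ is bounded, these cohomology groups vanish for all but finitely many $i$, so the coproduct reduces to a finite direct sum. This is what allows an infinite coproduct of possibly distinct endofinite objects to remain endofinite — an unexpected behaviour, since \cref{lem:DefFunPresEndofinite} is not directly applicable, as neither $M \mapsto \bigoplus_i \Sigma^i M$ nor $M \mapsto \prod_i \Sigma^i M$ is a definable functor (the former fails to preserve products and the latter fails to preserve direct limits).

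Next, I would bound each summand. Each of the finitely many non-zero terms $H^{-i}\Hom_R(P^\bullet, X)$ is a subquotient of $\Hom_R(P^{-i}, X)$, which is of finite length over $\t{End}_R(X)$ by the endofiniteness of $X$ in $\Mod R$ and finite presentation of $P^{-i}$. Consequently $\Hom_{\D(R)}(C, X^\Sigma)$ is of finite length as a module over $\t{End}_R(X)$.

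To conclude, I would consider the diagonal ring homomorphism $\t{End}_R(X) \to \t{End}_{\D(R)}(X^\Sigma)$ defined by $\phi \mapsto \bigoplus_i \Sigma^i \phi$, through which the natural $\t{End}_R(X)$-action on $\Hom_{\D(R)}(C, X^\Sigma)$ factors. Every $\t{End}_{\D(R)}(X^\Sigma)$-submodule of $\Hom_{\D(R)}(C, X^\Sigma)$ is therefore automatically an $\t{End}_R(X)$-submodule, so finite length over the smaller endomorphism ring forces finite length over the larger one. This final transfer of finite length between the two endomorphism rings, and ensuring its direction is correct, is the only step requiring care; the rest is a direct computation.
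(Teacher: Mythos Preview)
Your direct argument is correct and complete: the reduction to a finite direct sum via boundedness of $P^\bullet$, the subquotient bound on each cohomology group, and the transfer of finite length along the diagonal ring map $\t{End}_R(X)\to\t{End}_{\D(R)}(X^\Sigma)$ are all sound, with the direction of the length transfer handled correctly.

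However, your remark that \cref{lem:DefFunPresEndofinite} is inapplicable is mistaken, and in fact the paper's proof proceeds exactly that way. The point you missed is that one should consider $(-)^\Sigma$ as a functor $\Mod{R}\to\Mod{\D(R)^\c}$ (after composing with $\y$), not as an endofunctor of $\D(R)$. When the source is $\Mod{R}$, the shifts $\Sigma^i X$ of a module $X$ live in pairwise disjoint degrees, so $\bigoplus_i\Sigma^i X\simeq\prod_i\Sigma^i X$ is the complex with $X$ in every degree and zero differential; from this description one sees immediately that $X\mapsto\y(X^\Sigma)$ preserves both products and direct limits (the paper cites \cite[Corollary 3.5]{LakingVitoria} for the latter). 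Hence \cref{lem:DefFunPresEndofinite} applies directly, and endofiniteness of $X^\Sigma$ follows without any explicit Hom computation. Your approach is more elementary and self-contained, avoiding the machinery of definable functors and the external reference; the paper's approach is shorter and fits the conceptual framework developed earlier, but relies on recognising that the domain restriction to $\Mod{R}$ repairs exactly the failure of product-preservation you were worried about.
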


\begin{proof}
Consider the functor $F\colon \Mod{R}\to \Mod{\D(R)^{\c}}$ given by $X\mapsto \y\left(X^{\Sigma}\right)$.
The functor $F$ preserves direct limits by \cite[Corollary 3.5]{LakingVitoria}, while it preserves products since they are computed degreewise in $\D(R)$. Therefore $F$ is a definable functor. Thus, if $X\in\Mod{R}$ is endofinite, by \cref{lem:DefFunPresEndofinite} the object $FX\in\Mod{\D(R)^{\c}}$ is an endofinite cohomological functor. The restricted Yoneda embedding restricts to an equivalence between endofinite objects in $\D(R)$ and endofinite cohomological functors in $\Mod{\D(R)^{\c}}$ (see~\cref{prel:endofinite}), and thus $X^{\Sigma}$
is itself endofinite in $\D(R)$. The object $X^\Sigma$ is $\Sigma$-invariant by construction.
\end{proof}

\begin{prop}\label{prop:EmbedCharsinRank}
Let $R$ be any ring. Then there is an injective map 
\[
\iota\colon\{\t{Irreducible characters on $\mod{R}$}\}\to \{\t{Irreducible rank functions on $\D(R)^\c$}\}
\]
induced by the functor $(-)^\Sigma$. On the other hand, there is a map
\[
\pi\colon\{\t{Irreducible rank functions on $\D(R)^\c$}\} \to \{\t{Characters on $\mod{R}$}\}
\]
induced by $H_{0}(-)$.
Moreover, the composition $\pi\circ\iota$ is the identity when restricted to irreducible characters on $\mod{R}$, and thus every irreducible character on $\mod{R}$ is in the image of $\pi$.
\end{prop}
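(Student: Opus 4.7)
The strategy is to combine two parallel classifications of endofinite objects. By~\cite{CrawleyBoevey}, irreducible characters on $\mod{R}$ correspond bijectively to indecomposable endofinite $R$-modules $M$ via $\chi_M = \t{length}_{\t{End}(M)}\Hom_R(-,M)$, while the bijection recalled in \cref{rankviaSerre} combined with~\cite[Theorem 5.5]{conde2022functorial} identifies irreducible rank functions on $\D(R)^\c$ with simple $\Sigma$-invariant definable subcategories of $\D(R)$ consisting entirely of endofinite objects. The maps $\iota$ and $\pi$ transport endofinite objects between $\Mod{R}$ and $\D(R)$ along $(-)^\Sigma$ and $H_0$, using \cref{lem:CoprodStalksEndo} and \cref{lem:DefFunPresEndofinite} as the key bridges.

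To construct $\iota$, take an indecomposable endofinite $M \in \Mod{R}$ and form $M^\Sigma$, which is $\Sigma$-invariant and endofinite in $\D(R)$ by \cref{lem:CoprodStalksEndo}; moreover $\msf{Def}(M^\Sigma) = \msf{Add}(M^\Sigma)$ consists entirely of endofinite objects by \cref{prel:endofinite}. The key step is to verify this is simple as a $\Sigma$-invariant definable subcategory: any non-zero such subcategory inside must contain, by Krull-Schmidt for endofinite objects, some indecomposable summand of $M^\Sigma$, hence some $\Sigma^i M$; by $\Sigma$-invariance and closure under coproducts, it then contains all of $M^\Sigma$. Applying~\cite[Theorem 5.5]{conde2022functorial} yields an irreducible rank function, which I define to be $\iota(\chi_M)$. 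Injectivity follows by inversion: if $\iota(\chi_M) = \iota(\chi_N)$, then $\msf{Def}(M^\Sigma) = \msf{Def}(N^\Sigma)$, so $N$ (viewed as a stalk in degree zero) lies in $\msf{Add}(M^\Sigma)$; applying $H_0$ exhibits $N$ as a summand in $\Mod{R}$ of a direct sum of copies of $M$, and Krull-Schmidt for endofinite modules forces $N\simeq M$.

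To construct $\pi$, associate to an irreducible rank function $\rho$ its simple $\Sigma$-invariant definable subcategory $\mc{D}_\rho$ from \cite[Theorem 5.5]{conde2022functorial}, pick an indecomposable $X_\rho \in \mc{D}_\rho$, and consider $X_\rho^\Sigma \in \mc{D}_\rho$ (using closure of $\mc{D}_\rho$ under shifts and coproducts). Since $H_0 \colon \D(R) \to \Mod{R}$ is definable, \cref{lem:DefFunPresEndofinite} ensures $H_0(X_\rho^\Sigma)$ is endofinite in $\Mod{R}$, and I set
\[
\pi(\rho) := \chi_{H_0(X_\rho^\Sigma)}.
\]
The main technical point is well-definedness: since $\mc{D}_\rho$ is a simple $\Sigma$-invariant subcategory, Krull-Schmidt forces any two choices $X_\rho, X'_\rho$ to satisfy $X'_\rho \simeq \Sigma^j X_\rho$ for some $j$, whence $(X'_\rho)^\Sigma = X_\rho^\Sigma$ and the definition is unambiguous. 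Finally, the identity $\pi\circ\iota = \mrm{id}$ on irreducible characters follows: taking $X_\rho = M$ as representative for $\iota(\chi_M)$, one computes $H_0(M^\Sigma) = \bigoplus_i H_{-i}(M) = M$ since $M$ is concentrated in degree zero, giving $\pi(\iota(\chi_M)) = \chi_M$; the image claim is then immediate.
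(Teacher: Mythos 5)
Your proposal is correct and follows essentially the same route as the paper: both pass through the bijection of \cite[Theorem 5.5]{conde2022functorial} between irreducible rank functions and ($\Sigma$-invariant endofinite objects spanning) simple $\Sigma$-invariant definable subcategories, use \cref{lem:CoprodStalksEndo} to build $\iota$, use unique decomposition of endofinite objects into indecomposables for injectivity, and conclude with $H_0(M^\Sigma)=M$; your explicit Krull--Schmidt checks of simplicity of $\msf{Def}(M^\Sigma)$ and of well-definedness of $\pi$ are details the paper absorbs into the cited bijection. The only loose point is invoking \cref{lem:DefFunPresEndofinite} directly for $H_0\colon\D(R)\to\Mod{R}$: that lemma is about definable functors between finitely accessible categories with products, and $\D(R)$ is triangulated rather than finitely accessible, so ``$H_0$ is definable'' is not literally within its scope. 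The paper bridges this exactly as one would expect: $H_0$ is coherent, hence factors as $\widehat{H_0}\circ\y$ with $\widehat{H_0}\colon\Flat{\D(R)^{\c}}\to\Mod{R}$ definable, and since both $\y$ and $\widehat{H_0}$ preserve endofinite objects, so does $H_0$; inserting this factorisation makes your argument complete.
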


\begin{proof}
In \cite[Theorem 5.5]{conde2022functorial}, there is a bijection between $\Sigma$-invariant endofinite objects in $\D(R)$ and irreducible rank functions on $\D(R)^{\c}$. Applying \cref{lem:CoprodStalksEndo}, we see that $(-)^\Sigma$ sends endofinite $R$-modules to $\Sigma$-invariant endofinite objects of $\D(R)$. As such, by \cref{chunk:charactersendo} we obtain an irreducible rank function from an irreducible character. This assignment is injective by construction and the fact that endofinite objects decompose uniquely into a coproduct of indecomposable summands.

For the second assignment, the functor $H_{0}(-)\colon\D(R)\to\Mod{R}$ is coherent. In particular, by \cite[Theorem 3.2]{definablefunctors} there is a unique definable functor $\widehat{H_{0}}(-)\colon\Flat{\D(R)^{\c}}\to\Mod{R}$ such that $\widehat{H_{0}}\circ\y=H_{0}$. Since $\y$ and $\widehat{H_{0}}$ preserve endofinite objects, it follows that $H_{0}$ also does. In particular, given a $\Sigma$-invariant endofinite object $E\in\D(R)$, we deduce that $H_{0}(E)$ is an endofinite object in $\Mod{R}$, so it gives a character on $\mod{R}$. The composition $\pi \circ \iota$ is the identity when restricted to irreducible characters since $H_0(M^\Sigma) = M$ for any $M \in \Mod{R}$.
\end{proof}

\begin{rem}
The character induced by homology will, in general, not be irreducible since $H_{0}(X)$ need not be indecomposable. However by \cite[Proposition 13.2.2]{krbook}, there are pairwise non-isomorphic indecomposable endofinite $R$-modules $E_{1},\cdots,E_{n}$ such that $H_{0}(X)\simeq E_{1}^{(\lambda_{1})}\oplus\cdots\oplus E_{n}^{(\lambda_{n})}$, and each of these $E_{i}$ gives an irreducible character.
\end{rem}

\begin{rem}
    Note that it is not true that starting with an irreducible character $\chi$ on $\mod R$ and considering the rank function $\iota(\chi)$ on $\D(R)^{c}$ one has $\chi(M)=\iota(\chi)(M)$ for $M\in \D(R)^{\c} \cap \mod R$. 
\end{rem}

\begin{cor}
Let $R$ be a hereditary ring. The maps $\pi$ and $\iota$ induce a bijection between irreducible characters on $\mod{R}$ and irreducible rank functions on $\D(R)^\c$.
\end{cor}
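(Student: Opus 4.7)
The plan is to show that $\iota$ is surjective; combined with \cref{prop:EmbedCharsinRank}, which gives injectivity of $\iota$ together with the identity $\pi\circ\iota=\mrm{id}$ on irreducible characters, this immediately yields the claimed mutually inverse bijection.

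So let $\rho$ be an irreducible rank function on $\D(R)^{\c}$. The first step is to produce an indecomposable endofinite $R$-module out of $\rho$. By the proof of \cref{thm:whenisKahomeo} (invoking \cite[Theorem 5.5]{conde2022functorial}), the corresponding shift-homological prime $\mc{K}(\rho)$ is matched under the fundamental correspondence with a simple $\Sigma$-invariant definable subcategory $\mc{D}\subseteq\D(R)$ whose objects are all endofinite. Pick any non-zero $X\in\mc{D}\cap\msf{pinj}(\D(R))$, which exists because endofinite objects decompose as direct sums of indecomposable ones (\cref{prel:endofinite}). By \cref{Zieglerhereditary}, we have $X\simeq\Sigma^{i}M$ for some $M\in\msf{pinj}(\Mod{R})$, and because the functor $H_{i}\colon\D(R)\to\Mod{R}$ is definable (as used in the proof of \cref{prop:EmbedCharsinRank}) it preserves endofiniteness, so $M\simeq H_{i}(X)$ is an indecomposable endofinite $R$-module.

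The second step is to identify $\rho$ with $\iota(\chi_{M})$, where $\chi_{M}$ is the irreducible character on $\mod{R}$ associated to $M$ via the bijection of \cite[\S 3.6]{CrawleyBoevey}. By construction, $\iota(\chi_{M})$ is the irreducible rank function arising, via \cite[Theorem 5.5]{conde2022functorial}, from the $\Sigma$-invariant endofinite object $M^{\Sigma}$; this object is endofinite by \cref{lem:CoprodStalksEndo}, and satisfies $\msf{Def}(M^{\Sigma})=\msf{Def}^{\Sigma}(M[0])=\msf{Def}^{\Sigma}(X)$. Since $\mc{D}$ is a simple $\Sigma$-invariant definable subcategory containing $X$, we have $\mc{D}=\msf{Def}^{\Sigma}(X)=\msf{Def}(M^{\Sigma})$. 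Thus $\rho$ and $\iota(\chi_{M})$ correspond under \cite[Theorem 5.5]{conde2022functorial} to the same simple $\Sigma$-invariant definable subcategory, and therefore agree.

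The main obstacle is the identification in the last step: one must carefully match the simple $\Sigma$-invariant definable subcategory arising from the abstract rank function $\rho$ with the concrete definable closure of $M^{\Sigma}$, and verify that this matching is consistent under the bijection of \cite[Theorem 5.5]{conde2022functorial}. The hereditary hypothesis enters precisely here, via \cref{Zieglerhereditary}, which allows us to replace the arbitrary indecomposable pure injective $X\in\mc{D}$ with the stalk complex $M[0]$ up to a shift, so that $\msf{Def}^{\Sigma}(X)$ can be unambiguously recovered from a single $R$-module.
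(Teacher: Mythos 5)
Your proposal is correct, but it follows a genuinely different and longer route than the paper. The paper's proof is a one-line computation at the level of endofinite objects: since $R$ is hereditary, every object $E$ of $\D(R)$ splits as $\bigoplus_{i}\Sigma^{i}H_{i}E$, so for the $\Sigma$-invariant endofinite objects classifying irreducible rank functions one has $(H_{0}E)^{\Sigma}\simeq E$; hence the assignments $(-)^{\Sigma}$ and $H_{0}$ inducing $\iota$ and $\pi$ are mutually inverse, which together with $\pi\circ\iota=\mathrm{id}$ from \cref{prop:EmbedCharsinRank} gives the bijection at once. You instead prove surjectivity of $\iota$ by passing through the Ziegler-theoretic picture: realising $\rho$ by a simple $\Sigma$-invariant definable subcategory of endofinite objects via \cref{thm:whenisKahomeo}, using \cref{Zieglerhereditary} to write an indecomposable pure injective in it as a shift of a module $M$, and matching definable closures. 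This works, and it isolates exactly where heredity is used, but one step deserves to be made explicit: the conclusion ``$\rho$ and $\iota(\chi_{M})$ have the same associated simple definable subcategory, hence agree'' is not literally what \cite[Theorem 5.5]{conde2022functorial} says (as quoted, it is a bijection with $\Sigma$-invariant endofinite objects, and distinct endofinite objects can have the same definable closure); what you need is the injectivity of $\rho\mapsto\msf{ker}(\widetilde{\rho})$ on irreducible rank functions (\cref{rankviaSerre}, \cref{lem:RankGivesPrime}) together with the observation that the rank function attached to an endofinite object depends only on its definable (equivalently additive) closure, so that $\msf{ker}(\widetilde{\iota(\chi_{M})})$ is the Serre subcategory corresponding to $\msf{Def}(M^{\Sigma})=\msf{Def}^{\Sigma}(X)$. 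With that small patch, supplied by results already in the paper, your argument is complete; the trade-off is that the paper's proof is shorter and avoids the Ziegler machinery, while yours makes transparent that heredity enters only through the splitting of (indecomposable pure injective) complexes into stalk complexes.
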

\begin{proof}
    Since $R$ is hereditary, any $M \in \D(R)$ satisfies $M \simeq \bigoplus_{i \in \Z} \Sigma^i H_iM$. Therefore, $(H_0 M)^\Sigma \simeq M$ from which the claim follows.
\end{proof}

Note that in \cite[Section 6.1]{chuanglazarev} the connection between localising rank functions and Sylvester rank functions was already explored for hereditary algebras.

\subsection{Tame hereditary algebras}
From now on we shall consider finite dimensional tame hereditary algebras $\Lambda$ over $k$. For simplicity we will assume that $k$ is an algebraically closed field. Since hereditary algebras of finite representation type were covered in \cref{sec:locallyfinite} we will usually assume that $\Lambda$ is not of finite representation type. For simplicity we will also assume $\Lambda$ to be connected. Let us first recall the structure of $\mod{\Lambda}$ for such an algebra $\Lambda$. The finite dimensional indecomposable modules fit into three families: the preprojective modules $\mathbf{p}$, the preinjective modules $\mathbf{q}$, and the regular modules $\mathbf{r}$. We will denote the full additive subcategories of $\mod{\Lambda}$ containing those indecomposable modules in the same way. These classes satisfy the orthogonality conditions \[\Hom_{\Lambda}(\mathbf{q},\mathbf{r})=\Hom_{\Lambda}(\mathbf{r},\mathbf{p})=\Hom_{\Lambda}(\mathbf{q},\mathbf{p})=0\] and any morphism $\mathbf{p}\to\mathbf{q}$ factors through $\mathbf{r}$.

The Ziegler spectrum of $\Mod{\Lambda}$ is also known, and consists of the following points:
\begin{enumerate}
\item the finite dimensional modules;
\item the generic module $G$;
\item the Pr\"{u}fer modules;
\item the adic modules.
\end{enumerate}
A survey of these facts, as well as the definition of these modules, can be found in \cite[\S 8.1]{psl}. 

The following fact concerning the topology of $\msf{Zg}(\Lambda)$ is all we need to describe $\msf{KZg}_{\msf{Cl}}(\Lambda)$, which is, by \cref{prop:hereditarycategoryspectra}, homeomorphic to $\shspec{\D(\Lambda)^{\c}}$ when equipped with the GZ-topology. If $\msf{X}\subset\msf{Zg}(\Lambda)$ contains an infinitely generated module and no finite dimensional modules, then $\msf{X}$ is closed if and only if it contains the generic module, see \cite[Theorem A]{RingelTH}.

The finite dimensional modules and the generic module are all endofinite, and are therefore closed in $\msf{Zg}(\Lambda)$, and therefore are all distinct in $\msf{KZg}(\Lambda)$. By the preceding topological fact, we see that, for any adic or Pr\"{u}fer point $X\in\msf{Zg}(\Lambda)$, the set $\{X,G\}$ is closed. In particular, we cannot have $\msf{Def}(X)=\msf{Def}(Y)$ for any distinct points $X,Y\in\msf{Zg}(\Lambda)$. Consequently, the points of $\msf{KZg}_{\msf{Cl}}(\Lambda)$ are the finite dimensional modules, and the generic module. Together with \cref{prop:hereditarycategoryspectra} this gives the following description of the points of the shift-homological spectrum.

\begin{prop}
   Let $\Lambda$ be a tame hereditary algebra over a field. Then the shift-homological spectrum $\shspec{\D(\Lambda)^{\c}}$ is in bijection with the set of indecomposable finite dimensional modules of $\Lambda$ and the generic module over $\Lambda$. \qed
\end{prop}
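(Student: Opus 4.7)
The plan is to combine \cref{prop:hereditarycategoryspectra} with the classification of the points of $\msf{Zg}(\Lambda)$ and Ringel's topological fact (\cite[Theorem A]{RingelTH}). By \cref{prop:hereditarycategoryspectra}, we have a homeomorphism $\shspec{\D(\Lambda)^{\c}} \xrightarrow{\sim} \msf{KZg}_\msf{Cl}(\Lambda)^\msf{GZ}$, so the task reduces to identifying the closed points of $\msf{KZg}(\Lambda)$ as a set. Since these are built from $\msf{Zg}(\Lambda)$, whose points are already classified as recalled (finite-dimensional indecomposables, the generic module $G$, the Pr\"ufer modules, and the adic modules), we will proceed in two steps: first showing that the Kolmogorov quotient map $\msf{Zg}(\Lambda) \to \msf{KZg}(\Lambda)$ is a bijection, and then identifying which resulting equivalence classes are closed.

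For the first step, I would show that for any two distinct $X,Y \in \msf{Zg}(\Lambda)$ one has $\msf{Def}(X) \neq \msf{Def}(Y)$, equivalently $\overline{\{X\}} \neq \overline{\{Y\}}$. If $X$ is finite-dimensional or is $G$, then $X$ is endofinite, so by \cref{prel:endofinite} the set $\{X\}$ is already closed and distinct from the closure of any other point. For $X$ adic or Pr\"ufer, observe that $\{X,G\}$ consists of infinite-dimensional modules one of which is $G$, so it is closed by Ringel's fact, giving $\overline{\{X\}} \subseteq \{X,G\}$; on the other hand $\{X\}$ has no finite-dimensional points and does not contain $G$, so the same fact forces $\{X\}$ itself to not be closed, hence $\overline{\{X\}} = \{X,G\}$. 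Since these closures differ for distinct adic/Pr\"ufer $X$, the quotient map is injective on all of $\msf{Zg}(\Lambda)$.

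For the second step, because the quotient is a bijection, a point $[X] \in \msf{KZg}(\Lambda)$ is closed if and only if $\{X\}$ is closed in $\msf{Zg}(\Lambda)$. By the previous paragraph the closed singletons are precisely $\{X\}$ for $X$ a finite-dimensional indecomposable module or $X = G$, whereas for adic/Pr\"ufer $X$ we showed $\overline{\{X\}} = \{X,G\} \neq \{X\}$. This produces the claimed bijection.

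The main obstacle is the second step where we need to rule out Pr\"ufer/adic modules as closed points; the key subtlety is that Ringel's fact is stated only for subsets containing no finite-dimensional points, so we must deploy it carefully, first to verify that $\{X,G\}$ is closed (so that $\overline{\{X\}}\subseteq \{X,G\}$) and then to verify that $\{X\}$ is not closed (so that $G\in\overline{\{X\}}$). Once these two applications are made, the computation of $\msf{KZg}_\msf{Cl}(\Lambda)$ is purely formal.
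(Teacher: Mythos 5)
Your proposal is correct and follows essentially the same route as the paper: reduce via \cref{prop:hereditarycategoryspectra} to computing $\closed{\Lambda}$ as a set, use endofiniteness of the finite dimensional modules and the generic module to see these are closed points, and use Ringel's topological fact to show each adic/Pr\"ufer point $X$ has closure $\{X,G\}$, so the Kolmogorov quotient is a bijection and its closed points are exactly the finite dimensional indecomposables together with $G$. Your write-up just makes explicit the two applications of Ringel's fact (closedness of $\{X,G\}$ and non-closedness of $\{X\}$) that the paper leaves implicit.
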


Moreover, the next proposition shows that the isolation condition holds for $\D(\Lambda)$, and hence we get a  homeomorphism $\mc{K}\colon \Irr{\T^\c} \xrightarrow{\sim} \shspec{\D(\Lambda)^{\c}}$ by \cref{cor:Kishomeo}. In particular, every shift-prime thick subcategory is the kernel of an irreducible rank function. Recall that a nonzero object in an additive category with infinite coproducts is \emph{superdecomposable} if it has no indecomposable direct summands. 

\begin{prop}\label{thm:isolationhered}
The isolation condition holds for $\D(\Lambda)$, where $\Lambda$ is a tame hereditary artin algebra.
\end{prop}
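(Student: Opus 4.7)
I would prove this by showing that $\D(\Lambda)$ admits no non-zero superdecomposable pure injective object, and then deducing the isolation condition via a Ziegler-style criterion.

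First, using the hereditary decomposition of \cref{Zieglerhereditary}, any pure injective $X\in\D(\Lambda)$ decomposes as $X \simeq \prod_{i\in\Z}\Sigma^{i}H_{i}(X)$, where each $H_{i}(X)$ is a pure injective $\Lambda$-module. If $X$ were superdecomposable, then no $H_{i}(X)$ could have an indecomposable summand (otherwise that summand would yield an indecomposable summand of $X$), so each non-zero $H_{i}(X)$ would itself be a superdecomposable pure injective in $\Mod{\Lambda}$. By the classical theorem of Ringel \cite{RingelTH} (see also \cite[\S 8.1]{psl}), $\Mod{\Lambda}$ admits no superdecomposable pure injective for $\Lambda$ a tame hereditary algebra over an algebraically closed field. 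Consequently each $H_{i}(X)=0$ and so $X=0$, showing that $\D(\Lambda)$ contains no non-zero superdecomposable pure injective.

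Second, I would invoke the standard criterion that the isolation condition for the Ziegler spectrum holds whenever the ambient category contains no superdecomposable pure injective. In the module-theoretic setting this is Ziegler's theorem, see \cite[Theorem~5.3.17]{psl}. The isolation condition in \cref{prel:isolationcondition} is stated purely in terms of the locally coherent abelian category $\mod{\D(\Lambda)^{\c}}$, and the restricted Yoneda functor provides an equivalence $\y\colon\msf{Pinj}(\D(\Lambda))\xrightarrow{\sim}\msf{Inj}(\Mod{\D(\Lambda)^{\c}})$, so the classical statement translates to give the isolation condition for $\D(\Lambda)$.

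The main obstacle is the passage from the module-theoretic Ziegler theorem to its triangulated counterpart. Although the formal analogy between purity in module categories and in compactly generated triangulated categories (as developed in \cite{krsmash,krcoh}) makes this an expected adaptation, producing explicit simple objects in the localisation $\mod{\D(\Lambda)^{\c}}/\msf{ker}\,\Hom(-,\y X)$ for each isolated-in-closure $X$ requires writing down candidate pp-pairs (or their functor-category incarnations) and verifying that they are minimal. An alternative would be to quote such a criterion directly in the triangulated setting from the purity literature, avoiding the explicit translation step.
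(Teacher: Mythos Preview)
Your approach is essentially identical to the paper's: both reduce to the absence of superdecomposable pure injectives in $\Mod{\Lambda}$ via the hereditary homology decomposition, and then invoke the ``no superdecomposable pure injective $\Rightarrow$ isolation condition'' criterion. For your stated obstacle, the paper resolves the translation to the triangulated setting by citing \cite[Theorem~11.2]{kredc} and passing through the equivalence $\y\colon\msf{Pinj}(\T)\xrightarrow{\sim}\msf{Pinj}(\msf{Flat}(\T^{\c}))$; note also that the paper's references for the absence of superdecomposable pure injectives (\cite[Summary~8.71]{jl}, \cite[\S5.1]{Prestsuperdec}) cover arbitrary tame hereditary artin algebras, not just the algebraically closed case.
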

\begin{proof}
Let us first observe that if $\D(\Lambda)$ contains no superdecomposable pure injective object, then it satisfies the isolation condition. This follows from \cite[Theorem 11.2]{kredc} and passing the result through the equivalence of categories $\y\colon\msf{Pinj}(\T)\xrightarrow{\sim}\msf{Pinj}(\msf{Flat}(\T^{\c}))$. Now, suppose that $X$ is a superdecomposable pure injective object of $\D(\Lambda)$. As $X\simeq\oplus_{i \in \Z}\Sigma^{i}H_{i}X$, it follows that there is an $i$ such that $\Sigma^{i}H_{i}X$ is a superdecomposable pure injective object, and thus $H_{i}X$ is a superdecomposable pure injective $\Lambda$-module. Yet tame hereditary artin algebras have no superdecomposable pure injective modules, see \cite[Summary 8.71]{jl} or \cite[\S5.1]{Prestsuperdec}, giving a contradiction.
\end{proof}

\begin{rem}
As a corollary of the above proof, we see that any algebra that is derived equivalent to a tame hereditary artin algebra also satisfies the isolation condition. Suppose $A$ is such an algebra. If its category of modules contained a superdecomposable pure injective module, say $E$, then $E$ is a pure injective object in $\msf{D}(A)$ which remains superdecomposable, and this cannot happen by the above. Thus $A$ admits no superdecomposable pure injective modules so satisfies the isolation condition.
\end{rem}

\begin{chunk}\label{th:wide}  
Let us now turn our attention to proving \cref{thm:tamheredeverythingradical}, that all thick subcategories of $\D(\Lambda)^{\c}$ are radical. By \cref{thm:isolationhered}, we know that thick subcategories are given as intersections of kernels of rank functions or equivalently as perpendiculars to sets of endofinite objects. We will realize every thick subcategory in this way. First, let us state the result of Br\"uning reducing the classification of thick subcategories of $\D(\Lambda)^\c\simeq \D^{\mrm{b}}(\mod{\Lambda})$ to the classification of wide subcategories of $\mod{\Lambda}$. Recall that a full subcategory $\mathcal{W}$ of an abelian category is called \textit{wide}, if it is closed under extensions, and kernels and cokernels of morphisms in $\mathcal{W}$. Note that wide subcategories are in turn abelian with the induced abelian structure.
\end{chunk}

\begin{thm}[{\cite[Theorem 1.1]{bruning}}]\label{brunbij}
Let $\mathcal{A}$ be a hereditary abelian category.
The assignments 
\[\msf{L}\mapsto \{H_{0}(X): X\in \msf{L}\} \quad \text{ and } \quad \mathcal{W}\mapsto \{X\in \D^{\mrm{b}}(\mathcal{A}): H_{n}(X)\in \mathcal{W} \text{ for all } n\in \mathbb{Z}\}\] 
give a bijection between the set of thick subcategories of $\D^{\mrm{b}}(\mathcal{A})$ and
the set of wide subcategories of $\mathcal{A}$.
\end{thm}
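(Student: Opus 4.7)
The plan is to check that both assignments produce well-defined objects (wide, respectively thick, subcategories), and then verify that they are mutually inverse. The crucial structural input coming from the hereditary hypothesis is that every object $X \in \D^{\mrm{b}}(\mathcal{A})$ splits as $X \simeq \bigoplus_{n \in \Z} \Sigma^n H_n(X)$ (which is recalled in the excerpt). So first I would record this decomposition and note that it implies: if $\msf{L} \subseteq \D^{\mrm{b}}(\mathcal{A})$ is thick, then $X \in \msf{L}$ if and only if $H_n(X) \in \msf{L}$ for every $n \in \Z$, since the $\Sigma^n H_n(X)$ are summands of $X$ and $\msf{L}$ is thick.

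Next I would show the first assignment gives a wide subcategory. Given a thick $\msf{L}$, set $\mathcal{W} = \{H_0(X) : X \in \msf{L}\}$; by the previous remark this equals $\mathcal{A} \cap \msf{L}$. For a morphism $f\colon M \to N$ in $\mathcal{W}$, the cone $C$ in $\D^{\mrm{b}}(\mathcal{A})$ has $H_0(C) = \mrm{coker}(f)$ and $H_1(C) = \mrm{ker}(f)$ and all other homologies zero, so by hereditarity $C \simeq \mrm{coker}(f) \oplus \Sigma\mrm{ker}(f)$. Since $C \in \msf{L}$, both $\mrm{ker}(f)$ and $\mrm{coker}(f)$ lie in $\mathcal{W}$. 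Closure of $\mathcal{W}$ under extensions is immediate from the triangle $M \to E \to N \to \Sigma M$ associated to $0\to M\to E\to N\to 0$ and the two-out-of-three property for thick subcategories.

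Conversely, for a wide $\mathcal{W} \subseteq \mathcal{A}$, set $\msf{L} = \{X : H_n(X) \in \mathcal{W} \text{ for all } n \in \Z\}$. Closure of $\msf{L}$ under summands is clear because $H_n$ is additive. For a triangle $X \to Y \to Z \to \Sigma X$ with two of the three objects in $\msf{L}$, the long exact homology sequence
\[
\cdots \to H_{n+1}(Z) \to H_n(X) \to H_n(Y) \to H_n(Z) \to H_{n-1}(X) \to \cdots
\]
exhibits $H_n$ of the remaining object as an extension built from a kernel and cokernel of maps between objects of $\mathcal{W}$; wideness of $\mathcal{W}$ then places it in $\mathcal{W}$, so the third object is in $\msf{L}$.

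Finally I would check that the two assignments are mutually inverse. Starting with a thick $\msf{L}$, forming $\mathcal{W}_\msf{L} = \mathcal{A} \cap \msf{L}$ and then $\msf{L}' = \{X : H_n(X) \in \mathcal{W}_\msf{L} \text{ for all } n\}$, we have $X \in \msf{L}'$ iff each $H_n(X) \in \msf{L}$, which by the decomposition is equivalent to $X \in \msf{L}$. Starting with a wide $\mathcal{W}$, forming $\msf{L}_\mathcal{W}$ and then $\mathcal{W}' = \{H_0(X) : X \in \msf{L}_\mathcal{W}\}$, one inclusion is trivial (any $M \in \mathcal{W}$ is a stalk complex in $\msf{L}_\mathcal{W}$), and the other follows because $N = H_0(X)$ with $X \in \msf{L}_\mathcal{W}$ forces $N \in \mathcal{W}$ by definition of $\msf{L}_\mathcal{W}$. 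The main technical point — though not especially hard — is to be careful about the indexing conventions for homology and the cone triangle when identifying $H_\ast(\mrm{cone}(f))$ with the kernel/cokernel of a morphism in the heart.
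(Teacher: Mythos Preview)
Your proof is correct and follows the natural strategy. Note, however, that the paper does not actually prove this statement: it is quoted verbatim as \cite[Theorem 1.1]{bruning} and used as a black box, so there is no ``paper's own proof'' to compare against. Your argument is essentially the standard one (and is close in spirit to Br\"uning's original), resting on the decomposition $X \simeq \bigoplus_n \Sigma^n H_n(X)$ which the paper does recall earlier in the hereditary section. One small point worth tightening: in verifying that $\msf{L} = \{X : H_n(X)\in\mathcal{W}\text{ for all }n\}$ is triangulated, your sentence ``$H_n$ of the remaining object is an extension built from a kernel and cokernel of maps between objects of $\mathcal{W}$'' is correct but compressed---spelling out that images of maps in $\mathcal{W}$ lie in $\mathcal{W}$ (via $\mrm{im}(f)=\mrm{ker}(\mrm{coker}(f))$) would make the extension explicit.
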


\begin{chunk}\label{th:exceptional}
Thick subcategories of $\D(\Lambda)^\c$, or equivalently wide subcategories of $\mod{\Lambda}$, can be split into two families. The first consists of those which can be obtained from an exceptional sequence in $\mod{\Lambda}$, and the second consists of those which cannot.

Recall that a module $M\in\mod{\Lambda}$ is \emph{exceptional} provided $M$ is indecomposable and $\t{Ext}_{\Lambda}^{1}(M,M)=0$. A sequence $\bar{X}=(X_{1},\cdots,X_{t})$ of $\Lambda$-modules is \emph{exceptional} if each $X_{j}$ is exceptional and additionally $\Hom_{\Lambda}(X_{j},X_{i})=0 =\t{Ext}_{\Lambda}^{1}(X_{j},X_{i})$ for all $1\leq i<j\leq t$.
\end{chunk}

Given an exceptional sequence $\bar{X}=(X_{1},\cdots,X_{t})$, one can consider the smallest wide subcategory containing the modules $X_{1},\dots,X_{t}$, and we denote this wide subcategory by $\mc{W}(X_{1},\cdots,X_{t})$ or $\mc{W}(\bar{X})$. Those wide subcategories of $\mod{\Lambda}$ arising from exceptional sequences have particularly pleasant properties.

\begin{thm}[{\cite[Theorem A.4]{HuberyKrause}}]\label{ththm:WideEx} Let $\Lambda$ be a hereditary artin algebra and $\mathcal{W}$ be a wide subcategory of $\mod{\Lambda}$. The following are equivalent:
\begin{enumerate}
    \item  $\mathcal{W}=\mathcal{W}(\bar{X})$ for some exceptional sequence $\bar{X}$;
    \item the inclusion $\mathcal{W}\rightarrow\mod\Lambda$ admits a left adjoint;
    \item the inclusion $\mathcal{W}\rightarrow\mod\Lambda$ admits a right adjoint;
    \item there is a finite homological epimorphism $f\colon \Lambda\rightarrow B$ such that restriction of scalars along $f$
induces an equivalence $\mod{B}\simeq \mathcal{W}$;
    \item there is an exceptional sequence $\bar{Y}$ in $\mod{\Lambda}$ such that $\mathcal{W}=\bar{Y}^{\perp_{0,1}}$;
    \item there is an exceptional sequence $\bar{Z}$ in $\mod{\Lambda}$ such that $\mathcal{W}=\,^{\perp_{0,1}} \bar{Z}$.
    \end{enumerate}
\end{thm}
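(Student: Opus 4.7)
The plan is to prove this via a cycle of implications that rests on two classical ingredients: the perpendicular calculus of Geigle--Lenzing, and the theorem of Crawley--Boevey--Ringel--Schofield that any exceptional sequence in $\mod{\Lambda}$ can be completed to a full one of length $n$, where $n$ is the number of isomorphism classes of simple $\Lambda$-modules. This completion result is what makes the left and right perpendicular descriptions ((5) and (6)) genuinely available from an exceptional sequence generating $\mc{W}$.

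First I would handle (1) $\Leftrightarrow$ (5) $\Leftrightarrow$ (6). Given $\mc{W} = \mc{W}(\bar{X})$ with $\bar{X} = (X_{1},\ldots,X_{t})$, I complete $\bar{X}$ to a full exceptional sequence $(X_{1},\ldots,X_{t},Y_{1},\ldots,Y_{n-t})$, and verify that $\mc{W}(\bar{X}) = (Y_{1},\ldots,Y_{n-t})^{\perp_{0,1}}$ by induction along the exceptional sequence, using that in a hereditary category the perpendicular categories of exceptional objects iterate compatibly. The analogous argument with the sequence completed on the left yields the description as ${}^{\perp_{0,1}}\bar{Z}$. Conversely, starting from (5) or (6), I use that the perpendicular category of an exceptional sequence is wide and contains a canonically associated exceptional sequence generating it, essentially by induction using the fact that the perpendicular of a single exceptional object in a connected hereditary category is again the module category of a hereditary algebra with one fewer simple.

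Next I would prove (5), (6) $\Rightarrow$ (2), (3) by invoking the Geigle--Lenzing construction: for a single exceptional object $E$, the inclusion $E^{\perp_{0,1}} \hookrightarrow \mod{\Lambda}$ admits a left adjoint built by taking universal extensions killing $\Hom$ and $\t{Ext}^{1}$ into $E$, and dually a right adjoint to ${}^{\perp_{0,1}}E \hookrightarrow \mod{\Lambda}$. Iterating up the exceptional sequence, these adjoints compose to adjoints for the inclusion $\mc{W} \hookrightarrow \mod{\Lambda}$. For (2) or (3) $\Rightarrow$ (4), I would use the characterisation that a wide subcategory of $\mod{\Lambda}$ whose inclusion admits an adjoint corresponds, by a result of Krause, to restriction of scalars along a homological epimorphism $f\colon \Lambda \to B$, and finiteness of the resulting $B$ is forced because $\mc{W}$ has only finitely many simples (being generated under extensions, kernels, and cokernels by a finite exceptional sequence). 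Finally, (4) $\Rightarrow$ (1) is the cleanest step: if $\mc{W} \simeq \mod{B}$ for a finite-dimensional hereditary $B$, then the indecomposable projectives of $B$, ordered appropriately, form a full exceptional sequence in $\mod{B}$, which transports along the restriction equivalence to an exceptional sequence in $\mod{\Lambda}$ generating $\mc{W}$.

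The main obstacle I expect is (1) $\Rightarrow$ (5), (6), since this requires both the nontrivial completion of exceptional sequences and control over how the associated perpendicular categories behave iteratively. In particular, one must ensure at each step that the perpendicular of an exceptional object inside a hereditary category remains equivalent to a module category over a hereditary artin algebra, so that the induction keeps its hypotheses intact. Once this perpendicular calculus is in hand, the remaining implications are essentially formal consequences, with (4) $\Rightarrow$ (1) serving as the base case that closes the cycle.
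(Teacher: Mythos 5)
First, a structural point: the paper does not prove this statement at all --- it is imported verbatim as \cite[Theorem A.4]{HuberyKrause} --- so there is no internal proof to measure yours against; the comparison can only be with the standard argument (which is, in substance, the one in the cited source). Your outline does follow that route: Geigle--Lenzing perpendicular calculus (reflections/coreflections via universal extensions, and the fact that the perpendicular of an exceptional object is again $\mod{\Lambda'}$ for a hereditary artin algebra $\Lambda'$ with one fewer simple), the Crawley-Boevey/Ringel completion theorem to pass between $\mathcal{W}(\bar X)$ and the two perpendicular descriptions (your orientation conventions --- completing on the right for $(5)$, on the left for $(6)$ --- are consistent with the paper's convention for exceptional sequences), and closing the cycle via $(4)\Rightarrow(1)$ using the ordered projectives of $B$; there you should note that $B$ being hereditary is not part of $(4)$ but follows because a homological epimorphism identifies $\mathrm{Ext}$-groups over $B$ with those over $\Lambda$, and that this same identification is what makes the transported sequence exceptional in $\mod{\Lambda}$.

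The one genuine gap is in your step $(2)/(3)\Rightarrow(4)$: you force finiteness of $B$ by saying $\mathcal{W}$ ``has only finitely many simples (being generated under extensions, kernels, and cokernels by a finite exceptional sequence)''. At that point of the cycle this is exactly statement $(1)$, which is not available, so the justification is circular; and finiteness of the set of simples is genuinely not automatic for wide subcategories of $\mod{\Lambda}$ --- the regular modules over a tame hereditary algebra form a wide subcategory with infinitely many simples. The repair is to extract finiteness from the adjoint itself: if $L$ is left adjoint to the inclusion, then $L\Lambda\in\mod{\Lambda}$ and $\Hom_{\mathcal{W}}(L\Lambda,W)\cong\Hom_{\Lambda}(\Lambda,W)$ is the underlying module of $W$, which is exact in $W$ because short exact sequences of $\mathcal{W}$ are short exact in $\mod{\Lambda}$; hence $L\Lambda$ is a projective generator of $\mathcal{W}$, $B:=\mathrm{End}_{\mathcal{W}}(L\Lambda)^{\mathrm{op}}$ is an artin algebra with $\mathcal{W}\simeq\mod{B}$, and one then checks that the natural map $\Lambda\to B$ is a finite homological epimorphism (using that $\Lambda$ is hereditary and $\mathcal{W}$ is extension-closed); dually for a right adjoint via an injective cogenerator. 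Relatedly, the ``result of Krause'' you invoke is usually stated for bireflective extension-closed subcategories of $\Mod{\Lambda}$, so you either need the direct construction just described or an explicit passage between $\mod{\Lambda}$ and $\Mod{\Lambda}$. With that step repaired, the rest of your sketch is the standard proof and goes through.
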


We first check that thick subcategories given by an exceptional sequence in $\mod{\Lambda}$ are radical.

\begin{lem}\label{lem:ExceptionalIsRadical}
    Let $\msf{L}$ be a thick subcategory of $\D(\Lambda)^\c$ corresponding to a wide subcategory $\mathcal{W}={}^{\perp_{0,1}}Z$ for some object $Z\in \mod{\Lambda}$, then $\msf{L}$ is radical. In particular, if $\msf{L}$  corresponds to a wide subcategory given by an exceptional sequence, then $\msf{L}$ is radical. 
\end{lem}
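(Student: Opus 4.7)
The plan is to directly translate the hypothesis on $\mathcal{W}$ into a perpendicularity statement in $\D(\Lambda)$, and then invoke \cref{ex:foralgebraperpsareradical}, which already says that $^{\perp_\Z}\msf{A}\cap\T^\c$ is radical whenever $\msf{A}$ is a set of compact objects of $\D(\Lambda)$. Since $\Lambda$ is finite dimensional hereditary, every finite dimensional $\Lambda$-module has projective dimension at most $1$ and so lies in $\D(\Lambda)^\c$; in particular, the module $Z$ appearing in the hypothesis is a compact object of $\D(\Lambda)$. So the task reduces to identifying the thick subcategory $\msf{L}$ associated with $\mathcal{W}={}^{\perp_{0,1}}Z$ via \cref{brunbij} as $^{\perp_\Z}Z\cap\D(\Lambda)^\c$.

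To carry this out, I would first unpack \cref{brunbij}: the corresponding $\msf{L}$ consists of those $X\in\D^{\mrm{b}}(\mod{\Lambda})$ such that $H_n(X)\in{}^{\perp_{0,1}}Z$ for every $n\in\Z$. Using that $\Lambda$ is hereditary, each $X\in\D^{\mrm{b}}(\mod{\Lambda})$ splits as $X\simeq\bigoplus_{i\in\Z}\Sigma^iH_i(X)$, and $\Ext^k_\Lambda=0$ for $k\geqslant 2$. Hence for each $j\in\Z$ one computes
\[
\Hom_{\D(\Lambda)}(X,\Sigma^jZ)\simeq\prod_{i\in\Z}\Ext^{j-i}_\Lambda(H_i(X),Z)\simeq\Hom_\Lambda(H_j(X),Z)\oplus\Ext^1_\Lambda(H_{j-1}(X),Z),
\]
and these groups vanish for all $j\in\Z$ precisely when $H_n(X)\in{}^{\perp_{0,1}}Z$ for every $n$. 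This gives the clean identification $\msf{L}={}^{\perp_\Z}Z\cap\D(\Lambda)^\c$, and then \cref{ex:foralgebraperpsareradical} finishes the first assertion.

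For the ``in particular'' part, I would apply the implication (1)$\Rightarrow$(6) of \cref{ththm:WideEx}: any wide subcategory $\mathcal{W}=\mathcal{W}(\bar{X})$ arising from an exceptional sequence equals $^{\perp_{0,1}}\bar{Z}$ for some exceptional sequence $\bar{Z}=(Z_1,\dots,Z_t)$, which means $\mathcal{W}=\bigcap_{i=1}^t{}^{\perp_{0,1}}Z_i$. Under the Brüning bijection (and using that the $H_n$-pullback commutes with intersections) this corresponds to $\msf{L}=\bigcap_{i=1}^t\msf{L}_i$, where each $\msf{L}_i$ is radical by the first part. Since the intersection of radical thick subcategories is radical (an empty-intersection-of-primes calculation, already used implicitly throughout the section), $\msf{L}$ is radical.

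I do not anticipate any serious obstacle: the only non-routine point is the homological calculation in the second paragraph, which hinges precisely on the hereditarity of $\Lambda$ and the fact that $\D^\msf{b}(\mod{\Lambda})=\D(\Lambda)^\c$ for a finite dimensional algebra of finite global dimension, so that \cref{ex:foralgebraperpsareradical} applies to $Z$.
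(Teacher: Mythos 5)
Your proposal is correct and takes essentially the same route as the paper's proof: identify $\msf{L}$ with ${}^{\perp_\Z}Z\cap\D(\Lambda)^\c$ using the hereditary splitting and the bijection of \cref{brunbij}, conclude radicality from \cref{ex:foralgebraperpsareradical}, and deduce the exceptional-sequence case from \cref{ththm:WideEx}. Your explicit computation of $\Hom_{\D(\Lambda)}(X,\Sigma^jZ)$ merely spells out the step the paper calls ``clear'', and your intersection argument for the exceptional case is an inessential variant of taking $Z$ to be the direct sum of the sequence.
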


\begin{proof}
Let $\msf{L}=\{X\in\D(\Lambda)^\c : \Hom_{\D(\Lambda)^\c}(X,\Sigma^jZ)=0\text{ for all }j\in\Z\}$, which is a radical thick subcategory since $Z\in\D(\Lambda)^{\c}$ as $\Lambda$ is hereditary, see \cref{ex:foralgebraperpsareradical}. It is clear that $X\in\msf{L}$ if and only if $H_{i}(X)\in \mc{W}$ for all $i\in \Z$, so by the bijection in \cref{brunbij} the result follows. The claim about exceptional sequences follows from \cref{ththm:WideEx}.
\end{proof}

\begin{chunk}
For the second family of wide (equivalently thick) subcategories, those which are not orthogonal to an exceptional sequence, more work is required. Any such wide subcategory consists only of regular modules, see \cite[Proposition 6.14]{koehler}. For that reason, let us recall the structure of the regular part of $\mod{\Lambda}$ in more detail. Note that the regular part $\mathbf{r}$ is a wide subcategory of $\mod{\Lambda}$, so it can be considered as an abelian category with the induced structure.

 The regular part of the module category is of the form 
\[
\prod_{j\in \mathbb{P}^1_k}\mathcal{H}_j\times \prod_{i=1}^s\mathcal{U}_{n_i},
\]
where the components $\mathcal{H}_j$ are \emph{homogeneous tubes} of rank $1$ and the components $\mathcal{U}_{n_i}$ are \emph{non-homogeneous tubes} of rank $n_i>1$ with $s\leq 3$ \cite{Ringelinfdimreps}. 
We will be mostly working with the non-homogeneous tubes, so let us concentrate on their structure more closely. Note that we call these components tubes since their Auslander-Reiten quiver is of the form 
$\Z A_\infty/\langle \tau^{n}\rangle$, where $n$ denotes the rank. The tube is then the additive closure of its indecomposable objects, that is, the objects appearing in its Auslander-Reiten quiver. 

A non-homogeneous tube $\mathcal{U}_n$ has $n$ quasi-simple modules $R_1,\dots,R_{n}$ on the rim of its Auslander-Reiten quiver (simple objects of $\mathbf{r}$), and the whole tube is precisely the subcategory of modules admitting a filtration with factors $R_1,\dots,R_{n}$. The category $\mathcal{U}_n$ is a uniserial subcategory of $\mod{\Lambda}$ and each indecomposable object of $\mathcal{U}_n$ is determined up to isomorphism by its filtration with quasi-simple factors, in particular, it is determined by its regular socle (the unique quasi-simple submodule of $M$) and regular length (the length of the filtration with factors $R_1,\dots,R_{n}$). We will denote such an object $M$ by $R_i^m$, where $m$ is the length of the filtration and $R_i$ is the regular socle of $M$. 
\end{chunk}

\begin{chunk}
An explicit description of the thick subcategories of $\D(\Lambda)^\c$, or equivalently wide subcategories of $\mod{\Lambda}$, was obtained in \cite{Dichev} in a special case and in \cite{koehler} in full generality; we will follow the exposition from \cite{koehler}. 

 For a wide subcategory $\mathcal{W}\subseteq\mod{\Lambda}$, a tube of rank one is either completely contained in $\mathcal{W}$ or none of its objects are in $\mathcal{W}$.

The situation with tubes of higher rank is a little bit more complicated and requires some combinatorics. Take a tube of rank $n$ and consider a circle with $n$ marked points denoted $(1,\dots,n)$ in a clockwise order. Modules $R_i^m$ of regular length $\leq n$ can be represented as arcs  without self intersections $(i,i+m \mbox{ } \mod n)$ on the circle $(1,\dots,n)$. Two arcs will be called \emph{non-crossing} if their interiors do not intersect, and if their starting points, respectively their endpoints, do not coincide. Note that the starting point of one arc can coincide with the endpoint of the other and the arcs will still be considered non-crossing (see \cite{koehler} for details). 

\end{chunk}

\begin{prop}[{\cite[Proposition 6.10]{koehler}}]\label{wideandarcs}
Let $\mc{U}_n$ be a tube with $n$ simple objects. There is a bijective correspondence between the set of wide subcategories of $\mc{U}_n$ and the set of non-crossing collections of arcs on a circle with $n$ points.
\end{prop}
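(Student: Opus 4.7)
The plan is to reduce the problem to a statement about the simple objects of wide subcategories, and then translate this into the combinatorial condition. Since $\mc{U}_n$ is a uniserial length category with simples $R_1,\ldots,R_n$, every wide subcategory $\mc{W}\subseteq\mc{U}_n$ is itself an abelian category closed under extensions, and in particular is again a length category (each object of $\mc{W}$ has finite length in $\mc{U}_n$, hence a fortiori in $\mc{W}$). Therefore $\mc{W}$ is completely determined by its isoclasses of simple objects, and the first step is to identify which finite collections of indecomposables $\{R_{a_1}^{m_1},\ldots,R_{a_t}^{m_t}\}$ of $\mc{U}_n$ can occur as the simples of some wide $\mc{W}$.

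For the forward direction, given a wide subcategory $\mc{W}$ with simples $M_i = R_{a_i}^{m_i}$, I would associate to each $M_i$ the arc $(a_i, a_i + m_i \bmod n)$ and verify that the resulting collection is non-crossing. Suppose two such arcs crossed, or shared endpoints in a forbidden way. Using the explicit Hom/Ext computations in the tube (readable from the Auslander--Reiten quiver of $\mc{U}_n$, together with the uniserial structure), one can then produce a nonzero morphism $f\colon M_i\to M_j$ or a non-split extension whose kernel, cokernel, or middle term is a proper non-zero subobject or quotient of some $M_k$ lying inside $\mc{W}$. This contradicts the simplicity of $M_k$ in $\mc{W}$.

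For the backward direction, given a non-crossing collection of arcs with corresponding indecomposables $M_1,\ldots,M_t$, I would define $\mc{W}$ to be the extension closure of $\{M_1,\ldots,M_t\}$ in $\mc{U}_n$. The non-crossing hypothesis forces $\Hom_{\mc{U}_n}(M_i,M_j)=0$ for $i\neq j$ and pins down $\t{Ext}^{1}_{\mc{U}_n}(M_i,M_j)$ to the shape of a tube-like pattern, from which one deduces that $\mc{W}$ is itself a (disjoint union of) tube(s), hence a length abelian category whose simples are precisely the $M_i$. It is then routine to check that $\mc{W}$ is closed under kernels and cokernels of morphisms between its objects, and so is wide in $\mc{U}_n$. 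Checking that the two assignments are mutually inverse is immediate from the construction, since in both directions the simples of $\mc{W}$ and the arcs correspond to the same data.

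The main obstacle is the combinatorial/homological dictionary: one must show precisely that the non-crossing condition on arcs corresponds exactly to the vanishing/compatibility pattern of Hom and $\t{Ext}^{1}$ that allows a family of indecomposables to serve as the full collection of simples of a wide subcategory. This requires a somewhat delicate case analysis based on the relative positions of the arcs (nested, disjoint, sharing a common endpoint at the source, at the target, etc.), together with the uniserial structure of $\mc{U}_n$ which ensures that morphisms and extensions between $R_i^m$ and $R_j^l$ are completely controlled by the inclusion of their supporting filtrations on the cycle. Once this combinatorial lemma is in hand, both directions of the bijection follow cleanly.
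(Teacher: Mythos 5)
This proposition is not proved in the paper at all: it is imported verbatim from K\"ohler \cite{koehler}, and the text surrounding it only records the two assignments of the bijection (a collection of arcs goes to the smallest wide subcategory containing the corresponding modules $R_{i_t}^{m_t}$; a wide subcategory goes to its simple objects, which form a collection of pairwise orthogonal bricks, hence a collection of arcs). Your sketch reproduces exactly this route, and the strategy is sound: a wide subcategory $\mc{W}\subseteq\mc{U}_n$ is an exact abelian length subcategory, so it equals the extension closure of its simples, and those simples are a semibrick (pairwise $\Hom$-orthogonal bricks); conversely the extension closure of a semibrick is wide with the given bricks as its simples. Two comments on the details. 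First, for the backward direction you do not need to control $\t{Ext}^1$ or to identify $\mc{W}$ as a ``disjoint union of tubes'' --- that identification is in fact slightly off, since an exceptional (non-cyclic) collection of arcs generates a product of truncated tubes, i.e.\ uniserial categories of type $A$, and only a collection closing up into a cycle gives a tube of smaller rank; $\Hom$-orthogonality alone suffices, by Ringel's theorem that the filtration closure of a semibrick is wide, which makes the ``routine'' closure check unnecessary. Second, the step you explicitly defer --- that the bricks of $\mc{U}_n$ are exactly the $R_i^m$ with $m\le n$ and that pairwise $\Hom$-orthogonality of $R_a^m$ and $R_b^l$ is equivalent to the non-crossing condition on the arcs $(a,a+m)$ and $(b,b+l)$ --- is precisely the content of K\"ohler's proposition, so as written your proposal is a correct plan whose core lemma is assumed rather than proved; it does go through, because in the uniserial category $\mc{U}_n$ the image of any morphism is simultaneously a quotient of the source and a submodule of the target, which reduces the $\Hom$-vanishing pattern to an elementary overlap condition on the arcs.
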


This bijection assigns to a collection of arcs $x=(i_t,j_t)_{t\in I}$ the smallest wide subcategory containing objects $R_{i_t}^{m_t}$, where $m_t$ is the smallest positive integer with $m_t= j_t-i_t \mbox{ } \mod n$. To a wide subcategory $\mathcal{W}$ of $\mc{U}_n$ one associates the collection of the simple objects in $\mathcal{W}$, which turns out to be a collection of pairwise orthogonal bricks $R_{i_t}^{m_t}$, which gives a collection of non-crossing arcs $(i_t,i_t+m_t \mbox{ }  \mod n)$. 

A non-crossing collection of arcs $x$ is called \emph{exceptional} if there is no subset of arcs in $x$ that can be arranged into a family of arcs $(i_1,j_1),\dots,(i_l,j_l)$ such that $j_1=i_2$, $j_2=i_3$, ... and $j_l=i_1$. The wide subcategories of $\mod{\Lambda}$, contained in the regular part $\prod\limits_{j\in \mathbb{P}^1_k}\mathcal{H}_j\times \prod_{i=1}^s\mathcal{U}_{n_i}$ can be parametrised by 
\[\{(p, x_1, . . . , x_s) : p \in  2^{\mathbb{P}^1_k}, \, x_i \text{ a non-crossing collection of arcs on } n_i\}.
\]
The corresponding wide  subcategory $\mathcal{W}$ is the additive closure of the homogeneous tubes parametrised by the subset $p$ of $\mathbb{P}^1_k$ and the wide subcategories of $\mathcal{U}_{n_i}$, corresponding to $x_i$. Finally, $\mathcal{W}$ is generated by an exceptional sequence, if $p=\varnothing$ and each $x_i$ is exceptional, see \cite[Theorem 6.17]{koehler}.

\begin{chunk}
The goal for the rest of the section is to prove that all thick subcategories of $\D(\Lambda)^\c$ are radical. By \cref{lem:ExceptionalIsRadical} we only need to deal with wide subcategories not corresponding to an exceptional sequence, in particular, only with wide subcategories contained in the regular part. 

Throughout we will frequently use the Auslander-Reiten formulae, which for finite dimensional hereditary algebras take the form
\begin{equation}\label{ARform}
    D\t{Ext}_{\Lambda}^{1}(X,Y)\simeq \Hom_{\Lambda}(Y,\tau X) \quad \t{and} \quad \t{Ext}_{\Lambda}^{1}(Y,X)\simeq D\Hom_{\Lambda}(\tau^{-1}X,Y),
\end{equation}
where $X$ is finite dimensional and $Y$ is an arbitrary $\Lambda$-module (see e.g. \cite[Corollary 1.7]{stovthesis}). Here $D$ stands for the $k$-linear dual. 

Let us first consider the case of one tube of rank $n>1$ separately. 

\end{chunk}

\begin{prop}\label{prop:thwideisperp}
Let $\mc{U}_{n}$ be a tube of rank $n>1$ considered as an abelian category. Then any wide subcategory of $\mc{U}_{n}$ is of the form $\mathcal{W}={}^{\perp_{0,1}} Z$ for some $Z\in \mc{U}_{n}$.
\end{prop}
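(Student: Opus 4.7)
The plan is to exploit the classification of wide subcategories of $\mc{U}_n$ by non-crossing arc collections on a circle with $n$ marked points (\cref{wideandarcs}). Given a wide subcategory $\mathcal{W}$ with simples $S_t = R_{i_t}^{m_t}$ for $t=1,\ldots,k$ (corresponding to the arcs $(i_t, i_t+m_t)$), the goal is to construct an explicit $Z \in \mc{U}_n$ satisfying $\mathcal{W} = {}^{\perp_{0,1}}Z$.

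For the construction, to each arc of the non-crossing collection defining $\mathcal{W}$ I would associate indecomposable summands $R_c^d$ of $Z$ whose arcs lie in the complement of the arcs of $\mathcal{W}$, choosing the lengths so that (i) the socles of the summands of $Z$ avoid the tops $R_{i_t+m_t-1}$ of the $S_t$, which forces $\operatorname{Hom}(S_t,Z)=0$, and (ii) the tops of the summands of $Z$ avoid the socles $R_{i_t-1}$ of the $\tau S_t$, which forces $\operatorname{Ext}^1(S_t,Z)=0$ via the Auslander-Reiten formula \cref{ARform}. As a sanity check, for $\mathcal{W}=\t{add}(R_1) \subseteq \mc{U}_3$ (arc $(1,2)$) the candidate $Z = R_2 \oplus R_3^2$ verifies directly: the socles $R_2, R_3$ of its summands both differ from the top $R_1$ of $S_1 = R_1$, and the tops $R_2, R_1$ of its summands both differ from the socle $R_3$ of $\tau R_1 = R_3$.

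With $Z$ so constructed, the inclusion $\mathcal{W} \subseteq {}^{\perp_{0,1}}Z$ is immediate: each $S_t$ lies in ${}^{\perp_{0,1}}Z$ by design, and ${}^{\perp_{0,1}}Z$ is closed under extensions in the hereditary category $\mc{U}_n$. For the reverse inclusion, I would take any indecomposable $X = R_a^b \notin \mathcal{W}$ and exploit the uniserial structure of $\mc{U}_n$ to exhibit either a nonzero morphism $X \to Z$ (from a common indecomposable subquotient of $X$ and some summand of $Z$) or, dually via \cref{ARform}, a nonzero map $Z \to \tau X$ producing a nonzero $\operatorname{Ext}^1(X,Z)$. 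The driving arc-combinatorial fact is that since $X$ is not an iterated extension of the $S_t$, its arc must meet some complementary arc defining $Z$ in a way that produces such a morphism.

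\textbf{Main obstacle.} The bulk of the work lies in making the construction of $Z$ uniform across all non-crossing arc configurations and carrying out the case analysis for the reverse inclusion. The delicate case is the non-exceptional one: when a subcollection of arcs of $\mathcal{W}$ closes into a full cycle around the circle, $\mathcal{W}$ is not of the form ${}^{\perp_{0,1}}\bar{Y}$ for any exceptional sequence $\bar{Y}$ in $\mod\Lambda$, so \cref{ththm:WideEx} cannot be applied in the ambient category. One must instead genuinely use the periodic uniserial structure of $\mc{U}_n$ to ensure that the summands of $Z$ collectively detect every indecomposable outside $\mathcal{W}$ via a nonzero Hom or $\operatorname{Ext}^1$.
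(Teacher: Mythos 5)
There is a genuine gap: everything you defer to the ``main obstacle'' is in fact the entire content of the proof, and the construction principle you propose breaks down exactly there. First, your vanishing criteria are not correct in a tube: for indecomposables $R_a^b, R_c^d$ in $\mc{U}_n$ one has $\Hom_\Lambda(R_a^b,R_c^d)\neq 0$ if and only if some quotient of $R_a^b$ coincides with a submodule of $R_c^d$, i.e.\ $a+b-j\equiv c \bmod n$ for some $j\leq\min(b,d)$, so the condition depends on the regular lengths and not only on socle/top labels (e.g.\ $S_t=R_1^2$ and summand $R_1^5$ in a rank-$3$ tube have ``socle $\neq$ top'' yet $\Hom(S_t,R_1^5)\neq 0$); the dual statement via \cref{ARform} has the same defect for $\t{Ext}^1$. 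Second, the guiding idea that the summands of $Z$ correspond to arcs in the \emph{complement} of the arcs of $\mathcal{W}$ fails precisely in the delicate non-exceptional case you flag: when a subfamily of arcs closes into a cycle, the arcs cover the whole circle and there are \emph{no} complementary non-crossing arcs at all, yet the required $Z$ exists — its summands are nested \emph{inside} the cycle arcs. (For instance, for the cycle $R_1^1, R_2^2$ in rank $3$, the module $R_2^1$ works, and its arc sits inside the arc of $R_2^2$.) So the blueprint you give cannot produce $Z$ in the hardest case, and no argument is offered for the reverse inclusion ${}^{\perp_{0,1}}Z\subseteq\mathcal{W}$.

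For comparison, the paper avoids the brute-force ``detect every indecomposable outside $\mathcal{W}$'' step by a two-stage reduction. In the non-exceptional case it takes $Z_1=\bigoplus_u R_{i_u}^{m_u-1}$ (truncations of the cycle simples), computes ${}^{\perp_{0,1}}Z_1$ explicitly by rays and corays: it is the cycle subcategory $\mathcal{W}(R_{i_1}^{m_1},\dots,R_{i_l}^{m_l})$ together with an exceptional ``gap'' subcategory $\mathcal{W}''$, which by \cref{ththm:WideEx} is the image of $\mod{B}$ for a representation-finite hereditary algebra $B$ under a finite homological epimorphism. The remaining arcs of $\mathcal{W}$ generate a wide subcategory of $\mathcal{W}''$, hence are of the form ${}^{\perp_{0,1}}Z_2\cap\mathcal{W}''$ for some $Z_2\in\mathcal{W}''$, and then $\mathcal{W}={}^{\perp_{0,1}}(Z_1\oplus Z_2)$. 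In the exceptional case the same scheme applies with $Z_1=R_i^n$ for a suitably chosen vertex $i$, whose perpendicular is again an exceptional frame of the above type. If you want to salvage your direct approach, you would need the correct length-sensitive $\Hom$/$\t{Ext}^1$ criterion above and an explicit uniform choice of $Z$ (including nested summands in the cyclic case), together with a genuine case analysis for the reverse inclusion; as written, the proposal does not establish the proposition.
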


\begin{proof}
Let $x$ be a non-crossing collection of arcs corresponding to $\mathcal{W}$ under the bijection of \cref{wideandarcs}. Let us first consider the case when $x$ is not exceptional, that is, when $x$ has a subset of arcs that can be
arranged into a family of arcs $(i_1,j_1),\dots,(i_l,j_l)$ such that $j_1=i_2$, $j_2=i_3$, ... and $j_l=i_1$; the remaining arcs in $x$ will be denoted by $(k_1,h_1),\dots,(k_r,h_r)$. As before, the objects corresponding to $(i_1,j_1),\dots,(i_l,j_l)$ are denoted by $R_{i_1}^{m_1},\dots, R_{i_l}^{m_l}$. 

The wide subcategory $\mathcal{W}(R_{i_1}^{m_1},\dots, R_{i_l}^{m_l})$ consists of direct sums of indecomposable objects having a filtration with factors $R_{i_1}^{m_1},\dots, R_{i_l}^{m_l}$. Clearly such indecomposable objects are of the form $R_{i_u}^{m}$, where $m=an+m_u+m_{u+1}+\dots + m_{u+h}$ with $h<l$ and  $a\geq 0$. There is an equality
\[
\msf{ind}(\mc{W}) = \msf{ind}(\mc{W}(R_{i_1}^{m_1},\dots, R_{i_l}^{m_l}))\sqcup \msf{ind}(\mathcal{W}(R_{k_1}^{w_1},\dots, R_{k_r}^{w_r})),
\]
of indecomposable objects, and for brevity we set $\mathcal{W}'=\mathcal{W}(R_{k_1}^{w_1},\dots, R_{k_r}^{w_r})$.

Consider the  object \[Z_1=\bigoplus\limits_{i=1}^l R_{i_1}^{m_1-1}\] where by convention an object of regular length $0$ is $0$. Let us compute ${}^{\perp_{0,1}} Z_1$. For computations its convenient to use rays and corays on the Auslander-Reiten quiver of the tube. A ray starting at a regular simple $R_i^1$ consists of indecomposable objects with the regular socle $R_i^1$, so objects of the form $R_i^m$, $m\geq 1$. A coray finishing at a regular simple $R_i^1$ consists of indecomposable objects with the regular top $R_i^1$, that is the objects of the form $R_j^m$ with $j+m \equiv i+1 \mbox{ } \mod n$. This computation is illustrated with \cref{figure}.

Indecomposable objects $R$ with $\Hom_{\Lambda}(R,R_{i_u}^{m_u-1})\neq 0$ are from the coray finishing at $R_{i_u}^1$ of any regular length, the coray finishing at $R_{i_u+1}^1$ of regular length at least $2$, and so on, from the coray finishing at $R_{i_u+m_u-2}^1$ of regular length at least $m_u-1$. 

Indecomposable objects $R$ with $\t{Ext}_{\Lambda}^{1}(R,R_{i_u}^{m_u-1})\neq 0$ are, by the Auslander-Reiten formula (\ref{ARform}), those with $\Hom_{\Lambda}(\tau^{-1}R_{i_u}^{m_u-1},R)\neq 0$. Since $\tau^{-1}R_{i_u}^{m_u-1}=R_{i_u+1}^{m_u-1}$, we get $R$'s from the ray starting at $R_{i_u+1}^1$ of regular length at least $m_u-1$, the ray starting at $R_{i_u+2}^1$ of regular length at least $m_u-2$, and so on, from the ray starting at $R_{i_{u}+m_{u}-1}^1=R_{i_{u+1}-1}^1$ of any regular length.

Thus we get that 
\[
\{X\in\mc{U}_{n}: \Hom_{\Lambda}(X,Z_1)= 0\}=\mathcal{W}(R_{i_1}^{m_1},\dots,R_{i_l}^{m_l}) \oplus \bigoplus_{u=1}^l \mathcal{W}(R_{i_u+1}^{1},R_{i_u+2}^{1} \dots,R_{i_{u+1}-2}^{1}).
\]

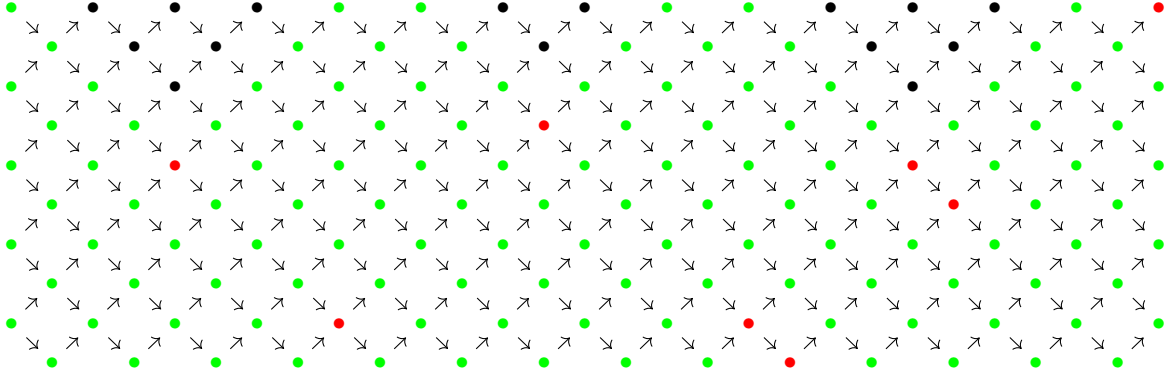
\begin{figure}[h!]
\begin{tikzcd}[cramped, sep=4.5pt]
	{\color{green} \bullet} && \bullet && \bullet && \bullet && {\color{green} \bullet} && {\color{green} \bullet} && \bullet && \bullet && {\color{green} \bullet} && {\color{green} \bullet} && \bullet && \bullet && \bullet && {\color{green} \bullet} && {\color{red} \bullet} \\
	& {\color{green} \bullet} && \bullet && \bullet && {\color{green} \bullet} && {\color{green} \bullet} && {\color{green} \bullet} && \bullet && {\color{green} \bullet} && {\color{green} \bullet} && {\color{green} \bullet} && \bullet && \bullet && {\color{green} \bullet} && {\color{green} \bullet} \\
	{\color{green} \bullet} && {\color{green} \bullet} && \bullet && {\color{green} \bullet} && {\color{green} \bullet} && {\color{green} \bullet} && {\color{green} \bullet} && {\color{green} \bullet} && {\color{green} \bullet} && {\color{green} \bullet} && {\color{green} \bullet} && \bullet && {\color{green} \bullet} && {\color{green} \bullet} && {\color{green} \bullet} \\
	& {\color{green} \bullet} && {\color{green} \bullet} && {\color{green} \bullet} && {\color{green} \bullet} && {\color{green} \bullet} && {\color{green} \bullet} && {\color{red} \bullet} && {\color{green} \bullet} && {\color{green} \bullet} && {\color{green} \bullet} && {\color{green} \bullet} && {\color{green} \bullet} && {\color{green} \bullet} && {\color{green} \bullet} \\
	{\color{green} \bullet} && {\color{green} \bullet} && {\color{red} \bullet} && {\color{green} \bullet} && {\color{green} \bullet} && {\color{green} \bullet} && {\color{green} \bullet} && {\color{green} \bullet} && {\color{green} \bullet} && {\color{green} \bullet} && {\color{green} \bullet} && {\color{red} \bullet} && {\color{green} \bullet} && {\color{green} \bullet} && {\color{green} \bullet} \\
	& {\color{green} \bullet} && {\color{green} \bullet} && {\color{green} \bullet} && {\color{green} \bullet} && {\color{green} \bullet} && {\color{green} \bullet} && {\color{green} \bullet} && {\color{green} \bullet} && {\color{green} \bullet} && {\color{green} \bullet} && {\color{green} \bullet} && {\color{red} \bullet} && {\color{green} \bullet} && {\color{green} \bullet} \\
	{\color{green} \bullet} && {\color{green} \bullet} && {\color{green} \bullet} && {\color{green} \bullet} && {\color{green} \bullet} && {\color{green} \bullet} && {\color{green} \bullet} && {\color{green} \bullet} && {\color{green} \bullet} && {\color{green} \bullet} && {\color{green} \bullet} && {\color{green} \bullet} && {\color{green} \bullet} && {\color{green} \bullet} && {\color{green} \bullet} \\
	& {\color{green} \bullet} && {\color{green} \bullet} && {\color{green} \bullet} && {\color{green} \bullet} && {\color{green} \bullet} && {\color{green} \bullet} && {\color{green} \bullet} && {\color{green} \bullet} && {\color{green} \bullet} && {\color{green} \bullet} && {\color{green} \bullet} && {\color{green} \bullet} && {\color{green} \bullet} && {\color{green} \bullet} \\
	{\color{green} \bullet} && {\color{green} \bullet} && {\color{green} \bullet} && {\color{green} \bullet} && {\color{red} \bullet} && {\color{green} \bullet} && {\color{green} \bullet} && {\color{green} \bullet} && {\color{green} \bullet} && {\color{red} \bullet} && {\color{green} \bullet} && {\color{green} \bullet} && {\color{green} \bullet} && {\color{green} \bullet} && {\color{green} \bullet} \\
	& {\color{green} \bullet} && {\color{green} \bullet} && {\color{green} \bullet} && {\color{green} \bullet} && {\color{green} \bullet} && {\color{green} \bullet} && {\color{green} \bullet} && {\color{green} \bullet} && {\color{green} \bullet} && {\color{red} \bullet} && {\color{green} \bullet} && {\color{green} \bullet} && {\color{green} \bullet} && {\color{green} \bullet}\\
	\arrow[from=1-1, to=2-2]
	\arrow[from=1-3, to=2-4]
	\arrow[from=1-5, to=2-6]
	\arrow[from=1-7, to=2-8]
	\arrow[from=1-9, to=2-10]
	\arrow[from=1-11, to=2-12]
	\arrow[from=1-13, to=2-14]
	\arrow[from=1-15, to=2-16]
	\arrow[from=1-17, to=2-18]
	\arrow[from=1-19, to=2-20]
	\arrow[from=1-21, to=2-22]
	\arrow[from=1-23, to=2-24]
	\arrow[from=1-25, to=2-26]
	\arrow[from=1-27, to=2-28]
	\arrow[from=2-2, to=1-3]
	\arrow[from=2-2, to=3-3]
	\arrow[from=2-4, to=1-5]
	\arrow[from=2-4, to=3-5]
	\arrow[from=2-6, to=1-7]
	\arrow[from=2-6, to=3-7]
	\arrow[from=2-8, to=1-9]
	\arrow[from=2-8, to=3-9]
	\arrow[from=2-10, to=1-11]
	\arrow[from=2-10, to=3-11]
	\arrow[from=2-12, to=1-13]
	\arrow[from=2-12, to=3-13]
	\arrow[from=2-14, to=1-15]
	\arrow[from=2-14, to=3-15]
	\arrow[from=2-16, to=1-17]
	\arrow[from=2-16, to=3-17]
	\arrow[from=2-18, to=1-19]
	\arrow[from=2-18, to=3-19]
	\arrow[from=2-20, to=1-21]
	\arrow[from=2-20, to=3-21]
	\arrow[from=2-22, to=1-23]
	\arrow[from=2-22, to=3-23]
	\arrow[from=2-24, to=1-25]
	\arrow[from=2-24, to=3-25]
	\arrow[from=2-26, to=1-27]
	\arrow[from=2-26, to=3-27]
	\arrow[from=2-28, to=1-29]
	\arrow[from=2-28, to=3-29]
	\arrow[from=3-1, to=2-2]
	\arrow[from=3-1, to=4-2]
	\arrow[from=3-3, to=2-4]
	\arrow[from=3-3, to=4-4]
	\arrow[from=3-5, to=2-6]
	\arrow[from=3-5, to=4-6]
	\arrow[from=3-7, to=2-8]
	\arrow[from=3-7, to=4-8]
	\arrow[from=3-9, to=2-10]
	\arrow[from=3-9, to=4-10]
	\arrow[from=3-11, to=2-12]
	\arrow[from=3-11, to=4-12]
	\arrow[from=3-13, to=2-14]
	\arrow[from=3-13, to=4-14]
	\arrow[from=3-15, to=2-16]
	\arrow[from=3-15, to=4-16]
	\arrow[from=3-17, to=2-18]
	\arrow[from=3-17, to=4-18]
	\arrow[from=3-19, to=2-20]
	\arrow[from=3-19, to=4-20]
	\arrow[from=3-21, to=2-22]
	\arrow[from=3-21, to=4-22]
	\arrow[from=3-23, to=2-24]
	\arrow[from=3-23, to=4-24]
	\arrow[from=3-25, to=2-26]
	\arrow[from=3-25, to=4-26]
	\arrow[from=3-27, to=2-28]
	\arrow[from=3-27, to=4-28]
	\arrow[from=4-2, to=3-3]
	\arrow[from=4-2, to=5-3]
	\arrow[from=4-4, to=3-5]
	\arrow[from=4-4, to=5-5]
	\arrow[from=4-6, to=3-7]
	\arrow[from=4-6, to=5-7]
	\arrow[from=4-8, to=3-9]
	\arrow[from=4-8, to=5-9]
	\arrow[from=4-10, to=3-11]
	\arrow[from=4-10, to=5-11]
	\arrow[from=4-12, to=3-13]
	\arrow[from=4-12, to=5-13]
	\arrow[from=4-14, to=3-15]
	\arrow[from=4-14, to=5-15]
	\arrow[from=4-16, to=3-17]
	\arrow[from=4-16, to=5-17]
	\arrow[from=4-18, to=3-19]
	\arrow[from=4-18, to=5-19]
	\arrow[from=4-20, to=3-21]
	\arrow[from=4-20, to=5-21]
	\arrow[from=4-22, to=3-23]
	\arrow[from=4-22, to=5-23]
	\arrow[from=4-24, to=3-25]
	\arrow[from=4-24, to=5-25]
	\arrow[from=4-26, to=3-27]
	\arrow[from=4-26, to=5-27]
	\arrow[from=4-28, to=3-29]
	\arrow[from=4-28, to=5-29]
	\arrow[from=5-1, to=4-2]
	\arrow[from=5-1, to=6-2]
	\arrow[from=5-3, to=4-4]
	\arrow[from=5-3, to=6-4]
	\arrow[from=5-5, to=4-6]
	\arrow[from=5-5, to=6-6]
	\arrow[from=5-7, to=4-8]
	\arrow[from=5-7, to=6-8]
	\arrow[from=5-9, to=4-10]
	\arrow[from=5-9, to=6-10]
	\arrow[from=5-11, to=4-12]
	\arrow[from=5-11, to=6-12]
	\arrow[from=5-13, to=4-14]
	\arrow[from=5-13, to=6-14]
	\arrow[from=5-15, to=4-16]
	\arrow[from=5-15, to=6-16]
	\arrow[from=5-17, to=4-18]
	\arrow[from=5-17, to=6-18]
	\arrow[from=5-19, to=4-20]
	\arrow[from=5-19, to=6-20]
	\arrow[from=5-21, to=4-22]
	\arrow[from=5-21, to=6-22]
	\arrow[from=5-23, to=4-24]
	\arrow[from=5-23, to=6-24]
	\arrow[from=5-25, to=4-26]
	\arrow[from=5-25, to=6-26]
	\arrow[from=5-27, to=4-28]
	\arrow[from=5-27, to=6-28]
	\arrow[from=6-2, to=5-3]
	\arrow[from=6-2, to=7-3]
	\arrow[from=6-4, to=5-5]
	\arrow[from=6-4, to=7-5]
	\arrow[from=6-6, to=5-7]
	\arrow[from=6-6, to=7-7]
	\arrow[from=6-8, to=5-9]
	\arrow[from=6-8, to=7-9]
	\arrow[from=6-10, to=5-11]
	\arrow[from=6-10, to=7-11]
	\arrow[from=6-12, to=5-13]
	\arrow[from=6-12, to=7-13]
	\arrow[from=6-14, to=5-15]
	\arrow[from=6-14, to=7-15]
	\arrow[from=6-16, to=5-17]
	\arrow[from=6-16, to=7-17]
	\arrow[from=6-18, to=5-19]
	\arrow[from=6-18, to=7-19]
	\arrow[from=6-20, to=5-21]
	\arrow[from=6-20, to=7-21]
	\arrow[from=6-22, to=5-23]
	\arrow[from=6-22, to=7-23]
	\arrow[from=6-24, to=5-25]
	\arrow[from=6-24, to=7-25]
	\arrow[from=6-26, to=5-27]
	\arrow[from=6-26, to=7-27]
	\arrow[from=6-28, to=5-29]
	\arrow[from=6-28, to=7-29]
	\arrow[from=7-1, to=6-2]
	\arrow[from=7-1, to=8-2]
	\arrow[from=7-3, to=6-4]
	\arrow[from=7-3, to=8-4]
	\arrow[from=7-5, to=6-6]
	\arrow[from=7-5, to=8-6]
	\arrow[from=7-7, to=6-8]
	\arrow[from=7-7, to=8-8]
	\arrow[from=7-9, to=6-10]
	\arrow[from=7-9, to=8-10]
	\arrow[from=7-11, to=6-12]
	\arrow[from=7-11, to=8-12]
	\arrow[from=7-13, to=6-14]
	\arrow[from=7-13, to=8-14]
	\arrow[from=7-15, to=6-16]
	\arrow[from=7-15, to=8-16]
	\arrow[from=7-17, to=6-18]
	\arrow[from=7-17, to=8-18]
	\arrow[from=7-19, to=6-20]
	\arrow[from=7-19, to=8-20]
	\arrow[from=7-21, to=6-22]
	\arrow[from=7-21, to=8-22]
	\arrow[from=7-23, to=6-24]
	\arrow[from=7-23, to=8-24]
	\arrow[from=7-25, to=6-26]
	\arrow[from=7-25, to=8-26]
	\arrow[from=7-27, to=6-28]
	\arrow[from=7-27, to=8-28]
	\arrow[from=8-2, to=7-3]
	\arrow[from=8-2, to=9-3]
	\arrow[from=8-4, to=7-5]
	\arrow[from=8-4, to=9-5]
	\arrow[from=8-6, to=7-7]
	\arrow[from=8-6, to=9-7]
	\arrow[from=8-8, to=7-9]
	\arrow[from=8-8, to=9-9]
	\arrow[from=8-10, to=7-11]
	\arrow[from=8-10, to=9-11]
	\arrow[from=8-12, to=7-13]
	\arrow[from=8-12, to=9-13]
	\arrow[from=8-14, to=7-15]
	\arrow[from=8-14, to=9-15]
	\arrow[from=8-16, to=7-17]
	\arrow[from=8-16, to=9-17]
	\arrow[from=8-18, to=7-19]
	\arrow[from=8-18, to=9-19]
	\arrow[from=8-20, to=7-21]
	\arrow[from=8-20, to=9-21]
	\arrow[from=8-22, to=7-23]
	\arrow[from=8-22, to=9-23]
	\arrow[from=8-24, to=7-25]
	\arrow[from=8-24, to=9-25]
	\arrow[from=8-26, to=7-27]
	\arrow[from=8-26, to=9-27]
	\arrow[from=8-28, to=7-29]
	\arrow[from=8-28, to=9-29]
	\arrow[from=9-1, to=8-2]
	\arrow[from=9-1, to=10-2]
	\arrow[from=9-3, to=8-4]
	\arrow[from=9-3, to=10-4]
	\arrow[from=9-5, to=8-6]
	\arrow[from=9-5, to=10-6]
	\arrow[from=9-7, to=8-8]
	\arrow[from=9-7, to=10-8]
	\arrow[from=9-9, to=8-10]
	\arrow[from=9-9, to=10-10]
	\arrow[from=9-11, to=8-12]
	\arrow[from=9-11, to=10-12]
	\arrow[from=9-13, to=8-14]
	\arrow[from=9-13, to=10-14]
	\arrow[from=9-15, to=8-16]
	\arrow[from=9-15, to=10-16]
	\arrow[from=9-17, to=8-18]
	\arrow[from=9-17, to=10-18]
	\arrow[from=9-19, to=8-20]
	\arrow[from=9-19, to=10-20]
	\arrow[from=9-21, to=8-22]
	\arrow[from=9-21, to=10-22]
	\arrow[from=9-23, to=8-24]
	\arrow[from=9-23, to=10-24]
	\arrow[from=9-25, to=8-26]
	\arrow[from=9-25, to=10-26]
	\arrow[from=9-27, to=8-28]
	\arrow[from=9-27, to=10-28]
	\arrow[from=10-2, to=9-3]
	\arrow[from=10-4, to=9-5]
	\arrow[from=10-6, to=9-7]
	\arrow[from=10-8, to=9-9]
	\arrow[from=10-10, to=9-11]
	\arrow[from=10-12, to=9-13]
	\arrow[from=10-14, to=9-15]
	\arrow[from=10-16, to=9-17]
	\arrow[from=10-18, to=9-19]
	\arrow[from=10-20, to=9-21]
	\arrow[from=10-22, to=9-23]
	\arrow[from=10-24, to=9-25]
	\arrow[from=10-26, to=9-27]
	\arrow[from=10-28, to=9-29]
\end{tikzcd}
\caption{
The start of the AR quiver of $\mc{U}_{n}$, which continues downwards indefinitely, and wraps horizontally with periodicity $n$. The red points are the indecomposable objects in $\mathcal{W}(R_{i_1}^{m_1},\dots,R_{i_l}^{m_l})$, the green points are the indecomposable objects with $\Hom_{\Lambda}(R,Z_1)\neq 0$ or $\t{Ext}_{\Lambda}^{1}(R,Z_1)\neq 0$, and the black points are the indecomposable objects in $\bigoplus_{u=1}^l \mathcal{W}(R_{i_u+1}^{1},R_{i_u+2}^{1} \dots,R_{i_{u+1}-2}^{1})$. 
}\label{figure}
\end{figure}

Since the wide subcategory $\mathcal{W}''=\bigoplus_{u=1}^l \mathcal{W}(R_{i_u+1}^{1},R_{i_u+2}^{1} \dots,R_{i_{u+1}-2}^{1})$ is given by an exceptional collection $(R_{i_1+1}^{1},R_{i_1+2}^{1} \dots,R_{i_2-2}^{1}, \dots, R_{i_l+1}^{1},R_{i_l+2}^{1} \dots,R_{i_{1}-2}^{1})$, by \cref{ththm:WideEx}(4) we have that $\mathcal{W}''\simeq \mod{B}$ is an image of the restriction of scalars along a finite homological epimorphism to an algebra $B$, which in this case is clearly hereditary of finite representation type. Note that $\Hom$ and $\t{Ext}$ are the same in $\mod{B}$ and in $\mod{\Lambda}$, 
by the definition of a homological epimorphism.
Any wide subcategory of $\mod{B}$ is given by an exceptional sequence inside of $\mod{B}$ and so is of the form ${}^{\perp_{0,1}}Z_B$ in $\mod{B}$ for some module $Z_B$. 
As $\mathcal{W}'\subseteq \mathcal{W}''$ there exists an object $Z_2\in \mathcal{W}''$ such that $\mathcal{W}'= {}^{\perp_{0,1}}Z_2 \cap \mathcal{W}''$ (the object $Z_2$ is the image of the corresponding $Z_B$ under the restriction of scalars). 

We obtain that 
\[ {}^{\perp_{0,1}}(Z_1\oplus Z_2)=(\mathcal{W}(R_{i_1}^{m_1},\dots,R_{i_l}^{m_l}) \bigoplus \mathcal{W}'') \cap {}^{\perp_{0,1}}Z_2=\mathcal{W}(R_{i_1}^{m_1},\dots,R_{i_l}^{m_l}) \bigoplus \mathcal{W}'\] in $\mc{U}_{n}$.
 The last equality follows since $\mathcal{W}(R_{i_1}^{m_1},\dots,R_{i_l}^{m_l})  \subseteq {}^{\perp_{0,1}}Z_2$. Hence for any non-crossing collection of arcs $x$ the corresponding wide subcategory is of the form ${}^{\perp_{0,1}}Z$.

 Now let us consider the case when the collection of arcs $x$ corresponding to $\mathcal{W}$ is exceptional. Since $x$ does not have a subset of arcs that can be
arranged into a family of arcs $(i_1,j_1),\dots,(i_l,j_l)$ such that $j_1=i_2$, $j_2=i_3$, $\cdots$, and $j_l=i_1$, there exists $i$ such that $x\cup (i,i+1)$ is still a non-crossing collection of arcs. Let us consider the wide subcategory $\mathcal{W}'$ corresponding to the collection of arcs $(i+1,i+2),(i+2,i+3),\dots,(i-1,i)$. This collection of arcs is exceptional, $\mathcal{W}'$ corresponds to the image of a finite homological epimorphism to a hereditary algebra of finite representation type $B$, and so $\mathcal{W}\subseteq \mathcal{W}'$ can be obtained inside of $\mathcal{W}'$ as ${}^{\perp_{0,1}}Z_2$ for some $Z_2\in \mathcal{W}'$. 

Let us construct $\mathcal{W}'$ as ${}^{\perp_{0,1}}Z_1$ for some object $Z_1\in \mathcal{U}_n$. We claim that $Z_1=R_i^n$ works. As before, the indecomposable objects $R$ with $\Hom_{\Lambda}(R,R_{i}^{n})\neq 0$ are from the coray finishing at $R_{i}^1$ of any regular length, the coray finishing at $R_{i+1}^1$ of regular length at least $2$, and so on, from the coray finishing at $R_{i-1}^1$ of regular length at least $n$. Indecomposable objects $R$ with $\t{Ext}_{\Lambda}^{1}(R,R_{i}^{n})\neq 0$ are the objects from the ray starting at $R_{i+1}^1$ of regular length at least $n$, the ray starting at $R_{i+2}^1$ of regular length at least $n-1$, and so on, from the ray starting at $R_{i}^1$ of any regular length. We see that $\mathcal{W}'={}^{\perp_{0,1}}R_{i}^n$.
Now $\mathcal{W}={}^{\perp_{0,1}}(Z_1\oplus Z_2)$ as desired, which finishes the proof.
\end{proof}

We are now in a position to prove the main result of this subsection. 
\begin{thm}\label{thm:tamheredeverythingradical}
    Let $\Lambda$ be a finite dimensional tame hereditary algebra over an algebraically closed field. Then every thick subcategory of $\D(\Lambda)^\c$ is radical.
\end{thm}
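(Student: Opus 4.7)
The plan is to use Br\"uning's bijection~\cref{brunbij} to translate the problem to wide subcategories of $\mod{\Lambda}$: I will show that every such wide subcategory $\mathcal{W}$ can be written as an intersection $\bigcap_{M \in S} {}^{\perp_{0,1}} M$ in $\mod{\Lambda}$, where $S$ is a set of endofinite indecomposable $\Lambda$-modules. Since $\Lambda$ is hereditary, any object of $\D^{\mrm{b}}(\mod{\Lambda})$ splits as a finite sum of shifts of its cohomologies, which yields the identification
\[
{}^{\perp_\Z} M \cap \D(\Lambda)^\c = \{C \in \D^{\mrm{b}}(\mod{\Lambda}) : H_j(C) \in {}^{\perp_{0,1}} M \text{ for all } j \in \Z\},
\]
so intersections of the form $\bigcap_{M \in S} {}^{\perp_\Z}M$ in $\D(\Lambda)^\c$ correspond under \cref{brunbij} to $\bigcap_{M \in S} {}^{\perp_{0,1}} M$ in $\mod{\Lambda}$. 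By \cref{thm:isolationhered} and \cref{cor:Kishomeo}, the map $\mc{K}$ is a homeomorphism for $\D(\Lambda)$, so the shift-primes of $\D(\Lambda)^\c$ are exactly the subcategories ${}^{\perp_\Z} M$ for $M$ indecomposable endofinite in $\D(\Lambda)$; over a tame hereditary $\Lambda$ these are precisely the finite dimensional indecomposable modules together with the generic module $G$. Thus it suffices to produce such a set $S$ for every wide subcategory $\mathcal{W}$.

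For $\mathcal{W}$ arising from an exceptional sequence, \cref{ththm:WideEx}(5) provides $\bar{Y}$ with $\mathcal{W} = {}^{\perp_{0,1}} \bar{Y}$, so the corresponding thick subcategory is radical by \cref{lem:ExceptionalIsRadical}. For $\mathcal{W}$ not from an exceptional sequence, \cite[Proposition 6.14]{koehler} gives $\mathcal{W} \subseteq \mathbf{r}$; writing $\mathcal{W} \leftrightarrow (p, x_1, \ldots, x_s)$ as in \cite[Theorem 6.17]{koehler}, I aim to verify
\[
\mathcal{W} = {}^{\perp_{0,1}} G \,\cap\, \bigcap_{i=1}^{s} {}^{\perp_{0,1}} Z_i \,\cap\, \bigcap_{j \in \mathbb{P}_1(k) \setminus p} {}^{\perp_{0,1}} R_j
\]
in $\mod{\Lambda}$, where $Z_i \in \mc{U}_{n_i}$ is the module produced by \cref{prop:thwideisperp} satisfying ${}^{\perp_{0,1}} Z_i = \mathcal{W} \cap \mc{U}_{n_i}$ (computed inside $\mc{U}_{n_i}$), and $R_j$ is the unique quasi-simple regular module in the homogeneous tube $\mathcal{H}_j$.

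The identity relies on two ingredients: (a) ${}^{\perp_{0,1}} G = \mathbf{r}$ in $\mod{\Lambda}$, reflecting that the generic module is torsion-free and divisible with respect to the hereditary torsion pair generated by the regular simples, so that $\Hom(-, G)$ and $\t{Ext}_{\Lambda}^{1}(-, G)$ together detect the preprojective and preinjective summands of a finite dimensional module; and (b) the vanishing of $\Hom$ and $\t{Ext}_{\Lambda}^{1}$ between modules lying in distinct tubes, which follows from the Auslander-Reiten formulas together with the $\tau$-invariance of each tube. The main obstacle is statement (a), a classical property of the generic module in tame hereditary theory which I will reference from the literature. Once (a) and (b) are in hand, bookkeeping on the decomposition of a finite dimensional module into its preprojective, regular, and preinjective parts, together with the internal computation of \cref{prop:thwideisperp} and the fact that every non-zero object of $\mathcal{H}_j$ admits a non-zero map to $R_j$, yields the displayed identity. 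This realises $\mathcal{W}$ as an intersection of perpendiculars of endofinite indecomposables, and hence the corresponding thick subcategory $\msf{L}(\mathcal{W})$ is radical, completing the proof.
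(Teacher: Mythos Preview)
Your proposal is correct and follows essentially the same approach as the paper: split into the exceptional case (handled by \cref{lem:ExceptionalIsRadical}) and the non-exceptional regular case, where one uses ${}^{\perp_{0,1}}G = \mathbf{r}$ together with the tube-by-tube description from \cref{prop:thwideisperp} and the $\Hom/\mathrm{Ext}^1$-orthogonality between distinct tubes. The only notable difference is that the paper verifies ${}^{\perp_{0,1}}G = \mathbf{r}$ explicitly from results in \cite{RR06} rather than citing it as folklore, and frames radicalness via \cref{Kerrhoisradical} rather than via the homeomorphism $\mc{K}$; these are equivalent packagings of the same argument.
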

\begin{proof}
First let us check that the thick subcategory consisting of the shifts of all the regular modules is radical. Let $G$ be the generic module over $\mod{\Lambda}$. This is an indecomposable endofinite module, so the assignments in \cref{prop:EmbedCharsinRank} give an irreducible rank function $\rho_{G}$ on $\D(\Lambda)^{\c}$. A direct comparison using \cref{brunbij} shows that the kernel of $\rho_{G}$, which is radical by \cref{Kerrhoisradical}, is precisely the thick subcategory corresponding to the wide subcategory ${}^{\perp_{0,1}}G$. In order to compute ${}^{\perp_{0,1}}G$ let us recall some facts from \cite{RR06}. 

Firstly, it is shown in \cite[\S 3.2]{RR06} that there is a torsion pair $(\scr{D},\scr{R})$ in $\Mod{\Lambda}$ where $\scr{D}=\,^{\perp_{0}}\mathbf{r}=\mathbf{r}^{\perp_{1}}$. We note that we have torsion classes on the left of the pair unlike \cite{RR06}. It is stated on \cite[p. 192]{RR06} that $\mathbf{p},\mathbf{r}\subseteq\scr{R}$ and $\mathbf{q}\subseteq\scr{D}$. In \cite[Corollary 8.1]{RR06} it is shown that $\scr{D}=\msf{Gen}(G)$, the full subcategory consisting of quotients of coproducts of $G$. Since $\mathbf{q}\subseteq\scr{D}$, it follows that $\Hom_{\Lambda}(G,q)\neq 0$ for all nonzero $q\in \mathbf{q}$. Applying the Auslander-Reiten formula (\ref{ARform})  tells us that $\t{Ext}_{\Lambda}^{1}(q,G)\neq 0$ for all nonzero $q\in \mathbf{q}$. The same argument shows that $\t{Ext}_{\Lambda}^{1}(\mathbf{p},G)=0$. Combining \cite[\S 3.2]{RR06} with the fact that $\Hom_{\Lambda}(\msf{Gen}(\mathbf{q}),\msf{Add}(G))=0$, which appears on \cite[p. 206]{RR06}, we see that $\Hom_{\Lambda}(\mathbf{q},G)=0$.

It is shown in \cite[\S 14, p. 220]{RR06} that $\t{Ext}_{\Lambda}^{1}(G,\mathbf{r})=0$, in a discussion about cotorsion pairs.
Since for each $M\in\mathbf{r}$ we have $\tau^{-1}M \in\mathbf{r}$, it follows from the Auslander-Reiten formula (\ref{ARform}) that $\Hom_{\Lambda}(\mathbf{r},G)=0$.

The last thing to describe is $\Hom_{\Lambda}(\mathbf{p},G)$. In \cite[Theorem 4.1]{RR06}, it is shown that for all $p\in\mathbf{p}$ there is a monomorphism $p\to \Omega$, where $\Omega$ is a module that is a coproduct of Pr\"{u}fer modules and copies of the generic module $G$. Yet, on \cite[p. 205]{RR06} it is further shown that $\Omega$ is actually a finite coproduct of copies of $G$. In particular, as the map $p\to \Omega$ is a monomorphism, we deduce that $\Hom_{\Lambda}(p,G)\neq 0$ for all nonzero $p\in \mathbf{p}$.

Combining all this we have that $^{\perp_{0}}G = \mathbf{q} \cup\mathbf{r}$ and $^{\perp_{1}}G = \mathbf{p}\cup\mathbf{r}$. Consequently $\mathbf{r}={}^{\perp_{0,1}}G$ is radical.

Now any wide subcategory corresponding to an exceptional sequence is radical by \cref{lem:ExceptionalIsRadical}. The wide subcategories not corresponding to exceptional sequences consist of regular modules and can be parametrised by 
$\{(p, x_1, . . . , x_s) : p \in  2^{\mathbb{P}^1_k}, x_i \text{ a non-crossing collection of arcs on } n_i\}$. Since $\mathbf{r}={}^{\perp_{0,1}}G$ is radical we can work inside of the regular component. Since the tubes are pairwise Hom- and Ext-orthogonal, all such wide subcategories can be obtained as intersections of subcategories of the form ${}^{\perp_{0,1}}Z$: for $p \in  2^{\mathbb{P}^1_k}$ take the complementary set in $\mathbb{P}^1_k$; for $x_i$, a non-crossing collection of arcs on $n_i$, take the $Z\in\mc{U}_{n}$ constructed in \cref{prop:thwideisperp}, if $x_i$ is empty take the whole tube. 
\end{proof}

\begin{ex}\label{th:kronecker}
Let us explicitly compute $\sspec{\D(\Lambda)^\c}$ for $\Lambda$ the path algebra of the Kronecker quiver. In this case, the regular component $\mathbf{r}$ of $\mod{\Lambda}$ is comprised of a $\mathbb{P}^1_k$-indexed family of tubes, the indecomposable modules of which we denote by $\{R_{\lambda}^{i}:\lambda\in\mathbb{P}^1_k,i\geq 1\}$, while $\mathbf{p}=\{P_{i}:i\geq 0\}$ and $\mathbf{q}=\{Q_{i}:i\geq 0\}$. Note that $P_{0}$ and $P_{1}$ denote the indecomposable projective modules, and $Q_{0}$ and $Q_{1}$ denote the indecomposable injective modules. We let $G$ denote the generic module.

For a $\lambda\in\mathbb{P}^1_k$, we set $\mathbf{r}_{\neq \lambda}=\msf{add}\{R_{\mu}^{i}:\mu\in\mathbb{P}^1_k\backslash \{\lambda\}, i\geq 1\}$ to be $\mathbf{r}$ with the tube at $\lambda$ removed. 

To understand $\sspec{\D(\Lambda)^\c}$ we use the map $\alpha$ of \cref{definingalpha}. Since $M\simeq\oplus_{i\in\Z}\Sigma^iH_{i}(M)$, we see that 
\[
M\in\alpha([X])\iff \Hom_{\Lambda}(H_{i}(M),X)=0=\t{Ext}_{\Lambda}^{1}(H_{i}(M),X) \t{ for all }i\in\Z.
\]
To compute the image of $\alpha$ explicitly, we make use of the Auslander-Reiten formula (\ref{ARform}). We emphasise that below all orthogonals are considering only finitely presented modules. For ease of exposition, in the following list we sometimes do not make a distinction between sets of indecomposable objects and their additive closures.
\begin{enumerate}
\item For $P_{n}\in\mathbf{p}$, we have that $^{\perp_{0}}P_{n}=\{P_{m}:m\geq n+1\}\cup\mathbf{r}\cup\mathbf{q}$ and $^{\perp_{1}}P_{n}=\{P_{m}:m\leq n+1\}$, so $\alpha([P_{n}])=\{M\in\D(\Lambda)^\c:H_{i}(M)\in\msf{add}(P_{n+1}) \t{ for all }i\in\Z\} = \msf{add}^\Sigma(P_{n+1})$.

\item For $Q_{n}\in\mathbf{q}$ with $n\geq 1$, we have $^{\perp_{0}}Q_{n}=\{Q_{m}:m\leq n-1\}$ and $^{\perp_{1}}Q_{n}= \mathbf{p} \cup \mathbf{r}\cup \{Q_{m}:m\geq n-1\}$
so $\alpha([Q_{n}])=\{M\in\D(\Lambda)^\c:H_{i}(M)\in\msf{add}(Q_{n-1})\t{ for all }i\in\Z\} = \msf{add}^\Sigma(Q_{n-1})$.

\item For $n=0$ we have $^{\perp_{0}}Q_{0}=\{P_{0}\}$, and as $Q_{0}$ is injective, $^{\perp_{1}}Q_{0}=\mod{\Lambda}$. Therefore we see that $\alpha([Q_{0}])=\{M\in\D(\Lambda)^\c:H_{i}(M)\in\msf{add}(P_{0}) \t{ for all }i\in\Z\} =\msf{add}^\Sigma(P_{0}).$

\item For $\lambda\in\mathbb{P}^1_k$ and $j\geq 1$ we have $^{\perp_{0}}R_{\lambda}^{j}=\mathbf{r}_{\neq \lambda}\cup \mathbf{q}$ and $^{\perp_{1}}R_{\lambda}^{j}=\mathbf{r}_{\neq \lambda}\cup \mathbf{p}$. Thus we obtain $\alpha([R_{\lambda}^{j}])=\{M\in\D(\Lambda)^\c:H_{i}(M)\in\msf{add}(\mathbf{r}_{\neq\lambda}) \t{ for all }i\in\Z\} = \msf{add}^\Sigma(\mathbf{r}_{\neq\lambda})$.

\item We saw in the proof of \cref{thm:tamheredeverythingradical} that $^{\perp_{0}}G = \mathbf{q} \cup\mathbf{r}$ and $^{\perp_{1}}G = \mathbf{p}\cup\mathbf{r}$, so we obtain that $\alpha([G])=\{M\in\D(\Lambda)^\c:H_{i}(M)\in \mathbf{r}\t{ for all }i\in\Z\}= \msf{add}^\Sigma(\mathbf{r})$.
\end{enumerate}
We therefore see that
\[
\sspec{\D(\Lambda)^\c}=\{\msf{add}^{\Sigma}(P_{n}), \msf{add}^{\Sigma}(Q_{n}):n\in\Z_{\geq 0}\}\cup \{\msf{add}^{\Sigma}(\mathbf{r}_{\neq \lambda}):\lambda\in\mbb{P}^{1}_{k}\}\cup\msf{add}^{\Sigma}(\mathbf{r})
\]
as a set. To determine the topology, we compute the shift support of indecomposables. If $A$ is a compact object, then $\msf{add}^\Sigma( P_{i}) \in \ssupp{A}^{\c}$ if and only if $H_{i}(A)\in\msf{add}(P_{i})$ for all $i\in\Z$. 
It follows that $\ssupp{P_{i}}=\sspec{\D(\Lambda)^\c}\setminus\{\msf{add}^{\Sigma}( P_{i})\}$. By the same reasoning we obtain $\ssupp{Q_{i}}=\sspec{\D(\Lambda)^\c}\setminus\{\msf{add}^{\Sigma}( Q_{i})\}$. If $R_{\lambda}^{i}$ is a regular module, then $\ssupp{R_{\lambda}^{i}}=\{\msf{add}^{\Sigma}( P_{n}), \msf{add}^{\Sigma}(Q_{n}), \msf{add}^{\Sigma}(\mathbf{r}_{\neq \lambda})\}$. 

Thus the basic open sets are finite intersections of: 
\begin{enumerate}
    \item $\{\msf{add}^{\Sigma}( P_{i})\}$ for $i \geq 0$;
    \item $\{\msf{add}^{\Sigma}(Q_{i})\}$ for $i\geq 0$;
    \item $\left(\bigcup\limits_{\mu\in\mbb{P}^{1}_{k}\setminus\lambda}\{\msf{add}^{\Sigma}(\mathbf{r}_{\neq \mu})\}\right)\cup \{\msf{add}^{\Sigma}(\mathbf{r})\}$ for any point $\lambda\in \mbb{P}^{1}_{k}$.
\end{enumerate} 

From this description we see that \[\sspec{\D(\Lambda)^\c} \simeq \Z\sqcup\mbb{P}^{1}_{k}\] where $\Z$ is considered with the discrete topology, since an open set in $\Z\sqcup \mbb{P}^{1}_{k}$ is a union of an arbitrary subset of $\Z$ with either a cofinite subset of $\mbb{P}^{1}_{k}$ or an empty subset of $\mbb{P}^{1}_{k}$.
\end{ex}

\section{One dimensional hypersurfaces of countable Cohen-Macaulay type}\label{hypersurfaces}
In this section we compute the shift-homological spectrum and the shift-spectrum for the singularity categories of all one dimensional hypersurfaces of countable Cohen-Macaulay type. It transpires that, for the hypersurfaces of countably infinite Cohen-Macaulay type, the zero thick subcategory is no longer radical, which provides a contrast to all other examples seen so far. Before computing the shift-homological spectrum and the shift-spectrum, we provide some background material concerning maximal Cohen-Macaulay modules.

\begin{chunk}
Throughout, we let $k$ denote an algebraically closed field of characteristic not equal to $2$. We shall consider complete one-dimensional hypersurface singularities of the form $R=k[[x,y]]/(f)$, where $f\in(x,y)^{2}$.

The category of \emph{maximal Cohen-Macaulay} modules is denoted by by $\msf{MCM}(R)$, and as $R$ is one-dimensional, we have
\[
\msf{MCM}(R)=\{M\in\mod{R}:\Hom_{R}(k,M)=0\}.
\]
As illustrated in \cite{buchweitz}, $\msf{MCM}(R)$ is a Frobenius category, whose projective-injective objects are given by $\msf{proj}(R)$. There is a triangulated equivalence
\begin{equation}\label{eqn:buchweitz}
\msf{StMCM}(R)\simeq \D_{\t{sg}}(R),
\end{equation}
where $\D_{\t{sg}}(R)$ is the \emph{singularity category} of $R$, defined as the Verdier quotient $\msf{D}^{\t{b}}(\msf{mod}(R))/\D(R)^\c$.

As shown in \cite{Krausest}, the category $\msf{D}_{\t{sg}}(R)$ is equivalent to the category of compact objects in $\msf{K}_{\t{ac}}(\msf{Inj}(R))$, and we therefore may compute $\shspec{\D_{\t{sg}}(R)}$ and $\sspec{\D_{\t{sg}}(R)}$. We do this for a particular family of hypersurfaces, determined by their representation type.
\end{chunk}

\begin{chunk}
The ring $R$ is of \emph{countable Cohen-Macaulay type} if $\msf{MCM}(R)$ has countably many indecomposable objects up to isomorphism. A complete description of the hypersurfaces of this type was given in \cite{bgs}: $R$ is of countable Cohen-Macaulay type if and only if $f$ is a simple singularity, that is: 
\[
f=
\begin{cases}
(A_{n}):&x^{2}+y^{n+1}, \t{ for }1\leq n\leq \infty,\\
(D_{n}): & x^{2}y+y^{n-1}, \t{ for }4\leq n\leq \infty, \\
(E_{6}):& x^{3}+y^{4},\\
(E_{7}):&x^{3}+xy^{3},\\
(E_{8}):&x^{3}+y^{5},
\end{cases}
\]
where by convention $y^{\infty}=0$. In fact, the ring is of \emph{finite} Cohen-Macaulay type if and only if it is $A_{n},D_{n},E_{6},E_{7}$ or $E_{8}$ for $n<\infty$.
\end{chunk}

\begin{chunk}
In the case that $R$ is of finite Cohen-Macaulay type, the category $\D_{\t{sg}}(R)$ is locally finite, see \cite[p.5]{KrauseLocallyFinite}, hence the spaces $\shspec{\D_{\t{sg}}(R)}$ and $\sspec{\D_{\t{sg}}(R)}$ can be deduced from the results of \cref{sec:locallyfinite}, together with a description of the finitely many indecomposable maximal Cohen-Macaulay modules, which can be found in, for instance, \cite{Yoshino}. 

Furthermore, if $R$ is of finite Cohen-Macaulay type, the space $\sspec{\D_{\t{sg}}(R)}$ will be a point, given by the thick subcategory $\{0\}$. This is because thick subcategories of $\D_{\t{sg}}(R)$ biject with the specialisation closed subcategories of the singular locus of $R$ by the work in \cite{stevenson} and \cite{takahashi}; as the hypersurfaces of finite Cohen-Macaulay type are isolated singularities, the singular loci are $\{0\}$, and hence there are two thick subcategories, which are $\{0\}$ and $\D_{\t{sg}}(R)$. It can only be the case that $\{0\}$ is prime.
\end{chunk}

\begin{chunk}
Therefore, to completely exhaust the case of countable Cohen-Macaulay type curves, we only need to compute the spaces for the $A_{\infty}$ and $D_{\infty}$ cases. We approach the problem through Gorenstein homological algebra.

Since $R$ is Gorenstein, the category $\msf{MCM}(R)$ coincides with the finitely presented objects in the category of \emph{Gorenstein flat} modules, which we denote as $\msf{GFlat}(R)$. The Gorenstein flat modules have a description as
\[
\msf{GFlat}(R)=\{M\in\Mod{R}:\t{Tor}_{i}^{R}(E,M)=0 \t{ for all }E\in\msf{Inj}(R) \t{ and }i\geq 1\},
\]
by \cite[Theorem 10.3.8]{EnochsJenda}, and there is an equality $\msf{GFlat}(R)=\rlim\msf{MCM}(R)$, as illustrated by combining \cite[Theorem 10.3.8]{EnochsJenda} and \cite[Corollary 11.5.4]{EnochsJenda}.

The category $\msf{GFlat}(R)$ has a Frobenius subcategory, consisting of the modules which are simultaneously Gorenstein flat and \emph{cotorsion}, where an $R$-module $C$ is cotorsion if and only if $\t{Ext}_{R}^{1}(F,C)=0$ for all flat $R$-modules $F$; we let $\msf{Cot}(R)$ denote the full subcategory of cotorsion $R$-modules. We shall let $\msf{GFC}(R)=\msf{GFlat}(R)\cap\msf{Cot}(R)$ denote this Frobenius category. The projective-injective objects in $\msf{GFC}(R)$ are $\msf{Flat}(R)\cap\msf{Cot}(R)$, and we let $\msf{StGFC}(R)$ denote the corresponding stable category. See \cite[\S4]{CET} for proofs of these facts.

In the case of hypersurfaces, the category $\msf{StGFC}(R)$ is compactly generated as shown in \cite{birdgfc}, and the compact objects are $\msf{StMCM}(R)$. As such, there is a triangulated equivalence $\msf{StGFC}(R)\simeq\msf{K}_{\t{ac}}(\msf{Inj}(R))$. The advantage of approaching $\shspec{\D_{\t{sg}}(R)}$ this way is that the Ziegler spectrum of $\msf{GFlat}(R)$ is already known for both $A_{\infty}$ and $D_{\infty}$, see \cite{pun1} and \cite{PunLos} respectively.
\end{chunk} 

\begin{chunk}\label{Zgdecomp}
As shown in \cite[Theorem 3.13]{birdgfc}, there is a homeomorphism $\msf{Zg}(\msf{GFlat}(R))\simeq \msf{Zg}(\msf{StGFC}(R))\sqcup \msf{Zg}(\msf{Flat}(R))$. Since the isolation condition holds for $\msf{GFlat}(R)$ for $R$ the $A_{\infty}$ and $D_{\infty}$ hypersurfaces by \cite[Theorem 6.9]{pun1} and \cite[Theorem 7.7]{PunLos} respectively, it follows that the isolation condition also holds in $\msf{Zg}(\msf{StGFC}(R))$, as it is a closed subspace of $\msf{Zg}(\msf{GFlat}(R))$.
Consequently, we are in the setting of \cref{thm:whenisKahomeo}, so it suffices to understand the indecomposable $\Sigma$-invariant endofinite objects of $\msf{StGFC}(R)$. We first do this for $A_{\infty}$ and then, via similar arguments, consider $D_{\infty}$.
\end{chunk}

\subsection{The \texorpdfstring{$A_{\infty}$-}{A-infinity }singularity}\label{ainfinity}
We now restrict to the case that $R=k[[x,y]]/x^{2}$, the $A_{\infty}$-singularity. We let $\mf{p}=(x)$ denote the unique non-maximal prime of $R$. The Ziegler spectrum of $\msf{GFlat}(R)$ was described in \cite{pun1} and has points
\[
\{(x,y^{i})\subset R:0\leq 1\leq\infty\}\cup \{Q(R),\overline{R},L\}
\]
where the first set consists of the indecomposable maximal Cohen-Macaulay modules with the convention that $y^{0}=1$, $Q(R)$ is the ring of quotients of $R$, $\overline{R}$ is the normalisation of $R$ (the integral closure of $R$ in $Q(R)$) viewed as an $R$-module via the nontrivial integral ring extension $R\to \bar{R}$, and $L=k((y))$, the Laurent series over $y$, is viewed as an $R$-module along the quotient $R\to k[[y]]=R/(x)$.

The flat modules among this list are $(x,y^{0})=R$ and $Q(R)$, as the latter is a localisation. To see that $\bar{R}$ is not flat, note that $\bar{R}=R+xQ$ by \cite[Remark 2.1]{pun1}, and thus $R$ is properly contained in $\bar{R}$. The  ring map $R\to \bar{R}$ then corresponds to a map of schemes $\t{Spec}(\bar{R})\to\t{Spec}(R)$ which is flat if and only if $\bar{R}$ is flat as an $R$-module. Yet as $\t{Spec}(\bar{R})$ is normal by definition, if this map were flat, then $\t{Spec}(R)$ would also be normal by \cite[Proposition 2.1.13]{EGA}, and this is false as $R$ is not normal. It is also the case that $L$ is not flat, for if it were $L_{\p}=k((y))_{\p}$ would be flat over $R_{\p}=k((y))[x]/(x^{2})$, and this is clearly not the case as it is the residue field of this local ring.
Therefore we see that the non-zero indecomposable pure injective objects in $\msf{StGFC}(R)$ are $\{(x,y^{i}):1\leq i\leq \infty\}\cup\{\overline{R},L\}$. 

\begin{chunk}\label{ainfinityshiftspec}
Since the isolation condition holds, the closed points in $\msf{Zg}(\msf{StGFC}(R))$ correspond to endofinite objects, see \cref{prel:isolationcondition}. As such, any $\Sigma$-invariant indecomposable endofinite object of $\msf{StGFC}(R)$ gives a closed point of $\msf{Zg}(\msf{GFlat}(R))$. This is because the embedding $\msf{Zg}(\msf{StGFC}(R))\hookrightarrow\msf{Zg}(\msf{GFlat}(R))$ arising in the disjoint union described in \cref{Zgdecomp} is closed. Yet the closed points of $\msf{Zg}(\msf{GFlat}(R))$ are just, by \cite[Lemma 6.7]{pun1}, the Laurent series $L$ and the ring of quotients $Q$; as we saw, $Q$ is flat, so we deduce that $L$ is the only closed point, and thus endofinite object, in $\msf{Zg}(\msf{StGFC}(R))$. 

In addition, the point $L$ is also $\Sigma$-invariant. This is because there is a short exact sequence $0\to L\to Q\to L\to 0$ in $\msf{GFC}(R)$, see \cite[Lemma 8.1]{pun1}, which illustrates $\Sigma L\simeq L$. Furthermore, the shift on $\msf{StMCM}(R)\simeq\msf{D}_{\t{sg}}(R)$ is also the identity, since the matrix factorisations exhibiting each indecomposable maximal Cohen-Macaulay module are symmetric. We note that since no finitely generated maximal Cohen-Macaulay module is closed in $\msf{Zg}(\msf{GFlat}(R))$, none of their images in $\msf{StGFC}(R)$ is endofinite. Therefore we have that $\shspec{\D_{\t{sg}}(A_\infty)}$ is the one-point space
\[
\shspec{\D_{\t{sg}}(A_{\infty})}=\{\{f\in\mod{\D_{\t{sg}}(R)}:\Hom(f,\y L)=0\}\}.
\]
% Any $\Sigma$-invariant endofinite object of $\msf{StGFC}(R)$ is a closed point in $\msf{Zg}(\msf{StGFC}(R))$. The closed points in $\msf{Zg}(\msf{StGFC}(R))$ are just the closed points in $\msf{Zg}(\msf{GFlat}(R))$ which are not flat. As the isolation condition holds, these are just the points of maximal Cantor-Bendixson rank which are not flat, which, by \cite[Lemma 6.7]{pun1}, is just the point $L$. We note that $L$ is actually $\Sigma$-invariant, as there is a short exact sequence $0\to L\to Q\to L\to 0$ in $\msf{GFC}(R)$, see \cite[Lemma 8.1]{pun1}. We note that the shift is isomorphic to the identity on $\msf{StMCM}(R)$ as well, just by observing that the matrix factorisations of each indecomposable maximal Cohen-Macaulay module is symmetric.
\end{chunk}

\begin{chunk}\label{chunk:identifyalphaL}
Let us now give a concrete description of $\sspec{\D_{\t{sg}}(R)}$, which we now know is a single point, given by 
\[\
\alpha(L)=\{M\in\msf{StMCM}(R):\underline{\Hom}_{R}(M,L)=0\}.
\]
Using the fact that $\Sigma L=L$, there is an isomorphism $\underline{\Hom}_{R}(M,L)\simeq \t{Ext}_{R}^{1}(M,L)$ for any $M\in\msf{StMCM}(R)$, see \cite[Lemma 3.3.3]{krbook}. Since $L$ is $\p$-local, that is $L_{\p}\simeq L$, there is, for each $n\geq 0$, an isomorphism
\[
\t{Ext}_{R}^{n}(M,L)\simeq \t{Ext}_{R_{\p}}^{n}(M_{\p},L).
\]
Viewing $L=k((y))$ as the residue field of $R_{\p}\simeq k((y))[[x]]/x^{2}$, we therefore have that
\[
\underline{\Hom}_{R}(M,L)=0 \iff \t{Ext}_{R}^{1}(M,L)=0 \iff \t{Ext}_{R_{\p}}^{1}(M_{\p},k((y)))=0 \iff M_{\p}\in\msf{proj}(R_{\p}).
\]
Now, since $\msf{Spec}(R)=\{\p,\m=(x,y)\}$, we have that the punctured spectrum is just $\{\p\}$; in other words, from the above equivalences, we have that
\[
M\in\alpha(L)\iff M \t{ is locally free on the punctured spectrum}.
\]
Here we appeal to Auslander-Reiten theory, for it is well known that in $\msf{MCM}(R)$ a module is locally free on the punctured spectrum if and only if there is an Auslander-Reiten sequence ending at it, see \cite[Theorem 3.4]{Yoshino}.

The Auslander-Reiten quiver for $A_{\infty}$, see \cite{schreyer} or \cite{pun1}, illustrates that the only module which does not appear at the end of an Auslander-Reiten sequence is $(x,y^{\infty})$. In other words,
\[
\alpha(L)=\msf{thick}\{(x,y^{i}):0<i<\infty\}.
\]
In particular, we have that the only radical thick subcategories of $\D_{\t{sg}}(R)$ are $\msf{thick}\{(x,y^{i}):0<i<\infty\}$ and $\D_{\t{sg}}(R)$. This exhibits an example when $\{0\}$ is not a radical thick subcategory.
\end{chunk}

\subsection{The \texorpdfstring{$D_{\infty}$-}{D-infinity }singularity}

We now let $R=k[[x,y]]/x^{2}y$ be the $D_{\infty}$-singularity. Let us define, for $1\leq k<\infty$, $M_{k}=(y^{k+1},xy)$ and $Y_{k}=(y^{k},x)$, which are ideals of $R$. We also consider the following submodules of $R^{(2)}$, given by $X_{k}=\langle (y^{k},x),(xy,0)\rangle_{R}$ and $N_{k}=\langle (y^{k},x),(x,0)\rangle_{R}$. Then a complete list of indecomposable objects of $\msf{MCM}(R)$ is 
\[
\{R,(x),(x^{2}),(xy),(y)\}\cup\{M_{k},Y_{k},X_{k},N_{k}:1\leq k<\infty\}.
\]

\begin{chunk}
The indecomposable endofinite objects in $\msf{StGFC}(R)$ - or equivalently the closed points in $\msf{Zg}(\msf{StGFC}(R))$ as the isolation condition holds - are precisely the non-flat closed points in $\msf{Zg}(\msf{GFlat}(R))$ via the decomposition in \cref{Zgdecomp}. By \cite[Lemmas 6.10 and 7.6]{PunLos}, the closed points of $\msf{Zg}(\msf{GFlat}(R))$ are $Q(R)$ and the Laurent series rings $k((x))$ and $L=k((y))$. The module $Q(R)$ is always flat, while $k((x))$ is the injective hull of $R/(y)$. As $R$ is Gorenstein, $k((x))$ having finite injective dimension also means it has finite flat dimension, and thus $k((x))$ is Gorenstein flat of finite flat dimension, hence is flat by \cite[Corollary 10.3.4]{EnochsJenda}. 
We note that $L=k((y))$ is not flat, for if it were it would, by extension of scalars, be flat over the $A_{\infty}$-singularity, viewed as the quotient $R/x^{2}\simeq k[[x,y]]/x^{2}$. Yet we know this is not the case by the discussion in \cref{ainfinity}.

Thus, the only closed point in $\msf{Zg}(\msf{StGFC}(R))$ is $L$. We note that this object is $\Sigma$-invariant. Indeed, since $\Sigma\colon\msf{StGFC}(R)\to\msf{StGFC}(R)$ is an autoequivalence, it induces a closed map on $\msf{Zg}(\msf{StGFC}(R))$, in particular $\Sigma k((y))$ is a closed point. Yet as $L$ is the only closed point, we deduce that $L\simeq \Sigma L$. 
\end{chunk}

\begin{chunk}
Therefore, using a similar argument as in the case of the $A_{\infty}$-singularity, the sets $\shspec{\D_{\t{sg}}(R)}$ and $\sspec{\D_{\t{sg}}(R)}$ are both homeomorphic to the point: we have that
\[
\shspec{\D_{\t{sg}}(R)}=\{\{f\in\mod{\T^{\c}}:\Hom(f,\y L)=0\}\}
\]
while
\[
\sspec{\D_{\t{sg}}(R)}=\{\{M\in\msf{StMCM}(R):\underline{\Hom}_{R}(M,L)=0\}\}.
\]
We again give a more concrete description of the latter by appealing to Auslander-Reiten theory.

By a similar argument to that used in \cref{chunk:identifyalphaL}, any module that is locally free on the punctured spectrum, equivalently which ends an Auslander-Reiten sequence in $\msf{MCM}(R)$, will lie in $\alpha(L)$. There is an Auslander-Reiten sequence ending at all indecomposable maximal Cohen-Macaulay modules apart from the modules $(x)$ and $(xy)$. Hence
\begin{equation}\label{ex:Dthick}
\msf{thick}(\{(x^{2}),(y),M_{k},N_{k},X_{k},Y_{k}:1\leq k<\infty\})\subseteq \alpha(L).
\end{equation}
We now consider $(x)$ and $(xy)$. In $\msf{MCM}(R)$ there is a 2-periodic ray 
\[
(xy)\xrightarrow{\t{inc}}(x) \xrightarrow{\cdot y}(xy)\xrightarrow{\t{inc}}(x) \xrightarrow{\cdot y}\cdots
\]
whose direct limit in $\msf{GFlat}(R)$ is $L$. Since none of the maps in the system are zero in $\msf{StMCM}(R)$, the colimit maps $(x)\to L$ and $(xy)\to L$ are also non-zero in $\msf{StGFC}(R)$; in particular $(x),(xy)\not\in \alpha(L)$ and so the inclusion in \cref{ex:Dthick} is an equality.
We therefore again have a situation where $\{0\}$ is not radical, as the only radical thick subcategories are $\alpha(L)$ and $\D_{\t{sg}}(R)$.
\end{chunk}

\section{Comparisons to other spaces}\label{comapsection}
To end, we give a brief comparison between $\sspec{\T^{\c}}$ and other spaces parametrising certain thick subcategories that appear in the literature. We do not recall the construction of these spaces here, and instead refer the reader to the relevant articles dedicated to their study. 

The spaces we consider are the \emph{partially functorial spectrum} $\msf{Spcnt}(\T^{\c})$ and the \emph{fully functorial spectrum} $\msf{fSpcnt}(\T^\c)$ of \cite{GratzStevenson}, the space $\msf{Spc}_{\msf{cent}}(\T^{\c})$ derived from the central support of \cite{Krausecentral}, and the \emph{Matsui spectrum} $\msf{Spc}_{\msf{M}}(\T^{\c})$ of \cite{Matsui}. In general, we do not have continuous comparison maps, but by example we can see that the spaces have different behaviours. We demonstrate this in \cref{table:comparisons}, and provide further elaboration below. In the following table, we write $(-)^\vee$ for the Hochster dual operation on topological spaces, and $S$ for the Sierpi\'nski space (that is, the 2 point space \{0,1\}, with closed sets $\varnothing$, $\{0\}$, and $\{0,1\}$). The table also contains the Balmer spectrum when it exists for comparison.

\begin{table}[h]
\begin{tabular}{c| c c c c}
& $\D^{\mrm{b}}(kA_{2})$ & $\D^{\mrm{b}}(\msf{coh}(\mathbb{P}^1))$ & $\msf{D}_{\t{sg}}(A_{\infty})$ & $\D(\Z)^{\c}$ \\ \hline \rule{0pt}{4ex}
$\sspec{\T^{\c}}$ & $\ast \sqcup \ast \sqcup \ast$ & $\Z \sqcup \mbb{P}^1$ & $\ast$ & $\msf{Spec}(\Z)$ \\ \rule{0pt}{4ex}
$\msf{Spcnt}(\T^{\c})$ & $\varnothing$ & $\varnothing$ & $S$ & $\msf{Spec}(\Z)^{\vee}$ \\ \rule{0pt}{4ex}
$\msf{Spc}_{\msf{cent}}(\T^{\c})$ & $\ast$ & $\ast$ & $S$ & $\msf{Spec}(\Z)^{\vee}$ \\ \rule{0pt}{4ex}
$\msf{Spc}_{\msf{M}}(\T^{\c})$ & $\ast \sqcup \ast \sqcup \ast$ & $\Z \sqcup \mbb{P}^1$ & $S$ & $\msf{Spec}(\Z)$ \\ \rule{0pt}{4ex}
$\msf{Spc}(\T^\c)$ & - & $\mbb{P}^1$ & - & $\msf{Spec}(\Z)$ \\
\end{tabular}
\caption{Comparisons between spaces parametrising thick subcategories of $\T^\c$}
\label{table:comparisons}
\end{table}

\begin{chunk}[Computations for $\msf{D}^{\t{b}}(kA_{2})$]
As $kA_{2}$ is hereditary of finite representation type, the space $\sspec{\T^{\c}}$ is the discrete space with three points by \cref{locfiniteindmodsim} and \cref{ex:psshered}. The lattice of thick subcategories of $\msf{D}^{\t{b}}(kA_{2})$ is
\[
\begin{tikzcd}
& \D^{\mrm{b}}(kA_{2}) & \\
\msf{thick}(P_{1}) \arrow[ur,no head]  & \msf{thick}(P_{2}) \arrow[u,no head]& \msf{thick}(S_{2}) \arrow[ul,no head] \\
& 0. \arrow[ul,no head] \arrow[u,no head] \arrow[ur,no head]&
\end{tikzcd}
\]
We have $\msf{Spcnt}(\D^{\mrm{b}}(kA_{2}))=\varnothing$ by ~\cite[Lemma 2.3.7]{GratzStevenson}. The lattice of central thick subcategories is $Z(\msf{D}^{\mrm{b}}(kA_{2}))=\{0,\D^{\mrm{b}}(kA_{2})\}$, thus $\msf{Spc}_{\msf{cent}}(\msf{D}^{\mrm{b}}(kA_2))=\ast$ is the one-point space, see~\cite[Example 4.13]{Krausecentral}. 
From the definition of $\msf{Spc}_{\msf{M}}$, one sees that the only Matsui primes are $\msf{thick}(P_1), \msf{thick}(P_{2})$, and $\msf{thick}(S_{2})$, and that the space is discrete by definition of the basis of the topology.
\end{chunk}

\begin{chunk}[Computations for $\msf{D}^{\mrm{b}}(\msf{coh}(\mbb{P}^1))$]
    We considered this example in detail in \cref{th:kronecker}. Justifications for the other spaces can be found in \cite[Example 7.1.4]{GratzStevenson}, \cite[Example 4.15]{Krausecentral}, \cite[Example 4.10]{Matsui}, and \cite[Corollary 5.6]{Balmer} respectively.
\end{chunk}

\begin{chunk}[Computations for $\msf{D}_{\t{sg}}(A_{\infty})$]
We considered $\sspec{\D_{\t{sg}}(A_{\infty})}$ in \cref{ainfinity}. The lattice of thick subcategories is equivalent to $\{0{<}1{<}2\}$. We see that $\msf{Spc}_{\msf{M}}(\D_{\t{sg}}(A_{\infty}))$ then consists of the points corresponding to $0$ and $1$, and calculating the basic closed sets one sees that $\msf{Spc}_{\msf{M}}(\D_{\t{sg}}(A_{\infty}))$ is the Sierpi\'nski space $S$. Since the thick subcategories are all ordered by inclusion, it is clear from the definition that all thick subcategories are central. From this, one easily calculates that the corresponding space $\msf{Spc}_\msf{cent}(\msf{D}_{\t{sg}}(A_\infty))$ under Stone duality is $S$. The same calculation also shows that $\msf{Spcnt}(\msf{D}_{\t{sg}}(A_\infty)) = S$.
\end{chunk}

\begin{chunk}[Computations for $\D(\Z)^{\c}$]
For $\D(\Z)^{\c}$, we saw in \cref{dedekind} that there is a homeomorphism $\sspec{\D(\Z)^{\c}}\simeq \msf{Spec}(\Z)$. The remaining identifications follow from \cite[Example 7.1.1]{GratzStevenson}, \cite[Corollary 4.4]{Krausecentral}, \cite[Corollary 2.17]{Matsui}, and \cite[Corollary 5.6]{Balmer}.
\end{chunk}

\begin{rem}
    In \cite[Definition 5.1.1 and Proposition 5.3.3]{GratzStevenson}, the fully functorial spectrum $\msf{fSpcnt}(\T^\c)$ is defined via Stone duality, but it is then shown that it is homeomorphic to $\msf{Thick}(\T^\c)$ topologised with closed sets being finite unions of subsets of the form $\upcl(\msf{L}) = \{\msf{L}' \in \msf{Thick}(\T^\c) : \msf{L} \subseteq \msf{L}'\}$ where $\msf{L} \in \msf{Thick}(\T^\c)$. From this description, it is easy to see that the inclusion of the shift-prime thick subcategories into all thick subcategories yields an injective continuous map \[\sspec{\T^\c}^\vee \to \msf{fSpcnt}(\T^\c)\] if $\sspec{\T^\c}$ is noetherian. Indeed, the noetherian assumption ensures that the open sets of the Hochster dual of the shift-spectrum are just the specialisation closed subsets in the usual topology, from which continuity easily follows. Alternatively, one may give a proof by Stone duality using the join-preserving poset map $\msf{supp}_\Sigma\colon \Thick(\T^\c) \to \msf{SC}(\sspec{\T^\c})$. We note that the noetherian assumption and the appearance of the Hochster dual are consequences of our differing conventions on whether support theories assign open subsets (as in Gratz--Stevenson~\cite{GratzStevenson}) or closed subsets (as in this paper and Balmer--Ocal~\cite{balmerocal}).
\end{rem}

\bibliographystyle{abbrv}
\bibliography{references.bib}
\end{document}